\documentclass[10pt]{amsart}
\usepackage{amsmath, latexsym, amssymb}
\usepackage{endnotes}
\usepackage{ulem} \normalem  
\usepackage[usenames]{color} 
\usepackage[all, knot]{xy}
\input grcawol.sty
\input grcalcx.sty

\oddsidemargin 0in \evensidemargin 0in \textwidth 6in
\parskip 6pt

\newcommand\hbr{\sigma}

\def\a{\alpha}
\def\b{\beta}
\def\e{\epsilon}
\def\w{\omega}
\def\G{\Gamma}
\def\hG{\hat\Gamma}
\def\k{\Bbbk}

\def\fs{\mathfrak{s}}
\def\ft{\mathfrak{t}}
\def\fg{\mathfrak{g}}
\def\fj{\mathfrak{j}}

\def\BQ{{\mathbb{Q}}}
\def\BC{{\mathbb{C}}}

\def\BZ{{\mathbb{Z}}}

\def\BN{{\mathbb{N}}}
\def\bX{{\mathbf{X}}}
\def\bU{{\mathbf{U}}}
\def\uS{\underline{S}}
\def\bY{{\mathbf{Y}}}

\def\SZ{I^{\operatorname{SZ}}}
\def\qed{{\ \ \ \mbox{$\square$}}}
\newcommand\mtx[1]{\begin{bmatrix} #1 \end{bmatrix}}

\newcommand\im{\operatorname{Im}}
\renewcommand{\c}{\operatorname{c}}
\renewcommand{\u}{\operatorname{u}}

\def\Hom{{\mbox{\rm Hom}}}
\def\End{{\mbox{\rm End}}}

\def\PGL{{\mbox{\rm PGL}}}
\def\GL{{\mbox{\rm GL}}}
\def\Gal{{\mbox{\rm Gal}}}
\def\FSexp{{\mbox{\rm FSexp}}}
\def\Cent{{\mbox{\rm Center}}}

\newcommand\C[1]{{#1\mbox{-\bf{mod}}_{\operatorname{\mathsf fin}}}}

\newcommand\lcm{\operatorname{lcm}}

\newtheorem{thm}{Theorem}[section]
\newtheorem{cor}[thm]{Corollary}
\newtheorem{prop}[thm]{Proposition}
\newtheorem{lem}[thm]{Lemma}

\newtheorem{defn}[thm]{Definition}
\newtheorem{example}[thm]{Example}
\newtheorem{remark}[thm]{Remark}

\numberwithin{equation}{section}

\newcommand\ord{\operatorname{ord}}

\newcommand\Z{\BZ}
\newcommand\ol[1]{\overline{#1}}
\newcommand\replace[1]{}
\newcommand\ptr{\operatorname{\underline{ptr}}}

\newcommand\ptrl{\ptr^\ell}
\newcommand\ptrr{\ptr^r}
\newcommand\FS{\operatorname{\underline{FS}}}

\newcommand\id{\operatorname{id}}
\renewcommand\o{\otimes}

\newcommand\Tr{\operatorname{Tr}}
\newcommand\SL{\operatorname{SL}_2(\BZ)}
\newcommand\SLN{\operatorname{SL}_2(\BZ/N\BZ)}
\newcommand\qsl[1]{{\operatorname{SL}_2(\BZ / #1 \BZ)}}

\newcommand\inv{^{-1}}
\DeclareMathOperator\ev{{\operatorname{ev}}}
\DeclareMathOperator\db{\operatorname{db}}
\newcommand\CC{\mathcal C}
\newcommand\CCstr{{\mathcal{C}_{\rm str}}}
\newcommand\DD{\mathcal D}
\newcommand\FF{\mathcal F}
\newcommand\KK{\mathcal K}

\newcommand\A{\mathcal A}

\newcommand\du{^{\vee}}
\newcommand\bidu{^{\vee\vee}}

\newcommand{\ou}[1]{\mathrel{\mathop{\otimes}_{#1}}}

\newcommand\op{{\operatorname{op}}}
\newcommand\cop{{\operatorname{cop}}}

\makeatletter
\def\namelabel#1#2{\@bsphack
  \protected@write\@auxout{}%
         {\string\newlabel{#1.nme}{{#2}{#2}}}%
  \@esphack}
\def\nmlabel#1#2{\label{#2}\namelabel{#2}{#1}}

\makeatother

\title[Congruence Subgroups and Generalized Frobenius-Schur Indicators]{Congruence Subgroups and Generalized Frobenius-Schur Indicators}
\author{Siu-Hung Ng}
\address{Department of Mathematics, Iowa State University, Ames, IA 50011, USA}
\email{rng@iastate.edu}
\thanks{The first author is supported by the NSA grant H98230-08-1-0078.}
\author{Peter Schauenburg}
\address{Mathematisches Institut der Universit\"at M\"unchen,
Theresienstr.\ 39, 80333 M\"unchen, Germany}
\email{schauenburg@math.lmu.de}
\thanks{The second author is supported by \emph{Deutsche Forschungsgemeinschaft} through a Heisenberg fellowship}
\begin{document}
\begin{abstract}
  We introduce generalized Frobenius-Schur indicators for pivotal categories. In a spherical fusion category $\CC$, an equivariant indicator of an object in $\CC$ is defined as a functional on the Grothendieck algebra of the quantum double $Z(\CC)$ via generalized Frobenius-Schur indicators. The set of all equivariant indicators admits a natural action of the modular group. Using the properties of equivariant indicators, we prove a congruence subgroup theorem for modular categories. As a consequence, all modular representations of a modular category have finite images, and they satisfy a conjecture of Eholzer. In addition, we obtain two formulae for the generalized indicators, one of them a generalization of Bantay's second indicator formula for a rational conformal field theory. This formula implies a conjecture of Pradisi-Sagnotti-Stanev, as well as a conjecture of Borisov-Halpern-Schweigert.
\end{abstract}
\maketitle

\section*{Introduction}\label{s:intro}
The importance of the role of the modular group $\SL$ in conformal
field theory has been known since the work of Cardy \cite{Cardy}.
Associated to a 2D rational conformal field theory (RCFT) is a
finite-dimensional representation of $\SL$ with a distinguished
basis formed by the characters of the primary fields. This modular
representation conceives
some interesting algebraic and arithmetic properties.
One notable example is the Verlinde formula (cf.
\cite{Ver88}, \cite{MS89}). The kernel of the modular
representation associated with a RCFT is of particular interest.
It has been conjectured the kernel is always a congruence subgroup
of $\SL$ (cf. \cite{Moore87}, \cite{E95}, \cite{ES95},
\cite{DM96}, \cite{BCIR}), i.e.\ it contains some principal
congruence subgroup $\Gamma(N)$ of $\SL$. The conjecture was first
addressed by Coste and Gannon in \cite{CG99}, and they showed that
the conjecture holds if $\ft=\mtx{1&1\\0 & 1} \in \SL$ is
represented by an odd order matrix under the modular
representation. The conjecture was later established by Bantay in
\cite{Bantay03} under certain assumptions. More recently,
Xu also solved the conjecture for the modular representation associated
to a local conformal net \cite{Xu}.

The language of modular tensor categories, termed by I. Frenkel,
constitutes a formalization of the chiral data of a conformal
field theory (cf. \cite{MS90}, \cite{BaKi}).  A modular tensor
category may be thought of as the representation category of some
chiral algebra which corresponds to a conformal field theory.
Huang has proved this for some vertex operator algebras
\cite{HYZ05} (see also \cite{Lep05}). The recent progress in
representation theory has revealed that a modular tensor category over
an algebraically closed field $\k$ of characteristic zero can
always be realized as the representation category of some
connected ribbon factorizable semisimple weak Hopf algebra over
$\k$ (cf. \cite{Szl}, \cite{NTV}). Moreover, M\"uger has also
shown in \cite{MugerII03} that the center (quantum double)
$Z(\CC)$ of a spherical fusion category $\CC$ over $\k$ is
naturally a modular tensor category. In particular, the
representation category of a semisimple factorizable Hopf algebra
and the representation category of the Drinfeld double $D(H)$ of a
semisimple Hopf algebra $H$ are modular tensor categories.

Parallel to rational conformal field theory, each modular tensor
category $\A$ over $\k$ is associated with a natural projective
modular representation $\ol\rho_\A$ on $\KK_0(\A)\o_\BZ \k$, where
$\KK_0(\A)$ is the Grothendieck (fusion) ring of $\A$. This
projective modular representation is projectively equivalent to an
ordinary representation, but such a lifting is not unique. However,
if $\A=Z(\CC)$ for some spherical fusion category $\CC$, then
there exists a \emph{canonical ordinary} modular representation
$\rho_{Z(\CC)}$ which is a lifting of $\ol \rho_{Z(\CC)}$. It is
natural to ask whether the kernels of these canonical projective
or ordinary modular representations are congruence subgroups of
$\SL$.   These questions were answered affirmatively by
Sommerh\"auser and Zhu in \cite{SZh} for factorizable semisimple
Hopf algebras and the Drinfeld doubles of semisimple Hopf
algebras. In this paper, we generalize their results to
spherical fusion categories, and prove the congruence subgroup conjectures in Theorems \ref{t:cong1} and
\ref{t:cong2}. Moreover, every lifting of the projective
modular representation of a modular tensor category has a finite
image. We approach these questions by studying the generalized
Frobenius-Schur indicators for spherical fusion categories
introduced in this paper.

The classical notion of the second Frobenius-Schur (FS) indicators
for the representations of a finite group has been generalized to
many different contexts. A version for semisimple Hopf algebras
was introduced by Linchenko and Montgomery \cite{LM00}. A more
general version for semisimple quasi-Hopf algebras was studied by
Mason and Ng in \cite{MN05}, and Schauenburg in \cite{Sch04}.
Some categorical versions of the 2nd FS indicator were studied by
Fuchs, Ganchev, Szlach\'anyi, and Vescerny\'es in \cite{FGSV99}
and by Fuchs and Schweigert in \cite{FucSch:CTCBC}. Bantay also
introduced another version of the 2nd FS indicator for RCFT as a
formula in terms of the modular data. The less
well-known higher FS
indicators for the representations of a finite group were
generalized to semisimple Hopf algebras in \cite{LM00}, and have been studied extensively by Kashina, Sommerh\"auser
and Zhu \cite{KSZ}, and to semisimple quasi-Hopf algebras by Ng
and Schauenburg \cite{NS052}. All these FS indicators in different
contexts are specializations of the higher FS indicators
for pivotal categories introduced in \cite{NS05}.

The main tool employed in \cite{SZh} to prove the congruence
subgroup theorems is the equivariant indicators for semisimple
Hopf algebras, which are extensions of the higher FS indicators
for semisimple Hopf algebras. Their discovery suggests a more
general version of indicators for pivotal categories. In this
paper, we introduce the generalized Frobenius-Schur (GFS)
indicator $\nu_{m,l}^\bX(V)$ for a pair $(m,l)$ of integers, an
object $V$ of a pivotal category $\CC$ and an object $\bX$ in the
center $Z(\CC)$. For a pair of integers $m,l\in\mathbb Z$,
the indicator $\nu_{m,l}^\bX(V)$ is defined as the
trace of a linear endomorphism $E_{\bX,
V}^{(m,l)}$ on the vector space $\CC(X, V^{\o m})$ where $X$ is the
underlying $\CC$-object of $\bX$. If $\bX$ is the
unit object of $Z(\CC)$ and $m>0$, then $\nu_{m,l}^\bX(V)$ coincides with
the $(m,l)$-th FS indicator $\nu_{m,l}(V)$ of $V$ defined in
\cite{NS05}. In a spherical fusion category $\CC$, one can extend
the assignment $\bX \mapsto \nu_{m,l}^\bX(V)$ for each simple $\bX
\in Z(\CC)$ to a linear functional $I_V((m,l), -)$ on the fusion
algebra $\KK_\k(Z(\CC))=\KK_0(Z(\CC))\o_\BZ \k$ for each pair $(m,l)$
of integers and $V \in \CC$; this extension is called the
equivariant indicator in Section 6. It is
equivalent to the corresponding notion introduced by
Sommerh\"auser and Zhu when $\CC$ is the representation category
of a semisimple Hopf algebra.

The set of all equivariant indicators is closed under the
$\SL$-action on $\KK_\k(Z(\CC))^*$ induced by the contragredient of
the modular representation $\rho_{Z(\CC)}$. Moreover, the indicators are
invariant under the action of the principal congruence subgroup
$\Gamma(N)$, which is the kernel of the epimorphism $\SL \to
\SLN$, where $N$ is the Frobenius-Schur exponent of $\CC$. The
study of the relation between the equivariant indicators and the
modular representations associated with the center of a modular
tensor category leads to our major theorems. These theorems imply
that all the modular representations of a modular category have finite images, and a conjecture of Eholzer on  these representations.

In the course of studying the equivariant indicators, we obtain two formulae for the GFS indicators. The first formula, obtained in Corollary \ref{c:nu}, expresses $\nu_{m,1}^\bX(V)$ for a spherical fusion category $\CC$ in terms of the modular data of $Z(\CC)$; it contains the FS indicator formula discovered in \cite[Theorem 4.1]{NS07} as a special case. The second formula, described in Proposition \ref{p:Bantay} {which is a consequence of the first formula,} expresses $\nu_{m,1}^\bX(V)$ for a modular category $\A$  in terms of its modular data. It implies Bantay's indicator formula \cite{Bantay97} when we specify $m=2$ and $\bX$ to be the unit object of $Z(\A)$. More importantly, this formula suggests a close relationship between the GFS indicators and a family of scalars $Y_{ab}^c$ indexed by the primary fields $a,b,c$ of a RCFT introduced in \cite{PSS}. It is conjectured in \cite{PSS} that $Y_{ab}^c$ are integers and it is further conjectured in \cite{BHS} certain inequality holds  for $Y_{ab}^c$. Gannon has proved these conjectures under the condition that the $T$ matrix of the RCFT has odd order \cite{Gan}. More recently, Kac, Longo and Xu have proved these conjectures via $\BZ_2$-orbifolds of conformal nets \cite{KLX}.  As an application of GFS indicators, we prove these conjectures hold for all modular categories.

The organization of this paper is as follows: In Section \ref{s:prelim} we
cover some basic definitions, notations, conventions and
preliminary results on pivotal categories for the remaining
discussion. In Section \ref{s:def} we define the generalized FS indicators,
discuss their basic properties and an alternative characterization.
This continues in Section \ref{s:sscase} under the additional assumption that the
category is semisimple, and we give another characterization of
the GFS indicators for spherical fusion categories. In Section \ref{s:hopf}, we show how this characterization recovers the equivariant indicators introduced in
\cite{SZh} when the underlying spherical category is the representation category of a semisimple Hopf algebra.
We define
the equivariant indicators for a spherical fusion category in
Section \ref{s:equiv}. We show that the set of equivariant indicators admits
a natural action of $\SL$, and derive some important consequences
of this modular action. In Section \ref{s:congsub}, we study the equivariant
indicators for a modular tensor category and its center, and
prove the congruence subgroup theorems. We also provide an example for the congruence subgroups arising. The study of modular representations of a modular category continues in Section \ref{s:eholzer}. We prove the images of these representations are finite, and a conjecture of Eholzer for modular categories holds.
In Section \ref{s:conjecture}, we prove a conjecture of Pradisi-Sagnotti-Stanev and a conjecture of Borisov-Halpern-Schweigert using a generalized Bantay's formula for GFS indicators.
In Section \ref{s:endom}, we introduce the definition of generalized Frobenius-Schur endomorphisms in a pivotal fusion
category $\CC$. For relatively prime positive integers $m,l$ these turn
out to be natural endomorphisms of
the identity functor $\id_\CC$, and the corresponding GFS indicators can
be expressed as their pivotal traces. This is a generalization of the formulas expressing higher indicators as character values on certain central
elements of a quasi-Hopf algebra.

\section{Preliminaries}\label{s:prelim}
In this section, we will collect some conventions and facts on
pivotal categories. Most of these are quite well-known, and the
readers are referred to to \cite{NS05,NS052,NS07} and the
literature cited there. Additional key results on fusion categories and their centers are taken from M\"uger's work \cite{MugerII03}

\subsection{Pivotal and spherical monoidal categories}

In a monoidal category $\CC$ with tensor product $\o$, we denote
$\Phi\colon (U\o V)\o W\to U\o(V\o W)$ the associativity
isomorphism. If $X,Y\in\CC$ are obtained by tensoring together the
same sequence of objects with two different arrangements of
parentheses, one can obtain an isomorphism between them by
composing several instances of the tensor products of $\Phi$,
$\Phi\inv$ and the identity. It is unique by the coherence theorem, and will be denoted by $\Phi^?\colon X\to Y$.

We will assume throughout that the unit object $I \in \CC$ is strict. A left dual of an
object $V \in \CC$ is an object $V\du \in \CC$ together with the
morphisms $\ev\colon V\du\o V\to I$ and $\db\colon I\to V\o V\du$
such that
\begin{gather*}
\id_V=\left(V\xrightarrow{\db\o V}(V\o V\du)\o V\xrightarrow{\Phi}V\o(V\du\o
V)\xrightarrow{V\o\ev}V\right),
\\
\id_{V\du}=\left(V\du\xrightarrow{V\du\o\db}V\du\o(V\o
V\du)\xrightarrow{\Phi\inv}(V\du\o V)\o V\du\xrightarrow{\ev\o
V\du}V\du\right).
\end{gather*}
A right dual of an object can be defined similarly. A monoidal
category $\CC$ is called left (resp.\ \ right) rigid if every object
of $\CC$ admits a left (resp.\ \ right) dual. If $\CC$ is a left
rigid monoidal category, then taking duals can be extended to a monoidal
functor $(-)\du: \CC\to \CC^\op$, and so $(-)\bidu\colon\CC\to\CC$
is consequently a monoidal functor. Moreover, we can choose
$I\du=I$ and $\ev_I =\db_I =\id_I$.

A \emph{pivotal category} is a left rigid  monoidal  category
equipped with an isomorphism $j\colon Id \to (-)\bidu$, called a
pivotal structure, of monoidal functors.

Let $\CC$ be a pivotal category, and $V \in \CC$. Then $V\du$
together with the morphisms
$$
\ol \db:=\left(I \xrightarrow{\db} V\du \o V\bidu\xrightarrow{V\du \o j_V\inv} V\du \o V\right),
\text{ and }
\ol \ev:=\left(V \o V\du\xrightarrow{j_V\o V\du} V\bidu \o V\du \xrightarrow{\ev} I\right)
$$
becomes a right dual of $V$. In particular, $\CC$ is also right
rigid. Let $f\colon V\to V$ be a morphism in the pivotal category
$\CC$. The left and right pivotal traces of $f$ are respectively
\begin{gather*}
\ptrr(f)=\bigg(I\!\xrightarrow{\db}\!V\o
V\du\!\xrightarrow{f\o \id}\!V\o V\du\! \xrightarrow{\ol\ev}\!I\bigg)\text{ and }
\ptrl(f)=\bigg(I\!\xrightarrow{\ol\db}\!V\du\o V\!\xrightarrow{\id\o f}\! V\du\o
V\xrightarrow{\ev}\!I\bigg).
\end{gather*}
The left and right pivotal dimensions of $V\in\CC$ are
$d_\ell(V)=\ptr^\ell(\id_V)$ and $d_r(V)=\ptr^r(\id_V)$.

A \emph{spherical category} is a pivotal category in which the left and
right pivotal traces of every morphism are identical. In a
spherical category, the pivotal traces and dimensions will be denoted by
$\ptr(f)$ and $d(V)$, respectively.

A pivotal category is called \emph{strict} if the associativity
isomorphism $\Phi$, the pivotal structure $j$, and the canonical
isomorphism $(V\o W)\du \to W\du \o V\du$ are identities. It has
been shown in \cite[Theorem 2.2]{NS05} that every pivotal category
$\CC$ is equivalent to a strict pivotal category $\CCstr$;
equivalence as pivotal categories means that the monoidal
equivalence $\CC\to \CCstr$ preserves pivotal structures in a
suitable sense \cite{NS05}. If $\CC$ is spherical, then so is
$\CCstr$.

In a strict pivotal category, we make free use of graphical
calculus. Our convention for a morphism is a diagram with the
source at the top and the target at the bottom.  For instance, the
morphisms $\ev: V\du \o V\to I$, $\db: I \to V \o V\du$, $\ol\ev:
V \o V\du\to I$ and $\ol \db: I \to V\du \o V$ are respectively
the diagrams:
$$
\def\objectstyle{\scriptstyle}
\xy (0,-1)="ctext",,
{"ctext"+(-4,4)="v1"*{V\du}; "ctext"+(4,4)="v2"*{V} **\crv{"v1"+(0,-7)&"v2"+(0,-7)}}\endxy, \quad
\xy (0,-1)="ctext",,
{"ctext"+(-4,0)="v1"*+{V}; "ctext"+(4,0)="v2"*+{V\du} **\crv{"v1"+(0,8)&"v2"+(0,8)}}\endxy,\quad
\xy (0,-1)="ctext",,
{"ctext"+(-4,4)="v1"*{V}; "ctext"+(4,4)="v2"*{V\du} **\crv{"v1"+(0,-7)&"v2"+(0,-7)}}\endxy,\quad\text{and}\quad
\xy (0,-1)="ctext",,
{"ctext"+(-4,0)="v1"*+{V\du}; "ctext"+(4,0)="v2"*+{V} **\crv{"v1"+(0,8)&"v2"+(0,8)}}\endxy.
$$
Notice that $\ol\ev_V = \ev_{V\du}$ and $\ol\db_V = \db_{V\du}$ in a strict pivotal category.

The left and right pivotal traces of a morphism $f:V \to V$ are given by the diagrams:
$$
  \ptrl(f)=
  \def\objectstyle{\scriptstyle}
  \xy (0,0)="ctext"*{f},
  {"ctext"+(-2,-2); "ctext"+(2,2) **\frm{-}},
  {"ctext"+(-7,-2)="v1"; "ctext"+(0,-2)="v2" **\crv{"v1"+(0,-5)&"v2"+(0,-5)}},
  {"ctext"+(-7, 2)="w1"; "ctext"+(0,2)="w2" **\crv{"w1"+(0,5)&"w2"+(0,5)}},
  {"w1"; "v1" **\dir{-}},
  "ctext"+(-9, 0)*{V\du}
  \endxy \quad\text{and}\quad
  \ptrr(f)=\,
  \xy (0,0)="ctext", "ctext"+(-7,0)*{f}="ctext1",
  {"ctext1"+(-2,-2); "ctext1"+(2,2) **\frm{-}},
  {"ctext"+(-7,-2)="v1"; "ctext"+(0,-2)="v2" **\crv{"v1"+(0,-5)&"v2"+(0,-5)}},
  {"ctext"+(-7, 2)="w1"; "ctext"+(0,2)="w2" **\crv{"w1"+(0,5)&"w2"+(0,5)}},
  {"w2"; "v2" **\dir{-}},
  "ctext"+(3, 0)*{V\du},
  \endxy.
$$
If the pivotal category is spherical, the two pivotal traces coincide.

Now let $\CC$ be a left rigid braided monoidal category. In the graphical calculus, we depict the braiding $c$ and its inverse as
$$
 c_{VW}=
  \def\objectstyle{\scriptstyle}
  \xy
  {\vcross~{(-2,3)*{V}}{(2,3)*{W}}{(-2,-3)*{W}}{(2,-3)*{V}}},
  \endxy \quad\text{and}\quad
   c_{VW}\inv=
  \xy
  {\vcrossneg~{(-2,3)*{W}}{(2,3)*{V}}{(-2,-3)*{V}}{(2,-3)*{W}}}
  \endxy\,.
$$
Associated to $c$ is the Drinfeld  isomorphism $u:
Id \rightarrow (-)\bidu$ defined by
\begin{multline*}
 u_V:= \bigg(V \xrightarrow{\db\o\id} (V\du\o V\bidu)\o V \xrightarrow{\Phi}
  V\du\o (V\bidu\o V)\xrightarrow{\id \o c\inv}V\du\o (V\o V\bidu)\\ \xrightarrow{\Phi\inv}
  (V\du\o V)\o V\bidu \xrightarrow{\ev\o\id} V\bidu\bigg)
\end{multline*}
which also satisfies
$$
u_{V\o W}=(u_V\o u_W)c_{VW}\inv c_{WV}\inv \quad \text{for } V, W \in \CC.
$$
In particular, the equation
$\theta= u\inv j$ describes a one-to-one correspondence between pivotal structures $j$ and twists $\theta$ on $\CC$. Here, a twist is by definition an automorphism of the
identity functor on $\CC$ satisfying
\begin{equation} \label{def:twist}
\theta_{V\o W}=(\theta_V\o\theta_W)c_{WV}c_{VW} \quad
\text{and}\quad \theta_I=\id_I\,.
\end{equation}
For a strict pivotal category with a braiding $c$, the
Drinfeld isomorphism and the associated twist $\theta$ are
respectively given by
$$
u_V=
\def\objectstyle{\scriptstyle}
\xy
{\vcrossneg~{(-2,2)="x1"}{(2,2)="x2"}{(-2,-2)="y1"}{(2,-2)="y2"}},
{"x1"+(-4, 0)="x0"; "x1"**\crv{"x0"+(0,2.5)& "x1"+(0,2.5)}},
{"y1"+(-4, 0)="y0"; "y1"**\crv{"y0"+(0,-2.5)& "y1"+(0,-2.5)}},
{"x0"; "y0"**\dir{-}?(.5)+(-3,0)*{V\du}},
{"y2"+(0,-4)*{V}; "y2"**\dir{-}},
{"x2"+(0,4)*{V}; "x2"**\dir{-}},
\endxy
\quad\text{and}\quad\theta_V = u_V\inv =
\xy
{\vcross~{(-2,2)="x1"}{(2,2)="x2"}{(-2,-2)="y1"}{(2,-2)="y2"}},
{"x2"; "x2"+(4, 0)="x3"; **\crv{"x3"+(0,2.5)& "x2"+(0,2.5)}},
{"y2"; "y2"+(4, 0)="y3"; **\crv{"y3"+(0,-2.5)& "y2"+(0,-2.5)}},
{"x3"; "y3"**\dir{-}?(.5)+(2.5,0)*{V\du}},
{"y1"+(0,-5)*+{V}; "y1"**\dir{-}},
{"x1"+(0,5)*+{V}; "x1"**\dir{-}},
\endxy\,.
$$

A twist $\theta$ on $\CC$ is called a ribbon structure if it
satisfies $\theta\du_V = \theta_{V\du}$. The triple $(\CC, c,
\theta)$ is called a ribbon category if $\theta$ is a ribbon
structure on the braided monoidal category $\CC$ with the braiding
$c$. In a ribbon category, the associated pivotal structure on
$\CC$ is spherical. If $\CC$ is strict, then the
associated ribbon structure $\theta$ can be depicted as
 $$
 \theta_V \quad =\quad \def\objectstyle{\scriptstyle}
\xy
{\vcross~{(-2,2)="x1"}{(2,2)="x2"}{(-2,-2)="y1"}{(2,-2)="y2"}},
{"x2"; "x2"+(4, 0)="x3"; **\crv{"x3"+(0,2.5)& "x2"+(0,2.5)}},
{"y2"; "y2"+(4, 0)="y3"; **\crv{"y3"+(0,-2.5)& "y2"+(0,-2.5)}},
{"x3"; "y3"**\dir{-}?(.5)+(2.5,0)*{V\du}},
{"y1"+(0,-5)*+{V}; "y1"**\dir{-}},
{"x1"+(0,5)*+{V}; "x1"**\dir{-}},
\endxy \quad =\quad
\xy
{\vcross~{(-2,2)="x1"}{(2,2)="x2"}{(-2,-2)="y1"}{(2,-2)="y2"}},
{"x1"+(-4, 0)="x0"; "x1"**\crv{"x0"+(0,2.5)& "x1"+(0,2.5)}},
{"y1"+(-4, 0)="y0"; "y1"**\crv{"y0"+(0,-2.5)& "y1"+(0,-2.5)}},
{"x0"; "y0"**\dir{-}?(.5)+(-3,0)*{V\du}},
{"y2"+(0,-4)*{V}; "y2"**\dir{-}},
{"x2"+(0,4)*{V}; "x2"**\dir{-}},
\endxy\,.
$$

\subsection{The center construction}

The (left) center $Z(\CC)$ of a monoidal category $\CC$ is a category whose objects are
pairs $\mathbf{X}=(X, \hbr_X)$ in which $X$ is an object of $\CC$ and the \emph{half-braiding} $\hbr_{X}(-): X \o (-)\;
\rightarrow \; (-) \o X$ is a natural isomorphism satisfying the
properties $\hbr_X(I)=\id_X$ and
$$
(V\o \hbr_X(W))\circ \Phi_{V,X,W} \circ (\hbr_X(V) \o W)  = \Phi_{V,W,
X}\circ \hbr_X(V \o W)\circ \Phi_{X,V,W}
$$
for all $V, W \in \CC$. We will often write $\hbr_{X,V}$ in place of $\hbr_X(V)$. It is well-known that $Z(\CC)$ is a braided monoidal category (cf. \cite{Kassel}). The tensor product
$
  (X, \hbr_X)\o (Y, \hbr_Y) := (X \o Y, \hbr_{X \o Y}),
$ of two objects $(X,\hbr_X)$ and $(Y,\hbr_Y)$, and the unit object
$(I, \hbr_I)$ are given by
$$
\hbr_{X \o Y}(V) = \Phi_{V, X, Y} \circ (\hbr_X(V) \o Y)\circ  \Phi_{X, V,
Y}\inv \circ (X \o \hbr_Y(V)) \circ \Phi_{X, Y, V}
$$
and $\hbr_I(V)=\id_V$ for any $V \in \CC$.
The associativity isomorphisms are
inherited from $\CC$, so that the forgetful functor $Z(\CC)\to\CC$ is a strict monoidal functor.
The canonical braiding
on $Z(\CC)$ is given by $c_{\mathbf{X},\mathbf{Y}}=\hbr_X(Y)$ for
$\mathbf{X}=(X, \hbr_X), \mathbf{Y}=(Y, \hbr_Y) \in Z(\CC)$. If there is no danger of confusion with a previously given braiding on $\CC$, we will depict the half-braiding of $Z(\CC)$ by
$$
 \hbr_{X}(V)=
  \def\objectstyle{\scriptstyle}
  \xy
  {\vcross~{(-2,3)*{X}}{(2,3)*{V}}{(-2,-3)*{V}}{(2,-3)*{X}}},
  \endxy \quad\text{and}\quad
   \hbr_X(V)\inv=
  \xy
  {\vcrossneg~{(-2,3)*{V}}{(2,3)*{X}}{(-2,-3)*{X}}{(2,-3)*{V}}}
  \endxy\,.
$$

If $\CC$ is left rigid, then so is $Z(\CC)$. If $\CC$ is a
pivotal (resp.\  spherical) category, then $Z(\CC)$ is also a
pivotal (resp.\  spherical) category with the pivotal structure
inherited from $\CC$. Any equivalence $\FF\colon\CC\to\mathcal D$ of monoidal categories naturally
induces an equivalence $\hat\FF\colon Z(\CC)\to
Z(\mathcal D)$ of braided monoidal categories. In addition, if $\CC$
and $\DD$ are pivotal categories and $\FF$ preserves their
pivotal structures, then $\hat\FF\colon Z(\CC)\to Z(\mathcal D)$
also preserves their pivotal structures and the twists  associated with their pivotal
structures.

There is a one-to-one correspondence between braidings on a monoidal category and sections of the forgetful functor $Z(\CC)\to\CC$, where the section $\CC\to Z(\CC)$ corresponding to a braiding $c$ maps $X\in\CC$ to
$(X,\hbr_X)\in Z(\CC)$ with $\hbr_X(V)=c_{X,V}$. Since the inverse of a braiding gives another braiding, we can combine the two resulting sections of the forgetful functor to yield a functor $\CC\times\CC\to Z(\CC)$ which maps $(X, Y)\in\CC \times \CC$ to $(X\o Y, \hbr_{X\o Y})$ given in the strict case by
\begin{equation}\label{doublebraid}
 \hbr_{X\o Y}(V)=(c_{XV}\o Y)(X\o c_{VY})\inv=
\def\objectstyle{\scriptstyle}
\xy
{(0,0)="c2"},
{"c2"+(-4, 0)="c1"}, {"c2"+(4,0)="c3"},
{\vcrossneg~{"c2"+(0, 6)="y2"*{Y}}{"c3"+(0, 6)="y3"*{V}}{"c2"}{"c3"}},
{\vcross~{"c1"}{"c2"}{"c1"+(0, -6)="x1"*{V}}{"c2"+(0,-6)="x2"*{X}}},
{"c1"+(0,6)*{X}; "c1"**\dir{-}},
{"c3"+(0,-6)*{Y}; "c3"**\dir{-}},
\endxy\,.
\end{equation}
If $\CC$ is left rigid, one can check that the dual of $(X\o Y,\sigma_{X\o Y})$ is
\begin{equation}\label{dualdoublebraid}
 (X\o Y,\sigma_{X\o Y})\du=(X\du\o Y\du,\sigma_{X\du\o Y\du})
\end{equation}
with evaluation  and coevaluation  morphisms given by
$$
 \def\objectstyle{\scriptstyle}
\xy
{(0,0)="z"},
{"z"+(-3, 5)="a1"}, {"a1"+(6,0)="a2"},
{"z"+(-3, -5)="b1"}, {"b1"+(6,0)="b2"},
{\vcrossneg~{"a1"*{Y\du}}{"a2"*{X}}{"b1"}{"b2"}},
{"b1";"b1"+(-6,0)="b0"**\crv{"b1"+(0,-2.5)& "b0"+(0,-2.5)}},
{"b2";"b2"+(6,0)="b3"**\crv{"b2"+(0,-2.5)& "b3"+(0,-2.5)}},
{"b0";"b0"+(0,10)*{X\du}**\dir{-}},
{"b3";"b3"+(0,10)*{Y}**\dir{-}}
\endxy
 \quad\text{ and }
 \xy
{(0,0)="z"},
{"z"+(-3, 5)="a1"}, {"a1"+(6,0)="a2"},
{"z"+(-3, -5)="b1"}, {"b1"+(6,0)="b2"},
{\vcrossneg~{"a1"}{"a2"}{"b1"*{Y}}{"b2"*{X\du}}},
{"a1";"a1"+(-6,0)="a0"**\crv{"a1"+(0,2.5)& "a0"+(0,2.5)}},
{"a2";"a2"+(6,0)="a3"**\crv{"a2"+(0,2.5)& "a3"+(0,2.5)}},
{"a0";"a0"+(0,-10)*{X}**\dir{-}},
{"a3";"a3"+(0,-10)*{Y\du}**\dir{-}}
\endxy\,.
$$

\subsection{$\k$-linear and semisimple monoidal categories}\label{s:1.3}

Almost all results obtained in this paper pertain to $\k$-linear monoidal categories, where we assume throughout that $\k$ is an algebraically closed
field of characteristic zero, although it may be worth to note that
Sections \ref{s:def} and \ref{s:sscase} do not require any additional assumptions on the field $\k$.

We also fix the convention that a $\k$-linear monoidal category $\CC$ is said to be semisimple if the underlying $\k$-linear category is semisimple with finite-dimensional morphism spaces, and \emph{the
unit object $I$ is simple}. Following \cite{ENO}, a fusion
category over $\k$ is a semisimple left rigid $\k$-linear monoidal
category with finitely many simple objects.

Note that if $\CC$ is a pivotal category over $\k$ and $I$ is
absolutely simple, then the pivotal traces $\ptrl(f)$ and $\ptrr(f)$ of an endomorphism $f$, which were defined as endomorphisms of $I$, can be identified with scalars in $\k$. In this case we use the pivotal
traces to define bilinear pairings
\[(\cdot,\cdot)_\ell, (\cdot, \cdot)_r: \CC(U, V)\times \CC(V, U) \to
\k\] by
\begin{equation}\label{eq:pairing}
(f,g)_\ell:= \ptrl(f\circ g) ,\quad \text{and}\quad (f,g)_r:=
\ptrr(f\circ g)
\end{equation}
Note that $(f,g)_\ell=(g,f)_\ell$ holds  (cf.\ \cite{NS05}), and these bilinear pairings coincide when $\CC$ is spherical. In this case, we simply denote $(f,g)_r$ or $(f,g)_\ell$ by $(f,g)$.
If $\CC$ is also semisimple, then the pairings in \eqref{eq:pairing} are non-degenerate (cf. \cite{GK96}). It follows that for a braided spherical semisimple $\k$-linear category the twist associated to the pivotal structure is always a ribbon structure.

For any object $V$ in a pivotal fusion category $\CC$ over $\k$,
we write $[V]$ for its isomorphism class. If $\{[V_i]\mid i \in
\Gamma\}$ denotes the finite set of isomorphism classes of simple
objects in $\CC$, then the index set $\Gamma$ is always assumed to
contain $0$ by setting $V_0$ to be the unit object of $\CC$. For
$i \in \Gamma$, we define $\ol i \in \Gamma$ by the isomorphism
$V_i\du \cong V_{\ol i}$. By \cite[Theorem 2.3 and Proposition 2.9]{ENO}, the
pivotal dimension $d_\ell(V_i)$ of $V_i$ is a non-zero algebraic
integer in $\k$, and
$$
\dim \CC = \sum_{i \in \Gamma} d_\ell(V_i) d_\ell(V_i\du) \ne 0\,.
$$
In addition, if $\CC$ is spherical, then $d(V_i)= d(V_i\du)$ (cf. \cite[Corollary 2.10]{ENO}) and so
$$
\dim \CC = \sum_{i \in \Gamma} d(V_i)^2  \ne 0\,.
$$

We denote by $\KK_0(\CC)$ the Grothendieck ring of the fusion category $\CC$, and by $\KK_\k(\CC):=\KK_0(\CC)\o_{\mathbb Z}\k$ its Grothendieck algebra.
The set $\{[V_i]\mid i \in \Gamma\}$ is a basis for the free $\BZ$-module $\KK_0(\CC)$, and (with the obvious identification) a $\k$-basis of $\KK_\k(\CC)$; we will refer to it as the canonical basis.

The center of a $\k$-linear monoidal category is itself $\k$-linear monoidal in the obvious way.
By \cite[Theorem 3.16, Corollary 5.6]{MugerII03}
(see also \cite[Theorem 2.15]{ENO}) the center
of a pivotal (resp.\  spherical) fusion category over $\k$ is also a
pivotal (resp.\  spherical) fusion category over $\k$ (\cite{MugerII03} assumes a spherical category, but note \cite[Remark 3.17]{MugerII03}). Moreover, by  \cite[Proposition 8.1]{MugerII03} the
forgetful functor $F: Z(\CC) \to \CC$, which maps $\mathbf{X}=(X,
\hbr_X)$ to $X$, admits a two-sided adjoint functor $K\colon \CC\to Z(\CC)$.

Alternatively, \cite{Szl} and \cite{NTV} also imply the existence of a left adjoint $K$ to $F$. Consider such an adjoint, and a natural isomorphism $\ol \Psi_{V, \bX}: \CC(V, X) \to  \DD(K(V), \bX)$. Since $\CC$ is a pivotal fusion category over $\k$,  so is $\DD=Z(\CC)$ by \cite[Theorem 2.15]{ENO}. Therefore, the bilinear forms  $(\cdot, \cdot)_\ell$ defined in $\DD$ is non-degenerate. Let
$\Psi_{\bX, V}: \DD(\bX, K(V))\to \CC(X, V)$ be the adjoint operator of $\ol \Psi_{V, \bX}$ with respect to  $(\cdot, \cdot)_\ell$, i.e.
\begin{equation}\label{arrangement}
(\ol \Psi (f), g)_\ell  = (f, \Psi(g))_\ell \quad \text{for all } f \in \CC(V, X),\, g\in \DD(\bX, K(V))\,.
\end{equation}
By the naturality of $\ol \Psi$, the linear isomorphism $\Psi$ is also  natural in $\bX$ and $V$. In particular, $K$ is a right adjoint to $F$, and $\Psi$ is an associated adjunction isomorphism.

In fact if either of the adjunction isomorphisms $\Psi$ for the right or $\ol\Psi$ for the left adjoint is given, we can define the other one by \eqref{arrangement}, which will define a natural isomorphism because $(\cdot ,\cdot)_\ell$ is nondegenerate.


\subsection{Modular categories}\label{s:modular}
A modular tensor category over $\k$ (see \cite[Chapter 3]{BaKi}), also simply called a modular category,  is a
ribbon fusion category $\A=(\A, c, \theta)$ over $\k$ such that, for the set $\{[U_i]\mid i \in \Pi\}$ of isomorphism classes of simple objects,
the matrix $S=[S_{ij}]_{\Pi}$ defined by
\begin{equation}\label{eq:Smatrix}
S_{ij} = \ptr \left(c_{U_j, U_{\ol i}}\circ c_{U_{\ol i}, U_j}\right)
\end{equation}
is non-singular. This matrix $S$ is called the \emph{$S$-matrix} of $\A$. In the strict case, $S_{ij}$ can be depicted as \\
$$
S_{ij} =
\def\objectstyle{\scriptstyle}
\xy
{(0,0)="ctext"},
{"ctext"+(-8, 6)="a1"}, {"ctext"+(-2,6)="a2"},
{"ctext"+(2,6)="a3"}, {"ctext"+(8,6)="a4"},
{"ctext"+(-2,0)="m2"}, {"ctext"+(2,0)="m3"},
{"ctext"+(-8, -6)="b1"}, {"ctext"+(-2,-6)="b2"},
{"ctext"+(2,-6)="b3"}, {"ctext"+(8,-6)="b4"},
{\vcross~{"a2"}{"a3"}{"m2"}{"m3"}},
{\vcross~{"m2"}{"m3"}{"b2"}{"b3"}},
{"a3"; "a4"*\crv{"a3"+(0,3) & "a4"+(0,3)}},
{"b3"; "b4"*\crv{"b3"+(0,-3) & "b4"+(0,-3)}},
{"a1"; "a2"*\crv{"a1"+(0,3) & "a2"+(0,3)}},
{"b1"; "b2"*\crv{"b1"+(0,-3) & "b2"+(0,-3)}},
{"a4";"b4"**\dir{-}?(0.3)+(3,0)*{U_j\du}},
{"a1";"b1"**\dir{-}?(0.1)+(-2,0)*{U_i}},
\endxy \vspace{8pt}\,.
$$

Let $\theta_{U_i}=\w_i \id_{U_i}$ for some $\w_i \in \k$. The matrices $T$ and $C$ (\emph{charge conjugation matrix}) of $\A$ are defined as
$$
T=[\delta_{ij} \w_i]_{\Pi} \quad \text{and} \quad
C=[\delta_{i\ol j}]_{\Pi}.
$$
These matrices $S$, $T$ and $C$ satisfy the conditions:
\begin{equation}\label{eq:STrelations}
(ST)^3 = p_\A^+ S^2, \quad S^2=p_\A^+ p_\A^- C, \quad CT=TC,\quad C^2=\id,
\end{equation}
where $p_\A^{\pm} = \sum_{i\in \Pi} \w_i^{\pm 1} d(U_i)^2$ are called the \emph{Gauss sums} of $\A$.
Note that $p_\A^{\pm}$ are non-zero scalars and
\begin{equation}\label{eq:gausssum}
p_\A^+ p_\A^- = \sum_{i\in \Pi} d(U_i)^2 =\dim \A.
\end{equation}
By \cite{Vafa88},  $\w_i$ and the quotient $\frac{p_\A^+}{p_\A^-}$
are roots of unity, and so $T^N=1$ where $N=\ord \theta$.

Recall that the modular group $\SL$ is the group generated by
$$
\fs= \mtx{0 & -1\\ 1 & 0}, \quad \ft=\mtx{1 & 1\\ 0 & 1} \text{ with defining relations }
(\fs\ft)^3=\fs^2 \text{ and } \fs^4=1.
$$
Therefore, the relations \eqref{eq:STrelations} imply that
\begin{equation}\label{eq:projrep}
  \ol\rho_\A\colon\SL\to\PGL(\KK_\k(\A));\quad \fs\mapsto S\quad \text{and}\quad \ft\mapsto T
\end{equation}
defines a projective representation of $\SL$ on the Grothendieck algebra of $\A$, where we identify the $S$ and $T$ matrices with automorphisms of $\KK_\k(\A)$ using the latter's canonical basis of simple objects. This projective representation will be called as the \emph{projective modular representation of} $\A$.

The projective representation \eqref{eq:projrep} can be lifted to an ordinary representation
\begin{align}\label{eq:repC}
 \rho_\A^{\lambda, \zeta}\colon\SL\to\GL(\KK_\k(\A));\quad\fs\mapsto s:=\frac 1\lambda S\quad\text{and}\quad\ft\mapsto t:=\frac 1\zeta T,
 \end{align}
by choosing scalars $\lambda, \zeta \in \k$ such that
\begin{equation}\label{eq:ls}
\lambda^2=\dim \A\quad\text{and}\quad \zeta^3=\dfrac{p_\A^+}{\lambda}.
\end{equation}

It follows from \eqref{eq:STrelations} that
\begin{equation}\label{eq:strelations}
(st)^3 = s^2,\quad s^2 =C, \quad \text{and}\quad
s^4=1.
\end{equation}
The following well-known properties of the matrix $s=[s_{ij}]_{\Pi}$ will be used frequently (cf. \cite[Chapter 3]{BaKi}) : for $i,j \in \Pi$,
\begin{equation}\label{eq:Sprop}
 s_{0i} = d(U_i)/\lambda, \quad s_{ij}=s_{ji}=s_{\ol i\, \ol j}, \quad\text{and} \quad
s^{-1}=[s_{\ol ij}]_{\Pi}\,.
\end{equation}

By M\"uger's results \cite{MugerII03}, the center of a spherical fusion category is a modular fusion category, whose Gauss sums are $p_{Z(\CC)}^\pm=\dim \CC$. In this case, the projective representation \eqref{eq:projrep} for $\A=Z(\CC)$ can be lifted in a canonical way to an ordinary representation by choosing $\lambda=\dim \CC$ and $\zeta=1$, which satisfy \eqref{eq:ls}. We will call $s=\dfrac{1}{\dim\CC} S$ the \emph{normalized} $S$-matrix of $Z(\CC)$. This ordinary representation
$$\rho_{Z(\CC)}: \SL \to \GL(\KK_\k(Z(\CC))); \quad \fs \mapsto \dfrac{1}{\dim\CC} S\quad\text{and}\quad \ft \mapsto T$$
is called the \emph{canonical modular representation of} $Z(\CC)$.

The center of a modular fusion category $\A=(\A, c, \theta)$ can be described explicitly as follows: Let $\{U_i\mid i \in \Pi\}$ be a complete set of non-isomorphic simple objects.Then by \cite{MugerII03},
$\{(U_i\o U_j, \hbr_{U_i\o U_j})\mid i,j \in \Pi\}$ is a complete set of non-isomorphic simple objects of $Z(\A)$, where the half-braiding $\hbr_{U_i\o U_j}$ is defined in \eqref{doublebraid}. In other words, we have isomorphisms
\begin{align}
 \KK_0(\A)\ou{\BZ}\KK_0(\A)&\to \KK_0(Z(\A)), \quad
 \KK_\k(\A)\o\KK_\k(\A)\to\KK_\k(Z(\A))\label{tpiso};\\
 [U_i]\o [U_j]&\mapsto  [\bU_{ij}]:=[(U_i\o U_j,\hbr_{U_i\o U_j})].\notag
\end{align}
Note that \eqref{dualdoublebraid} implies
\begin{equation}\label{dualUij}
\bU_{ij}\du\cong \bU_{\ol i\ol j}.
\end{equation}

\section{Generalized Frobenius-Schur indicators}\label{s:def}

In this section, we introduce the definition of generalized
Frobenius-Schur indicators for each object in a pivotal category
over the field $\k$, and we derive some properties of these
indicators from the definition.

Let $\CC$ be a pivotal category over $\k$. For $V \in \CC$ and $m\in\BZ$ we define $V^m\in\CC$ by setting $V^0=I$ and $V^m=V \o V^{m-1}$ if $m>0$, and $V^{m}:=(V\du)^{-m}$ for $m<0$. Duality $(-)\du$ is a contravariant monoidal functor with respect to a canonical isomorphism $\xi:Y\du \o X\du \to (X
\o Y)\du$ coherent with the associativity isomorphisms $\Phi$ in
$\CC$. For a non-negative integer $m$ and $V \in \CC$, there
exists, by the coherence theorem, a unique isomorphism $t_m:
V^{-m} \to (V^m)\du$ which is a composition of instances of tensor
products of $\id$, $\Phi^{\pm 1}$ and $\xi^{\pm 1}$. Combining
with the pivotal structure $j$ of $\CC$, we can extend the
definition to negative $m$ as follows:
$$
t_m:=\left(V^{-m}\xrightarrow
j(V^{-m})\bidu\xrightarrow{t_{-m}\du}(V^m)\du\right)\,.
$$
Using $t_m$ we define, for any $m\in\Z$:
\[\ev_m:=\left(V^{-m}\o
V^m\xrightarrow{t_m\o \id}(V^m)\du\o V^m\xrightarrow\ev I\right)\]
and
\[\db_m:=\left(I\xrightarrow\db V^m\o(V^m)\du\xrightarrow{\id\o t_m\inv}V^m\o V^{-m}\right).\]
Note that if $\CC$ is  a strict pivotal category, then $t_m$ is
the identity, $\ev_m=\ev_{V^m}$, and $\db_m=\db_{V^m}$.

Next, for any $m, l\in\Z$, there is a canonical morphism
$J_{m,l}(V)\colon V^{-l}\o (V^m\o V^l)\to V^m$ defined using only
evaluation and coherence. More precisely,
$$
J_{m,l}(V):=\left\{
\begin{array}{ll}
V^{-l}\o(V^m\o V^l)\xrightarrow{\Phi^?}
(V^{-l}\o V^{l})\o V^m\xrightarrow{\ev_l\o \id}V^m &\text{if}\quad ml \ge 0, \\ \\
V^{-l}\o(V^m\o V^l)\xrightarrow{\Phi^?}V^m\o(V^{-l}\o
V^l)\xrightarrow{\id\o\ev_{l}}V^m & \text{if}\quad  ml \leq 0.
\end{array}
\right.
$$
Note that there is no difference between these two expressions for
$J_{m,l}(V)$ if $ml=0$. We  write $J_{m,l}$ for $J_{m,l}(V)$ when the context is clear.

Now for $\bX=(X,\hbr_X)\in Z(\CC)$, $V\in\CC$, and $l\in\Z$ set
\begin{multline*}
   D_{\bX,l}:=\bigg(X\xrightarrow{X\o\db_{-l}}X\o (V^{-l}\o
V^l)\xrightarrow{\Phi\inv} (X \o V^{-l}) \o V^l\\
\xrightarrow{\hbr_X\o V^l}(V^{-l}\o X) \o V^l  \xrightarrow{\Phi}
V^{-l}\o (X \o V^l) \bigg)\,.
\end{multline*}

Finally for $m,l\in\Z$, $\bX=(X,\hbr_X)\in Z(\CC)$, $V\in\CC$, we
define the $\k$-linear map $E_{\bX,V}^{(m,l)}: \CC(X,V^m) \to
\CC(X,V^m)$ as
\begin{align*} E_{\bX,V}^{(m,l)}(f)&:=
\left( X\xrightarrow{D_{\bX,l}}V^{-l}\o(X\o
V^l)\xrightarrow{V^{-l}\o f\o V^l}V^{-l}\o (V^m\o
V^l)\xrightarrow{J_{m,l}(V)}V^m\right)
\end{align*}
for $f \in \CC(X, V^m)$. It will sometimes be convenient to write
$E_{\bX,V}^{(m)}:=E_{\bX,V}^{(m,1)}$ for $m>0$ and
$F_{\bX,V}:=E_{\bX,V}^{(0,1)}$.

\begin{defn} \label{def:GFS}
 Let $\CC$ be a pivotal category over $\k$.  For $\bX \in Z(\CC)$, $V \in \CC$
 and $(m, l) \in \BZ \times \BZ$, we define the
 \textbf{generalized Frobenius-Schur (GFS) indicator}
 \begin{equation}
 \nu_{m,l}^{\bX}(V)=\Tr\left(E_{\bX,V}^{(m,l)}\right).
 \end{equation}
\end{defn}

\begin{remark}
  Let $\FF\colon\CC\to\mathcal D$ be an equivalence of monoidal
  categories, with the induced equivalence
  $\hat\FF\colon Z(\CC)\to Z(\DD)$ of braided monoidal categories.
  Consider $V\in\CC$ and $\bX\in Z(\CC)$.
  Similar to the reasoning in \cite[Section 4]{NS05}, the
  endomorphisms $E_{\bX,V}^{(m,l)}$ of
  $\CC(X,V^m)$ and $E_{\hat\FF(\bX),\FF(V)}^{(m,l)}$ of
  $\DD(\FF(X),\FF(V)^m)$ are conjugate. More precisely, by the
  general coherence results for monoidal functors there is a
  unique isomorphism
  $\xi^?\colon\FF(V^m)\to\FF(V)^m$ composed from instances of the
  monoidal functor structures of $\FF$, and (if $m<0$) the
  canonical isomorphism $\FF(V\du)\to\FF(V)\du$. For
  $f\colon X\to V^m$ we then have
  \[\xi^?\circ\FF\left(E_{\bX,V}^{(m,l)}(f)\right)=E_{\hat\FF(\bX),\FF(V)}^{(m,l)}(\xi^?\circ\FF(f)).\]
  As a consequence, monoidal category equivalences preserve
  generalized indicators:
  \[\nu_{m,l}^{\hat\FF(\bX)}(\FF(V))=\nu_{m,l}^{\bX}(V).\]
  In particular, to deal with the theory of generalized indicators
  it will be sufficient to treat the case where the category $\CC$
  is strict pivotal.
\end{remark}

\begin{remark} \label{r:EXVml}
Assuming the pivotal category $\CC$ is strict, we have the
following diagrams in the graphical calculus:
\begin{equation}\label{eq:EXVml}
E_{\bX,V}^{(m,l)}(f)
 \quad=\quad
 \def\objectstyle{\scriptstyle}
 \xy (0,0)="ctext", "ctext"+(0,-.1)*{f},
{"ctext"+(-2,-2)*{}; "ctext"+(2,2)*{} **\frm{-}},
{\vcross~{"ctext"+(-5,8)="v0"}{"ctext"+(0,8)="v1"}{"ctext"+(-5,2)="w0"}{"ctext"+(0,2)}},
{"v1"; "v1"+(5,0)="v2" **\crv{"v1"+(0,3)& "v2"+(0,3)}}, {"v0";
"v0"+(0,4)*{X} **\dir{-}},
{"ctext"+(0,-2);"ctext"+(0,-6)**\dir{-}},
{"w0";"w0"+(0,-8)**\dir{-}?(.1)+(-3,0)*{V^{-l}}},
{"v2";"v2"+(0,-14)**\dir{-}?(.3)+(2.5,0)*{V^{l}}},
{"ctext"+(0,-8)="ctext1"},{"ctext1"+(0,-.2)*{J_{m,l}(V)}},
{"ctext1"+(-7,-2)*{}; "ctext1"+(7,2)**\frm{-}},
{"ctext1"+(0,-2);"ctext1"+(0,-6)*{V^m}**\dir{-}},
\endxy
 \quad = \,
 \xy (0,0)="ctext", "ctext"+(0,-.1)*{f},
{"ctext"+(-2,-2)*{}; "ctext"+(2,2)*{} **\frm{-}},
{\vcrossneg~{"ctext"+(0,8)="v0"}{"ctext"+(5,8)="v1"}{"ctext"+(0,2)="w0"}{"ctext"+(5,2)="w1"}},
{"v0"; "v0"+(-5,0)="v" **\crv{"v0"+(0,3)& "v"+(0,3)}}, {"v1";
"v1"+(0,4)*{X} **\dir{-}},
{"ctext"+(0,-2);"ctext"+(0,-6)**\dir{-}},
{"w1";"w1"+(0,-8)**\dir{-}?(.1)+(2.5,0)*{V^{l}}},
{"v";"v"+(0,-14)**\dir{-}?(.1)+(-3,0)*{V^{-l}}},
{"ctext"+(0,-8)="ctext1"},{"ctext1"+(0,-.2)*{J_{m,l}(V)}},
{"ctext1"+(-7,-2)*{}; "ctext1"+(7,2)**\frm{-}},
{"ctext1"+(0,-2);"ctext1"+(0,-6)*{V^m}**\dir{-}},
\endxy \quad\text{for } f \in \CC(X, V^m),
\end{equation}
 where
$$
 \def\objectstyle{\scriptstyle}
 \xy (0,0)="ctext", "ctext"+(0,-.1)*{J_{m,l}(V)},
{"ctext"+(-7,-2)*{}; "ctext"+(7,2)*{} **\frm{-}},
{"ctext"+(0,-2);"ctext"+(0,-6)*{V^m}**\dir{-}},
{"ctext"+(0,2);"ctext"+(0,6)*{V^m}**\dir{-}},
{"ctext"+(-6,2);"ctext"+(-6,6)*{V^{-l}}**\dir{-}},
{"ctext"+(6,2);"ctext"+(6,6)*{V^{l}}**\dir{-}},
\endxy \quad\text{is  }
\xy (0,0)="ctext",, {"ctext"+(-4,4)="v1"*{V^{-l}};
"ctext"+(4,4)="v2"*{V^l} **\crv{"v1"+(0,-6)&"v2"+(0,-6)}},
{"ctext"+(8, 4);"ctext"+(8,-4)**\dir{-}?(.5)+(3,0)*{V^{m}}}\endxy
\quad \text{if} \quad ml \ge 0, \quad \text{but equals} \quad \xy
(0,0)="ctext", {"ctext"+(-4,4)="v1"*{V^{-l}};
"ctext"+(4,4)="v2"*{V^l} **\crv{"v1"+(0,-6)&"v2"+(0,-6)}},
{"ctext"+(-9, 4);"ctext"+(-9,-4)**\dir{-}?(.3)+(-3,0)*{V^{m}}}
\endxy \quad \text{if}\quad ml \le 0.
$$
Moreover, since $J_{m,l}(V\du) = J_{-m,-l}(V)$, \eqref{eq:EXVml} immediately implies
\begin{equation}\label{dualnegE}
  E_{\bX,V}^{(m,l)}=E_{\bX,V\du}^{(-m,-l)}
\end{equation}

for  $V \in \CC$, $\bX \in Z(\CC)$, and $m,l \in \BZ$.
\end{remark}

\begin{remark}\label{firstadditivity}
 It is immediate from the definition or \eqref{eq:EXVml} that $E_{\bX,V}^{(m,l)}$ is natural in $\bX\in Z(\CC)$, i.e.\ for morphisms $g\colon\bX\to\bY$ in $Z(\CC)$ and $f\colon Y\to V$ in $\CC$ we have $E_{\bX,V}^{(m,l)}(fg)=E_{\bY,V}^{(m,l)}(f)g$. As a consequence, the generalized indicator $\nu_{m,l}^{\bX}(V)$ is additive in its parameter $\bX$, that is
 $$
 \nu_{m,l}^{\bX\oplus\bY}(V)=\nu_{m,l}^{\bX}(V)+\nu_{m,l}^{\bY}(V)
 $$
  for $\bX,\bY\in Z(\CC)$, $V\in\CC$ and $(m,l)\in\Z\times\Z$.
\end{remark}

\begin{lem}\label{Ecomposition}
  Consider a pivotal monoidal category $\CC$. Then we have
  \begin{equation}\label{eql:2.1}
  E_{\bX,V}^{(m,k+l)}=E_{\bX,V}^{(m,k)}E_{\bX,V}^{(m,l)}
  \end{equation}
  for all $V\in\CC$, $\bX\in Z(\CC)$, and $m,k,l\in\Z$ such that $kl\geq 0$ or $m\neq 0$.
  In particular
  $E_{\bX,V}^{(m,l)}=\left(E_{\bX,V}^{(m,1)}\right)^l$ if
  $m\neq 0$.
\end{lem}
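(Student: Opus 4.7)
The plan is to work graphically in a strict pivotal category, which is justified by the preceding remark that each operator $E_{\bX,V}^{(m,l)}$ is conjugate to its counterpart in the strictification under any monoidal equivalence. Via the description in \eqref{eq:EXVml}, I interpret $E_{\bX,V}^{(m,l)}(f)$ as the morphism obtained by sandwiching $f\colon X\to V^m$ inside a ``wrap'' of $l$ strands, built from a coevaluation $\db_{-l}\colon I\to V^{-l}\otimes V^l$, a half-braiding $\sigma_X(V^{-l})$, and a nested closing cap $J_{m,l}$ around the $V^m$ output. Composing $E_{\bX,V}^{(m,k)}$ after $E_{\bX,V}^{(m,l)}$ produces a doubly nested wrap, and the proof reduces to showing that this merges into a single wrap of $k+l$ strands.

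For the case $kl\geq 0$ (wraps run in the same orientation), the outer and inner cups, half-braidings, and caps are concentric. The two cups combine into a single $\db_{-(k+l)}$ by coherence and strictness (both sides have the same atomic sequence $V\du^{k+l}\otimes V^{k+l}$). The two half-braidings combine by the tensor compatibility axiom
\[\sigma_X(V^{-k}\otimes V^{-l}) = (V^{-k}\otimes\sigma_X(V^{-l}))\circ(\sigma_X(V^{-k})\otimes V^{-l})\]
into the single half-braiding $\sigma_X(V^{-(k+l)})$. Finally the nested caps of $J_{m,k}$ around $J_{m,l}$ coincide with $J_{m,k+l}$ by coherence, whence inserting $f$ yields exactly $E_{\bX,V}^{(m,k+l)}(f)$.

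For the case $kl<0$ with $m\neq 0$ (say $k>0>l$; the opposite sign case is symmetric), the outer wrap creates strands $V^{-k}=V\du^k$, $V^k$ while the inner wrap creates strands $V^{-l}=V^{|l|}$, $V^l=V\du^{|l|}$ in the opposite orientation. After combining the half-braidings by the same axiom, the configuration contains pairs of oppositely oriented strands adjacent on both sides of $V^m$. The snake (zigzag) identity then cancels $\min(k,|l|)$ matched pairs, leaving a single wrap of $|k+l|$ strands in the dominant orientation, which is precisely the wrap for $E_{\bX,V}^{(m,k+l)}$. The hypothesis $m\neq 0$ is essential here: the surviving $V^m$ strand serves as an anchor separating the cancellation pairs so that the snake identity applies cleanly; without it, the cancelled pairs would close into free loops contributing factors of the pivotal dimension $d(V)$ rather than the identity.

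The main obstacle is the detailed diagrammatic bookkeeping in the mixed-sign case, specifically verifying that the snake identity cancels precisely the matched pairs while the residual wrap is exactly $E_{\bX,V}^{(m,k+l)}$ with no extraneous scalar; the same-sign case is essentially immediate from the half-braiding tensor axiom and strict coherence. The ``in particular'' claim then follows by iterating \eqref{eql:2.1} for $l\geq 0$ and, for $l<0$, from the consequent identity $E_{\bX,V}^{(m,1)}E_{\bX,V}^{(m,-1)}=E_{\bX,V}^{(m,0)}=\id$ (valid whenever $m\neq 0$), which exhibits $E_{\bX,V}^{(m,-1)}$ as the inverse of $E_{\bX,V}^{(m,1)}$.
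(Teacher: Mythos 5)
The same-sign case ($kl\ge 0$) matches the paper's argument in substance: combine the cups, apply the tensor-compatibility axiom for the half-braiding, and observe that the nested caps coincide with $J_{m,k+l}$, i.e.\ $J_{m,k}(V)\circ(\id\o J_{m,l}(V)\o\id)=J_{m,k+l}(V)$. So far so good.

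The gap is in the mixed-sign case. You propose to prove it by a direct diagrammatic cancellation of $\min(k,|l|)$ oppositely-oriented wrap pairs via zigzag identities, and you explicitly flag this as ``the main obstacle'' without carrying it through. But this heavy lifting is unnecessary, and that observation is precisely what makes the paper's proof short. Once you have the same-sign case, it follows (for $l\ge 0$) that $E_{\bX,V}^{(m,\pm l)}=\bigl(E_{\bX,V}^{(m,\pm1)}\bigr)^{l}$. Hence, for $m\neq0$, the general statement reduces to the single claim that $E_{\bX,V}^{(m,1)}$ and $E_{\bX,V}^{(m,-1)}$ are mutually inverse; once they are, $l\mapsto E_{\bX,V}^{(m,l)}$ is a homomorphism $\Z\to\Aut(\CC(X,V^m))$ and \eqref{eql:2.1} is automatic for all $k,l$. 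That single inverse identity is a short explicit picture (the paper draws $E_{\bX,V}^{(m,1)}(f)$ and $E_{\bX,V}^{(m,-1)}(f)$ for $m>0$ and cancels by one zigzag); the case $m<0$ is then free via the duality $E_{\bX,V}^{(m,l)}=E_{\bX,V\du}^{(-m,-l)}$ of \eqref{dualnegE}, a reduction you do not use. Your final paragraph in fact already contains the right idea — using $E^{(m,1)}E^{(m,-1)}=\id$ as the pivot — but you present it as a \emph{consequence} of the general mixed-sign formula rather than as the thing to prove directly, which inverts the logical order and leaves the actual verification undone.

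Your heuristic for why $m\neq0$ is needed (that when $m=0$ the cancelled pairs close into loops that produce pivotal dimensions) is a reasonable intuition, but as stated it is not a substitute for the missing computation; the clean way to see the role of $m\neq 0$ is exactly that the reduction above requires $E^{(m,0)}=\id$ to serve as the neutral element, which fails to help when both exponents collapse to the $m=0$ hom-space.
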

\begin{proof}
We can assume that $\CC$ is strict pivotal.
By \eqref{dualnegE} we may assume that $m\geq 0$.
 If $k, l\ge 0$, then
 $$
 \def\objectstyle{\scriptstyle}
 \xy (0,0)="ctext", "ctext"+(0,-.1)*{J_{m,l}},
{"ctext"+(-7,-2)*{}; "ctext"+(7,2)*{} **\frm{-}},
{"ctext"+(0,-2);"ctext"+(0,-6)**\dir{-}?(.5)+(3,0)*{V^m}},
{"ctext"+(0,2);"ctext"+(0,6)*{V^m}**\dir{-}},
{"ctext"+(-6,2);"ctext"+(-6,6)*{V^{-l}}**\dir{-}},
{"ctext"+(6,2);"ctext"+(6,6)*{V^{l}}**\dir{-}},
{"ctext"+(0,-8)="ctext1"},"ctext1"+(0,-.1)*{J_{m,k}},
{"ctext1"+(-14,-2)*{}; "ctext1"+(14,2)*{} **\frm{-}},
{"ctext1"+(-12,2);"ctext1"+(-12,14)*{V^{-k}}**\dir{-}},
{"ctext1"+(12,2);"ctext1"+(12,14)*{V^{k}}**\dir{-}},
{"ctext1"+(0,-2);"ctext1"+(0,-6)*{V^m}**\dir{-}},
\endxy\,=\,
\xy (0,4)="ctext", {"ctext"+(-5,0)*{V^{-l}}="v2";
"ctext"*{V^l}="v3" **\crv{"v2"+(0,-6)&"v3"+(0,-6)}}, {"ctext"+(5,
0)*{V^m};"ctext"+(5,-6)**\dir{-}},
{"ctext"+(0,-8)="ctext1"},"ctext1"+(0,-.1)*{J_{m,k}},
{"ctext1"+(-14,-2)*{}; "ctext1"+(14,2)*{} **\frm{-}},
{"ctext1"+(-12,2);"ctext1"+(-12,8)*{V^{-k}}**\dir{-}},
{"ctext1"+(12,2);"ctext1"+(12,8)*{V^{k}}**\dir{-}},
{"ctext1"+(0,-2);"ctext1"+(0,-6)*{V^m}**\dir{-}},
\endxy\,=\,
\xy (0,4)="ctext", {"ctext"+(-5,0)*{V^{-l}}="v2";
"ctext"*{V^l}="v3" **\crv{"v2"+(0,-6)&"v3"+(0,-6)}},
{"ctext"+(-10,0)*{V^{-k}}="v1"; "ctext"+(5,0)*{V^k}="v4"
**\crv{"v1"+(0,-12)&"v4"+(0,-12)}}, {"ctext"+(10,
0)*{V^m};"ctext"+(10,-14)*{V^m}**\dir{-}},
\endxy\,=\, J_{m, k+l}\,.
$$
The same conclusion
$$
J_{m,k}(V)\circ(\id_{V^{-k}}\o
  J_{m,l}(V)\o \id_{V^k})=J_{m,k+l}(V)
$$
holds for $k,l \le 0$ by a calculation which is
  the mirror image of that above.
  Thus, whenever $kl\geq 0$, we have
  \[E_{\bX,V}^{(m,k)}E_{\bX,V}^{(m,l)}(f)\,=\,
 \def\objectstyle{\scriptstyle}
 \xy (0,0)="ctext", "ctext"+(0,-.1)*{f},
{"ctext"+(-2,-2)*{}; "ctext"+(2,2)*{} **\frm{-}},
{\vcross~{"ctext"+(-5,7)="v0"}{"ctext"+(0,7)="v1"}{"ctext"+(-5,2)="w0"}{"ctext"+(0,2)}},
{"v1"; "v1"+(5,0)="v2" **\crv{"v1"+(0,3)& "v2"+(0,3)}},
{\vcross~{"v0"+(-5,4)="z"}{"v0"+(0,4)="z0"}{"v0"+(-5,0)="v"}{"v0"}},
{"z0";"z0"+(15,0)="z1"**\crv{"z0"+(0,6)&"z1"+(0,6)}},
{"ctext"+(0,-2);"ctext"+(0,-6)**\dir{-}},
{"w0";"w0"+(0,-8)**\dir{-}}, {"v2";"v2"+(0,-13)**\dir{-}},
{"ctext"+(0,-8)="ctext1"},{"ctext1"+(0,-.2)*{J_{m,l}}},
{"ctext1"+(-7,-2)*{}; "ctext1"+(7,2)**\frm{-}},
{"ctext1"+(0,-2);"ctext1"+(0,-4)**\dir{-}},
{"ctext1"+(0,-6)="ctext2"},{"ctext2"+(0,-.2)*{J_{m,k}}},
{"ctext2"+(-13,-2)*{}; "ctext2"+(13,2)**\frm{-}},
{"ctext2"+(0,-2);"ctext2"+(0,-6)*{V^m}**\dir{-}},
{"ctext2"+(-10,2);"v"**\dir{-}}, {"ctext2"+(10,2);"z1"**\dir{-}},
{"z"; "z"+(0,6)*{X}**\dir{-}},
\endxy \,=\,
\xy (0,0)="ctext", "ctext"+(0,-.1)*{f}, {"ctext"+(-2,-2)*{};
"ctext"+(2,2)*{} **\frm{-}},
{\vcross~{"ctext"+(-5,7)="v0"}{"ctext"+(0,7)="v1"}{"ctext"+(-5,2)="w0"}{"ctext"+(0,2)}},
{"v1"; "v1"+(5,0)="v2" **\crv{"v1"+(0,3)& "v2"+(0,3)}},
{"ctext"+(0,-2);"ctext"+(0,-6)**\dir{-}},
{"w0";"w0"+(0,-8)**\dir{-}}, {"v2";"v2"+(0,-13)**\dir{-}},
{"ctext"+(0,-8)="ctext1"},{"ctext1"+(0,-.2)*{J_{m,k+l}}},
{"ctext1"+(-7,-2)*{}; "ctext1"+(7,2)**\frm{-}},
{"ctext1"+(0,-2);"ctext1"+(0,-6)*{V^m}**\dir{-}}, {"v0";
"v0"+(0,6)*{X}**\dir{-}},
\endxy
    \,=\,E_{\bX,V}^{(m,k+l)}(f).
  \]
Note that \eqref{eql:2.1} for $k,l$ of the same sign implies $E_{\bX, V}^{m,\pm l}=\left(E_{\bX, V}^{m,\pm 1}\right)^l$ for all non-negative integers $l$. Thus, to prove \eqref{eql:2.1} for $m\neq 0$ and arbitrary $k,l$, it suffices to show that $E_{\bX,V}^{m,\pm 1}$ are mutually inverse. It suffices to assume $m>0$. Then
  \begin{equation}
   E_{\bX,V}^{(m,1)}(f)=
  \def\objectstyle{\scriptstyle}
  \xy (0,-1)="ctext", "ctext"*{f},
{"ctext"+(-4,-2)*{}; "ctext"+(4,2)*{} **\frm{-}},
{"ctext"+(2,-2);"ctext"+(2,-10)*{V^{m-1}}**\dir{-}},
{\vcross~{"ctext"+(-8,14)*{X}}{"ctext"+(0,10)="v0"}{"ctext"+(-8,0)="v2"}{"ctext"+(0,2)}},
{"v0"; "v0"+(8,0)="v1" **\crv{"v0"+(0,4)& "v1"+(0,4)}},
{"v1"; "v1"+(0, -20) **\dir{-}?(1)+(2,0)*{V}},
{"v2";"v2"+(0,-2)="v3" **\dir{-}},
{"v3"; "ctext"+(-2,-2)="y" **\crv{"v3"+(0,-4)& "y"+(0,-4)}},
\endxy\,,\qquad
E_{\bX,V}^{(m,-1)}(f)=
\def\objectstyle{\scriptstyle}
 \xy (0,-1)="ctext", "ctext"*{f},
{"ctext"+(-4,-2)*{}; "ctext"+(4,2)*{} **\frm{-}},
{"ctext"+(2,-2)="c1";"c1"+(6,0)="c2" **\crv{"c1"+(0,-4)& "c2"+(0,-4)}},
{\vcross~{"ctext"+(-8,14)*{X}}{"ctext"+(0,11)="v0"}{"ctext"+(-8,0)="v2"}{"ctext"+(0,2)}},
{"v0"; "v0"+(8,0)="v1" **\crv{"v0"+(0,4)& "v1"+(0,4)}},
{"v1"; "c2" **\dir{-}?(.5)+(3,0)*{V\du}},
{"v2"*{};"ctext"+(-8,-10)*{V} **\dir{-}},
{"ctext"+(-2,-2); "ctext"+(-2,-10)*{V^{m-1}} **\dir{-}}
\endxy
\end{equation}
  and they are inverse of each other.
\end{proof}

\begin{remark}
In particular, for $m>0$ and $\bX$ equal to the unit object of
$Z(\CC)$, the GFS indicator  $\nu_{m,l}^\bX (V)$ coincides with
the $(m,l)$-th Frobenius-Schur indicator $\nu_{m,l}(V)$ defined in
\cite{NS05}.
\end{remark}

\begin{lem}\label{Emm}
  Consider a pivotal monoidal category $\CC$. Then we have
  \begin{equation}
  E_{\bX,V}^{(m,m+l)}(f)=E_{\bX,V}^{(m,l)}(f)\theta\inv_{\bX}
  \end{equation}
  for all $V\in\CC$, $\bX\in Z(\CC)$, and $m,l\in\BZ$ with $m\neq 0$
  and $f\in\CC(X,V^m)$, where $\theta$ is the twist on $Z(\CC)$ associated with the pivotal structure on $\CC$.  In addition, if $\theta_\bX^N=\id_\bX$ for some positive integer $N$, then $E_{\bX,V}^{(m,mN)}=\id$.
\end{lem}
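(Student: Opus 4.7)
The plan is to reduce to the special case $l=0$ of the first identity, namely
$E^{(m,m)}_{\bX,V}(f) = f \circ \theta_\bX^{-1} \quad (\star)$
for $m \neq 0$ and $f \in \CC(X, V^m)$. Granted $(\star)$, the first assertion of the lemma follows from Lemma \ref{Ecomposition} applied with $k = m$ (the hypothesis $m \neq 0$ is essential here): one has $E_{\bX,V}^{(m, m+l)} = E_{\bX,V}^{(m, m)} \circ E_{\bX,V}^{(m, l)}$, and applying $(\star)$ to $E^{(m, l)}(f)$ in place of $f$ yields $E^{(m, m+l)}(f) = E^{(m,l)}(f) \, \theta_\bX^{-1}$. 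The second assertion then follows by iteration: $E^{(m, mN)}(f) = f \circ \theta_\bX^{-N}$, which equals $f$ under the assumption $\theta_\bX^N = \id_\bX$.

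For $(\star)$, I would work in the strict pivotal case (justified by the first remark following Definition \ref{def:GFS}) and assume $m > 0$; the case $m < 0$ reduces to this via \eqref{dualnegE}. Let $g \in \CC(I, X\du \o V^m)$ denote the morphism corresponding to $f$ under the standard adjunction $\CC(X, V^m) \cong \CC(I, X\du \o V^m)$. The strategy is to rewrite both sides of $(\star)$ in a common form involving $g$.

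Starting from $E^{(m,m)}_{\bX,V}(f)$, one uses (i) the duality identity $\ev_m(V^{-m} \o f) = \ev_X(f\du \o X)$, (ii) the standard equality $(f\du \o V^m) \db_{-m} = g$ (both expressing $f$ under the cited adjunction), and (iii) the naturality of the half-braiding $\hbr_X$ at $f\du\colon V^{-m} \to X\du$, to obtain
$E^{(m,m)}_{\bX,V}(f) = (\ev_X \o V^m)(\hbr_X(X\du) \o V^m)(X \o g).$
On the other hand, using the Drinfeld formula $\theta_\bX^{-1} = u_\bX$ in $Z(\CC)$, which expresses $u_\bX$ via $\hbr_X(X)^{-1}$, together with naturality of $\hbr_X$ at $f\colon X \to V^m$, one rewrites
$f \circ \theta_\bX^{-1} = (\ev_X \o V^m)(X\du \o \hbr_X(V^m)^{-1})(g \o X).$
The agreement of these two expressions is then a direct consequence of the compatibility of the half-braiding with tensor products, $\hbr_X(X\du \o V^m) = (X\du \o \hbr_X(V^m))(\hbr_X(X\du) \o V^m)$, combined with naturality of $\hbr_X$ at $g\colon I \to X\du \o V^m$ and the identity $\hbr_X(I) = \id_X$, which together give $\hbr_X(X\du \o V^m)(X \o g) = g \o X$.

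The main obstacle lies in $(\star)$: one must transport the half-braiding from where it acts in $E^{(m,m)}$ (on $V^{-m}$) to where it appears in the Drinfeld formula for $\theta_\bX^{-1}$ (on $X$). The presence of the morphism $f$ and the mismatch between the objects $V^m$ and $X$ necessitate a coordinated use of duality, of naturality of $\hbr_X$ at several distinct morphisms, and of the tensor-product axiom of the half-braiding. Once both sides have been coerced into the common form involving $g$, the coincidence of the expressions is a formal consequence of these structural axioms.
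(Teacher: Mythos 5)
Your proposal is correct and follows essentially the same route as the paper: both proofs reduce to $l=0$ via Lemma~\ref{Ecomposition}, and both establish $E^{(m,m)}_{\bX,V}(f)=f\circ\theta_\bX^{-1}$ by passing $f$ through duality (to $f\du$), exploiting naturality of the half-braiding, and recognizing the Drinfeld isomorphism $u_\bX=\theta_\bX^{-1}$. The only difference is presentational — the paper transforms $E^{(m,m)}_{\bX,V}(f)$ into $f\theta_\bX^{-1}$ in a single graphical chain, whereas you rewrite both sides into a common expression involving the mate $g$ and match them via the tensor-product axiom of the half-braiding — but the ingredients and the decomposition are the same.
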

\begin{proof}
  We may assume that $\CC$ is strict. By Lemma \ref{Ecomposition} it is enough
  to treat the case $l=0$, i.e. to show
 $E_{\bX,V}^{(m,m)}(f)=f \circ \theta\inv_{\bX}$. But indeed
  $J_{m,m}(V)=\ev_m\o V^m$, and thus
   $$
 E_{\bX,V}^{(m,m)}(f)\,=
{\def\objectstyle{\scriptstyle}
\xy (0,0)="ctext", "ctext"*{f},
{"ctext"+(-3,-2)*{}; "ctext"+(3,2)*{} **\frm{-}},
{\vcross~{"ctext"+(-6,12)*+{X}}{"ctext"+(0,8)="v0"}{"ctext"+(-6,0)="v2"}{"ctext"+(0,2)}},
{"v0"; "v0"+(7,0)="v1" **\crv{"v0"+(0,4)& "v1"+(0,4)}},
{"v1"; "v1"+(0, -18)*{V^m} **\dir{-}},
{"v2"*{};"ctext"+(-6,-2)="v3" **\dir{-}},
{"v3"; "ctext"+(0,-2)="y" **\crv{"v3"+(0,-4)& "y"+(0,-4)}},
\endxy}
=\,
{\def\objectstyle{\scriptstyle}
\xy (0,0)="ctext", "ctext"+(-10,0)="ctext1"*{f\du},
{"ctext1"+(-3,-2)*{}; "ctext1"+(3,2)*{} **\frm{-}},
{\vcross~{"ctext"+(-10,10)="pX"}{"ctext"+(-4,10)="v0"}{"ctext"+(-10,2)="v2"}{"ctext"+(-4,2)="v4"}},
{"v0"; "v0"+(6,0)="v1" **\crv{"v0"+(0,3)& "v1"+(0,3)}},
{"v1"; "v1"+(0, -20)*{V^m} **\dir{-}},
{"ctext"+(-10,-2)="v3"; "ctext"+(-4,-2)="y" **\crv{"v3"+(0,-4)& "y"+(0,-4)}},
{"v4"; "y" **\dir{-}},
{"pX"; "pX"+(0,2) **\dir{-}},
{"pX"+(0,3.5)*{X}}
\endxy}
 = \,
{\def\objectstyle{\scriptstyle}
\xy (0,-1)="ctext",
{\vcross~{"ctext"+(-10,14)*+{X}}{"ctext"+(-4,10)="v0"}{"ctext"+(-10,0)="v2"}{"ctext"+(-4,0)}},
{"v0"; "v0"+(7,0)="v1" **\crv{"v0"+(0,4)& "v1"+(0,4)}},
{"v1"; "v1"+(0, -7) **\dir{-}},
{"ctext"+(-10,0)="v3"; "ctext"+(-4,0)="y" **\crv{"v3"+(0,-5)& "y"+(0,-5)}},
"v1"+(0,-9)="ctext1"*{f},
{"ctext1"+(-3,-2)*{}; "ctext1"+(3,2)*{} **\frm{-}},
{"v1"+(0,-11); "v1"+(0, -19)*{V^m} **\dir{-}},
\endxy}\,
= \,f\circ \theta\inv_{\bX}\,.
$$
If, in addition, $\theta_\bX^N =\id_\bX$, then
$$
E_{\bX, V}^{(m,mN)}(f) = \left(E_{\bX, V}^{(m,m)}\right)^N(f) = f \circ \theta_\bX^{-N} =f
$$
for all $f \in \CC(X, V^m)$.
\end{proof}

\begin{prop}\label{p:tq}
Let $\CC$ be a pivotal category over $\k$, $V \in \CC$, $\bX \in
Z(\CC)$, and $(m,l) \in \BZ\times \BZ$. Then
  \begin{enumerate}
    \item[\rm (i)] $\nu_{m, l}^{\bX}(I)=\dim \CC(X,I) $.
    \item[\rm (ii)] $\nu_{m,l}^\bX(V^{q}) = \nu_{qm,ql}^\bX(V)$ for $q \in \BZ$.
    \item[\rm (iii)] If $\bX$ is (absolutely) simple and $m>0$, then $\nu_{m, m+l}^\bX(V) = \w^{-1} \nu_{m,l}^\bX(V)$ where $\w \in \k$ is given by $\theta_\bX = \w \id_\bX$. Moreover, $\w=1$ or $\CC(X,I)=0$.
  \end{enumerate}
\end{prop}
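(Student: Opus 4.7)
My plan is to prove the three parts separately. Part (i) will follow by unpacking the definition of $E_{\bX,I}^{(m,l)}$ in the degenerate case $V=I$; part (ii) will combine a graphical-calculus manipulation (in the strict pivotal case) with \eqref{dualnegE} to handle negative $q$; and part (iii) will be a short corollary of Lemma \ref{Emm}.

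For (i), with $V=I$ strictness gives $V^m=I$ for every $m\in\BZ$, the isomorphism $t_m\colon I\to I\du=I$ is the identity, and $\ev_m$, $\db_m$ then reduce to $\id_I$. Since the axioms of a half-braiding force $\hbr_X(I)=\id_X$, chasing the definitions one finds $D_{\bX,l}=\id_X$ and $J_{m,l}(I)=\id_I$, so $E_{\bX,I}^{(m,l)}$ is the identity endomorphism of $\CC(X,I)$, whose trace is $\dim\CC(X,I)$.

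For (ii), by the remark preceding the statement I may assume $\CC$ is strict pivotal. For $q>0$ one has $(V^q)^{\pm m}=V^{\pm qm}$ on the nose, so $\CC(X,(V^q)^m)=\CC(X,V^{qm})$. The coherence of the half-braiding says $\hbr_X(V^q)$ is a composite of $q$ instances of $\hbr_X(V)$, and similarly for the value of $\hbr_X$ on $V^{-l}$ regarded as a string of $V^q$-strands; consequently, in the diagram \eqref{eq:EXVml} for $E_{\bX,V^q}^{(m,l)}(f)$, each $V^q$-strand crossing $X$ expands into $q$ parallel $V$-strand crossings, turning the diagram into the one for $E_{\bX,V}^{(qm,ql)}(f)$. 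The two endomorphisms therefore coincide and so do their traces. The case $q=0$ is the $V=I$ instance of (i). For $q<0$, write $q'=-q>0$ and use $V^q=(V\du)^{q'}$; the positive case applied to $V\du$ yields $\nu_{m,l}^\bX(V^q)=\nu_{q'm,q'l}^\bX(V\du)$, and \eqref{dualnegE} rewrites this as $\nu_{qm,ql}^\bX(V)$.

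For (iii), Lemma \ref{Emm} gives $E_{\bX,V}^{(m,m+l)}(f)=E_{\bX,V}^{(m,l)}(f)\circ\theta_\bX^{-1}$ whenever $m>0$. Absolute simplicity of $\bX$ means $\End(\bX)=\k$, so $\theta_\bX=\w\,\id_\bX$ for a unique $\w\in\k^\times$; right composition by $\theta_\bX^{-1}$ is then multiplication by $\w^{-1}$ on $\CC(X,V^m)$, giving $E_{\bX,V}^{(m,m+l)}=\w^{-1}E_{\bX,V}^{(m,l)}$ and hence $\nu_{m,m+l}^\bX(V)=\w^{-1}\nu_{m,l}^\bX(V)$. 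For the final clause, specialize to $V=I$: by (i) both sides equal $\dim\CC(X,I)$, so $(1-\w^{-1})\dim\CC(X,I)=0$ in $\k$, and characteristic zero forces $\w=1$ or $\CC(X,I)=0$. The principal obstacle I foresee is the strand-expansion step in (ii): one must verify that it converts not only the half-braidings but also $\db_{-l}$ and $J_{m,l}(V^q)$ into $\db_{-ql}$ and $J_{qm,ql}(V)$ respectively. This is a coherence statement that is essentially automatic in the strict pivotal setting, but still needs to be spelled out carefully.
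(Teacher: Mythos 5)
Your argument matches the paper's essentially line for line: part (i) by observing that $J_{m,l}(I)$ and $D_{\bX,l}$ are identities, part (ii) by reading off strand expansion from the graphical form \eqref{eq:EXVml} (the paper simply asserts $E_{\bX,V^q}^{(m,l)}=E_{\bX,V}^{(qm,ql)}$; your case split $q>0$, $q=0$, $q<0$ via \eqref{dualnegE} is a careful way to justify it), and part (iii) from Lemma \ref{Emm} together with the $V=I$ specialization. The only thing worth adding is that the paper also records a characteristic-free variant of the final clause of (iii), working directly with the operator identity $\id_{\CC(X,I)}=E_{\bX,I}^{(1,1)}=\w^{-1}E_{\bX,I}^{(1,0)}=\w^{-1}\id_{\CC(X,I)}$ rather than passing to dimensions, since Section \ref{s:def} does not assume $\operatorname{char}\k=0$; your version uses the dimension count, which is fine under the paper's standing characteristic-zero hypothesis.
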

\begin{proof}
  We can assume that $\CC$ is strict.
  \begin{enumerate}
  \item[\rm(i)] For $V=I$ the morphism $J_{m,l}(V)$ is the identity,
  and so is $E_{\bX,V}^{(m,l)}$.
  \item[\rm(ii)] From the graphical
  representation of $E_{\bX,V}$ displayed in Remark \ref{r:EXVml}, it is straightforward to read off
  that $E_{\bX,V^q}^{(m,l)}=E_{\bX,V}^{(mq,lq)}$. \item[\rm(iii)]
  The first statement is a direct consequence of the preceding
  lemma. The second then follows by setting $V=I$ and using (i),
  since we have assumed $\k$ to have characteristic zero. Without
  that assumption we could still set $V=I$ and find
  $$
  \id_{\CC(X,I)} = E_{\bX,I}^{(1,1)} = \w \inv E_{\bX, I}^{(1,0)}=\w\inv \id_{\CC(X,I)},
  $$
  and so $\w=1$ or $\CC(X,I)=0$.\qedhere
  \end{enumerate}
\end{proof}

\begin{lem}\label{pairingformulas} Let $\CC$ be a pivotal category over $\k$, $V \in \CC$, $\bX \in
Z(\CC)$, and $m,l \in \BZ$. Then, for all $f \in \CC(X, V^m)$ and $g \in \CC(V^m, X)$, we have
$$
  \begin{aligned}
    (g,E_{\bX,V}^{(m,l)}(f))_r&=
  \ptrr\left(\,
    \def\objectstyle{\scriptstyle}
  \xy
  \vcross~{(-2.5,3)="v1"}{(3.5,3)="v2"}{(-2.5, -3)="z1"}{(3.5, -3)="z2"},
    {"v1"+(0,2)="ctext1"},
    {"ctext1"+(.5,0)*{g}},
    {"ctext1"+(-2,-2)*{}; "ctext1"+(2,2)*{} **\frm{-}},
    {"ctext1"+(0,2); "ctext1"+(0,6)*{V^m}**\dir{-}},
    {"v2"; "v2"+(0,7)**\dir{-}},{"v2"+(1,8.5)*{V^{-l}}},
    {"z2"+(0,-2)="ctext2"},
    {"ctext2"+(0,0)*{f}},
    {"ctext2"+(-2,-2)*{}; "ctext2"+(2,2)*{} **\frm{-}},
     {"z1";"z1"+(0,-9)*{V^{-l}}**\dir{-}},
     {"ctext2"+(0,-2); "ctext2"+(0,-7)*{V^m}**\dir{-}},
\endxy\,\right)\quad\text{if} \quad lm \leq
                0\,,\\ \\
    (g,E_{\bX,V}^{(m,l)}(f))_\ell & =
   \ptrl\left(\,
    \def\objectstyle{\scriptstyle}
    \xy (0,0)="ctext",
    \vcrossneg~{"ctext"+(-2.5,3)="v1"}{"ctext"+(3.5,3)="v2"}{"ctext"+(-2.5, -3)="w1"}{"ctext"+(3.5, -3)="w2"},
    "v2"+(0,2)="ctext1",
    {"ctext1"+(0.5,-.1)*{g}},
    {"ctext1"+(-2,-2)*{}; "ctext1"+(2,2)*{} **\frm{-}},
    {"ctext1"+(0,2); "ctext1"+(0,7)*{V^{m}}**\dir{-}},
    {"v1"; "v1"+(0,9)*{V^l}**\dir{-}},
    {"w2"; "w2"+(0,-9)*{V^l}**\dir{-}},
    {"w1"+(0,-2)="ctext2"},
    {"ctext2"+(0.5,-.1)*{f}},
    {"ctext2"+(-2,-2)*{}; "ctext2"+(2,2)*{} **\frm{-}},
    {"ctext2"+(0,-2); "ctext2"+(0,-7)*{V^{m}}**\dir{-}},
\endxy\,\right)\quad\text{if}\quad lm\geq 0\,.
  \end{aligned}
$$
\end{lem}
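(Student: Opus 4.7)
\emph{Proof plan.} We may assume $\CC$ is strict pivotal, by the invariance of the GFS indicators under pivotal equivalence (noted after Definition \ref{def:GFS}) together with the strictification of pivotal categories \cite[Theorem 2.2]{NS05}. We prove the first identity; the second is strictly analogous, with $\ptrr$ replaced by $\ptrl$, $\hbr_X$ replaced by $\hbr_X^{-1}$, and $V^{-l}$ replaced by $V^l$.

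Since $(g, E_{\bX, V}^{(m,l)}(f))_r = \ptrr(g \circ E_{\bX, V}^{(m,l)}(f))$ by definition, the task is to rewrite this trace as $\ptrr$ of the morphism on the right-hand side. Using the first graphical form of $E_{\bX, V}^{(m,l)}(f)$ from \eqref{eq:EXVml} (featuring $\hbr_X$ at the top), the composite $g\circ E_{\bX, V}^{(m,l)}(f)\colon X\to X$ is presented by the following string diagram: $\db_{-l}$ to the right of $X$ produces a $V^{-l}\otimes V^l$ pair; $\hbr_X(V^{-l})$ swaps $X$ past $V^{-l}$; $f$ is applied to the middle $X$-strand; the coherence part of $J_{m,l}$ rearranges $V^{-l}\otimes V^m\otimes V^l$ into $V^m\otimes(V^{-l}\otimes V^l)$, which in the strict setting reduces to the identity, using that $lm\leq 0$ ensures $V^{-l}$ and $V^m$ consist of factors of the same type; then $\ev_l$ contracts $V^{-l}\otimes V^l$; finally $g$ returns $V^m$ to $X$. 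Applying $\ptrr$ closes this endomorphism via $\db_X$ at the top right and $\ol\ev_X$ at the bottom right, producing an outer $X$-loop.

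The essential step is to recognize, by isotopy of the underlying string diagram together with the zigzag identities for duality and the naturality of the half-braiding, that this closed diagram coincides with the closed diagram representing $\ptrr$ of the right-hand side morphism $V^m\otimes V^{-l}\to V^{-l}\otimes V^m$. Geometrically, one slides the $X$-strand through the picture so that the outer $X$-loop is ``unzipped'' into a single straight $X$-strand running from the output of $g$, through $\hbr_X(V^{-l})$, into the input of $f$; simultaneously, the $V^{-l},V^l$-loop (originally internal, closed by $\db_{-l}$ and $\ev_l$) and the $V^m$-arc connecting $f$ to $g$ through $J_{m,l}$ take over as the external trace-closures of the $V^{-l}$ and $V^m$ endpoints of the right-hand side morphism. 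The rearrangement does not require sphericity; it uses only the naturality of $\hbr_X$ and the zigzag identities, both available in any pivotal category. The principal obstacle is the careful graphical bookkeeping in this ``exchange'' of the $X$-loop and the $V^{-l},V^l$-loop, in particular keeping track of how all strand endpoints reconnect; once the shape of the rearrangement is laid out, each individual step is routine.
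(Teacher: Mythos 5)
Your plan is correct and uses essentially the same method as the paper: pass to the strict case, write the pairing as a pivotal trace of the closed string diagram built from the graphical form of $E_{\bX,V}^{(m,l)}(f)$ composed with $g$, and rearrange by planar isotopy, zigzag identities, and naturality of the half-braiding. The bookkeeping you defer to ``the careful graphical exchange'' is exactly what the paper's displayed chain of diagrams carries out, with the key concrete observation being that $J_{m,l}(V)$ together with the trace-closure arc of the $V^m$-leg collapses to $\ev_{m+l}$ (in the $ml\geq 0$ case) or $\ev_{l-m}$ (in the $ml\leq 0$ case), after which the remaining rearrangement is immediate.
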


\begin{proof}
We may assume that $\CC$ is strict pivotal. We treat the case
$ml\geq 0$ first. Note that in this case

$$
\def\objectstyle{\scriptstyle}
\xy {(0,0)="ctext"+(3,0)*{J_{m,l}(V)}},
{"ctext"+(-4,2)*{}; "ctext"+(10,-2)*{} **\frm{-}},
{"ctext"+(-3,2)="a2"; "a2"+(0,5)*{\,\,V^{-l}}**\dir{-}},
{"ctext"+(3,2)="a3"; "a3"+(0,5)*{V^m}**\dir{-}},
{"ctext"+(9,2)="a4"; "a4"+(0,5)*{V^l}**\dir{-}},
{"ctext"+(3,-2)="a5"; "a5"+(-13,0)="a6"**\crv{"a5"+(0,-5)& "a6"+(0,-5)}},
{"a6"; "a6"+(0,9)*{V^{-m}}**\dir{-}},
\endxy =
\def\objectstyle{\scriptstyle}
\xy {(0,0)="ctext"},
{"ctext"+(-3,2)="a2"; "a2"+(0,5)*{\,\,V^{-l}}**\dir{-}},
{"ctext"+(3,2)="a3"; "a3"+(0,5)*{V^l}**\dir{-}},
{"a2"; "a3"**\crv{"a2"+(0,-3)& "a3"+(0,-3)}},
{"ctext"+(9,-2)="a4"; "a4"+(0,9)*{V^m}**\dir{-}},
{"a4"; "a4"+(-18,0)="a6"**\crv{"a4"+(0,-6)& "a6"+(0,-6)}},
{"a6"; "a6"+(0,9)*{V^{-m}}**\dir{-}},
\endxy
  =\ev_{m+l}.
$$

Therefore
\[(g,E_{\bX,V}^{(m,l)}(f))_\ell
  =\ptrl(E_{\bX,V}^{(m,l)}(f)g)=
   {\def\objectstyle{\scriptstyle}
 \xy (0,-2)="ctext", "ctext"+(.5,-.1)*{f},
{"ctext"+(-2,-2)*{}; "ctext"+(2,2)*{} **\frm{-}},
{\vcross~{"ctext"+(-5,8)="v0"}{"ctext"+(0,8)="v1"}{"ctext"+(-5,2)="w0"}{"ctext"+(0,2)}},
{"v1"; "v1"+(5,0)="v2" **\crv{"v1"+(0,3)& "v2"+(0,3)}},
{"ctext"+(0,-2);"ctext"+(0,-4)**\dir{-}},
{"w0";"w0"+(0,-6)**\dir{-}?(.1)+(-3,0)},
{"v2";"v2"+(0,-12)**\dir{-}?(.3)+(2,0)},
{"ctext"+(0,-6)="ctext1"},{"ctext1"+(0,-.2)*{J_{m,l}(V)}},
{"ctext1"+(-7,-2)*{}; "ctext1"+(7,2)**\frm{-}},
{"v0"+(0,2)="ctext2"}, "ctext2"+(.5,-.1)*{g},
{"ctext2"+(-2,-2)*{}; "ctext2"+(2,2)*{} **\frm{-}},
{"ctext2"+(0,2)="d0"; "d0"+(-5,0)="d1"**\crv{"d0"+(0,3.5)& "d1"+(0,3.5)}},
{"ctext1"+(0,-2)="d4"; "d4"+(-10,0)="d5"**\crv{"d4"+(0,-5)& "d5"+(0,-5)}},
{"d1"; "d5"**\dir{-}},
\endxy}
  =\gbeg47
   \glmpb\gdnot{\db_{m+l}}\gcmpb\gcmpb\grmpb\gnl
   \gcl5\gcl5\gcl2\gcl1\gnl
   \gvac3\gbmp g\gnl
   \gvac2\gibr\gnl
   \gvac2\gbmp f\gcl2\gnl
   \gvac2\gcl1\gnl
   \glmpt\gdnot{\ev_{m+l}}\gcmpt\gcmpt\grmpt\gend
   =
   \ptrl\left(\,
    \def\objectstyle{\scriptstyle}
    \xy (0,0)="ctext",
    \vcrossneg~{"ctext"+(-2.5,3)="v1"}{"ctext"+(3.5,3)="v2"}{"ctext"+(-2.5, -3)="w1"}{"ctext"+(3.5, -3)="w2"},
    "v2"+(0,2)="ctext1",
    {"ctext1"+(0.5,-.1)*{g}},
    {"ctext1"+(-2,-2)*{}; "ctext1"+(2,2)*{} **\frm{-}},
    {"ctext1"+(0,2); "ctext1"+(0,7)*{V^{m}}**\dir{-}},
    {"v1"; "v1"+(0,9)*{V^l}**\dir{-}},
    {"w2"; "w2"+(0,-9)*{V^l}**\dir{-}},
    {"w1"+(0,-2)="ctext2"},
    {"ctext2"+(0.5,-.1)*{f}},
    {"ctext2"+(-2,-2)*{}; "ctext2"+(2,2)*{} **\frm{-}},
    {"ctext2"+(0,-2); "ctext2"+(0,-7)*{V^{m}}**\dir{-}},
\endxy\,\right).
   \]
   The proof in the case $lm\leq 0$ is similar but using
   $
   \def\objectstyle{\scriptstyle}
\xy {(0,0)="ctext"+(3,0)*{J_{m,l}(V)}},
{"ctext"+(-4,2)*{}; "ctext"+(10,-2)*{} **\frm{-}},
{"ctext"+(-3,2)="a2"; "a2"+(0,5)*{\,\,V^{-l}}**\dir{-}},
{"ctext"+(3,2)="a3"; "a3"+(0,5)*{V^m}**\dir{-}},
{"ctext"+(9,2)="a4"; "a4"+(0,5)*{V^l}**\dir{-}},
{"ctext"+(3,-2)="a5"; "a5"+(13,0)="a6"**\crv{"a5"+(0,-5)& "a6"+(0,-5)}},
{"a6"; "a6"+(0,9)*{V^{-m}}**\dir{-}},
\endxy
=\ev_{l-m}
  $
  this time.
\end{proof}

\begin{lem}
  Assume that $\CC$ is a strict spherical monoidal category. Then
  for $V\in\CC$, $\bX=(X, \sigma_X)\in Z(\CC)$, $m,l\in\BZ$, $f\in\CC(X,V^m)$
  and $g\in\CC(V^m,X)$ we have:
  \[\ptr(E_{\bX,V}^{(m,l)}(f)g)=\ptr(fE_{\bX\du,V\du}^{(m,l)}(g\du)\du)\,.\]
\end{lem}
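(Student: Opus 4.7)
The plan is to express both sides as pivotal traces of explicit closed diagrams using Lemma \ref{pairingformulas}, and then to recognize the two diagrams as each other's overall duals, at which point sphericity delivers the equality.

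First, I would use sphericity to massage the right-hand side into a form directly comparable with the left. Since $\CC$ is strict spherical, $\ptr(\alpha)=\ptr(\alpha\du)$ for any endomorphism $\alpha$ and $\alpha\du\du=\alpha$; hence
\[\ptr(fE_{\bX\du,V\du}^{(m,l)}(g\du)\du) = \ptr\bigl((fE_{\bX\du,V\du}^{(m,l)}(g\du)\du)\du\bigr) = \ptr(E_{\bX\du,V\du}^{(m,l)}(g\du)\cdot f\du).\]
In terms of the bilinear pairing \eqref{eq:pairing}, the desired identity becomes
\[(g,E_{\bX,V}^{(m,l)}(f)) = (f\du, E_{\bX\du,V\du}^{(m,l)}(g\du)).\]

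Next, I would apply Lemma \ref{pairingformulas} to both sides of this identity, splitting into the two cases $ml\geq 0$ and $ml\leq 0$ in parallel. Each side is expressed as the pivotal trace of a closed diagram: on the left, built from $f$, $g$, the half-braiding $\sigma_X$ of $\bX$, and evaluations/coevaluations on tensor powers of $V$; on the right, the analogous ingredients with $V$ replaced by $V\du$ and $\sigma_X$ replaced by $\sigma_{X\du}$. The key observation is that the diagram on the right is the overall dual of the diagram on the left: taking $V\mapsto V\du$, $f\mapsto f\du$, $g\mapsto g\du$ and reversing all strands amounts to dualizing the entire endomorphism whose pivotal trace appears. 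Sphericity then gives $\ptr(D)=\ptr(D\du)$, closing the argument.

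The main obstacle is the graphical bookkeeping: one must verify that the dual of the half-braiding $\sigma_X$, as it appears inside the diagram for the left-hand side, is literally the half-braiding $\sigma_{X\du}$ of $\bX\du$ as it appears in the diagram for the right-hand side. This reflects the standard construction of duals in the center, where $\sigma_{X\du}$ is obtained from $\sigma_X$ by bending the $X$-strand with $\ev$ and $\db$. One must also check that, under the overall dualization, the two shapes of diagram produced by Lemma \ref{pairingformulas} in the regimes $ml\geq 0$ and $ml\leq 0$ are interchanged correctly (in particular the positive crossing $\sigma_X$ used in one case becomes the negative crossing $\sigma_{X\du}^{-1}$ in the dualized picture). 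Once these identifications are in place, the claim is immediate from sphericity.
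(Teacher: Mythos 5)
Your proposal is correct and follows essentially the same route as the paper: the paper's proof likewise rewrites $\ptr(gE_{\bX,V}^{(m,l)}(f))$ as the pivotal trace of a closed diagram built from $f$, $g$, and the half-braiding $\hbr_{X,V^{-l}}$, applies the sphericity identity $\ptr(\alpha)=\ptr(\alpha\du)$, and identifies the dual of $\hbr_{X,V^{-l}}$ with $\hbr_{X\du,(V\du)^{-l}}$ to recognize the result as $\ptr(f\du E_{\bX\du,V\du}^{(m,l)}(g\du))$. The only cosmetic difference is that the paper works directly with the defining diagram of $E_{\bX,V}^{(m,l)}$ in a single uniform expression, which sidesteps the $ml\geq 0$ versus $ml\leq 0$ case distinction in Lemma~\ref{pairingformulas} that you flag as the main bookkeeping concern.
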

\begin{proof}
\begin{align*}
  \ptr(gE_{\bX,V}^{(m,l)}(f))&=\ptr((V^{-l}\o f)\hbr_{X,V^{-l}}(g\o V^{-l}))\\
    &=\ptr\left((V^{-l}\o f)\hbr_{X,V^{-l}}(g\o V^{-l}))\du\right)\\
    &=\ptr\left((V^{l}\o g\du)\hbr_{X\du,V^l}(f\du\o V^l)\right)\\
    &=\ptr\left(((V\du)^{-l}\o g\du)\hbr_{X\du,(V\du)^{-l}}(f\du\o (V\du)^{-l})\right)\\
    &=\ptr(f\du E_{\bX\du,V\du}^{(m,l)}(g\du))\\
    &=\ptr(E_{\bX\du,V\du}^{(m,l)}(g\du)\du f)\, .\qedhere
\end{align*}
\end{proof}
\begin{remark}
It is worthwhile to rewrite the last lemma slightly in the context of a strict spherical category over $\k$. We define
\begin{equation}\label{Ebar}
\ol E_{\bX,V}^{(m,l)}\colon\CC(V^m,X)\to\CC(V^m,X);\ f\mapsto
E_{\bX\du,V\du}^{(m,l)}(f\du)\du.
\end{equation}
Thus, the definition of the $\ol E$ maps is obtained by turning
that of the $E$ maps upside down; we will return to this aspect
later. Then the above Lemma says that
\begin{equation}
\label{adjointness} %
\left(E_{\bX,V}^{(m,l)}(f),g\right)=\left(f,\ol
E_{\bX,V}^{(m,l)}(g)\right)\, .
\end{equation}
Note also that by definition the linear map $\ol
E_{\bX,V}^{(m,l)}$ is conjugate to $E_{\bX\du,V\du}^{(m,l)}$, so
that
\begin{equation}\label{nuEbar}
  \nu_{m,l}^{\bX\du}(V\du)=\Tr\left(\ol
  E_{\bX,V}^{(m,l)}\right)\, .
\end{equation}
\end{remark}


\section{The Case of a Semisimple Pivotal Category}\label{s:sscase}
In this section, we continue to study the GFS indicators for
semisimple pivotal categories over $\k$.

In such a category, Lemma \ref{pairingformulas} allows us to express the GFS indicators of $V$ as the pivotal traces of certain endomorphisms of tensor powers of $V$ in the category.

In the case where the category is spherical, we will obtain additional properties as well as another expression for the indicators in terms of pivotal traces of certain endomorphisms in the center $Z(\CC)$. The latter expression will be used in the following section to compare our indicators to those defined by Sommerh\"auser and Zhu in the Hopf algebra case.

Let $\CC$ be a semisimple pivotal category over $\k$.  Recall that the pairings $(\cdot,\cdot)_\ell,(\cdot,\cdot)_r$
defined in \eqref{eq:pairing} are always non-degenerate in the
semisimple case. Suppose $\{p_\a\}$ is a basis
for $\CC(V,W)$. Then the non-degenerate pairing $(\cdot,
\cdot)_{\e}$ defines a dual basis $\{\ol p^\e_\a\}$ for
$\CC(W,V)$, where $\e=\ell$ or $r$. The two bases $\{\ol
p^\ell_\a\}$, $\{\ol p^r_\a\}$ may not be the same. However, when
$V$ or $W=I$, these two bases are identical because
$d_r(I)=d_\ell(I)=1$. In addition, if $\CC$ is spherical these two
bases are always identical, and we will simply write $\{\ol
p_\a\}$ for this dual basis in this case.
\begin{prop}\label{p:surgery}
Let $\CC$ be a semisimple strict pivotal category over $\k$, and let
$V \in \CC$,  $\bX=(X, \hbr_X) \in Z(\CC)$ and $(m, l) \in \BZ^2$.
Suppose $\{p_\a\}_\a$ is a basis for $\CC(V^{m}, X)$. Then
 \begin{equation} \label{eq:ts2}
  \nu_{m,l}^\bX(V) =\left\{
  \begin{array}{lcl}
  \displaystyle  \sum_\a \ptrr \left(
{\def\objectstyle{\scriptstyle}
  \xy
  \vcross~{(-2.5,3)="v1"}{(2.5,3)="v2"}{(-2.5, -3)="z1"}{(2.5, -3)="z2"},
    {"v1"+(0,2)="ctext1"},
    {"ctext1"+(.5,0)*{p_\a}},
    {"ctext1"+(-2,-2)*{}; "ctext1"+(2,2)*{} **\frm{-}},
    {"ctext1"+(0,2); "ctext1"+(0,6)*{V^m}**\dir{-}},
    {"v2"; "v2"+(0,7)**\dir{-}},{"v2"+(1,8.5)*{V^{-l}}},
    {"z2"+(0,-2)="ctext2"},
    {"ctext2"+(.5,0)*{ \ol p^r_\a}},
    {"ctext2"+(-2,-2)*{}; "ctext2"+(2,2)*{} **\frm{-}},
     {"z1";"z1"+(0,-9)*{V^{-l}}**\dir{-}},
     {"ctext2"+(0,-2); "ctext2"+(0,-7)**\dir{-}?(.8)+(3,-1)*{V^m}},
\endxy}
\right) & \text{if} & ml \le 0, \\ \\
\displaystyle%
\sum_\a\ptrl\left(
{\def\objectstyle{\scriptstyle}
  \xy (0,0)="ctext",
  \vcrossneg~{"ctext"+(-2.5,3)="v1"}{"ctext"+(2.5,3)="v2"}{"ctext"+(-2.5, -3)="w1"}{"ctext"+(2.5, -3)="w2"},
    "v2"+(0,2)="ctext1",
    {"ctext1"+(0.5,-.1)*{p_\a}},
    {"ctext1"+(-2,-2)*{}; "ctext1"+(2,2)*{} **\frm{-}},
    {"ctext1"+(0,2); "ctext1"+(0,7)*{V^{m}}**\dir{-}},
    {"v1"; "v1"+(0,9)*{V^l}**\dir{-}},
    {"w2"; "w2"+(0,-9)*{V^l}**\dir{-}},
    {"w1"+(0,-2)="ctext2"},
    {"ctext2"+(0.5,-.1)*{ \ol p^\ell_\a}},
    {"ctext2"+(-2,-2)*{}; "ctext2"+(2,2)*{} **\frm{-}},
    {"ctext2"+(0,-2); "ctext2"+(0,-7)*{V^{m}}**\dir{-}},
\endxy} \right)&\text{if} & ml \ge 0\,.
\end{array}
\right.
 \end{equation}
\end{prop}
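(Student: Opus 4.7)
The plan is to compute $\Tr\left(E_{\bX,V}^{(m,l)}\right)$ using the standard dual-basis trace formula applied to the nondegenerate pairings $(\cdot,\cdot)_\ell$ and $(\cdot,\cdot)_r$, and then invoke Lemma \ref{pairingformulas} to convert the resulting pairings into the stated pivotal traces.

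More concretely, I may assume $\CC$ is strict by the remark following Definition \ref{def:GFS}. Since $\CC$ is semisimple, the morphism spaces $\CC(X,V^m)$ and $\CC(V^m,X)$ are finite-dimensional, and the bilinear pairings $(\cdot,\cdot)_\e$ with $\e\in\{\ell,r\}$ defined in \eqref{eq:pairing} are nondegenerate. Hence, given the chosen basis $\{p_\a\}$ of $\CC(V^m,X)$, there is a dual basis $\{\ol p^\e_\a\}$ of $\CC(X,V^m)$ characterized by $(p_\a,\ol p^\e_\b)_\e=\delta_{\a\b}$. The standard trace-via-dual-basis identity then gives
\[
  \nu_{m,l}^\bX(V)\;=\;\Tr\left(E_{\bX,V}^{(m,l)}\right)\;=\;\sum_\a \left(p_\a,\,E_{\bX,V}^{(m,l)}(\ol p^\e_\a)\right)_{\!\e}\,,
\]
for either choice of $\e$, since one can verify this by expanding $E_{\bX,V}^{(m,l)}(\ol p^\e_\b)=\sum_\a c_{\a\b}\ol p^\e_\a$ and reading off $c_{\a\b}=(p_\a,E_{\bX,V}^{(m,l)}(\ol p^\e_\b))_\e$.

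To obtain the two cases of the statement I choose the pairing to match the sign condition assumed in Lemma \ref{pairingformulas}. In the case $ml\le 0$, I take $\e=r$ and apply Lemma \ref{pairingformulas} with $g=p_\a$ and $f=\ol p^r_\a$; this converts each summand $(p_\a,E_{\bX,V}^{(m,l)}(\ol p^r_\a))_r$ directly into the right-pivotal-trace diagram appearing in the first line of \eqref{eq:ts2}. In the case $ml\ge 0$, I instead take $\e=\ell$ and apply the other half of Lemma \ref{pairingformulas} with $g=p_\a$ and $f=\ol p^\ell_\a$, which yields the left-pivotal-trace diagram in the second line of \eqref{eq:ts2}. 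Summing over $\a$ gives the desired formulas.

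The argument is essentially a bookkeeping exercise once Lemma \ref{pairingformulas} is in place; the only point requiring attention is matching the two diagrammatic forms of Lemma \ref{pairingformulas} to the sign regime of $ml$ and the corresponding choice of pairing $(\cdot,\cdot)_r$ or $(\cdot,\cdot)_\ell$ used to produce the dual basis. In particular, no spherical hypothesis is needed here, consistent with the fact that the two pivotal traces, hence the two dual bases $\{\ol p^r_\a\}$ and $\{\ol p^\ell_\a\}$, may differ in the general pivotal case.
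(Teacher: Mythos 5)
Your proof is correct and takes essentially the same approach as the paper, which simply remarks that the result is a direct consequence of the definition and Lemma \ref{pairingformulas}; you supply the dual-basis trace computation that the paper leaves implicit.
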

\begin{proof}This is a direct consequence of the definition
of the indicators and Lemma \ref{pairingformulas}.
\end{proof}
\begin{remark}\label{r:alternative}
  One may also see that
  $$
  \nu_{m,l}^\bX(V)=
\sum_\a \ptrl \left( {\def\objectstyle{\scriptstyle}
  \xy (0,0)="ctext",
  \vcrossneg~{"ctext"+(-2.5,3)="v1"}{"ctext"+(2.5,3)="v2"}{"ctext"+(-2.5, -3)="w1"}{"ctext"+(2.5, -3)="w2"},
    "v2"+(0,2)="ctext1",
    {"ctext1"+(0.5,-.1)*{p_\a}},
    {"ctext1"+(-2,-2)*{}; "ctext1"+(2,2)*{} **\frm{-}},
    {"ctext1"+(0,2); "ctext1"+(0,7)*{V^{m}}**\dir{-}},
    {"v1"; "v1"+(0,9)*{V^l}**\dir{-}},
    {"w2"; "w2"+(0,-9)*{V^l}**\dir{-}},
    {"w1"+(0,-2)="ctext2"},
    {"ctext2"+(0.5,-.1)*{ \ol p^\ell_\a}},
    {"ctext2"+(-2,-2)*{}; "ctext2"+(2,2)*{} **\frm{-}},
    {"ctext2"+(0,-2); "ctext2"+(0,-7)*{V^{m}}**\dir{-}},
\endxy}
\right) =
 \sum_\a \ptrl \left(
{\def\objectstyle{\scriptstyle}
  \xy (0,0)="ctext",
  \vcrossneg~{"ctext"+(-2.5,3)="v1"}{"ctext"+(2.5,3)="v2"}{"ctext"+(-2.5, -3)="w1"}{"ctext"+(2.5, -3)="w2"},
    "v2"+(0,2)="ctext1",
    {"ctext1"+(0.5,-.1)*{p_\a}},
    {"ctext1"+(-2,-2)*{}; "ctext1"+(2,2)*{} **\frm{-}},
    {"ctext1"+(0,2); "ctext1"+(0,7)*{V^{m}}**\dir{-}},
    {"v1"; "v1"+(-5,0)="v0"**\crv{"v1"+(0,3)&"v0"+(0,3)}},
    {"w1"+(-5,0)="w0"; "w0"+(-5,0)="w"**\crv{"w0"+(0,-3)&"w"+(0,-3)}},
    {"w"; "w"+(0,15)*{V^l}**\dir{-}},
    {"w2"; "w2"+(0,-9)*{V^l}**\dir{-}},
    {"w1"+(0,-2)="ctext2"},
    {"ctext2"+(0.5,-.1)*{ \ol p^\ell_\a}},
    {"ctext2"+(-2,-2)*{}; "ctext2"+(2,2)*{} **\frm{-}},
    {"ctext2"+(0,-2); "ctext2"+(0,-7)*{V^{m}}**\dir{-}},
    {"w0"; "v0"**\dir{-}},
\endxy}
\right)=
  \sum_\a \ptrl \left(
{\def\objectstyle{\scriptstyle}
  \xy (0,0)="ctext",
  \vcross~{"ctext"+(-2.5,3)="v1"}{"ctext"+(2.5,3)="v2"}{"ctext"+(-2.5, -3)="w1"}{"ctext"+(2.5, -3)="w2"},
    "v1"+(0,2)="ctext1",
    {"ctext1"+(0.5,-.1)*{p_\a}},
    {"ctext1"+(-2,-2)*{}; "ctext1"+(2,2)*{} **\frm{-}},
    {"ctext1"+(0,2); "ctext1"+(0,7)*{V^{m}}**\dir{-}},
    {"v2"; "v2"+(6,0)="v3"**\crv{"v2"+(0,3)&"v3"+(0,3)}},
    {"w1"; "w1"+(-6,0)="w0"**\crv{"w1"+(0,-3)&"w0"+(0,-3)}},
    {"w0"; "w0"+(0,15)*{V^l}**\dir{-}},
    {"v3"; "v3"+(0,-15)*{V^l}**\dir{-}},
    {"w2"+(0,-2)="ctext2"},
    {"ctext2"+(0.5,-.1)*{ \ol p^\ell_\a}},
    {"ctext2"+(-2,-2)*{}; "ctext2"+(2,2)*{} **\frm{-}},
    {"ctext2"+(0,-2); "ctext2"+(0,-7)*{V^{m}}**\dir{-}},
\endxy}
\right)
  $$
   for $ml \le 0$.
\end{remark}

\begin{prop}\label{p:dual}
  Let $\CC$ be a semisimple  spherical category over $\k$. For $V
  \in \CC$ and $\bX \in Z(\CC)$, we have
  $$
  \nu_{-m,-l}^\bX(V)=\nu_{m,l}^{\bX}(V\du) = \nu_{m,l}^{\bX\du}(V)
  $$
  for all $(m,l)\in \BZ\times \BZ$.
\end{prop}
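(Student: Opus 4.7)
The plan is to combine equation \eqref{dualnegE} with the adjointness formula \eqref{adjointness}. By the remark following Definition \ref{def:GFS}, generalized indicators are preserved under monoidal equivalences, so I may assume throughout that $\CC$ is strict spherical.

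For the first equality $\nu_{-m,-l}^\bX(V)=\nu_{m,l}^\bX(V\du)$, I would simply apply \eqref{dualnegE} with $(m,l)$ replaced by $(-m,-l)$: this gives $E_{\bX,V}^{(-m,-l)}=E_{\bX,V\du}^{(m,l)}$, and taking traces yields the claim immediately. Note that this step requires no semisimplicity, only the pivotal structure.

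For the second equality $\nu_{m,l}^\bX(V\du)=\nu_{m,l}^{\bX\du}(V)$, I would invoke the preceding lemma in its reformulated version: by \eqref{Ebar} and \eqref{adjointness}, the map $\ol E_{\bX,V}^{(m,l)}$ on $\CC(V^m,X)$ is the adjoint of $E_{\bX,V}^{(m,l)}$ on $\CC(X,V^m)$ with respect to the spherical pairing $(\cdot,\cdot)$. Because $\CC$ is semisimple, this pairing is non-degenerate (see Section \ref{s:1.3}), so the two operators are mutually transposed and therefore share the same trace. Combined with \eqref{nuEbar} this gives
$$\nu_{m,l}^{\bX\du}(V\du)=\Tr\bigl(\ol E_{\bX,V}^{(m,l)}\bigr)=\Tr\bigl(E_{\bX,V}^{(m,l)}\bigr)=\nu_{m,l}^\bX(V).$$
Substituting $V\mapsto V\du$ and using the pivotal isomorphism $V\du\du\cong V$ (under which $\nu$ is invariant, again by the remark after Definition \ref{def:GFS}) then yields $\nu_{m,l}^{\bX\du}(V)=\nu_{m,l}^\bX(V\du)$, completing the second equality.

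The only non-formal point is the trace-adjoint identity: it depends crucially on the non-degeneracy of the spherical pairing and therefore on both the semisimplicity and the spherical assumption in the hypothesis. Everything else is essentially bookkeeping built on the formulas $E^{(-m,-l)}_{\bX,V}=E^{(m,l)}_{\bX,V\du}$ and $\nu^{\bX\du}_{m,l}(V\du)=\Tr(\ol E^{(m,l)}_{\bX,V})$ established earlier.
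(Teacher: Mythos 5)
Your proof is correct and follows essentially the same route as the paper's: the first equality is the identity $E_{\bX,V}^{(-m,-l)}=E_{\bX,V\du}^{(m,l)}$ (which is the $q=-1$ instance of Proposition \ref{p:tq}(ii) that the paper cites, and also equation \eqref{dualnegE}), and the second equality is the trace-adjoint argument via \eqref{adjointness} and \eqref{nuEbar}. The only cosmetic difference is that you substitute $V\mapsto V\du$ at the end while the paper substitutes $\bX\mapsto\bX\du$, and your appeal to ``the remark after Definition \ref{def:GFS}'' to handle $V\du\du\cong V$ is slightly misplaced (that remark concerns monoidal equivalences between categories, not isomorphisms within one); but since you have already strictified, $V\du\du=V$ holds on the nose and the extra invariance claim is unnecessary.
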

\begin{proof}
  The first equality follows immediately from Proposition
  \ref{p:tq} (ii) by setting $q=-1$. In the semisimple case
  \eqref{adjointness} says that $\ol E_{\bX,V}^{(m,l)}$ and
  $E_{\bX,V}^{(m,l)}$ are adjoint maps and have the same trace, so
  that, by \eqref{nuEbar},
  \[\nu_{m,l}^\bX(V\du)=\Tr\left(\ol E_{\bX\du,V}^{(m,l)}\right)=
  \Tr\left(E_{\bX\du,V}^{(m,l)}\right)=\nu_{m,l}^{\bX\du}(V).\qedhere\]
  \end{proof}

Next, we will derive an expression for the GFS indicators as the pivotal traces of certain endomorphisms in $Z(\CC)$. For this we will assume that the category $\CC$ is spherical. We will use the two-sided adjoint $K\colon \CC\to Z(\CC)$ to the forgetful functor with the conventions at the end of section \ref{s:1.3}. Associated with the adjunction $\Psi$, we define
\begin{multline}\label{eq:varphi}
\varphi_{\bX, V}^{(m,l)} : = \bigg(\DD(\bX, K(V^m))
\xrightarrow{\Psi_{\bX, V^m}} \CC(X, V^m)
\xrightarrow{E_{\bX,V}^{(m,l)}}
 \CC(X,  V^m)  \xrightarrow{\Psi\inv_{\bX, V^m}} \DD(\bX,  K(V^m)) \bigg)
\end{multline}
for  $m,l \in \BZ$, where $\DD$ {simply} denotes the center $Z(\CC)$. Obviously, $\varphi_{\bX, V}^{(m,l)}$ is natural in
$\bX$. By Yoneda's lemma,
\begin{equation} \label{eq:kappa}
\varphi_{\bX, V}^{(m,l)} (f) = \kappa_{V}^{(m,l)} \circ f
\end{equation}
for $f \in \DD(\bX, K(V^m))$, where
$$
\kappa_{V}^{(m,l)}:=\varphi_{K(V^m), V}^{(m,l)}(\id) : K(V^m) \to
K(V^m)\,.
$$

Note that for $f\in\DD(\bX,K (V^m))$ and $g\in\DD(K (V^m),\bX)$ we
have, abbreviating $\kappa=\kappa_{V}^{(m,l)}$,

\begin{align*}
  (g\kappa, f)&=\ptr(g\kappa f)=(g,\kappa f)
    =\left(g,\Psi\inv E\Psi(f)\right)
    =\left(\ol\Psi\;\ol E\;\ol\Psi\inv(g),f\right),
\end{align*}
where $\ol\Psi_{W, \bX}: \CC(W, X) \to \DD(K(W), \bX)$ and $\ol E_{\bX, V}^{(m,l)}: \CC(V^m, X) \to \CC(V^m, X)$ are respectively the adjoint maps of $\Psi_{\bX, W}$ and $E_{\bX, V}^{(m,l)}$ with respect to the bilinear form $(\cdot, \cdot)$ described in \eqref{arrangement} and \eqref{adjointness}.
Thus, if we define
$\ol \varphi_{V,\bX}^{(m,l)}$ to be
the composition:
\begin{equation}\label{eq:barvarphi}
\ol \varphi_{V, \bX}^{(m,l)}:=\left( \DD(K(V^m), \bX)
\xrightarrow{\ol\Psi\inv} \CC(V^m, X)  \xrightarrow{\ol
E_{\bX,V}^{(m,l)}} \CC(V^m,  X) \xrightarrow{\ol\Psi}
\DD(K(V^m),\bX)\right)\,,
\end{equation}
then by the non-degeneracy of the pairing $(\cdot,\cdot)$ we have
shown
\begin{equation}\label{eq:barb} \ol\varphi_{V, \bX}^{(m,l)}(g) = g
\circ \kappa_{V}^{(m,l)}
\end{equation}
for all $g \in \DD(K(V^m), \bX)$. In particular,
$\kappa_{V}^{(m,l)}=\ol\varphi_{V,K(V^m)}^{(m,l)}(\id_{K(V^m)})$.

The morphisms $\kappa$ defined above can be used to
compute the GFS indicators with the following theorem.
\begin{thm} \label{t:kappaFS}
  Let $\CC$ be a  spherical fusion category over $\k$, and  $\bX$ a simple object of $\DD:=Z(\CC)$.
  For $m,l \in\BZ$, we have
  $$
  \nu_{m,l}^\bX(V) = \dfrac{1}{d(\bX)}\ptr\left(\kappa_{ V}^{(m,l)} \circ z_\bX\right)\,,
  $$
  where $z_\bX$  is the natural projection of $K(V^m)$ onto the
  the isotypic component of $\bX$.
\end{thm}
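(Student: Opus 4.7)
My plan is to reduce the computation to a trace on the hom-space in the center, exploit the simplicity of $\bX$, and then use cyclicity of the pivotal trace.

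First, I would use the adjunction $\Psi$ to transport $E_{\bX,V}^{(m,l)}$ from $\CC(X,V^m)$ to $\DD(\bX,K(V^m))$. By the definition of $\varphi_{\bX,V}^{(m,l)}$ in \eqref{eq:varphi}, the two maps are conjugate via $\Psi_{\bX,V^m}$, so they have equal trace. Hence
\[
  \nu_{m,l}^\bX(V) = \Tr\left(E_{\bX,V}^{(m,l)}\right) = \Tr\left(\varphi_{\bX,V}^{(m,l)}\right).
\]
Now \eqref{eq:kappa} tells us that $\varphi_{\bX,V}^{(m,l)}$ acts on $\DD(\bX,K(V^m))$ as left composition with $\kappa_V^{(m,l)}$, so computing its trace reduces to a basis calculation in this hom-space.

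Next, since $\DD=Z(\CC)$ is semisimple and $\bX$ is simple, I would choose a basis $\{\xi_\alpha\}$ of $\DD(\bX,K(V^m))$ together with a dual basis $\{\eta_\alpha\}$ of $\DD(K(V^m),\bX)$ characterized by $\eta_\alpha\circ\xi_\beta=\delta_{\alpha\beta}\,\id_\bX$; such a dual basis exists because, by Schur's lemma, composition defines a non-degenerate pairing $\DD(K(V^m),\bX)\times\DD(\bX,K(V^m))\to \k\,\id_\bX$. The isotypic projection is then recovered as $z_\bX = \sum_\alpha \xi_\alpha \circ \eta_\alpha$: indeed $\sum_\alpha \xi_\alpha\eta_\alpha$ is idempotent, equals the identity on the $\bX$-isotypic part of $K(V^m)$, and annihilates every simple summand not isomorphic to $\bX$.

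Writing $\kappa_V^{(m,l)}\circ\xi_\alpha=\sum_\beta M_{\beta\alpha}\,\xi_\beta$ and applying $\eta_\gamma$ from the left gives $M_{\gamma\alpha}\,\id_\bX = \eta_\gamma\circ\kappa_V^{(m,l)}\circ\xi_\alpha$, and therefore $\Tr\bigl(\varphi_{\bX,V}^{(m,l)}\bigr)=\sum_\alpha M_{\alpha\alpha}$. Since $\ptr(\id_\bX)=d(\bX)\neq 0$, we get $\ptr\bigl(\eta_\alpha\circ\kappa_V^{(m,l)}\circ\xi_\alpha\bigr)=M_{\alpha\alpha}\,d(\bX)$. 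Now I invoke cyclicity of the pivotal trace in the spherical category $\DD$ to obtain $\ptr\bigl(\kappa_V^{(m,l)}\circ\xi_\alpha\circ\eta_\alpha\bigr)=M_{\alpha\alpha}\,d(\bX)$, and summing over $\alpha$ yields
\[
  \ptr\bigl(\kappa_V^{(m,l)}\circ z_\bX\bigr)=d(\bX)\sum_\alpha M_{\alpha\alpha}=d(\bX)\,\nu_{m,l}^\bX(V),
\]
which is the claim after dividing by $d(\bX)$.

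I do not anticipate any deep obstacle here: the content is really just Yoneda plus the Schur-type decomposition of $K(V^m)$ into isotypic components. The step deserving the most care is verifying that the ``dual basis'' interpretation of $z_\bX$ is correct, i.e.\ that $\sum_\alpha \xi_\alpha\eta_\alpha$ really is the canonical isotypic projector and not some rescaling; this is where the normalization $\eta_\alpha\xi_\beta=\delta_{\alpha\beta}\id_\bX$ (rather than any pivotal-trace-normalized dual basis) is essential, so that the scalar $d(\bX)$ appears exactly once and in the right place.
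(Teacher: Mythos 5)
Your proof is correct and takes essentially the same approach as the paper: reduce to the trace of $\varphi_{\bX,V}^{(m,l)}$, expand $z_\bX$ as a sum of "rank one" morphisms through $\bX$, and close the computation using cyclicity of the pivotal trace together with $\ptr(\id_\bX)=d(\bX)$. The only real difference is cosmetic: you normalize the dual basis so that $\eta_\alpha\circ\xi_\beta=\delta_{\alpha\beta}\,\id_\bX$ and hence $z_\bX=\sum_\alpha\xi_\alpha\eta_\alpha$, whereas the paper takes the dual basis with respect to the pivotal-trace pairing, so $\bar f_\alpha\circ f_{\alpha'}=\delta_{\alpha\alpha'}d(\bX)^{-1}\id_\bX$ and $z_\bX=d(\bX)\sum_\alpha f_\alpha\bar f_\alpha$; the factor of $d(\bX)$ then appears in the same place either way.
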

\begin{proof}
It follows from  \eqref{eq:varphi}  that
\begin{equation}
  \nu_{m,l}^\bX(V) = \Tr\left(E_{\bX, V}^{(m,l)}\right)= \Tr\left(\varphi_{\bX,
  V}^{(m,l)}\right)
\end{equation}
for all $l,m \in \BZ$. Let $\{f_\a\}_\a$ be a basis for
$\DD(\bX, K(V^{m}))$ and $\{{\ol f}_\a\}_\a$ the dual basis for $\DD(K(V^{m}), \bX)$ with respect to
the pairing $(\cdot, \cdot)$. Then
$$
{\ol f}_\a \circ  f_{\a'} = \dfrac{\delta_{\a,\a'}}{ d(\bX)}\id_\bX \quad\text{and}\quad
z_\bX: = d(\bX) \sum_\a f_\a \circ {\ol f}_\a
$$
is the idempotent corresponding to the isotypic component of $\bX$
in $K(V^m)$. Let us write $\kappa$ for $\kappa_{V}^{(m,l)}$. Then,
by \eqref{eq:kappa},
\begin{multline*}
\nu_{m,l}^\bX(V) = \Tr(\varphi_{\bX, V}^{(m,l)}) =  \sum_\a ({\ol
f}_\a, \varphi_{\bX, V}^{(m,l)}(f_\a))
= \sum_\a ({\ol f}_\a, \kappa f_\a) \\
= \sum_\a \ptr ({\ol f}_\a \kappa   f_\a) =  \ptr (\kappa  \sum_\a
f_\a {\ol f}_\a)=\frac{1}{d(\bX)}\ptr(\kappa \circ
z_\bX)\,.\qedhere
\end{multline*}
\end{proof}
Since $\varphi_{\bX,V}^{(m,l)}$ is conjugate to
$E_{\bX,V}^{(m,l)}$, Lemma \ref{Ecomposition} implies analogous
rules for the $\varphi$ maps, as well as for the $\kappa$
morphisms. More precisely, we have
\begin{equation}
  \kappa_V^{(m,k+l)} = \kappa_V^{(m,k)} \kappa_V^{(m,l)}
\end{equation}
for $kl \ge 0$ or $m \ne 0$. We write $\beta_{V^m}:=\kappa_V^{(m,1)}$ for $m\neq 0$ and
$\gamma_V=\kappa_V^{(0,1)}$. Then we have
$$
\b_{V^m}^l=\kappa_V^{(m,l)}, \quad \gamma_V^l = \kappa_V^{(0,l)} \quad \text{for all }l \ge 0\,.
$$
In view of Proposition \ref{p:dual} and Theorem \ref{t:kappaFS}, the GFS indicator for spherical fusion categories can summarized in terms of $\b$ and $\gamma$:
\begin{equation}\label{eq:betagammaind}
  \nu_{m,l}^\bX(V)=\left\{
  \begin{array}{ll}
    \frac{1}{d(\bX)} \ptr  \left(\b_{V^m}^l \circ z_\bX\right) & \text{for } m > 0,\\ \\
    \frac{1}{d(\bX)} \ptr  \left(\gamma_V^l \circ z_\bX\right) & \text{for } m = 0 \text{ and }l \ge 0,\\ \\
    \nu_{-m,-l}^{\bX\du}(V) & \text{for otherwise.}
  \end{array}
  \right.
\end{equation}
Thus, the values of the GFS indicators are completely determined by those values $\nu_{m,l}^\bX(V)$ with $m \ge 0$. This characterization will be useful in the following section.

\section{Equivariant Frobenius-Schur indicators for Semisimple Hopf Algebras}\label{s:hopf}
We will use the results in the preceding section to compare our generalized indicators with the equivariant indicators defined by Sommerh\"auser and Zhu \cite{SZh}.

Let $\CC=\C{H}$ for a semisimple Hopf algebra $H$ over $\k$. We follow the conventions for the Drinfeld
double $D(H)$ of $H$ described in \cite{Kassel} and \cite{Mont93bk}. As a coalgebra,
$D(H)=(H^*)^\cop\o H$. We abbreviate the element $p \o k$ in $D(H)$ as $pk$ and simply write
$p$ for $p1_H$ and $k$ for $1_{H^*}k$.
 Recall that the multiplication in $D(H)$ is given by
$$
pk \cdot qh = \sum_{(k)} p q(S(k_3) ? k_1) \o k_2 h
$$
where $\displaystyle \sum_{(k)} k_1 \o k_2 \o k_3$ is the
Sweedler notation for $(\Delta \o \id)\Delta(k)$, and $q(S(k_3)
? k_1)$ denotes the linear functional $a \mapsto q(S(k_3) a k_1)$
on $H$.

The center $Z(\CC)$ of $\C{H}$ is equal to $\C{D(H)}$ as a rigid monoidal category.
For $\bX \in \C{D(H)}$, the half-braiding
$\hbr_X (V): X \o V \to V \o X$ for $V \in \CC$ is given by
$$
\hbr_X (V)(x \o v) := \sum_i h_i v \o S^*(h^i)x
$$
where $S$ denotes the antipode of $H$, $\{h_i\}$ is a basis for $H$ and $\{h^i\}$ its
dual basis for $H^*$. Note that $S^2=\id_H$ (cf. \cite{LaRa87}, \cite{LaRa88}).
The Drinfeld isomorphism $u_\bX \in \End_{D(H)}(\bX)$ is
given by
$$
u_\bX(x) = \sum_i h_i h^i x = \sum_i h^i h_i x
$$
for all $x \in \bX$.

The induction functor $K(-)=D(H) \o_H -$ is left adjoint to
the forgetful functor from $\C{D(H)} \to \C{H}$ with the adjunction isomorphisms
$\ol \Psi_{V, \bX} :   \Hom_H(V,X) \to \Hom_{D(H)}(K(V), \bX)$ and
$\ol \Psi\inv_{V, \bX} :   \Hom_{D(H)}(K(V), \bX) \to \Hom_H(V,X)$
given by
$$
\ol \Psi (f)(p \o v) = p   f(v)\quad \text{and}\quad
\ol \Psi\inv (g)(v) = g(1_{D(H)} \o v)
$$
for $v \in V$, $p \in H^*$. Note that  $D(H) \o_H V$ naturally
isomorphic to $H^* \o V$ as $\k$-linear spaces.   Every element is
a linear combination of the tensor products $p \o v \in D(H) \o_H
V$ with $p \in H^*$ and $v \in V$.

As we have mentioned following \eqref{Ebar}, the definition of
$\ol E_{\bX,V}^{(m,l)}$ can be obtained in the graphical calculus
by turning the definition of $E_{\bX,V}^{(m,l)}$ upside down.
Explicitly, this gives, for $m>0$:

$$
\ol E_{\bX,V}^{(m,1)}(g) =
\def\objectstyle{\scriptstyle}
\xy (0,1)="ctext", "ctext"*{g},
{"ctext"+(-3, -2)*{}; "ctext"+(3,2)*{} **\frm{-}},
{"ctext"+(-1,2)*{};"ctext"+(-1,10)*{V^{m-1}}="v" **\dir{-}},
{"ctext"+(1.5,2);"ctext"+(1.5,3.5)="w" **\dir{-}},
{"w"; (5,0)+"w"="w1" **\crv{"w"+(0,3)& "w1"+(0,3)}},
{"w1"; (0,-5.5)+"w1"="w2" **\dir{-}},
{\vcross~{"ctext"+(0,-2)="x"}{"w2"}{"x"+(0,-6)="x1"}{"w2"+(0,-6)="w4"}},
{"ctext"+(-7,-8)="w3";"ctext"+(-7,10)*{V} **\dir{-}},
{"w3"; "x1" **\crv{"w3"+(0,-4)& "x1"+(0,-4)}},
{"w4"; "w4"+(0,-5)*{X}**\dir{-}},
\endxy
    \quad\text{and}\quad
 \ol E_{\bX,V}^{(0,1)}(g) =
  {\def\objectstyle{\scriptstyle}
  \xy
    (0,-7)="ctext",
    \vcross~{"ctext"+(-5,7)="x1"}{"ctext"+(0,7)="x2"}{"ctext"+(-5,2)="y1"}{"ctext"+(0,2)="y2"},
    {"x1"+(0,2)*{g}},
    {"x1"+(-2,0)*{}; "x1"+(2,4)*{} **\frm{-}},
    {"x2"+(-10,4)="z0";"x2"+(0,4)="z2"**\crv{"z0"+(0,4) & "z2"+(0,4)}},
     {"z2";"x2"**\dir{-}},
     {"z0";"y1"+(-5,0)="y0"**\dir{-}?(.1)+(-1,0)*{V}},
    {"y2"; "y2"+(0,-6)*{X}**\dir{-}},
    {"y0";"y1"**\crv{"y0"+(0,-3)& "y1"+(0,-3)}},
\endxy}\,.
$$

Thus, for $V\in\CC$, $\bX\in Z(\CC)$, $m>0$, $f \in \Hom_H(V^m,
X)$ and $g \in \Hom_H(\k, \bX)$, it is straightforward to verify
that
$$
\ol E_{\bX,V}^{(m,1)}(f)(v\o w)= \sum_{i,j} (h_i v^j)(v)  S^*(h^i)
f(w \o v_j) =
\sum_{i}  h^i f(w \o h_i v)
$$
for  $v\in V$ and $w\in V^{m-1}$,  and
$$
\ol E_{\bX,V}^{(0,1)}(g)(1_\k)= \sum_{i,j} (h_iv^j)(v_j)
S^*(h^i)g(1_\k)=\sum_{i,j} v^j(h_i v_j) h^i g(1_\k)= \chi_V
g(1_\k),
$$
where $\chi_V$ denotes the character afforded by $V$,
$\{v_j\}$  a basis for $V$ and $\{v^j\}$  its dual basis.
Therefore, for $p \in H^*$, $v \in V$ and $w \in V^{m-1}$,
\begin{equation}\label{eq:bg}
\b_{V^m} (p \o (v \o w))  = \sum_i ph^i \o (w \o h_i v) \quad \text{and}\quad
\gamma_V(p \o 1_\k) =p \chi_V \o 1_\k\,.
\end{equation}
The above formula for
$
\b_{V^m}$ is identical to the map $\b_{V, V^{m-1}}$ defined in
\cite{SZh}. Let $\rho_m: D(H) \to \End(D(H) \o_H V^m)$ be the
corresponding representation of the $D(H)$-module $D(H) \o_H V^m$
and $z$ an element in the center of $D(H)$. The $(m,l)$-th
\textbf{equivariant Frobenius-Schur indicator} of $V$ and
$z=\sum_i p_i k_i$ is defined in \cite{SZh} as %


\newcommand{\ool}{}

\begin{equation}\label{def:SZ}
\SZ_V((m,l), z):= \left\{
\begin{array}{ll}
\Tr(\ool\b_{V^{m}}^l \circ \rho_m (z)) & \text{if } m >0, \\ \\
\displaystyle \dim H \sum_i \e(k_i) (p_i \chi_{V^l})(\Lambda) & \text{if } m =0 \text{ and } l\ge 0, \\\\
\SZ_V((-m,-l), S_D(z)) & \text{ otherwise},
\end{array}
\right.
\end{equation}
where $S_D$ is the antipode of $D(H)$, $\e$ the counit of $H$,
$\chi_{V^l}$ the character of $H$ afforded by $V^l$, and $\Lambda$
the \emph{normalized integral} of $H$, i.e. the integral of $H$
satisfying $\e(\Lambda)=1$.

The following corollary highlights the relationship between our GFS indicators and
the equivariant FS indicators defined for semisimple Hopf algebras.
\begin{cor}\label{c:Hopfcase}
Let $\CC = \C{H}$ for some finite-dimensional semisimple Hopf algebra $H$ over $\k$.
For simple $\bX \in \C{D(H)}$, $V \in \C{H}$ and $(m,l) \in \BZ \times \BZ$,
$$
\nu_{m,l}^\bX(V)=\frac{1}{\dim\bX} \SZ_V((m,l), e_\bX)
$$
where $e_\bX$ is the central idempotent of $D(H)$ associated with $\bX$.
\end{cor}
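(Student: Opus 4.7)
The plan is to verify the identity case by case via the summary \eqref{eq:betagammaind} of the GFS indicators. First, I would observe that since $S^2=\id$ for a semisimple Hopf algebra $H$, the canonical pivotal structure on $\CC=\C{H}$ is spherical, its pivotal traces coincide with ordinary traces, and $d(\bX)=\dim\bX$. For simple $\bX\in\C{D(H)}$ the central idempotent $e_\bX$ acts as the projection $z_\bX$ onto the $\bX$-isotypic component of any $D(H)$-module; in particular $\rho_m(e_\bX)=z_\bX$ on $K(V^m)=D(H)\otimes_H V^m$. Finally, $e_{\bX\du}=S_D(e_\bX)$ and $\dim\bX\du=\dim\bX$, so the duality relations $\nu_{m,l}^\bX(V)=\nu_{-m,-l}^{\bX\du}(V)$ from Proposition \ref{p:dual} and $\SZ_V((m,l),e_\bX)=\SZ_V((-m,-l),S_D(e_\bX))$ built into \eqref{def:SZ} match term-by-term; it therefore suffices to settle the cases $m>0$ and $m=0,\,l\ge0$.

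For $m>0$, the formula \eqref{eq:bg} shows that the map $\b_{V^m}$ defined here agrees exactly with the map of the same name in \cite{SZh}. Combining this with $\rho_m(e_\bX)=z_\bX$ gives
\[\SZ_V((m,l),e_\bX)=\Tr(\b_{V^m}^l\circ\rho_m(e_\bX))=\Tr(\b_{V^m}^l\circ z_\bX)=\dim\bX\cdot\nu_{m,l}^\bX(V),\]
where the last equality is \eqref{eq:betagammaind}.

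The case $m=0,\,l\ge 0$ is the main content. Under the identification $K(\unit)\cong H^*$, the second formula in \eqref{eq:bg} shows that $\gamma_V$ is right multiplication by $\chi_V$ on $H^*$; since $\chi_V^l=\chi_{V^l}$ in $H^*$, iterating gives that $\gamma_V^l$ is right multiplication by $\chi_{V^l}$. What must be shown is
\[\Tr(\gamma_V^l\circ z_\bX)=\dim H\sum_i\e(k_i)(p_i\chi_{V^l})(\Lambda)\]
for $e_\bX=\sum_i p_ik_i$. By linearity in $z_\bX$ it suffices to verify this for a single summand $pk\in D(H)$; a direct calculation using the multiplication rule in $D(H)$ shows that the action of $pk$ on $\psi\in H^*$ has the form $\psi\mapsto\sum p\cdot\psi(S(k_{(2)})\,?\,k_{(1)})$, and composing with right multiplication by $\chi_{V^l}$ and taking the trace on $H^*$ collapses, via the standard identity $\Tr_{H^*}(L_\phi)=\dim H\cdot\phi(\Lambda)$ for the regular representation of the semisimple Hopf algebra $H^*$, to the claimed expression.

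The main obstacle is this last explicit trace computation in the $m=0$ case: one must carefully carry the $H$-part $k$ of $pk$ through the $D(H)$-action on $H^*$ and see that the Sweedler components combine with the counit $\e$ on the $k$-side, leaving the integral pairing $(p\chi_{V^l})(\Lambda)$ on the $p$-side. Once this is done, the three cases together yield the corollary.
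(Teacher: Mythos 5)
Your overall strategy --- the reduction via duality to the two cases $m>0$ and $m=0,\ l\geq 0$, and the $m>0$ case via \eqref{eq:bg}, $\rho_m(e_\bX)=z_\bX$ and \eqref{eq:betagammaind} --- follows the paper, and your treatment of those parts is correct.

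The $m=0$ case, however, contains a genuine gap. You argue ``by linearity in $z_\bX$ it suffices to verify this for a single summand $pk\in D(H)$'' and then claim that the trace of $\psi\mapsto \sum p\,\psi(S(k_{(2)})\,?\,k_{(1)})\chi_{V^l}$ collapses via $\Tr_{H^*}(L_\phi)=\dim H\cdot\phi(\Lambda)$ to $\dim H\cdot\e(k)(p\chi_{V^l})(\Lambda)$. Both sides are indeed linear in the argument, so the reduction to pure tensors is allowed \emph{as a logical step} --- but the resulting identity for a general pure tensor $pk$ is simply false. The operator you describe is $L_p\circ R_{\chi_{V^l}}\circ C_k$, where $C_k(\psi)=\sum\psi(S(k_{(2)})\,?\,k_{(1)})$ is conjugation by $k$, and this composite is not a one-sided multiplication, so the trace formula does not apply directly. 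Concretely, take $H=\k G$ for a nonabelian group $G$, $p=\delta_h$, $k=g$: one computes $\rho_0(\delta_h g)(\delta_x)=\delta_{h,gxg^{-1}}\delta_h$, hence $\Tr(\gamma_{V^l}\circ\rho_0(\delta_h g))=\chi_{V^l}(h)$ if $g$ and $h$ commute and $0$ otherwise, whereas the right-hand side $\dim H\cdot\e(g)(\delta_h\chi_{V^l})(\Lambda)=\chi_{V^l}(h)$ always. The centrality of $e_\bX$ is therefore essential and is lost the moment you pass to an individual summand $pk$.

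The paper circumvents this precisely by exploiting centrality \emph{before} expanding: it writes $\gamma_{V^l}\rho_0(e_\bX)(h^j\o 1)=\gamma_{V^l}(e_\bX h^j\o 1)=\gamma_{V^l}(h^j e_\bX\o 1)=\sum_i\e(k_i)\,h^jp_i\chi_{V^l}\o 1$, the middle equality using $e_\bX h^j=h^j e_\bX$; the conjugation twist $C_{k_i}$ never appears, and one is left with a genuine right multiplication by $\sum_i\e(k_i)p_i\chi_{V^l}$, whose trace is then computed by the identity you cite (the paper invokes \cite[Proposition 2]{Radf94}). To repair your argument you need to keep $e_\bX$ whole and commute it past the basis elements $h^j$ rather than decompose it into pure tensors.
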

\begin{proof}
We first consider $m >0$. Since $e_\bX$ is the central idempotent
of $D(H)$ associated with the simple $D(H)$-module $\bX$,
$\rho_m(e_\bX)$ is the central idempotent $z_\bX$ of
$\End_{D(H)}(D(H) \o_H V^m)$ corresponding to the isotypic
component of $\bX$ in $D(H) \o_H V^m$. The pivotal traces in
$\C{H}$ as well as $\C{D(H)}$ are identical to the ordinary trace of linear operators.
Therefore, by Theorem \ref{t:kappaFS} or \eqref{eq:betagammaind},
$$
\nu_{m,l}^\bX(V) = \frac{1}{\dim \bX} \Tr( \ool \b_{V^m}^l \circ z_\bX) =
\frac{1}{\dim \bX} \Tr(\ool\b_{V^m}^l \circ \rho_m (e_\bX)) =
\frac{1}{\dim \bX} \SZ_V((m,l), e_\bX)\,.
$$

Let $e_\bX = \sum_i p_i k_i$ for some $p_i \in H^*$ and $k_i \in H$, $\{h_j\}$ a basis for $H$ and
$\{h^j\}$ its dual basis for $H^*$. Then $\{h^j \o 1_\k\}$ is a basis for $D(H) \o_H \k$ and
$$
\ool\gamma_{V^l}\circ \rho_0(e_\bX)(h^j \o 1_\k)= \ool\gamma_{V^l}(e_\bX h^j \o 1_\k) =
\ool\gamma_{V^l}( h^j e_\bX \o 1_\k) = \sum_i \e(k_i) h^j p_i \chi_{V^l} \o 1_\k\,.
$$
Let $\Lambda$ be the normalized integral, and $\chi_H$ the regular character of $H$. Then
$\chi_H$ is a two-sided integral of $H^*$ and $\chi_H(\Lambda)=1$ (cf. \cite[Theorem 4.4]{LaRa88}). By
\cite[Proposition 2]{Radf94},
$$
\Tr(\ool\gamma_{V^l}\circ \rho_0(e_\bX)) =
\sum_{i,j} \e(k_i)(h^j p_i \chi_{V^l})(h_j) = \chi_H(1) \sum_i \e(k_i) p_i \chi_{V^l}(\Lambda)\,.
$$
It follows from \eqref{eq:betagammaind} that
$$
\nu_{0,l}^\bX(V) = \frac{1}{\dim \bX} \Tr(\ool\gamma_{V^l}\circ \rho_0(e_\bX))
=\frac{\dim H}{\dim \bX}  \sum_i \e(k_i) p_i \chi_{V^l}(\Lambda)
= \frac{1}{\dim \bX} \SZ_V((0,l), e_\bX)
$$
for $l \ge 0$.

Thus, if (i) $m<0$, or (ii) $m=0$ and $l<0$, then, by Proposition \ref{p:dual}, we find
$$
\nu_{m,l}^\bX(V) =\nu_{-m,-l}^{\bX\du}(V) = \frac{1}{\dim \bX} \SZ_V((-m,-l), e_{\bX\du})
=\frac{1}{\dim \bX} \SZ_V((m,l), e_\bX)\,.
$$
The last equality follows from the fact $S_D(e_\bX) = e_{\bX\du}$ and the definition of
equivariant FS-indicators illustrated in \eqref{def:SZ}.
\end{proof}

\section{$\SL$-equivariant Indicators for spherical fusion categories}\label{s:equiv}
 Given a pair $(m,l)$ of integers, and an object $V$ in a pivotal
 fusion category, the values of the GFS indicators
 $\nu_{m,l}^\bX(V)$ for  $\bX \in Z(\CC)$ can be
 extended to a functional on the Grothendieck algebra
 $\KK_\k(Z(\CC))=\KK_0(Z(\CC))\o_\BZ \k$. In this section, these
 functionals are introduced as the equivariant indicators, and
 studied in detail for a spherical fusion category $\CC$. In this
 case, the center $Z(\CC)$ is a modular tensor category, and so
 $\KK_\k(Z(\CC))$ admits a natural representation of $\SL$
 as described in Section \ref{s:modular}. We show that
 the set of all equivariant indicators for a spherical fusion category $\CC$
 is closed under the contragredient action of $\SL$ on
 $\KK_\k(Z(\CC))^*$, and this action on the equivariant indicators is
 compatible with the action of $\SL$ on $\BZ^2$. This property
 of equivariant indicators implies the additivity of the GFS
 indicator $\nu_{m,l}^\bX(V)$ in $V$, and that its values lie in the
 cyclotomic field $\BQ_N$ where $N$ is the Frobenius-Schur
 exponent of $\CC$. Moreover, a formula for the GFS
 indicators for spherical fusion categories is obtained in Corollary \ref{c:nu}
 as a consequence. This formula implies the FS indicator formula discovered in
 \cite[Theorem 4.1]{NS07}.

 Throughout the section, we consider a spherical fusion category $\CC$, and we let $\{[\bX_j]|j\in\hat\Gamma\}$ with $\bX_j=(X_j,\hbr_{X_j})$ be the set of isomorphism classes of the simple objects in $\DD:=Z(\CC)$.

The equivariant indicators for $\CC$ are defined as follows.
\begin{defn}\label{def:EGFS}
   For $m,l \in \BZ$, the \textbf{$(m,l)$-th equivariant indicator} of $V \in \CC$ is defined as the functional $I_V((m,l), -)\in (\KK_\k(\DD))^*$ determined by the assignment
  $$
  I_V((m,l), [\bX]):= \nu_{m,l}^{\bX} (V)
  $$
  for $\bX \in Z(\CC)$; this is well-defined in view of remark \ref{firstadditivity}.
\end{defn}
\begin{remark}{\rm (i)
 Definition \ref{def:EGFS} obviously makes sense for pivotal fusion categories. However, there is no natural modular action on the Grothendieck algebras of the centers of these categories, so we will reserve the term for the spherical case.{\smallskip}\\
 (ii) For $\CC = \C{H}$ for some semisimple Hopf algebra $H$ over $\k$, it follows from Corollary \ref{c:Hopfcase} that
$$
I_V\left((m,l), z\right) = \SZ_V\left((m,l), \psi(z)\right)
$$
where $ \psi: \KK_\k(Z(\CC)) \to \Cent(D(H))$ is the $\k$-linear isomorphism given by
$\psi([\bX]) = \frac{1}{\dim \bX} e_\bX$ for every simple $D(H)$-module $\bX$. Therefore,
the equivariant indicator defined in Definition \ref{def:EGFS} is a generalization of the
corresponding notion introduced by Sommerh\"auser and Zhu in \cite{SZh} to spherical fusion categories.
}
\end{remark}

Recall from Section \ref{s:modular} the canonical modular representation $\rho_{Z(\CC)}: \SL \to \GL(\KK_\k(Z(\CC)))$ of $Z(\CC)$. The association action of $\SL$ on $\KK_\k(Z(\CC))$ is given by
\begin{equation}\label{eq:sl2action}
\fs[\bX_j] = \dfrac{1}{\dim \CC} \sum_{i \in \hat\Gamma} S_{ij} [\bX_i]\quad\text{and}\quad \ft[\bX_j] = \w_j [\bX_j],
\end{equation}
where $[S_{ij}]_{\hat\Gamma}$ and $[\delta_{ij}\w_j]_{\hat\Gamma}$ are the $S$ and $T$-matrices of $Z(\CC)$. For the convenience of the remaining discussion, we summarize some properties of  the equivariant indicators in the following lemma.

\begin{lem}\label{l:elementary2} Let $\CC$ be a spherical fusion category over $\k$.
  For $z \in \KK_\k(Z(\CC))$, $V \in \CC$,  and $q, m,l  \in \BZ$, we have
  \begin{enumerate}
    \item[\rm (i)] $I_V((-m,-l), z)= I_{V\du}((m,l), z) = I_V((m,l), \fs^2 z)$,\\
    \item[\rm (ii)] $I_V((qm,ql), z)= I_{V^{q}}((m,l), z)$.
  \end{enumerate}
\end{lem}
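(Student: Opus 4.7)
The statement is linear in $z$, so by Definition \ref{def:EGFS} it suffices to verify both identities when $z=[\bX]$ for a simple object $\bX\in Z(\CC)$. Both parts will be immediate from results already proved.

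For part (ii), I simply invoke Proposition \ref{p:tq}(ii), which says $\nu_{m,l}^{\bX}(V^q)=\nu_{qm,ql}^{\bX}(V)$. Unwinding the definition of the equivariant indicator gives
\[
I_V((qm,ql),[\bX])=\nu_{qm,ql}^{\bX}(V)=\nu_{m,l}^{\bX}(V^q)=I_{V^q}((m,l),[\bX]),
\]
and linearity extends this to arbitrary $z\in\KK_\k(Z(\CC))$.

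For the first equality in part (i), I apply Proposition \ref{p:dual}: $\nu_{-m,-l}^{\bX}(V)=\nu_{m,l}^{\bX}(V\du)$, so
\[
I_V((-m,-l),[\bX])=\nu_{-m,-l}^{\bX}(V)=\nu_{m,l}^{\bX}(V\du)=I_{V\du}((m,l),[\bX]).
\]
For the second equality I use the other half of Proposition \ref{p:dual}, namely $\nu_{m,l}^{\bX}(V\du)=\nu_{m,l}^{\bX\du}(V)$, together with the fact that $\fs^2$ acts on $\KK_\k(Z(\CC))$ as the charge conjugation operator $C$. Indeed, by Section \ref{s:modular} the canonical modular representation $\rho_{Z(\CC)}$ satisfies $\rho_{Z(\CC)}(\fs)=\frac{1}{\dim\CC}S$, and the relations \eqref{eq:strelations} give $\rho_{Z(\CC)}(\fs)^2=C$; but $C$ is the permutation matrix sending $[\bX]$ to $[\bX\du]$ (cf.\ \eqref{eq:Sprop} together with the definition of $C$). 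Hence
\[
I_V((m,l),\fs^2[\bX])=I_V((m,l),[\bX\du])=\nu_{m,l}^{\bX\du}(V)=\nu_{m,l}^{\bX}(V\du)=I_{V\du}((m,l),[\bX]),
\]
and linearity finishes the proof.

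There is no serious obstacle here; both parts are formal consequences of Proposition \ref{p:tq}(ii), Proposition \ref{p:dual}, and the explicit description of $\fs^2$ in the canonical modular representation. The only point requiring a sentence of justification is that $\fs^2$ acts as charge conjugation, but this is built into the lifting $\rho_{Z(\CC)}$ fixed in Section \ref{s:modular}.
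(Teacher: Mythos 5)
Your proof is correct and follows essentially the same route as the paper: the paper notes that $\fs^2[\bX_j]=[\bX_j\du]$ (via the relation $s^2=C$ from \eqref{eq:strelations}) and then cites Propositions \ref{p:tq} and \ref{p:dual} together with Definition \ref{def:EGFS}, which is exactly the chain you have written out in detail.
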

\begin{proof} By \eqref{eq:strelations}, $\fs^2 [\bX_j] = [\bX_{\ol j}] = [\bX_j\du]$ for $j \in \hG$. Therefore, the statements follow immediately from Propositions \ref{p:tq}, \ref{p:dual} and Definition \ref{def:EGFS}.
\end{proof}

We define an \emph{outer} automorphism $\sigma$ of $\SL$ by
\begin{equation}\label{eq:tildefg}
\sigma(\fg) = \fj \fg \fj\inv  \quad \text{where} \quad \fj = \mtx{1 & 0\\ 0 & -1}
\end{equation}
for $\fg \in \SL$.
We will write $\tilde \fg$ for $\sigma(\fg)$ in the sequel. In
particular, $\tilde \fs =\fs\inv$ and $\tilde \ft = \ft\inv$, and
so $\tilde {\tilde \fg} =\fg$ for all $\fg \in \SL$. If $\rho: \SL \to \GL(V)$ is a representation, then we denote by $\tilde\rho$ its twist by the automorphism $\sigma$, i.e.
\begin{equation}
  \tilde \rho(\fg):= \rho(\tilde \fg)\quad \text{for all } \fg  \in \SL\,.
\end{equation}

We proceed to show the $\SL$-equivariance of the equivariant indicators as stated in the following theorem.
\begin{thm}\label{t:equivariance}
Let $\CC$ be a spherical fusion category over $\k$, $V \in \CC$ and $m,l \in \BZ$. Then
$$
I_V((m,l)\fg, z) = I_V((m,l), \tilde\fg z)
$$
for all $\fg \in \SL$ and $z \in \KK_\k(Z(\CC))$.
\end{thm}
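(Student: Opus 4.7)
The plan is to reduce the claim to verifying it on the generators $\fs, \ft$ of $\SL$, and, by linearity together with the additivity of $\nu_{m,l}^{\bX}(V)$ in $\bX$ (Remark \ref{firstadditivity}), on the simple classes $z = [\bX_j]$, where $\bX_j$ runs over a set of representatives of the isomorphism classes of simple objects of $\DD := Z(\CC)$. The first reduction is legitimate because $\sigma$ is a group automorphism: the set of $\fg \in \SL$ for which the identity holds for every $(m,l)$ and $z$ is readily checked to be closed under products and inverses.

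For $\fg = \ft$ the identity becomes $\nu_{m,m+l}^{\bX_j}(V) = \w_j\inv\,\nu_{m,l}^{\bX_j}(V)$, using $(m,l)\ft = (m,m+l)$ and $\ft\inv[\bX_j] = \w_j\inv[\bX_j]$. For $m \neq 0$ this is exactly Proposition \ref{p:tq}(iii). For $m = 0$ the same proposition (applied with $V = I$) forces $\w_j = 1$ whenever $\CC(X_j, I) \neq 0$; otherwise $\CC(X_j, I) = 0$ and $E^{(0,l)}_{\bX_j,V}$ acts on the zero vector space, so both sides already vanish.

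For $\fg = \fs$ the identity becomes, using $(m,l)\fs = (l,-m)$ and the formula $\fs\inv[\bX_j] = \frac{1}{\dim\CC}\sum_{i\in\hG} S_{\bar i j}[\bX_i]$ from \eqref{eq:Sprop},
$$\nu_{l,-m}^{\bX_j}(V) \;=\; \frac{1}{\dim\CC}\sum_{i\in\hG} S_{\bar i j}\,\nu_{m,l}^{\bX_i}(V).$$
Using Lemma \ref{l:elementary2}(i) together with $\fs^2 = C$, I would reduce to the case $m, l \geq 0$, and then expand each $\nu_{m,l}^{\bX_i}(V)$ via Proposition \ref{p:surgery} as a sum of pivotal traces of closed diagrams in $\CC$ whose only occurrence of $\bX_i$ is as the half-braiding $\hbr_{X_i}$ wrapped around a $V^l$-strand. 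The $S$-matrix entry $S_{\bar i j} = \ptr(\hbr_{X_j}(X_i)\,\hbr_{X_i}(X_j))$ (read off from \eqref{eq:Smatrix} with the braiding in $\DD$ being the half-braiding) then contributes a second closed $\bX_j$-loop linked with $\bX_i$. Summing over $i \in \hG$ with these weights and applying the completeness, or handle-slide, identity for simple objects in the modular category $\DD$ should collapse the $\sum_i$ into a single closed diagram in $\DD$ involving only $\bX_j$ and the $V$-strands. A final diagrammatic rotation — pictorially, a ninety-degree turn of the underlying torus, with Lemma \ref{pairingformulas} and Remark \ref{r:alternative} converting $V^m$ into $V^{-m}$ as the loops swap roles — identifies this with $\dim\CC \cdot \nu_{l,-m}^{\bX_j}(V)$.

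The main obstacle is precisely this handle-slide step in the $\fs$-case: one must justify that $\sum_{i} S_{\bar i j}$ times a pivotal trace involving $\hbr_{X_i}$ collapses to a single pivotal trace involving $\hbr_{X_j}$, all while the $V^m$- and $V^l$-strands thread through both loops in a controlled way. Modulo this identity, the remaining reorganization is a coherence argument on a torus, corresponding intuitively to the fact that the generator $\fs \in \SL$ interchanges the two standard cycles.
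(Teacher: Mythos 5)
Your treatment of the $\ft$-case is essentially correct, and your handling of the $m=0$ subcase is in fact cleaner than the paper's: you invoke the second assertion of Proposition~\ref{p:tq}(iii) directly (either $\w_j=1$ or $\CC(X_j,I)=0$), so the $\ft$-equivariance is established independently, whereas the paper's $m=0$ argument leans on the already-proved $\fs$-equivariance via the relation $(\tilde\fs\tilde\ft)^3=\tilde\fs^2$. One small imprecision: Proposition~\ref{p:tq}(iii) is stated only for $m>0$; for $m<0$ you should either quote Lemma~\ref{Emm} directly (which holds for all $m\neq0$), or, as the paper does, reduce to $m>0$ via Lemma~\ref{l:elementary2}(i).

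The $\fs$-case, however, contains a genuine gap, and you flag it yourself. The identity you describe — that the weighted sum $\sum_i S_{\bar i j}\cdot(\text{trace with }\hbr_{X_i}\text{ encircling }V^l)$ collapses into a single trace with $\hbr_{X_j}$ threaded through $V^m$, while the $V$-strands are rearranged — is precisely the content of M\"uger's \cite[Lemma~5.9]{MugerII03}, and the paper's proof hinges on it. You are right that this is ``the main obstacle,'' but calling it a handle-slide / completeness identity and appealing to a rotation of the torus is a heuristic, not a proof; the detailed basis bookkeeping and the insertion of a resolution of identity through $\CC(V_i,V^l)$ that make M\"uger's Lemma applicable are exactly where the work lies. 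Your proposed reduction ``to $m,l\geq0$ via $\fs^2=C$ and Lemma~\ref{l:elementary2}(i)'' is also not quite available: that lemma flips \emph{both} signs simultaneously, so it cannot bring a mixed-sign pair $(m,l)$ to two nonnegative entries. The paper instead proves the case $ml\geq0$ first and then derives $ml\leq0$ from it by observing $(m,l)=(-l,m)\fs$ and $(-l)m\geq0$. Without filling in these two points — the M\"uger identity and the correct reduction between sign patterns — the proposal does not establish $\fs$-equivariance, which is the heart of the theorem.
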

  \begin{proof}
Since $\SL$ is generated by $\fs$ and $\ft$, it suffices to prove the
equality holds for $\fg=\fs, \ft$. Let $\{[V_i]\mid i \in
\Gamma\}$  denote the set
of isomorphism classes of simple objects in $\CC$.
 For any $n \in \BZ$, $i \in \Gamma$ and $k \in \hG$, we
  let $\{p^n_{i, \a}\}_\a$ be a basis for $\CC(V^n, V_i)$, and $\{\iota_{i,k, \b}\}_\b$ a
  basis for $\CC(V_i, X_k)$. Then $\{\iota_{i,k, \b} \circ p^n_{i, \a}\}_{\a, \b, i}$ is a basis for
  $\CC(V^n, X_k)$. Let $\{q^n_{i, \a}\}_\a$ be the basis for $\CC(V_i, V^n)$, and
  $\{\pi_{i,k, \b}\}_\b$ the basis for $\CC(X_k, V_i)$ such that
  $$
  p^n_{i, \a} \circ q^n_{i, \a'} = \delta_{\a, \a'} \id_{V_i}, \quad \text{and} \quad
  \pi_{i,k, \b}\circ \iota_{i,k, \b'} = \delta_{\b, \b'} \id_{V_i}\,.
  $$
  Then $\{\frac{1}{d(V_i)} q^n_{i,\a}\circ \pi_{i,k, \b}\}_{\a, \b, i}$ forms a basis for
  $\CC(X_k,V^n)$ dual to $\{\iota_{i,k, \b} \circ p^n_{i, \a}\}_{\a, \b, i}$ relative to the
  non-degenerate bilinear form $(\cdot, \cdot)$.\\ \\
  (i) $\fs$-\emph{equivariance}.
  We first consider the case $ml \ge 0$.
  Then, by Proposition \ref{p:surgery} and Remark \ref{r:alternative},
  $$
  I_V((m,l), [\bX_k]) = \!\sum_{\a, \b, j} \frac{1}{d(V_j)} \ptr\left(
{\def\objectstyle{\scriptstyle}
  \xy
   \vcross~{(-4,4)}{(4,4)}{(-4, -4)}{(4, -4)},
    (-4,6)="ctext1",
    {"ctext1"*{  \iota_{j,k,\b}}},
    {"ctext1"+(-4,-2); "ctext1"+(4,2) **\frm{-}},
    {"ctext1"+(0,8)="ctext3"*{  p^m_{j, \a}}},
    {"ctext3"+(-4,-2); "ctext3"+(4,2) **\frm{-}},
    {"ctext1"+(0,2); "ctext1"+(0,6)**\dir{-}?(0.5)+(2,0)*{V_j}},
    {"ctext3"+(0,2); "ctext3"+(0,6)*{V^m}**\dir{-}},
    {(-10,20)*{V^l}; (-10,-4)**\dir{-}},
    {(-10,-4); (-4,-4)**\crv{(-10,-8)&(-4,-8)}},
    {(4,4); (10,4)**\crv{(4,8) & (10,8)}},
    {(10,4); (10,-20)*{V^l}**\dir{-}},
    (4,-6)="ctext2",
    {"ctext2"*{  \pi_{j,k,\b}}},
    {"ctext2"+(-4.5,-2); "ctext2"+(4,2) **\frm{-}},
    {"ctext2"+(0,-2); "ctext2"+(0,-6)**\dir{-}?(0.5)+(2,0)*{V_j}},
    {"ctext2"+(0,-8)="ctext4"*{  q^m_{j,\a}}},
     {"ctext4"+(-4,-2); "ctext4"+(4,2) **\frm{-}},
    {"ctext4"+(0,-2); "ctext4"+(0,-6)*{V^m}**\dir{-}},
\endxy}
  \right)=\!
  \sum_{\a, \a' \b, i, j}\frac{1}{d(V_j)} \ptr\left(\,
{\def\objectstyle{\scriptstyle}
  \xy
   \vcross~{(-4,4)}{(4,4)}{(-4, -4)}{(4, -4)},
    (-4,6)="ctext1",
    {"ctext1"*{ \iota_{j,k,\b}}},
    {"ctext1"+(-4,-2); "ctext1"+(4,2) **\frm{-}},
    {"ctext1"+(0,8)="ctext3"*{ p^m_{j, \a}}},
    {"ctext3"+(-3,-2); "ctext3"+(3,2) **\frm{-}},
    {"ctext1"+(0,2); "ctext1"+(0,6)**\dir{-}?(0.5)+(2,0)*{V_j}},
    {"ctext3"+(0,2); "ctext3"+(0,6)*{V^m}**\dir{-}},
    {"ctext3"+(-8,0)="ctext5"+(0,-.2)*{p^l_{i, \a'}}},
    {"ctext5"+(-3.5,-2); "ctext5"+(3,2) **\frm{-}},
    {"ctext5"+(0,2); "ctext5"+(0,6)*{V^l}**\dir{-}},
    {"ctext5"+(0,-2); "ctext5"+(0,-18)="v1"**\dir{-}?(.5)+(-1.5,0)*{V_i}},
    {"v1"; (-4,-4)**\crv{"v1"+(0,-5) &(-4,-9)}},
    (4,-6)="ctext2",
    {"ctext2"*{  \pi_{j,k,\b}}},
    {"ctext2"+(-4.5,-2); "ctext2"+(4,2) **\frm{-}},
    {"ctext2"+(0,-2); "ctext2"+(0,-6)**\dir{-}?(0.5)+(2,0)*{V_j}},
    {"ctext2"+(0,-8)="ctext4"*{ q^m_{j,\a}}},
     {"ctext4"+(-3,-2); "ctext4"+(3,2) **\frm{-}},
    {"ctext4"+(0,-2); "ctext4"+(0,-6)*{V^m}**\dir{-}},
    {"ctext4"+(8,0)="ctext6"+(0,-.2)*{q^l_{i, \a'}}},
    {"ctext6"+(-3.5,-2); "ctext6"+(3,2) **\frm{-}},
    {"ctext6"+(0,-2); "ctext6"+(0,-6)*{V^l}**\dir{-}},
    {"ctext6"+(0,2); "ctext6"+(0,18)="v2"**\dir{-}?(.5)+(2,0)*{V_i}},
    {(4,4); "v2"**\crv{(4,8) & "v2"+(0,4)}},
\endxy}\,
  \right).
$$
It follows from \cite[Lemma 5.9]{MugerII03} that
$$
\frac{1}{d(V_j)}\sum_\b
{\def\objectstyle{\scriptstyle}
  \xy
   \vcross~{(-4,5)}{(4,5)}{(-4, -5)}{(4, -5)},
    (-4,7)="ctext1",
    {"ctext1"*{  \iota_{j,k,\b}}},
    {"ctext1"+(-4.5,-2)*{}; "ctext1"+(4.5,2)*{} **\frm{-}},
    {(-4,13)*{V_j}; (-4,9)**\dir{-}},
    {(-11,13)*{V_i}; (-11,-5)**\dir{-}},
    {(-11,-5); (-4,-5)**\crv{(-11,-9)&(-4,-9)}},
    {(4,5); (11,5)**\crv{(4,9) & (11,9)}},
    {(11,5); (11,-13)*{V_i}**\dir{-}},
    {(4,-13)*{V_j}; (4,-9)**\dir{-}},
    (4,-7)="ctext2",
    {"ctext2"*{  \pi_{j,k,\b}}},
    {"ctext2"+(-4.5,-2)*{}; "ctext2"+(4.5,2)*{} **\frm{-}},
\endxy}
= \sum_{\b, k'}  \frac{S_{kk'}}{d(V_i)\dim \CC}
{\def\objectstyle{\scriptstyle}
  \xy
  \vcross~{(-4,5)}{(4,5)}{(-4, -5)}{(4, -5)},
    (-4,7)="ctext1",
    {"ctext1"*{  \iota_{i,k',\b}}},
    {"ctext1"+(-5,-2)*{}; "ctext1"+(4.5,2)*{} **\frm{-}},
    {(-4,13)*{V_i}; (-4,9)**\dir{-}},
    {(4,13)*{V_j}; (4,5)**\dir{-}},
    (4,-7)="ctext2",
    {"ctext2"*{   \pi_{i,k',\b}}},
    {"ctext2"+(-5,-2)*{}; "ctext2"+(4.5,2)*{} **\frm{-}},
     {(-4,-13)*{V_j}; (-4,-5)**\dir{-}},
     {(4,-13)*{V_i}; (4,-9)**\dir{-}},
\endxy}
$$
for all $i,j \in \Gamma$ and $k \in \hG$, where $[S_{ab}]_{\hG}$ is the
$S$-matrix of $Z(\CC)$. Thus, we have
$$
  I_V((m,l), [\bX_k]) =\sum_{\a, \a' \b, i, j, k'} \frac{S_{kk'}}{d(V_i)\dim \CC}\ptr\left(\,
{\def\objectstyle{\scriptstyle}
  \xy
   \vcross~{(-6,4)}{(6,4)}{(-6, -4)}{(6, -4)},
    (-6,6)="ctext1",
    {"ctext1"+(.5,0)*{  \iota_{i,k',\b}}},
    {"ctext1"+(-4,-2); "ctext1"+(4.5,2) **\frm{-}},
    {"ctext1"+(0,8)="ctext3"+(0,-.2)*{ p^l_{i, \a'}}},
    {"ctext3"+(-3.5,-2); "ctext3"+(3,2) **\frm{-}},
    {"ctext1"+(0,2); "ctext1"+(0,6)**\dir{-}?(0.5)+(2,0)*{V_i}},
    {"ctext3"+(0,2); "ctext3"+(0,6)*{V^l}**\dir{-}},
    {"ctext3"+(12,0)="ctext5"*{ p^m_{j, \a}}},
    {"ctext5"+(-3,-2); "ctext5"+(3,2) **\frm{-}},
    {"ctext5"+(0,2); "ctext5"+(0,6)*{V^m}**\dir{-}},
    {"ctext5"+(0,-2); "ctext5"+(0,-11)="v1"**\dir{-}?(.5)+(-2,0)*{V_j}},
    (6,-6)="ctext2",
    {"ctext2"*{  \pi_{i,k',\b}}},
    {"ctext2"+(-4.5,-2); "ctext2"+(4,2) **\frm{-}},
    {"ctext2"+(0,-2); "ctext2"+(0,-6)**\dir{-}?(0.5)+(2,0)*{V_i}},
    {"ctext2"+(0,-8)="ctext4"+(0,-.2)*{ q^l_{i,\a'}}},
     {"ctext4"+(-3.5,-2); "ctext4"+(3,2) **\frm{-}},
    {"ctext4"+(0,-2); "ctext4"+(0,-6)*{V^l}**\dir{-}},
    {"ctext4"+(-12,0)="ctext6"*{ q^m_{j, \a}}},
    {"ctext6"+(-3,-2); "ctext6"+(3,2) **\frm{-}},
    {"ctext6"+(0,-2); "ctext6"+(0,-6)*{V^m}**\dir{-}},
    {"ctext6"+(0,2); "ctext6"+(0,11)="v2"**\dir{-}?(.5)+(2,0)*{V_j}},
\endxy}\,\right)\,.
$$
By Proposition \ref{p:surgery}, the expression on the right hand side is equal to
$$
\displaystyle \sum_{k'} \frac{S_{kk'}}{\dim \CC} \nu_{l,-m}^{\bX_{k'}}(V)
= \sum_{k'} \frac{S_{kk'}}{\dim \CC} I_V((l,-m),  [\bX_{k'}])
= I_V((l,-m), \fs [\bX_k]) = I_V((m, l)\fs, \fs [\bX_k]) ,
$$
where the second last equality holds by definition of the $\SL$-action on $\KK_\k(Z(\CC))$ (cf. \eqref{eq:sl2action}).
  Therefore, $I_V((m,l)\fs, \fs [\bX_k]))=I_V((m,l),  [\bX_k]))$ for all $k \in \hG$, and
  hence  $I_V((m,l), z))=I_V((m,l)\fs, \fs z))$ for all $z \in \KK_\k(Z(\CC))$.
  Replacing $z$ by $\tilde\fs z$,
  we obtain
  $$
  I_V((m,l), \tilde\fs z))= I_V((m,l)\fs, \fs \tilde \fs z)) = I_V((m,l)\fs,   z))
  $$
  for all $z \in \KK_\k(Z(\CC))$ and  $(m,l)\in \BZ^2$ with $ml \ge 0$.

  If $ml \le 0$, then $m(-l) \ge 0$ and $(m,l)=(-l,m)\fs$. In view of the preceding discussion,
  we find
  $$
  I_V((m,l), \tilde \fs z)= I_V((-l,m)\fs , \tilde \fs  z) = I_V((-l,m), \fs^2 z)= I_V((l,-m), z) = I_V((m, l)\fs, z).
  $$
  Here the second last equality follows from Proposition
  \ref{p:dual}, and this completes the proof of $\fs$-equivariance.\\ \\
  (ii) $\ft$\emph{-equivariance}. For  $k \in \hG$ and $m, l \in
\BZ$ with $m>0$,  it follows from Proposition \ref{p:tq} (iii)
that
  $$
  I_V((m, l)\ft, [\bX_k]) =  \nu^{\bX_k}_{m,m+l}(V) = \w_k\inv \nu_{m,l}(V) =
  I_V((m,l), \tilde\ft [\bX_k])
  $$
  where $[\delta_{ab}\w_a]_{\hG}$ is the $T$-matrix of $Z(\CC)$.
  If $m < 0$, then, by Lemma \ref{l:elementary2}, we also have
  $$
  I_V((m, l)\ft, [\bX_k]) = I_{V\du}((-m, -l)\ft, [\bX_k])=I_{V\du}((-m,-l), \tilde\ft [\bX_k])=
  I_{V}((m,l), \tilde\ft [\bX_k])\,.
  $$
  Therefore, $I_V((m, l)\ft, z) = I_V((m, l), \tilde\ft z)$ for all $z \in \KK_\k(Z(\CC))$ whenever $m \ne 0$.
  By Lemma \ref{l:elementary2} (ii), $I_V((0,0), z) = I_I((1,0), z)=I_I((1,1), z)$. Thus,
  $$
  I_V((0,0), \tilde\ft z) = I_I((1,0), \tilde\ft  z) = I_I((1,1), z) =I_V((0,0),  z).
  $$
  Note that $(\tilde \fs \tilde\ft)^3=\tilde\fs^2$. Applying what we have just obtained, we find
  \begin{multline*}
  I_V((0,l), z)=I_V((-l,0)\fs, z) = I_V((-l,0), \tilde\fs z)
  =I_V((-l,0), \tilde\ft \tilde\fs \tilde\ft \tilde \fs \tilde \ft z)
  =I_V((-l,-l),  \tilde\fs \tilde\ft \tilde \fs \tilde \ft z)\\
  =I_V((-l,l),  \tilde\ft \tilde \fs \tilde \ft z)
  =I_V((-l,0),  \tilde \fs \tilde \ft z)
  =I_V((-l,0)\fs,  \tilde \ft z)
  =I_V((0,l),  \tilde \ft z).
  \end{multline*}
  for $l \in \BZ$. In conclusion, we have $I_V((m, l)\ft, z) = I_V((m, l), \tilde\ft z)$
  for all $(m,l) \in \BZ^2$, and  the proof of $\ft$-equivariance is complete.
\end{proof}
The theorem implies that the space of equivariant indicators is closed under the contragredient action of $\SL$ on $\KK_\k(Z(\CC))^*$, and
$$
\fg\cdot  I_V((m,l), -) = I_V((m,l)\tilde \fg\inv, -) \quad \text{for all } \fg \in \SL\,.
$$

One consequence of the $\SL$-equivariance of the indicators is the additivity of $\nu_{m,l}^\bX(-)$ for any pair of relatively prime integers $(m,l)$ and $\bX \in Z(\CC)$. A different proof will be given in section \ref{s:endom}.
\begin{cor}\label{c:additivity}
Let $\CC$ be a spherical fusion category over $\k$, $(m,l)$ a pair of relatively
prime integers, and $z \in \KK_\k(Z(\CC))$. Then
$$
I_{V\oplus W}((m,l), z) = I_{V}((m,l), z) + I_{W}((m,l), z)\quad\text{for }V,W \in \CC \,.
$$
\end{cor}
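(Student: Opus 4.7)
The plan is to reduce the general coprime case to the trivial case $(m,l)=(1,0)$ via the $\SL$-equivariance of Theorem \ref{t:equivariance}.

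First I would verify the base case by direct inspection. Unpacking Definition \ref{def:GFS} with $l=0$, every ingredient of $E_{\bX,V}^{(1,0)}$ collapses: $V^{\pm 0}=I$, so $\db_0=\id$, $\hbr_X(I)=\id_X$, and both $D_{\bX,0}$ and $J_{1,0}(V)$ reduce to identities after strict unit identifications, while the middle factor $V^{-l}\o f\o V^l$ is just $f$. Hence $E_{\bX,V}^{(1,0)}=\id_{\CC(X,V)}$, so
\[
\nu_{1,0}^{\bX}(V)=\Tr\bigl(E_{\bX,V}^{(1,0)}\bigr)=\dim_\k \CC(X,V),
\]
which is manifestly additive in $V$ since $\CC(X,-)$ is an additive functor. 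Extending $\k$-linearly in the second argument, $I_V((1,0),z)$ is additive in $V$ for every $z\in\KK_\k(Z(\CC))$.

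Second, because $\gcd(m,l)=1$, I can choose integers $c,d$ with $md-cl=1$ and put
\[
\fg:=\mtx{m & l\\ c & d}\in\SL,
\]
so that $(m,l)=(1,0)\fg$ for the right action of $\SL$ on row vectors. Theorem \ref{t:equivariance} then yields
\[
I_V((m,l),z)=I_V((1,0)\fg,z)=I_V((1,0),\tilde\fg z)
\]
for all $z\in\KK_\k(Z(\CC))$. Since $\tilde\fg z$ depends only on $z$, additivity in $V$ at $(m,l)$ follows at once from the base case.

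The only nontrivial verification is the identity $E_{\bX,V}^{(1,0)}=\id_{\CC(X,V)}$; coprimality of $(m,l)$ enters solely to guarantee the existence of a matrix in $\SL$ with prescribed first row $(m,l)$. No computation specific to the structure of $\CC$ beyond the $\SL$-equivariance is needed.
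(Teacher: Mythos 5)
Your proof is correct and follows the same route as the paper: establish additivity at $(1,0)$ from the identity $\nu_{1,0}^\bX(V)=\dim\CC(X,V)$, then transport it to any coprime $(m,l)$ via Theorem \ref{t:equivariance} using a matrix $\fg\in\SL$ with first row $(m,l)$. The only difference is that you spell out the verification $E_{\bX,V}^{(1,0)}=\id$ and the Bezout construction of $\fg$, both of which the paper leaves implicit.
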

\begin{proof}
  For  $V \in \CC$ and simple $\bX \in Z(\CC)$, $\nu_{1,0}^\bX(V) = \dim \CC(X, V)$. Therefore,
  $\nu_{1,0}^\bX(V)$ is additive in $V$, and so is $I_V((1,0), z)$ for all $z \in \KK_\k(Z(\CC))$.
  Since $m,l$ are relatively prime, there exists $\fg \in \SL$ such that $(m,l)=(1,0)\fg$.
  By Theorem \ref{t:equivariance}, we find
  \begin{multline*}
   I_{V \oplus W}((m,l), z) = I_{V \oplus W}((1,0)\fg, z)=
  I_{V \oplus W}((1,0), \tilde \fg z) = I_{V}((1,0), \tilde \fg z)
  +I_{W}((1,0), \tilde \fg z) \\
  =  I_{V}((1,0)\fg,  z) +I_{W}((1,0)\fg, z)
  = I_{V}((m,l), z) + I_{W}((m,l), z)\,. \qedhere
  \end{multline*}
\end{proof}

The equivariant indicators shed  new light on the relationship
between the higher indicators for spherical fusion category $\CC$
and the modular data of the $Z(\CC)$. The following corollary
which generalizes \cite[Theorem 4.1]{NS07} is one of the examples.
\begin{cor}\label{c:nu}
Let $\CC$ be a spherical fusion category over $\k$, and $Z(\CC)$ the center of $\CC$ with the ribbon
structure $\theta$ and braiding $c$. Suppose
$\{\bX_i \mid i \in \hG\}$ is a complete set of non-isomorphic simple objects of
$Z(\CC)$ and $[S_{ij}]_{ij \in \hG}$, $[\delta_{ij}\w_j]_{ij \in \hG}$
are the corresponding $S$ and $T$ matrices. Then, for $m \in \BZ$, $i \in \hG$ and $V \in \CC$, we have
\begin{equation}\label{eq:nu(m1)}
  \nu_{m,1}^{\bX_i} (V)  =  \frac{1}{\dim \CC} \sum_{k}\w^{m}_k S_{i k}N_V^{X_k}
  = \frac{1}{\dim \CC}
  \ptr \left(c_{K(V), \bX_{\ol i}} \circ c_{\bX_{\ol i}, K(V)}
  \circ (\id_{\bX_{\ol i}} \o \theta^m_{K(V)})\right)
\end{equation}
where $N_V^{X_k} =\dim \CC(X_k , V)$ and
$K$ is a left adjoint to the forgetful functor from $Z(\CC)$ to $\CC$. In particular, if
$N=\ord \theta$, then
$$
\nu_{N}(V) = d(V) \quad\text{for all $V \in \CC$}.
$$
\end{cor}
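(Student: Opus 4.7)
The plan is to derive the first equality directly from the modular equivariance (Theorem~\ref{t:equivariance}), then to convert the $S$-matrix entries into pivotal traces via \eqref{eq:Smatrix} applied to the modular category $Z(\CC)$, and finally to repackage the resulting sum into a single pivotal trace on $K(V)$ via the canonical decomposition into isotypic components.

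First I would choose $\fg=\mtx{m & 1 \\ -1 & 0}\in\SL$ so that $(1,0)\fg=(m,1)$, and verify by a direct matrix computation that $\tilde\fg=\ft^m\fs$. Theorem~\ref{t:equivariance} then gives
\[
\nu_{m,1}^{\bX_i}(V)=I_V((m,1),[\bX_i])=I_V((1,0),\ft^m\fs[\bX_i]),
\]
and the $\SL$-action formula \eqref{eq:sl2action} turns this into $\frac{1}{\dim\CC}\sum_k\omega_k^m S_{ik}\,I_V((1,0),[\bX_k])$. A direct inspection of Definition~\ref{def:GFS} shows that $D_{\bX_k,0}$ and $J_{1,0}(V)$ are both identities, so $E_{\bX_k,V}^{(1,0)}=\id_{\CC(X_k,V)}$, and hence $I_V((1,0),[\bX_k])=N_V^{X_k}$, yielding the first equality.

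For the second equality, I would substitute $S_{ik}=\ptr(c_{\bX_k,\bX_{\ol i}}\circ c_{\bX_{\ol i},\bX_k})$ from \eqref{eq:Smatrix} and absorb the scalar $\omega_k^m$ into the trace using $\omega_k^m\id_{\bX_k}=\theta_{\bX_k}^m$. Since $K$ is two-sided adjoint to the forgetful functor and $Z(\CC)$ is semisimple, the adjunction isomorphism $Z(\CC)(\bX_k,K(V))\cong\CC(X_k,V)$ gives $K(V)\cong\bigoplus_k\bX_k^{\oplus N_V^{X_k}}$ in $Z(\CC)$. Additivity of the pivotal trace together with the naturality of the braiding and twist then repackages $\sum_k N_V^{X_k}\ptr(c_{\bX_k,\bX_{\ol i}}\circ c_{\bX_{\ol i},\bX_k}\circ(\id_{\bX_{\ol i}}\o\theta_{\bX_k}^m))$ as the single trace $\ptr(c_{K(V),\bX_{\ol i}}\circ c_{\bX_{\ol i},K(V)}\circ(\id_{\bX_{\ol i}}\o\theta_{K(V)}^m))$.

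For the final assertion $\nu_N(V)=d(V)$, I would specialize the second formula to $i=0$ (so $\bX_{\ol i}=\bX_0$ is the tensor unit of $Z(\CC)$) and $m=N=\ord\theta$: the half-braidings with $\bX_0$ reduce to identities and $\theta^N_{K(V)}=\id_{K(V)}$, so the pivotal trace collapses to $d(K(V))$ and $\nu_N(V)=d(K(V))/\dim\CC$. The remaining identity $d(K(V))=\dim\CC\cdot d(V)$ follows from the standard description $F(K(V))\cong\bigoplus_{j\in\Gamma}V_j\o V\o V_j\du$ of the composite $FK$ for the Drinfeld center \cite{MugerII03}, combined with $d(V_j\du)=d(V_j)$ and $\sum_j d(V_j)^2=\dim\CC$ in the spherical case. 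The main bookkeeping hurdle is choosing the right $\fg$ so that $\tilde\fg=\ft^m\fs$; everything else consists of assembling facts already established in the paper.
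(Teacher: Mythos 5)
Your proof is correct and follows the same strategy as the paper's: apply the $\SL$-equivariance of Theorem \ref{t:equivariance} to reduce $\nu_{m,1}^{\bX_i}(V)$ to $I_V((1,0),\ft^m\fs[\bX_i])$, expand via the $S$- and $T$-matrices, and repackage the resulting sum as a single trace using $K(V)\cong\bigoplus_k N_V^{X_k}\bX_k$. The only cosmetic differences are that you pick $\fg$ with $(1,0)\fg=(m,1)$ and verify $\tilde\fg=\ft^m\fs$ directly, where the paper routes through $(-m,-1)$ and $[\bX_{\ol i}]$, and you rederive $d(K(V))=\dim\CC\cdot d(V)$ from M\"uger's description of $FK$ rather than citing \cite[Proposition 5.4]{ENO}.
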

\begin{proof}
By Theorem \ref{t:equivariance}, we find
\begin{multline*}
\nu_{m,1}^{\bX_i}(V)= I_V((m,1), [\bX_i]) =  I_V((-m,-1), [\bX_{\ol i}]) =
I_V((1,0)\ft^{-m}\fs, [\bX_{\ol i}]) \\ =  I_V((1,0),\ft^m \fs\inv [\bX_{\ol i}])  =
 I_V((1,0),\ft^m \fs [\bX_i])  =
 \frac{1}{\dim \CC} \sum_{k \in \hG} \w_k^m S_{ik} I_V((1,0), [\bX_k]) \,.
\end{multline*}
It follows from the definition that
$$
I_V((1,0), [\bX_k]) =\nu_{1,0}^{\bX_k}(V) = \dim \CC(X_k, V)=N_V^{X_k}
$$
and so the first equality follows.
Since
$
K(V)\cong \sum_{k \in \hG} N_V^{X_k} \bX_k,
$
we can conclude that
$$
\frac{1}{\dim \CC}\, \ptr\left({ \def\objectstyle{\scriptstyle}
\xy (0,-6)="ctext",
{\vcross~{"ctext"+(-3,5)="m1"}{"ctext"+(3,5)="m2"}{"ctext"+(-3,-5)="y1"}
{"ctext"+(3,-5)="y2"}},
{\vcross~{"m1"+(0,10)="z1"}{"m2"+(0,10)="z2"*+[F]{ \theta^{m}}}
{"m1"}{"m2"}},
{"y2"; "y2"+(0,-4)*{K(V)}**\dir{-}},
{"z2"+(0,2); "z2"+(0,7)*{K(V)}**\dir{-}},
{"z1"; "z1"+(0,6)="z0"**\dir{-}},
{"y1"; "y1"+(0,-4)="y0"**\dir{-}},
{"z0"+(-2,1)*{\bX_i\du}},
{"y0"+(-2.5,0)*{\bX_i\du}},
\endxy }\right)
= \frac{1}{\dim \CC}\sum_{k \in \hG} \w_k^m N_V^{X_k}\ptr\left(
{ \def\objectstyle{\scriptstyle}
\xy (0,-4)="ctext",
{\vcross~{"ctext"+(-3,4)="m1"}{"ctext"+(3,4)="m2"}{"ctext"+(-3,-4)="y1"}
{"ctext"+(3,-4)="y2"}},
{\vcross~{"m1"+(0,8)="z1"}{"m2"+(0,8)="z2"}{"m1"}{"m2"}},
{"y2"; "y2"+(0,-4)*+{\bX_k}**\dir{-}},
{"z2"; "z2"+(0,4)*+{\bX_k}**\dir{-}},
{"y1"; "y1"+(0,-4)*+{\bX_{\ol i}}**\dir{-}},
{"z1"; "z1"+(0,4)*{\bX_{\ol i}}**\dir{-}},
\endxy }\right)
= \frac{1}{\dim \CC}\sum_{k}\w^{m}_k S_{i k}N_V^{X_k}.
$$
In view of \cite[Proposition 5.4]{ENO} (or \cite[Proposition 4.5]{NS07}),
$$
d(K(V)) = (\dim \CC)d(V)\quad \text{for all $V \in \CC$}.
$$
Therefore,
$$
\nu_N(V) = \nu_{N,1}^{\bX_0}(V) =
 \frac{d(K(V))}{\dim \CC} =d(V). \qedhere
$$
\end{proof}

Following \cite[Definition 5.1]{NS07}, the Frobenius-Schur exponent $\FSexp(\CC)$
 of a pivotal category  $\CC$ over $\k$ with simple unit object is defined as the minimum of the set
$$
\{n \in \BN \mid \nu_{n}(V) = d_\ell(V) \text{ for all } V \in \CC\}.
$$
It has been proved in \cite[Theorem 5.5]{NS07} that if $\CC$ is a spherical fusion category
over $\BC$, then $\FSexp(\CC)=\ord \theta$ where $\theta$ is the ribbon structure of $Z(\CC)$.
Indeed, the theorem holds for any spherical fusion category over $\k$.

If $\CC$ is a spherical fusion over $\k$, then we learn immediately from
Corollary \ref{c:nu} that $\FSexp(\CC) \le N$ where $N$ is the order of the ribbon
structure $\theta$ of $Z(\CC)$. Let $\BQ_N$ be the subfield of $\k$ obtained by adjoining
a primitive $N$-th root of unity in $\k$ to $\BQ$.
For $V \in \CC$,  $\nu_N(V)$ is an algebraic integer in $\BQ_N$ (cf. \cite{NS05}),
and so is $d(V)$.
Obviously, if $\w \in \k$ such that
$\theta_\bX = \w \id_\bX$ for some simple $\bX \in Z(\CC)$, then $\w \in   \BQ_N$.
The subfield $\BQ_N$ of $\k$ can be
identified with the $N$-th cyclotomic field contained in $\BC$.
Under this identification, and  by \cite[Corollary 2.10]{ENO}, $d(V)$ is totally real for
simple $V \in \CC$.
Using the same proof of \cite[Theorem 5.5]{NS07}, we have $\FSexp(\CC) = N$.

\begin{prop} \label{p:inQN}
Let $\CC$ be a spherical fusion category over $\k$ with Frobenius-Schur exponent $N$, and
let $\bX_i, i \in \hG$, form a complete set of non-isomorphic simple objects of $Z(\CC)$.
Then
$$\dfrac{1}{\dim \CC} S_{ij}, \,  \nu_{m,l}^{\bX_i}(V)\, \in\,\BQ_N
$$
for all $m,l \in \BZ$, $i,j \in \hG$ and $V \in \CC$, where
$S=[S_{ij}]$ denotes the $S$-matrix of $Z(\CC)$.
\end{prop}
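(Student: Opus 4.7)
The proof plan splits into two parts: first showing that $s_{ij}:=S_{ij}/\dim\CC\in\BQ_N$ for all $i,j\in\hG$, and then deducing the $\BQ_N$-statement for $\nu_{m,l}^{\bX_i}(V)$ from this using Corollary \ref{c:nu} and the equivariance Theorem \ref{t:equivariance}.

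For the $s$-matrix claim, the key input---rationality of the quantum dimensions---is already in hand from the paragraph immediately preceding the proposition: $\FSexp(\CC)=N$ together with \cite{NS05} gives $d(V)=\nu_N(V)\in\BQ_N$ for every $V\in\CC$, so $\dim\CC=\sum_a d(V_a)^2\in\BQ_N$, and decomposing $X_j$ into simples of $\CC$ also yields $d(\bX_j)=d(X_j)\in\BQ_N$. The crucial move is then to apply the twist identity $\theta_{U\o W}=(\theta_U\o\theta_W)c_{W,U}c_{U,W}$ of equation \eqref{def:twist}, inside the modular category $Z(\CC)$, to $U=\bX_{\ol i}$ and $W=\bX_j$. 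Since $\theta_{\bX_k}=\w_k\id_{\bX_k}$ for each simple $\bX_k$, rearranging and taking pivotal trace yields
\[
 S_{ij}\;=\;\ptr(c_{\bX_j,\bX_{\ol i}}\,c_{\bX_{\ol i},\bX_j})\;=\;\w_{\ol i}^{-1}\w_j^{-1}\,\ptr(\theta_{\bX_{\ol i}\o\bX_j})\;=\;\w_{\ol i}^{-1}\w_j^{-1}\sum_{k\in\hG}N_{\ol i,j}^{k}\,\w_k\,d(\bX_k),
\]
where $N_{\ol i,j}^{k}$ is the multiplicity of $\bX_k$ in $\bX_{\ol i}\o\bX_j$. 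Every factor on the right lies in $\BQ_N$ (the $\w$'s are $N$th roots of unity, the $N_{\ol i,j}^k$ are integers, and $d(\bX_k)\in\BQ_N$), so $S_{ij}\in\BQ_N$, whence $s_{ij}\in\BQ_N$.

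With $s_{ij}\in\BQ_N$ in hand, Corollary \ref{c:nu} immediately gives $\nu_{m,1}^{\bX_i}(V)=\sum_k\w_k^m s_{ik}N_V^{X_k}\in\BQ_N$. To treat general $(m,l)$, I reduce to this case. The trivial case $(m,l)=(0,0)$ gives an integer by Proposition \ref{p:tq}(i)--(ii); otherwise, writing $d=\gcd(m,l)$ and $(m,l)=d(m',l')$ with $\gcd(m',l')=1$, Lemma \ref{l:elementary2}(ii) yields $\nu_{m,l}^{\bX_i}(V)=\nu_{m',l'}^{\bX_i}(V^{d})$, reducing to the coprime case. For such $(m',l')$ choose $\fg\in\SL$ with $(1,0)\fg=(m',l')$; Theorem \ref{t:equivariance} then gives
\[
\nu_{m',l'}^{\bX_i}(V)\;=\;I_V((1,0),\tilde\fg[\bX_i])\;=\;\sum_j \bigl(\rho_{Z(\CC)}(\tilde\fg)\bigr)_{ji}\,N_V^{X_j}.
\]
Since $\rho_{Z(\CC)}(\tilde\fg)$ is a product of matrices of $\fs^{-1}$ (entries $s_{\ol j\, i}\in\BQ_N$) and $\ft^{-1}$ (entries $\w_i^{-1}\in\BQ_N$), all its entries lie in $\BQ_N$, and the indicator becomes a $\BQ_N$-combination of the integers $N_V^{X_j}$, which completes the plan.
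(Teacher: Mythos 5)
Your proof is correct and follows essentially the same route as the paper: establish $d(V),\dim\CC,d(\bX_k)\in\BQ_N$, then use the balanced $S$-matrix formula $S_{ij}=\w_i^{-1}\w_j^{-1}\sum_k N_{\ol i j}^k\w_k d(\bX_k)$ to get $s_{ij}\in\BQ_N$, reduce general $(m,l)$ to the coprime case via Proposition \ref{p:tq}(ii), and apply Theorem \ref{t:equivariance} with $(m',l')=(1,0)\fg$ to write the indicator as a $\BQ_N$-linear combination of the integers $N_V^{X_j}$. The only cosmetic difference is that you re-derive the $S$-matrix formula from the twist identity \eqref{def:twist}, where the paper simply cites \cite[3.1.2]{BaKi}.
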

\begin{proof}
>From the above remark, we find $d(V) \in \BQ_N$ for all $V \in \CC$. Hence
  $$
  \dim \CC \quad \text{and} \quad d(\bX_k) \in \BQ_N \quad\text{for $k \in \hG$}.
  $$
Let $\theta$ be the ribbon structure of $Z(\CC)$ and $\w_k \in \k$
such that $\theta_{\bX_k}=\w_k \id_{\bX_k}$ for $k \in \hG$. By
\cite[3.1.2]{BaKi}, we also have
  \begin{equation}\label{eq:S_in_QN}
 S_{ij}=  \w_i\inv \w_j\inv \sum_{k \in \hG} N_{\ol i j}^k \w_k d(\bX_k) \in \BQ_N\,.
  \end{equation}
   Therefore, $\dfrac{1}{\dim \CC} S_{ij} \in \BQ_N$. Note that
   $s=\frac{1}{\dim \CC}S$ is the normalized matrix of $\Z(\CC)$.
   Since $s^4 = 1$, $s\inv$ is also a matrix over $\BQ_N$.

  For any non-zero $(m,l) \in \BZ^2$, it follows from Proposition
  \ref{p:tq} that $\nu_{m,l}^\bX(V) = \nu_{m', l'}^\bX(V^q)$ where
  $q =\gcd(l,m)$, $l'=l/q$ and $m'=m/q$. By Corollary
  \ref{c:additivity}, it suffices to show that $\nu_{m',l'}^\bX(V)
  \in \BQ_N$ for all $V \in \CC$. Let $\fg \in \SL$ such that
  $(m',l') =(1,0)\fg$. By Theorem \ref{t:equivariance},
  \begin{multline}\label{eq:inQN}
   \nu_{m',l'}^{\bX_j}(V) = I_V((1,0)\fg, [\bX_j]) =
   I_V((1,0), \tilde \fg [\bX_j]) \\ = \sum_{i \in \hG} g_{ij}I_V((1,0), [\bX_i]) =
   \sum_{i \in \hG} g_{ij}\nu_{1,0}^{\bX_i}(V) = \sum_{i \in \hG} g_{ij} N^{X_i}_V
  \end{multline}
  where $[g_{ij}]_{\hG} = \rho_{Z(\CC)}(\tilde \fg)$. Since $\fs, \ft$ generate $\SL$,
  $[g_{ij}]$ is a product of the matrices
  $$
  \rho_{Z(\CC)}(\tilde \fs)^{\pm 1} = s^{\mp 1} \quad \text{and}\quad \rho(\tilde \ft)^{\pm 1}=T^{\mp 1}\,,
  $$
  where $T=[\delta_{ij}\w_i]_{\hG}$ is the $T$-matrix of $Z(\CC)$.
  These matrices have been shown to be matrices over $\BQ_N$, and so is $[g_{ij}]_{\hG}$.
  Therefore, the last term in \eqref{eq:inQN} is an element of $\BQ_N$.
\end{proof}

\begin{remark}\label{r:inQN}
  Since $E_{\bX, V}^{(m,1)}$ has finite
  order dividing $mN$ for $m>0$, $\nu_{m,l}^\bX(V)$ is  a cyclotomic integer
  in $\BQ_N$ for $m \ne 0$. It has been shown in \cite{CG}, \cite{BG91} and \cite[Theorem 10.1]{ENO}
  that $\dfrac{1}{\dim \CC} S_{ij} \in \BQ(\xi)$ for some
   root of unity $\xi$. The above proposition proves $\xi$
  can be chosen as a primitive $N$-th root of unity for the modular
  tensor category $Z(\CC)$. It will be developed in Theorem \ref{t:finite} that the image  $\rho^{\lambda, \zeta}_\A(\fs)$ of a modular representation $\rho^{\lambda, \zeta}_\A$ of a modular category $\A$ is a matrix over $\BQ_m$ where $m=\ord(\rho^{\lambda, \zeta}_\A(\ft))$.
\end{remark}
\section{The center of a modular tensor category and congruence subgroups }\label{s:congsub}
In this section, we study the GFS indicators for a modular tensor category. We obtain a generalization of Bantay's formula in Proposition \ref{p:Bantay}, and we prove  in Theorem \ref{t:cong2} that the kernel of the projective modular representation $\ol\rho_\A$ associated with
a modular category $\A$ is a level $N$ congruence subgroup of $\SL$, where $N$ is the
Frobenius-Schur exponent of $\A$. In the case that $\A$ is the center $Z(\CC)$ of some spherical fusion
category $\CC$, we know more precisely that the kernel of the canonical modular representation $\rho_{Z(\CC)}$ of $Z(\CC)$ is a level $N$ congruence subgroup of $\SL$. In fact this is proved first, in Theorem \ref{t:cong1}, and used in the proof of Theorem \ref{t:cong2}. An example for the congruence subgroup theorem is provided at the end this section. We begin with the discussion
of the center of a modular tensor category.

Let $\A=(\A, c, \theta)$ be a modular category over $\k$ with
a complete set of non-isomorphic simple objects $\{U_i\mid i \in \Pi\}$, the $S$-matrix
$S=[S_{ij}]_{\Pi}$, and the
$T$-matrix $T=[\delta_{ij}\w_i]_{\Pi}$. Without loss of generality, we may further assume that
the underlying spherical fusion category of $\A$ is strict.

Let $\bU_{ij}=(U_i\o U_j,\sigma_{U_i\o U_j})$, $(i,j) \in \Pi \times \Pi$, be the complete set of simple objects of $Z(\A)$  described in Section \ref{s:modular}. We have noted $\bU_{ij}\du \cong \bU_{\ol i \ol j}$ in \eqref{dualUij}, and so the $(ij,kl)$-entry of the  $S$-matrix $\uS=[\uS_{ij,kl}]_{\Pi\times \Pi}$ of $Z(\A)$ is given by
\begin{equation}\label{eq:uS}
\uS_{ij,kl}= \ptr\left(
\def\objectstyle{\scriptstyle}
{\xy
{(0,0)="c"},
{"c"+(-4.5, 0)="a1"}, {"c"+(-1.5,0)="a2"},{"c"+(1.5,0)="a3"},{"c"+(4.5,0)="a4"},
{\vcross~{"a2"+(0,4)="b2"}{"a3"+(0,4)="b3"}{"a2"}{"a3"}},
{\vcross~{"a1"+(0,8)="c1"}{"a2"+(0,8)="c2"}{"a1"+(0,4)="b1"}{"b2"}},
{\vcrossneg~{"a3"+(0,8)="c3"}{"a4"+(0,8)="c4"}{"b3"}{"a4"+(0,4)="b4"}},
{\vcrossneg~{"c2"+(0,4)="d2"}{"c3"+(0,4)="d3"}{"c2"}{"c3"}},
{"a1"+(0,4)="b1"; "a1"+(0,-4)**\dir{-}},
{"a4"+(0,4)="b4"; "a4"+(0,-4)**\dir{-}},
{"c4"+(0,4)="d4"; "c4"**\dir{-}},
{"c1"+(0,4)="d1"; "c1"**\dir{-}},
{"d1"+(0,1.5)*{U_{\ol i}}, "d2"+(0,1.5)*{U_{\ol j}},"d3"+(0,1.5)*{U_k}, "d4"+(0,1.5)*{U_l} },
{\vcross~{"a2"+(0,-4)="b2"}{"a3"+(0,-4)="b3"}{"a2"}{"a3"}},
{\vcrossneg~{"a1"+(0,-8)="c1"}{"a2"+(0,-8)="c2"}{"a1"+(0,-4)="b1"}{"b2"}},
{\vcross~{"a3"+(0,-8)="c3"}{"a4"+(0,-8)="c4"}{"b3"}{"a4"+(0,-4)="b4"}},
{\vcrossneg~{"c2"+(0,-4)="d2"}{"c3"+(0,-4)="d3"}{"c2"}{"c3"}},
{"c1"+(0,-4)="d1"; "c1"**\dir{-}},
{"c4"+(0,-4)="d4"; "c4"**\dir{-}},
{"d1"+(1,-1.5)*{U_{\ol j}}, "d2"+(1,-1.5)*{U_{\ol i}},"d3"+(1,-1.5)*{U_k}, "d4"+(1,-1.5)*{U_l} },
\endxy}
\right)
=
 \ptr\left(
\def\objectstyle{\scriptstyle}
{\xy
{(0,0)="c"},
{"c"+(-4.5, 0)="c1"}, {"c"+(-1.5,0)="c2"},{"c"+(1.5,0)="c3"},{"c"+(4.5,0)="c4"},
{"c1"+(0, 4)="a1"}, {"c2"+(0, 4)="a2"}, {"c3"+(0, 4)="a3"}, {"c4"+(0, 4)="a4"},
{"c1"+(0, -4)="b1"}, {"c2"+(0, -4)="b2"}, {"c3"+(0, -4)="b3"}, {"c4"+(0, -4)="b4"},
{"b2"+(0, -4)="e2"}, {"b3"+(0, -4)="e3"}, {"a2"+(0, 4)="d2"}, {"a3"+(0, 4)="d3"},
{\vcross~{"a1"}{"a2"}{"c1"}{"c2"}},
{\vcross~{"c1"}{"c2"}{"b1"}{"b2"}},
{\vcrossneg~{"a3"}{"a4"}{"c3"}{"c4"}},
{\vcrossneg~{"c3"}{"c4"}{"b3"}{"b4"}},
{\vcross~{"b2"}{"b3"}{"e2"}{"e3"}},
{\vcrossneg~{"d2"}{"d3"}{"a2"}{"a3"}},
{"a1"+(-3,0)="a0"}, {"a4"+(3,0)="a5"},
{"b1"+(-3,0)="b0"}, {"b4"+(3,0)="b5"},
{"a0";"a1"**\crv{"a0"+(0,1.5)&"a1"+(0,1.5)}},
{"a4";"a5"**\crv{"a4"+(0,1.5)&"a5"+(0,1.5)}},
{"b0";"b1"**\crv{"b0"+(0,-1.5)&"b1"+(0,-1.5)}},
{"b4";"b5"**\crv{"b4"+(0,-1.5)&"b5"+(0,-1.5)}},
{"a0";"b0"**\dir{-}?(.5)+(-2,0)*{U_i}},
{"a5";"b5"**\dir{-}?(.5)+(2,0)*{U_{\ol l}}},
{"e2"+(-1,-1.5)*{U_{\ol j}}},{"e3"+(1,-1.5)*{U_k}},
{"d2"+(-1,1.5)*{U_{\ol j}}},{"d3"+(1,1.5)*{U_k}},
\endxy}
\right)
 =
 S_{ik} S_{\ol j l}\,.
\end{equation}

Since $\theta_{U_i}= \w_i \id_{U_i}$, we have the equalities
$$
\w_i \,
\def\objectstyle{\scriptstyle}\xy {(0,6)*{U_i}; (0,-6)*{U_i} **\dir{-}}\endxy
\quad =\quad
\xy
{\vcross~{(-2,2)="x1"}{(2,2)="x2"}{(-2,-2)="y1"}{(2,-2)="y2"}},
{"x1"+(-4, 0)="x0"; "x1"**\crv{"x0"+(0,2.5)& "x1"+(0,2.5)}},
{"y1"+(-4, 0)="y0"; "y1"**\crv{"y0"+(0,-2.5)& "y1"+(0,-2.5)}},
{"x0"; "y0"**\dir{-}},
{"y2"+(0,-4)*{U_i}; "y2"**\dir{-}},
{"x2"+(0,4)*{U_i}; "x2"**\dir{-}},
\endxy
\quad \text{and} \quad
\w_i\inv \,
\def\objectstyle{\scriptstyle}\xy {(0,6)*{U_i}; (0,-6)*{U_i} **\dir{-}}\endxy
\quad =\quad
\def\objectstyle{\scriptstyle}
\xy
{\vcrossneg~{(-2,2)="x1"}{(2,2)="x2"}{(-2,-2)="y1"}{(2,-2)="y2"}},
{"x1"+(-4, 0)="x0"; "x1"**\crv{"x0"+(0,2.5)& "x1"+(0,2.5)}},
{"y1"+(-4, 0)="y0"; "y1"**\crv{"y0"+(0,-2.5)& "y1"+(0,-2.5)}},
{"x0"; "y0"**\dir{-}},
{"y2"+(0,-4)*{U_i}; "y2"**\dir{-}},
{"x2"+(0,4)*{U_i}; "x2"**\dir{-}},
\endxy\,.
$$
The   $\bU_{ij}$ component of the ribbon structure of $Z(\A)$ is given by
\begin{equation}\label{eq:centerribbon}
\def\objectstyle{\scriptstyle}
\xy
{(0,-6)="c"},
{"c"+(-4.5, 0)="a1"}, {"c"+(-1.5,0)="a2"},{"c"+(1.5,0)="a3"},{"c"+(4.5,0)="a4"},
{\vcross~{"a2"+(0,4)="b2"}{"a3"+(0,4)="b3"}{"a2"}{"a3"}},
{\vcross~{"a1"+(0,8)="c1"}{"a2"+(0,8)="c2"}{"a1"+(0,4)="b1"}{"b2"}},
{\vcrossneg~{"a3"+(0,8)="c3"}{"a4"+(0,8)="c4"}{"b3"}{"a4"+(0,4)="b4"}},
{\vcrossneg~{"c2"+(0,4)="d2"}{"c3"+(0,4)="d3"}{"c2"}{"c3"}},
{"a3"; "a3"+(0,-4)*{U_i}**\dir{-}},
{"b4"; "a4"+(0,-4)*{U_j}**\dir{-}},
{"c4"+(0,8)="d4"*{U_j}; "c4"**\dir{-}},
{"d3"+(0,4)*{U_i}; "d3"**\dir{-}},
{"d2"+(-10,0)="d1"; "d2"**\crv{"d1"+(0,4)& "d2"+(0,4)}},
{"a2"+(-10,0)="a1"; "a2"**\crv{"a1"+(0,-4)& "a2"+(0,-4)}},
{"d1"; "a1"**\dir{-}},
{"c1"+(-4,0)="c0"; "c1"**\crv{"c0"+(0,2)& "c1"+(0,2)}},
{"b1"+(-4,0)="b0"; "b1"**\crv{"b0"+(0,-2)& "b1"+(0,-2)}},
{"c0"; "b0"**\dir{-}},
\endxy =\frac{\w_i}{\w_j} \id_{U_i \o U_j}\,.
\end{equation}
Thus, the $T$-matrix of $Z(\A)$ is
\begin{equation}\label{eq:uT}
  \underline{T}=\left[\delta_{ij, kl} \frac{\w_i}{\w_j}\right]_{\Pi \times \Pi}.
\end{equation}
Using
Corollary \ref{c:nu}, we can prove the following generalization of \cite[Theorem 7.5]{NS07} which is also a further generalization of Bantay's formula \cite{Bantay97} to GFS indicators.
\begin{prop}\label{p:Bantay}
  Let $\A$ be a modular category over $\k$ with a complete set of non-isomorphic simple objects
  $\{U_i\mid i \in \Pi\}$.
  Then, for $\bU_{ij}=(U_i \o U_j, \hbr_{U_i \o U_j}) \in Z(\A)$,
  $$
  \nu_{m,1}^{\bU_{ij}}(U_a) = \frac{1}{\dim \A}\sum_{k,l \in \Pi} \left(\frac{\w_k}{\w_l}\right)^m
  S_{ik} S_{\ol j l} N_{kl}^a
  $$
where $N_{kl}^a=\dim \A(U_k\o U_l, U_a)$ and $[S_{ij}]_\Pi$, $[\delta_{ij} \w_i]_\Pi$ are the
$S$ and $T$-matrices of $\A$ respectively.
\end{prop}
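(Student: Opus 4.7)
The proposal is to obtain the formula as a direct specialization of Corollary \ref{c:nu} to the spherical fusion category $\CC=\A$, after substituting the modular data of $Z(\A)$ that has just been computed in equations \eqref{eq:uS} and \eqref{eq:uT}.

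More concretely, the first step is to invoke Corollary \ref{c:nu} with $\CC=\A$, with the simple object $\bX$ of $Z(\A)$ taken to be $\bU_{ij}$, and with $V=U_a$. Since $\A$ is already spherical, this is immediate. The index set $\hat\Gamma$ for simple objects of $Z(\A)$ becomes $\Pi\times\Pi$ via the parametrization $\bU_{kl}=(U_k\o U_l,\sigma_{U_k\o U_l})$ recalled in Section \ref{s:modular}, and the formula of Corollary \ref{c:nu} takes the shape
$$
\nu_{m,1}^{\bU_{ij}}(U_a)\;=\;\frac{1}{\dim\A}\sum_{(k,l)\in\Pi\times\Pi}\underline{\w}_{kl}^{\,m}\,\uS_{ij,kl}\,N_{U_a}^{X_{(k,l)}},
$$
where $\underline{\w}_{kl}$ and $\uS_{ij,kl}$ are the twist eigenvalue and $S$-matrix entry of $Z(\A)$, and $X_{(k,l)}=U_k\o U_l$ is the underlying object of $\bU_{kl}$.

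The second step is simply to substitute the three ingredients. From the computation just preceding the proposition, \eqref{eq:uS} gives $\uS_{ij,kl}=S_{ik}S_{\ol j l}$, and \eqref{eq:centerribbon}--\eqref{eq:uT} give $\underline{\w}_{kl}=\w_k/\w_l$. Finally, the multiplicity $N_{U_a}^{X_{(k,l)}}=\dim\A(U_k\o U_l,U_a)$ is exactly $N_{kl}^{a}$ by definition. Plugging these three identities into the displayed formula yields precisely
$$
\nu_{m,1}^{\bU_{ij}}(U_a)\;=\;\frac{1}{\dim\A}\sum_{k,l\in\Pi}\Bigl(\frac{\w_k}{\w_l}\Bigr)^{\!m}S_{ik}\,S_{\ol j l}\,N_{kl}^{a},
$$
which is the desired equality.

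There is essentially no obstacle beyond these bookkeeping substitutions: all the structural work — the indicator formula in terms of modular data, and the explicit description of the modular data of the center of a modular category — has been done in Corollary \ref{c:nu} and in \eqref{eq:uS}, \eqref{eq:centerribbon}, \eqref{eq:uT}, respectively. The only point worth being explicit about in the write-up is the parametrization $(k,l)\leftrightarrow\bU_{kl}$ of simples of $Z(\A)$, and the identification of $\dim\CC=\dim\A$ in the denominator, so that the coefficient matches the claimed one.
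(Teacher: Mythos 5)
Your proposal is correct and matches the paper's own proof: both obtain the formula by specializing Corollary \ref{c:nu} to $\CC=\A$, $\bX=\bU_{ij}$, $V=U_a$, and then substituting the explicitly computed modular data $\uS_{ij,kl}=S_{ik}S_{\ol j l}$ and $\underline{\w}_{kl}=\w_k/\w_l$ of $Z(\A)$ from \eqref{eq:uS} and \eqref{eq:uT}.
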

\begin{proof}
  The $S$ and $T$-matrices of $Z(\A)$ have been shown in \eqref{eq:uS} and \eqref{eq:uT}. By Corollary
  \ref{c:nu}, we find
  $$
  \nu_{m,1}^{\bU_{ij}}(U_a) = \frac{1}{\dim \A} \sum_{k, l\in \Pi} \left(\frac{\w_k}{\w_l}\right)^m
  \uS_{ij, kl} N_{kl}^a =  \frac{1}{\dim \A}\sum_{k,l \in \Pi} \left(\frac{\w_k}{\w_l}\right)^m
  S_{ik} S_{\ol j l} N_{kl}^a. \qedhere
  $$
\end{proof}
Note that the $(2,1)$-st indicator for the unit object $\bU_{00}$ given by
$$
\nu_{2,1}^{\bU_{00}}(U_a)=\frac{1}{\dim \A}\sum_{k,l \in \Pi} \left(\frac{\w_k}{\w_l}\right)^2
  S_{0k} S_{0 l} N_{kl}^a = \sum_{k,l \in \Pi} \left(\frac{\w_k}{\w_l}\right)^2
  s_{0k} s_{0 l} N_{kl}^a
$$
is identical to Bantay's indicator formula for RCFT introduced in \cite{Bantay97}.

The representation  $\rho_{Z(\A)}$ is determined by the actions
\begin{equation}\label{eq:MZC}
\fs [\bU_{ij}]=\frac{1}{\dim \A}\sum_{k,l} \uS_{kl, ij} [\bU_{kl}] = \frac{1}{\dim \A}\sum_{k,l} S_{ki} S_{\ol l j} [\bU_{kl}],
\quad \text{and}\quad
\ft [\bU_{kl}]=\w_k \w_l\inv [\bU_{kl}]\,,
\end{equation}
and it is isomorphic to a tensor product of two representations  as described in the following lemma.
\begin{lem} \label{l:iso}
  Let $\A$ be a modular category over $\k$ with a complete set of non-isomorphic simple objects $\{U_i\mid i \in \Pi\}$, and let $\rho$ denote the representation  $\rho_\A^{\lambda, \zeta}$ for
  some $\lambda, \zeta \in \k$ such that
  $\lambda^2=\dim \A$ and $\zeta^3 = p_\A^+/\lambda$.  Then:
  \begin{enumerate}
  \item[(i)] The $\k$-linear isomorphism $\phi: \KK_\k(\A)\o\KK_\k(\A)\to\KK_\k(Z(\A)),\, [U_i] \o [U_j] \mapsto [\bU_{ij}]$ from \eqref{tpiso}
   defines an isomorphism $\rho\otimes\tilde\rho\to\rho_{Z(\A)}$ of representations of $\SL$.
   \item[(ii)] The bilinear form $\langle\cdot,\cdot\rangle\colon\KK_\k(\A)\o\KK_\k(\A)\to \k$ defined by $\langle[U_i],[U_j]\rangle=\delta_{ij}$ is $\SL$-invariant under the representation $\rho\o\tilde\rho$.
  \end{enumerate}
\end{lem}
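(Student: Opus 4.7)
The plan is to verify both statements by direct computation on the generators $\fs,\ft$ of $\SL$, using the explicit formulas for the $S$- and $T$-matrices of $Z(\A)$ given in \eqref{eq:uS} and \eqref{eq:uT}, together with the symmetry properties of $s$ recorded in \eqref{eq:Sprop}.

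For (i), I would compute the action of $(\rho\o\tilde\rho)(\ft)$ and $(\rho\o\tilde\rho)(\fs)$ on a simple tensor $[U_i]\o[U_j]$ and check that $\phi$ carries these to the action of $\rho_{Z(\A)}$ on $[\bU_{ij}]$ as given in \eqref{eq:MZC}. For $\ft$, using $\rho(\ft)=t=\zeta\inv T$ and $\tilde\rho(\ft)=\rho(\ft\inv)=t\inv$, one finds
\[
(\rho\o\tilde\rho)(\ft)([U_i]\o[U_j])=\tfrac{\w_i}{\zeta}\cdot\tfrac{\zeta}{\w_j}\,[U_i]\o[U_j]=\tfrac{\w_i}{\w_j}\,[U_i]\o[U_j],
\]
so the $\zeta$-factors cancel automatically, and applying $\phi$ gives $(\w_i/\w_j)[\bU_{ij}]=\ft[\bU_{ij}]$. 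For $\fs$, from \eqref{eq:Sprop} we have $s\inv=[s_{\ol l\,j}]_\Pi$, so
\[
(\rho\o\tilde\rho)(\fs)([U_i]\o[U_j])=\sum_{k,l}s_{ki}s_{\ol l\,j}\,[U_k]\o[U_l],
\]
and applying $\phi$ yields $\tfrac{1}{\lambda^2}\sum_{k,l}S_{ki}S_{\ol l\,j}[\bU_{kl}]$; since $\lambda^2=\dim\A$, this matches the formula for $\fs[\bU_{ij}]$ in \eqref{eq:MZC}. Because $\fs,\ft$ generate $\SL$, the intertwining holds on all of $\SL$, and $\phi$ is clearly bijective.

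For (ii), the invariance condition $\langle(\rho\o\tilde\rho)(\fg)(x\o y)\rangle=\langle x,y\rangle$ (using the matrix $I$ of the form in the canonical basis) amounts to $\rho(\tilde\fg)^{T}\rho(\fg)=I$, i.e.\ $\rho(\tilde\fg)=\rho(\fg)^{-T}$. Since both sides are group homomorphisms $\SL\to\GL(\KK_\k(\A))$ (the $\widetilde{(\cdot)}$ operation is an automorphism and the transpose-inverse is an antihomomorphism composed with inversion), it suffices to verify this on the generators. For $\ft$, the matrix $t$ is diagonal, so $t^{-T}=t\inv=\rho(\tilde\ft)$. For $\fs$, we have $\tilde\fs=\fs\inv$, hence $\rho(\tilde\fs)=s\inv$; on the other hand $s^{-T}=s\inv$ because $s$ is symmetric by \eqref{eq:Sprop}. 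This establishes the identity on generators, and hence on all of $\SL$.

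The only subtlety worth flagging is the independence of the argument from the choice of the lift parameters $\lambda,\zeta$: for $\ft$ this is visible from the cancellation above, while for $\fs$ it is precisely the relation $\lambda^2=\dim\A$ which converts the normalization $s=S/\lambda$ into the $\dim\A$ appearing in \eqref{eq:MZC}. No essential obstacle arises; the lemma is a bookkeeping consequence of the explicit modular data of $Z(\A)$ worked out in \eqref{eq:uS} and \eqref{eq:uT}.
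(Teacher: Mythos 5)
Your proposal is correct and follows essentially the same approach as the paper: direct verification on the generators $\fs,\ft$ using the explicit formulas \eqref{eq:uS}, \eqref{eq:uT}, \eqref{eq:MZC} and the symmetry properties of $s$ from \eqref{eq:Sprop}. The only cosmetic difference is in part (ii), where you recast invariance as the matrix identity $\rho(\tilde\fg)=\rho(\fg)^{-T}$ and check it on generators, whereas the paper evaluates the form directly on the right-hand sides of its equations \eqref{eq:Saction} and \eqref{eq:Taction}; both amount to the same computation.
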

\begin{proof}  Note that
the $\SL$-action
on $\KK_\k(\A)$ associated with the representation $\rho$ is given by
$$
\fs[U_j] = \frac{1}{\lambda}\sum_{i \in \Pi}S_{ij}[U_i], \quad \text{and}\quad
 \ft[U_j] = \frac{\w_j}{\zeta} [U_j]
$$
where $[S_{ij}]_\Pi$ and $[\delta_{ij}\w_j]_\Pi$ are the $S$ and $T$ matrices of $\A$.
Note that $S_{ik}=S_{ki}$ and $S_{\ol j l}=S_{j \ol l}$.
By \eqref{eq:Sprop} the representation $\rho\o\tilde\rho$ satisfies
\begin{equation}\label{eq:Saction}
 \fs([U_i]\o[U_j])=\fs[U_i]\o\tilde\fs[U_j]
  =\fs[U_i]\o\fs\inv[U_j]
  =\frac{1}{\lambda^2}\sum_{k,l \in \Pi} S_{ki}S_{l \ol j}
   [U_k] \o [U_l]
\end{equation}
and also
\begin{equation}\label{eq:Taction}
\ft([U_i] \o [U_j])= \ft[U_i] \o \tilde\ft [U_j]=
  \ft[U_i] \o \ft\inv [U_j] = \frac{\w_i}{\zeta} \left(\frac{\w_j}{\zeta}\right)\inv
  [U_i] \o [U_j]\,.
\end{equation}
Comparing with \eqref{eq:MZC} we see that $\phi$ satisfies $\phi(\fg([U_i]\o[U_j]))=\fg\phi([U_i]\o[U_j])$ for $\fg\in\{\fs,\ft\}$ which implies that $\phi$ is $\SL$-equivariant.

Now apply the bilinear form $\langle\cdot,\cdot\rangle$ to the rightmost expressions in \eqref{eq:Saction} and \eqref{eq:Taction}. It follows from \eqref{eq:Sprop} that they are both equal to $\delta_{ij}$, and this proves the second statement.
\end{proof}
\begin{defn}
 The kernel $\Gamma(n)$ of the natural group homomorphism $\SL \to \qsl{n}$
is called the \textbf{principal congruence subgroup of level $n$}. A finite index subgroup $G$ of $\SL$ is called
a \textbf{congruence subgroup} if $G$ contains a principal congruence
subgroup of $\SL$.  If $n$ is the least positive integer such that
$\Gamma(n) \subseteq G$, then $G$ is called a \textbf{congruence subgroup of level $n$}.
\end{defn}

In view of \eqref{eq:tildefg},
\begin{equation}\label{eq:tildeinv}
\widetilde{\Gamma(n)} = \Gamma(n) \quad\text{for all positive integers } n\,.
\end{equation}

We proceed to show that the principal congruence subgroup $\Gamma(N)$, where $N$ is the Frobenius-Schur exponent, always fixes the equivariant indicators.

\begin{lem}\label{l:inv1}
Let $\CC$ be a spherical fusion category over $\k$ with $\FSexp(\CC)=N$. Then
$$
I_V((m,l), \tilde\fg z) =  I_V((m,l),  z)=I_V((m,l), \fg z)
$$
for all $m,l \in \BZ$, $V \in \CC$, $z \in \KK_\k(Z(\CC))$ and $\fg \in \Gamma(N)$.
\end{lem}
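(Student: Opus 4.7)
The plan is to reduce both asserted equalities to a single $\Gamma(N)$-invariance statement and then leverage the identity $\rho_{Z(\CC)}(\ft^N) = \id$.

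First I would apply Theorem~\ref{t:equivariance}, which gives $I_V((m,l)\fg, z) = I_V((m,l), \tilde\fg z)$ for every $\fg \in \SL$. Combined with $\widetilde{\Gamma(N)} = \Gamma(N)$ from \eqref{eq:tildeinv}, both asserted equalities reduce to the single condition that
\[
I_V((m,l)\fg, z) = I_V((m,l), z) \quad \text{for all } \fg \in \Gamma(N),\ V \in \CC,\ z \in \KK_\k(Z(\CC)).
\]

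The key input is that $N = \FSexp(\CC)$ equals the order of the ribbon structure on $Z(\CC)$ (cf.\ \cite[Theorem 5.5]{NS07} together with Corollary~\ref{c:nu}), so $T^N = \id$ and hence $\rho_{Z(\CC)}(\ft^N) = \id$. Consequently, $\ft^N$ acts trivially on $\KK_\k(Z(\CC))$, and for any $\eta \in \SL$ the conjugate satisfies $\rho_{Z(\CC)}(\eta\ft^N\eta^{-1}) = \rho(\eta)\cdot\id\cdot\rho(\eta)^{-1} = \id$. Thus every element of the normal closure $\langle\langle\ft^N\rangle\rangle^{\SL}$ of $\ft^N$ in $\SL$ acts trivially on $\KK_\k(Z(\CC))$, and the required equality holds automatically for all such $\fg$.

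Next I would extend the conclusion to arbitrary $\fg \in \Gamma(N)$. Using Proposition~\ref{p:tq}(ii) one first reduces to the case of coprime $(m,l)$, and then, via the equivariance, to the base case $(m,l) = (1,0)$, where the required invariance becomes the statement that the vector $[KV] = \sum_k N_V^{X_k}[\bX_k] \in \KK_\k(Z(\CC))$ is a fixed vector of $\rho_{Z(\CC)}(\fg)^T$ for every $\fg \in \Gamma(N)$. One establishes this fixity by combining the $N$-periodicity of $\nu_{m,1}^{\bX_i}(V)$ in $m$ from Corollary~\ref{c:nu} (which expresses $\rho_{Z(\CC)}(\fs\ft^m)[KV]$ in terms of the modular data of $Z(\CC)$) with the $\SL$-equivariance and the congruence $\fg \equiv I \pmod N$.

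The hardest step will be this final extension: when the quotient $\SL/\langle\langle\ft^N\rangle\rangle^{\SL}$ is infinite (for instance when the triangle quotient $T(2,3,N)$ is infinite, i.e.\ $N \ge 7$), the normal closure $\langle\langle\ft^N\rangle\rangle^{\SL}$ is strictly smaller than $\Gamma(N)$, so one cannot simply invoke triviality of $\rho_{Z(\CC)}(\fg)$ on all of $\KK_\k(Z(\CC))$; instead one must exploit the specific structure of $[KV]$ arising from the forgetful/induction adjunction $F \dashv K$ between $\CC$ and $Z(\CC)$, together with the explicit formula of Corollary~\ref{c:nu}, to conclude the required fixity at the level of the equivariant-indicator functionals alone.
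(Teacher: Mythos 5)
Your opening steps match the paper: use Theorem~\ref{t:equivariance} to reduce both equalities to $I_V((m,l)\fg,z)=I_V((m,l),z)$ via $\widetilde{\Gamma(N)}=\Gamma(N)$, and observe that $\rho_{Z(\CC)}(\ft^N)=\id$ kills the normal closure $\langle\langle\ft^N\rangle\rangle^{\SL}$. You are also right to flag the extension past the normal closure as the hard step when $N$ is large. The paper itself handles $N=2$ directly since $\langle\langle\ft^2\rangle\rangle^{\SL}=\Gamma(2)$, and for $N>2$ uses precisely the extra machinery that your sketch omits.

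The gap is concrete. Your reduction to $(m,l)=(1,0)$ via normality of $\Gamma(N)$ is valid, but it leaves you with the claim that $[KV]=\sum_i N_V^{X_i}[\bX_i]$ is a $\Gamma(N)$-fixed vector for every $V\in\CC$ (under the transposed/contragredient action), which is essentially a restatement of the original problem rather than a simplification. The tools you invoke for it --- ``the $N$-periodicity of $\nu_{m,1}^{\bX_i}(V)$ in $m$ from Corollary~\ref{c:nu}'' plus the congruence $\fg\equiv I\ (\mathrm{mod}\ N)$ --- only encode $\rho_{Z(\CC)}(\ft^N)=\id$ again, i.e.\ only recover the normal closure of $\ft^N$. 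No combination of these facts, by themselves, reaches an arbitrary $\fg\in\Gamma(N)$ when the normal closure is a proper subgroup. The paper bridges this exact gap with two ingredients you do not have: (A) a group-theoretic lemma due to Sommerh\"auser--Zhu (their Theorem~1.3), which says that an $\SL$-equivariant relation $\sim$ on $\BZ^2$ satisfying $(m,l)\sim(m,mN+l)$ and a gcd-preserving rescaling property $(m,l)\sim(m,kl)$ for $k\equiv 1\ (\mathrm{mod}\ N)$ must be $\Gamma(N)$-stable; and (B) a Galois argument, relying on Proposition~\ref{p:inQN} that $\nu_{m,l}^{\bX}(V)\in\BQ_N$ and on the finite order of $E^{(m)}_{\bX,V}$, which proves the rescaling property: for $k\equiv 1\ (\mathrm{mod}\ N)$ coprime to $mN$, the automorphism $\sigma_k\in\Gal(\BQ_{|m|N}/\BQ)$ fixes $\BQ_N$ pointwise while sending $\Tr\bigl((E^{(m)}_{\bX,V})^l\bigr)$ to $\Tr\bigl((E^{(m)}_{\bX,V})^{kl}\bigr)$, forcing $\nu_{m,l}^{\bX}(V)=\nu_{m,kl}^{\bX}(V)$. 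Without (A) and (B), or a genuinely new substitute, your proof does not close.
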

\begin{proof} Since $\widetilde{\Gamma(N)} =\Gamma(N)$, the first and second equality are equivalent.
It suffices to show one of these two equalities holds.
Note that $\ft^N \in \Gamma(N)$ and $\rho_{Z(\CC)}(\ft^N)=\id$.
 Therefore, $\ker \rho_{Z(\CC)}$ contains the normal closure of $\ft^N$ in $\SL$.
For $N=2$, it is well-known that the normal closure of $\ft^2$ in $\SL$ is $\Gamma(2)$
(cf. \cite{Bre60}). Thus, $\fg z = z$ for all $\fg \in \Gamma(2)$ and $z \in \KK_\k(Z(\CC))$.
In particular, we have $I_V((m,l), \fg z) =  I_V((m,l),  z)$.

  Now, we may assume $N >2$ and  consider the relation $\sim$ on $\BZ^2$ defined by
  $$
  (m,l) \sim (m',l') \quad \text{iff}\quad I_V((m,l), z) =I_V((m',l'), z) \quad \text{for all }z \in \KK_\k(Z(\CC)),\,   V \in \CC \,.
  $$
  It is obvious that $\sim$ is an equivalence relation on $\BZ^2$. By Theorem \ref{t:equivariance},
  if $(m,l) \sim (m',l')$, then
  $$
  (m,l)\fg \sim (m',l')\fg \quad \text{for all $\fg \in \SL$}.
  $$
  We need to show that
  $$
  (m,l) \sim (m, l)\fg \quad \text{for all $(m,l) \in \BZ^2$ and $\fg \in \Gamma(N)$}.
  $$
  To prove this, we use  a version of \cite[Theorem 1.3]{SZh} which requires
  to verify the following conditions for each $(m,l) \in \BZ^2$:
  \begin{enumerate}
    \item[(i)] $(m,l) \sim (m, mN+l)$ and
    \item[(ii)] $(m,l) \sim (m,kl)$ whenever $\gcd(m,l)=\gcd(m,kl)$ for some integer $k \equiv 1 \pmod N$.
  \end{enumerate}
  The first condition follows directly from
  Theorem \ref{t:equivariance} and the fact that $\rho_{Z(\CC)}(\tilde\ft^N)=\id$. For the second condition, we consider $m,l,k \in \BZ$ such that
  $k\equiv 1 \pmod N$ and $\gcd(m,l)=\gcd(m,kl)=q$. Obviously, if $l=0$,
  then $(m,l) \sim (m,kl)$. We may assume $l \ne 0$. In this case, $q \ge 1$ and
  $\gcd(m/q,l/q)=\gcd(m/q,kl/q)=1$. If
  $(m/q, l/q)\sim (m/q, kl/q)$, then
  $$
  I_V((m,l), z)=I_{V^q}((m/q, l/q), z) = I_{V^q}((m/q, kl/q), z) = I_V((m, kl), z)
  $$
  for all $V \in \CC$ and $z \in \KK_\k(Z(\CC))$. Hence $(m,l)\sim (m,kl)$. Therefore,
  it suffices to prove $(m,l)\sim (m,kl)$ for $\gcd(m,l)=\gcd(m,kl)=1$.
  If $m=0$, then this condition forces $k = \pm 1$.
  Since $k \equiv 1 \pmod N$ and $N>2$,  $k=1$, and
  hence $(0,l)\sim (0, kl)$. So, we may further assume $m \ne 0$. Since $\gcd(m,kl)=1$,
  $k$  and $mN$ are relatively prime. Let $\xi\in \k$ be a primitive $|m|N$-th root of unity and consider the
  automorphism $\sigma_k \in \Gal(\BQ_{|m|N}/\BQ)$
  defined by $\sigma_k : \xi \mapsto \xi^k$.  Since $k \equiv 1 \pmod N$, we
  have $\sigma_k(\xi^m) =\xi^m$ or equivalently $\sigma_k|_{\BQ_N}=\id$.
  Since $\theta_\bX^N=\id_\bX$ for $\bX \in Z(\CC)$, by Lemmas \ref{Ecomposition} and \ref{Emm}, we have
  $\left(E^{(m)}_{\bX, V}\right)^{mN}=\id$. Therefore,
  $$
  \sigma_k(\nu_{m,l}^{\bX}(V))
  = \sigma_k \left(\Tr\left( \left(E^{(m)}_{\bX, V}\right)^l\right)\right)
  =\Tr\left( \left(E^{(m)}_{\bX, V}\right)^{kl}\right)=
  \nu_{m,kl}^{\bX}(V)\,.
  $$
  On the other hand, $\nu_{m,l}^{\bX}(V) \in \BQ_N$ by
  Proposition \ref{p:inQN}, and so it is fixed by $\sigma_k$. Thus,
  $$
  I_V((m,l), [\bX]) = I_V((m,kl), [\bX])\quad  \text{for all $V \in \CC$ and simple $\bX \in Z(\CC)$}.
  $$
  Hence, $(m,l) \sim (m,kl)$.
\end{proof}

\begin{lem}\label{l:inv2}
  Let $\CC$ be a spherical fusion category over $\k$ with
  $\FSexp(\CC)=N$. Suppose $\bX_0$ is the unit object of $Z(\CC)$.
 Then $[\bX_0]\in \KK_\k(Z(\CC))$ is $\Gamma(N)$-invariant.
\end{lem}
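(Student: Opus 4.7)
The plan is to write $\fg[\bX_0]=\sum_{j\in\hG}c_j[\bX_j]$ for $\fg\in\Gamma(N)$ and to show $c_j=\delta_{j,0}$ for all $j$. By Lemma \ref{l:inv1} applied with $z=[\bX_0]$,
$$\sum_j c_j\,\nu_{m,l}^{\bX_j}(V)=I_V((m,l),\fg[\bX_0])=I_V((m,l),[\bX_0])=\nu_{m,l}(V)$$
for every $V\in\CC$ and every $(m,l)\in\BZ^2$. Setting $d_j:=c_j-\delta_{j,0}$, the task reduces to deducing $d=0$ from the identities $\sum_j d_j\,\nu_{m,l}^{\bX_j}(V)=0$.

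First I would specialize to $(m,l)=(1,l)$: iterating Proposition \ref{p:tq}(iii) gives $\nu_{1,l}^{\bX_j}(V)=\w_j^{-l}\dim\CC(X_j,V)$, and since $\w_0=1$ and all $\w_j$ are $N$-th roots of unity (because $N=\FSexp(\CC)=\ord\theta$), a Vandermonde argument in $l$ yields, for each $N$-th root of unity $\w$,
$$\sum_{j\,:\,\w_j=\w}d_j\,[X_j]=0\quad\text{in }\KK_\k(\CC).$$
Next I would specialize to $(m,l)=(m,1)$: Corollary \ref{c:nu} expresses $\nu_{m,1}^{\bX_j}(V)$ through the $S$-matrix of the modular category $Z(\CC)$ as $\frac{1}{\dim\CC}\sum_k \w_k^m S_{jk}N_V^{X_k}$, and a second Vandermonde argument in $m$ produces for each such $\w$,
$$\sum_{k\,:\,\w_k=\w}\Bigl(\sum_j S_{kj}d_j\Bigr)[X_k]=0\quad\text{in }\KK_\k(\CC).$$

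The main obstacle will be to combine these two families of identities to force $d=0$. The first family confines each twist-eigenspace block of $d$ to the kernel of the induced forgetful map to $\KK_\k(\CC)$, while the second couples different eigenspaces through the non-degenerate $S$-matrix guaranteed by M\"uger's theorem and satisfying $\fs^2=C$ as in \eqref{eq:strelations}. The plan is to exploit this non-degeneracy---possibly by also specializing to more general $\fh\in\SL$ via Theorem \ref{t:equivariance} and projecting onto twist eigenspaces via $\ft^a$, which produces analogous identities for $\sum_{k\,:\,\w_k=\w}(S^n d)_k[X_k]$ with $n\ge 0$---to invert the coupling and conclude that each block of $d$ vanishes, so that $\fg[\bX_0]=[\bX_0]$.
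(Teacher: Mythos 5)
Your reduction is correct: by Lemma \ref{l:inv1}, $\fg[\bX_0]-[\bX_0]=\sum_j d_j[\bX_j]$ satisfies $\sum_j d_j\,\nu_{m,l}^{\bX_j}(V)=0$ for all $(m,l)$ and $V$, and the two Vandermonde reductions you carry out are valid. Specializing to $(1,l)$ does show that each twist-eigenspace block of $d$ lies in the kernel of the forgetful map $\KK_\k(Z(\CC))\to\KK_\k(\CC)$, and specializing to $(m,1)$ via Corollary \ref{c:nu} shows the same for each twist-eigenspace block of $Sd$. Where the argument is incomplete is precisely the step you flag as the ``main obstacle.'' Conditions of the type ``$d$ and $Sd$ (and more generally $w\cdot d$ for words $w$ in $\fs,\ft$) lie in the kernel of the eigenspace-wise forgetful map'' describe an $\SL$-invariant subspace of $\KK_\k(Z(\CC))$, and nothing in your sketch shows this subspace is zero; iterating with further $\fh\in\SL$ and projecting via $\ft^a$ only regenerates conditions already implied, since that kernel is $T$-invariant. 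You have essentially reduced the lemma to the assertion that the functionals $z\mapsto I_V((m,l),z)$ separate points of $\KK_\k(Z(\CC))$, but you have not proved that assertion.

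The paper avoids this difficulty with a more targeted choice: it introduces the single functional
\[
f(z)=\frac{1}{\dim\CC}\sum_{k\in\Gamma}d(V_k)\,I_{V_k}((0,1),z),
\]
which is $\Gamma(N)$-invariant by Lemma \ref{l:inv1}, and then uses $\fs$-equivariance (Theorem \ref{t:equivariance}) together with the identity $\sum_k d(V_k)\dim\CC(X_j,V_k)=d(X_j)$ to compute $f(\fs[\bX_j])=s_{0j}=\langle[\bX_0],\fs[\bX_j]\rangle$, hence $f=\langle[\bX_0],\cdot\rangle$. Combined with the $(\rho\o\tilde\rho)$-invariance of the pairing $\langle\cdot,\cdot\rangle$ from Lemma \ref{l:iso}(ii), this gives $\langle\fg[\bX_0],z\rangle=\langle[\bX_0],\tilde\fg\inv z\rangle=f(\tilde\fg\inv z)=f(z)=\langle[\bX_0],z\rangle$ for $\fg\in\Gamma(N)$, which is exactly the conclusion. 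Two ingredients your plan does not identify are the use of $m=0$ (rather than the $(1,l)$ and $(m,1)$ families) and the crucial weighting by $d(V_k)/\dim\CC$, which is what turns the equivariant indicator into the coordinate functional dual to $[\bX_0]$; and the invocation of Lemma \ref{l:iso}(ii), which lets one transfer invariance of $[\bX_0]$ across the pairing instead of trying to invert the coupling by the $S$-matrix directly.
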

\begin{proof}
   Let $f \in \KK_\k(Z(\CC))^*$ be defined by
  $$
  f(z)=\frac{1}{\dim \CC} \sum_{k \in \Gamma} d(V_k) I_{V_k}((0, 1),z) \quad\text{for}\quad z \in \KK_\k(Z(\CC)),
  $$
  where $\{V_k \mid k \in \Gamma\}$ is a complete  set of non-isomorphic
   simple objects in $\CC$.
  As a consequence of Lemma \ref{l:inv1}, we have $f(\fg z) = f(z)$ for all
  $z \in \KK_\k(Z(\CC))$ and
  $\fg \in \Gamma(N)$.
  By Theorem \ref{t:equivariance}, we find
  \begin{multline*}
  f(\fs[\bX_j]) =  \frac{1}{\dim \CC}\sum_{k \in \Gamma} d(V_k) I_{V_k}((0, 1),\fs[\bX_j])
  = \frac{1}{\dim \CC} \sum_{k \in \Gamma} d(V_k) I_{V_k}((0, 1)\fs\inv,[\bX_j]) \\
  = \frac{1}{\dim \CC} \sum_{k \in \Gamma} d(V_k) I_{V_k}((1, 0),[\bX_j])
  = \frac{1}{\dim \CC}d(\bX_j)
   =s_{0j} =\langle[\bX_0],\fs[\bX_j]\rangle
  \end{multline*}
  and thus $f(z)=\langle[\bX_0],z\rangle$ for all $z\in\KK_\k(Z(\CC))$.
  Now
  $ \langle\fg[\bX_0],z\rangle
   =\langle[\bX_0],\tilde\fg\inv z\rangle
   =f(\tilde\fg\inv z)
   =f(z)=\langle[\bX_0],z\rangle$
  for all $z\in\KK_\k(Z(\CC))$ by Lemma \ref{l:iso} (ii), and the result follows.
\end{proof}
\begin{lem}\label{l:inv3}
 Let $\A$ be a modular tensor category over $\k$ with $\FSexp(\A)=N$.
  Suppose $\lambda, \zeta \in \k$
  such that $\lambda^2=\dim \A$ and $\zeta^3= p_\A^+/\lambda$ and consider the representation $\rho_\A^{\lambda,\zeta}$ of $\SL$ on $\KK_\k(Z(\CC))$.
   Then, for $z, z' \in \KK_\k(\A)$,
   $$
   (\fg z)(\tilde \fg z') = zz' \quad \text{for all }\fg \in \Gamma(N)\,.
   $$
\end{lem}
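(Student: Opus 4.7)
The plan is to show that the multiplication map $\mu\colon\KK_\k(\A)\otimes\KK_\k(\A)\to\KK_\k(\A)$, sending $z\otimes z'$ to $zz'$, satisfies $\mu\circ(\rho\otimes\tilde\rho)(\fg)=\mu$ for every $\fg\in\Gamma(N)$, where I abbreviate $\rho:=\rho_\A^{\lambda,\zeta}$. Applied to $w=z\otimes z'$, this is exactly the claim $(\fg z)(\tilde\fg z')=zz'$.

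The key observation is that the structure constants of $\mu$ in the canonical basis are precisely equivariant indicator values at $(1,0)$, transported through the isomorphism $\phi$ of Lemma~\ref{l:iso}(i). Namely, for any simple $U_\alpha$, the coefficient of $[U_\alpha]$ in $\mu([U_i]\otimes[U_j])=[U_i][U_j]$ is the fusion rule $N_{ij}^\alpha=\dim\A(U_i\otimes U_j,U_\alpha)$. A direct inspection of the definition of $E_{\bX,V}^{(m,l)}$ shows that when $(m,l)=(1,0)$ the morphisms $D_{\bX,0}$ and $J_{1,0}(V)$ both reduce to identities, so $E_{\bX,V}^{(1,0)}=\id_{\A(X,V)}$ and hence $\nu_{1,0}^\bX(V)=\dim\A(X,V)$. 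Applying this with $\bX=\bU_{ij}$ and $V=U_\alpha$, the coefficient above equals $\nu_{1,0}^{\bU_{ij}}(U_\alpha)=I_{U_\alpha}((1,0),\phi([U_i]\otimes[U_j]))$. Extending linearly, for every $w\in\KK_\k(\A)\otimes\KK_\k(\A)$ the $[U_\alpha]$-coefficient of $\mu(w)$ equals $I_{U_\alpha}((1,0),\phi(w))$.

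With this identification in hand the conclusion is nearly automatic. For $\fg\in\Gamma(N)$, the $\SL$-equivariance of $\phi$ gives $\phi((\rho\otimes\tilde\rho)(\fg)w)=\fg\phi(w)$, while Lemma~\ref{l:inv1}, applied to the spherical fusion category $\A$, yields
\[
I_{U_\alpha}((1,0),\phi((\rho\otimes\tilde\rho)(\fg)w))
  =I_{U_\alpha}((1,0),\fg\phi(w))
  =I_{U_\alpha}((1,0),\phi(w)).
\]
Thus the $[U_\alpha]$-coefficients of $\mu((\rho\otimes\tilde\rho)(\fg)w)$ and $\mu(w)$ agree for every simple $U_\alpha$, forcing the two elements of $\KK_\k(\A)$ to coincide. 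Specialising to $w=z\otimes z'$ finishes the proof.

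There is no real obstacle beyond setting up the correct reformulation: once one recognises that the $(1,0)$-indicators are precisely the dimensions of the relevant Hom spaces, and so encode the fusion rules, the $\SL$-equivariance of $\phi$ together with the $\Gamma(N)$-invariance established in Lemma~\ref{l:inv1} carries out all the remaining work.
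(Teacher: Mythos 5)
Your proof is correct and follows essentially the same route as the paper's: both arguments hinge on identifying the structure constants of the Grothendieck algebra multiplication with $(1,0)$-indicators (via $\nu_{1,0}^{\bU_{ij}}(U_\alpha)=\dim\A(U_i\o U_j,U_\alpha)$), then invoking the $\SL$-equivariance of $\phi$ from Lemma~\ref{l:iso}(i) and the $\Gamma(N)$-invariance from Lemma~\ref{l:inv1}. The only cosmetic difference is that the paper phrases the coefficient extraction through the pairing $\langle\cdot,\cdot\rangle$ of Lemma~\ref{l:iso}(ii), whereas you read off coefficients directly in the canonical basis; these are literally the same operation.
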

\begin{proof}
  Consider the non-degenerate bilinear form $\langle\cdot,\cdot\rangle$  on $\KK_\k(\A)$ from lemma \ref{l:iso}. Let $m\colon\KK_\k(\A)\o\KK_\k(\A)\to\KK_\k(\A)$ denote multiplication in the Grothendieck algebra. Then, by Lemma \ref{l:iso},
  we have
  $$
  \langle m\phi\inv(\bU_{ij}), [U_k] \rangle = \langle [U_i][U_j], [U_k]\rangle =
  \dim \A(U_i \o U_j, U_k) = I_{U_k}((1,0), [\bU_{ij}])
  $$
  for all $i, j \in \Pi$. Therefore,
  $$
  \langle m\phi\inv(w), [U_k] \rangle = I_{U_k}((1,0), w) \quad\text{for all }w \in \KK_\k(Z(\A)).
  $$
  By Lemma \ref{l:inv1}, for  $\fg\in\Gamma(N)$,
  $$\langle m\phi\inv(\fg w),[U_k]\rangle=I_{U_k}((1,0),\fg w)=I_{U_k}((1,0),w)=\langle m\phi\inv(w),[U_k]\rangle$$
  and hence $m\phi\inv(\fg w)=m\phi\inv(w)$. Now for $w=\phi(z\o z')$ we have $\phi\inv(\fg w)=\fg z\o \tilde\fg z'$ by Lemma \ref{l:iso} and the claim follows.
\end{proof}
\begin{thm}\label{t:cong1}
  Let $\CC$ be a spherical fusion category over $\k$ with $\FSexp(\CC)=N$.
  The kernel of the canonical modular representation
  $\rho_{Z(\CC)}: \SL \to \GL(\KK_\k(Z(\CC)))$ of $Z(\CC)$ is a congruence subgroup of level $N$.
  In particular, $\KK_\k(Z(\CC))$ is $\Gamma(N)$-invariant,
\end{thm}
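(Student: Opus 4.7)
The plan is to combine Lemmas \ref{l:inv2} and \ref{l:inv3} to show that $\Gamma(N)$ acts trivially on $\KK_\k(Z(\CC))$ under $\rho_{Z(\CC)}$, and then to verify the minimality of $N$ by inspecting the $T$-matrix.

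The one preliminary verification I would need is that Lemma \ref{l:inv3} can be applied to the modular category $\A = Z(\CC)$ with the same integer $N$, i.e.\ that $\FSexp(Z(\CC)) = N$. By the theorem recalled after Proposition \ref{p:inQN}, $\FSexp(Z(\CC)) = \ord(\theta_{Z(Z(\CC))})$. The computation \eqref{eq:centerribbon} shows that a simple object $\bU_{ij}=(\bX_i\o\bX_j,\sigma_{\bX_i\o\bX_j}) \in Z(Z(\CC))$ has twist scalar $\w_i\w_j^{-1}$, where the $\w_i$ are the $T$-eigenvalues of $Z(\CC)$. Since $\w_0 = 1$, the collection $\{\w_i\w_j^{-1}\}$ contains every $\w_i$ and has the same LCM of orders, so $\FSexp(Z(\CC)) = \ord(\theta_{Z(\CC)}) = N$. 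Note also that the canonical representation $\rho_{Z(\CC)}$ is precisely $\rho_{Z(\CC)}^{\dim\CC,\,1}$ in the notation of Lemma \ref{l:inv3}.

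I would then apply Lemma \ref{l:inv3} with $\A = Z(\CC)$, $z = [\bX_0]$, and arbitrary $z' \in \KK_\k(Z(\CC))$ to obtain $(\fg[\bX_0])(\tilde\fg z') = [\bX_0]\cdot z' = z'$ for every $\fg \in \Gamma(N)$. Lemma \ref{l:inv2} gives $\fg[\bX_0] = [\bX_0]$, and since $[\bX_0]$ is the multiplicative unit of the Grothendieck algebra, this collapses to $\tilde\fg z' = z'$. Combined with $\widetilde{\Gamma(N)} = \Gamma(N)$ from \eqref{eq:tildeinv}, every $\fg \in \Gamma(N)$ acts as the identity on $\KK_\k(Z(\CC))$, which yields both the inclusion $\Gamma(N) \subseteq \ker \rho_{Z(\CC)}$ and the $\Gamma(N)$-invariance of $\KK_\k(Z(\CC))$.

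For the minimality of the level, I would observe that any inclusion $\Gamma(n) \subseteq \ker \rho_{Z(\CC)}$ implies $T^n = \rho_{Z(\CC)}(\ft^n) = I$, so $\ord T$ divides $n$; but $\ord T = \ord \theta_{Z(\CC)} = N$, hence $N \mid n$. The only nontrivial step in the plan is the verification $\FSexp(Z(\CC)) = N$, which is what allows Lemma \ref{l:inv3} to be invoked at level $N$; once this is secured, all the substantive work has already been done in the preceding three lemmas.
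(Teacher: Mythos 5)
Your proof follows the paper's argument exactly: combine Lemma \ref{l:inv2} with Lemma \ref{l:inv3} applied to $\A = Z(\CC)$, use that $[\bX_0]$ is the multiplicative unit of $\KK_\k(Z(\CC))$ to obtain $\Gamma(N) \subseteq \ker\rho_{Z(\CC)}$, and derive minimality of the level from $\ord T = N$. Your preliminary check that $\FSexp(Z(\CC)) = N$ is a welcome explicit verification of something the paper leaves tacit: Lemma \ref{l:inv3} is stated for a modular category $\A$ with $\FSexp(\A) = N$, so invoking it with $\A = Z(\CC)$ at the level $N = \FSexp(\CC)$ quietly presupposes (at least) $\FSexp(Z(\CC)) \mid \FSexp(\CC)$, and your computation --- reading the twist eigenvalues $\w_i\w_j\inv$ of $Z(Z(\CC))$ from \eqref{eq:centerribbon}, observing that $\w_0 = 1$ gives them the same LCM of orders as the $\w_i$ themselves, and applying $\FSexp(\DD) = \ord\theta_{Z(\DD)}$ from the paragraph preceding Proposition \ref{p:inQN} --- closes this cleanly by establishing the equality.
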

\begin{proof}
  Recall  that $\KK_\k(Z(\CC))$ is a $\k$-algebra
  with  $[\bX_0]$ as the identity element. It follows from Lemmas \ref{l:inv2} and \ref{l:inv3} that
  $$
  z=z [\bX_0]= (\fg z) (\tilde\fg [\bX_0])= (\fg z)[\bX_0] =\fg z
  \quad \text{for all }\fg \in \Gamma(N), z \in \KK_\k(Z(\CC))\,.
  $$
  Therefore, $\Gamma(N) \subseteq \ker \rho_{Z(\CC)}$. Suppose
  $\Gamma(N') \subseteq \ker \rho_{Z(\CC)}$ for some positive integer $N' \le N$.
  Then $\ft^{N'} \in  \ker \rho_{Z(\CC)}$ or $\ft^{N'}z =z$ for all $z \in \KK_\k(Z(\CC))$. Therefore, $T^{N'}=1$
  where $T$ is the $T$-matrix of $Z(\CC)$. Since $\ord(T) =N$ (cf. \cite[Theorem 5.5]{NS07}),
  $N \mid N'$ and so $N=N'$.
\end{proof}

\begin{thm}\label{t:cong2}
  Let $\A$ be a modular category over $\k$ with $\FSexp(\A)=N$. Then the kernel of the
  projective modular representation $\ol \rho_\A$ of  $\A$ is a congruence subgroup of level $N$.
\end{thm}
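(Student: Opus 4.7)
The plan is to reduce Theorem \ref{t:cong2} to the center version, Theorem \ref{t:cong1}, which I would apply directly to $\CC = \A$ (viewing $\A$ as a spherical fusion category in its own right). Combined with the tensor-product decomposition $\rho_{Z(\A)} \cong \rho_\A^{\lambda,\zeta} \otimes \widetilde{\rho_\A^{\lambda,\zeta}}$ supplied by Lemma \ref{l:iso}(i), this should push the congruence property from $\rho_{Z(\A)}$ down to $\ol\rho_\A$ via a simple tensor-product argument.

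In more detail, since $\FSexp(\A) = N$, Theorem \ref{t:cong1} gives $\Gamma(N) \subseteq \ker \rho_{Z(\A)}$. Fix $\lambda, \zeta \in \k$ satisfying \eqref{eq:ls} and write $\rho = \rho_\A^{\lambda, \zeta}$. For each $\fg \in \Gamma(N)$, Lemma \ref{l:iso}(i) then yields
\[
\rho(\fg) \otimes \tilde\rho(\fg) = \id_{\KK_\k(\A) \otimes \KK_\k(\A)}.
\]
A standard linear-algebra fact says that if $A \otimes B$ is the identity on $V \otimes W$ with $A, B$ invertible, then $A$ and $B$ are mutually inverse scalar operators. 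Applying this, $\rho(\fg)$ is a scalar, so $\ol\rho_\A(\fg)$ is trivial in $\PGL(\KK_\k(\A))$, proving $\Gamma(N) \subseteq \ker \ol\rho_\A$.

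To see that $N$ is the \emph{exact} level, suppose $\Gamma(N') \subseteq \ker \ol\rho_\A$ for some positive integer $N'$. Then $\ol\rho_\A(\ft^{N'}) = \id$, so $\rho(\ft)^{N'} = \zeta^{-N'} T^{N'}$ must be a scalar matrix; as $T = [\delta_{ij}\w_i]_\Pi$ is diagonal, this forces $(\w_i/\w_j)^{N'} = 1$ for all $i, j \in \Pi$. By \eqref{eq:uT}, this is precisely the statement $T_{Z(\A)}^{N'} = \id$, and since $\FSexp(\A) = \ord(T_{Z(\A)}) = N$ by \cite[Theorem 5.5]{NS07}, we conclude $N \mid N'$.

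The main conceptual input is Theorem \ref{t:cong1}; given that, the argument is essentially bookkeeping. The only slightly delicate point is verifying that $A \otimes B = \id$ forces $A$ (and hence $\rho(\fg)$) to be a scalar matrix, but this is elementary. I expect no serious obstacle beyond the proper application of Lemma \ref{l:iso} and the bookkeeping of the minimality of $N$.
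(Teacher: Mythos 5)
Your proposal is correct and follows essentially the same route as the paper: invoke Theorem \ref{t:cong1} for $Z(\A)$, decompose $\rho_{Z(\A)} \cong \rho \otimes \tilde\rho$ via Lemma \ref{l:iso}(i), conclude $\rho(\fg)$ is a scalar for $\fg\in\Gamma(N)$, and read off exactness of the level from the diagonal $T$-matrix. The only cosmetic differences: for the scalar step the paper contracts the second tensor factor against a test functional $\e$ with $\e(z')=1$, whereas you invoke the general fact that $A\otimes B = \id$ forces $A$ and $B$ to be mutually inverse scalars (both arguments are valid and amount to the same thing); and for minimality the paper uses $\w_0=1$ to pin down $\zeta^{N'}\a = 1$ and deduce $T^{N'}=\id$ directly, whereas you deduce $(\w_i/\w_j)^{N'}=1$ and identify this with $\underline{T}^{N'}=\id$ — again equivalent, since $\ord T = \ord\underline{T} = N$.
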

\begin{proof}
  Let $\lambda, \zeta \in \k$ such that  the representation $\rho:=\rho_\A^{\lambda,\zeta}$ is well-defined.
  In view of Lemma \ref{l:iso},  $\rho\o\tilde\rho\cong\rho_{Z(\A)}$.
  Therefore, by Theorem \ref{t:cong1},
  \begin{equation}\label{eq:g-invariant}
 \fg z \o \tilde \fg z' = z \o z'\quad\text{for all $z, z' \in \KK_\k(\A)$ and
  $\fg \in \Gamma(N)$}.
  \end{equation}
  Pick $z'\in\KK_\k(\A)$ and $\e\in\KK_\k(\A)^*$ with $\e(z')=1$. Then \eqref{eq:g-invariant} implies $\fg z\e(\tilde\fg z')=z$ for all $z\in\KK_\k(\A)$.
   In particular, $\rho_{\A}^{\lambda, \zeta}(\fg)$ is a scalar
  multiple of $\id_{\KK_\k(\A)}$ and hence
  $\ol \rho_\A(\fg)$ is the unit of $\PGL(\KK_\k(\A))$. Thus, $\Gamma(N) \subseteq \ker \ol \rho_\A$.

   Suppose $\Gamma(N') \subseteq \ker \ol \rho_\A$ for some  positive integer $N' \le N$. Then
   $\rho_{\A}^{\lambda, \zeta}(\ft^{N'})=\a \id$ for some nonzero scalar $\a \in \k$.
   Therefore,
   $$
   \frac{1}{\zeta^{N'}} T^{N'} = \a\id
   $$
   where $T=[\delta_{ij}\w_i]_\Pi$ is the $T$-matrix of $\A$. Since $\w_0 =1$,
   we find
   $$
   1 = \w_0 =\zeta^{N'} \a
   $$
   and hence $T^{N'}=\id$. This implies  $N\mid N'$.
\end{proof}

\begin{example}{\rm
  Let $G$ be a finite abelian group and  $\w$ a normalized complex valued 3-cocycle on $G$ such that quasi-Hopf algebra $D^\w(G)$, introduced in \cite{DPR90}, is commutative.  By \cite[Corollary 3.6]{MN01},  the function $\theta_z$ defined by
  $$
  \theta_z(x,y)=\frac{\w(z,x,y)\w(x,y,z)}{\w(x,z,y)}
  $$
  is a 2-coboundary for all $z \in G$. Let $t_z: G \to \BC^\times$ be a normalized cochain such that $\theta_z= \delta t_z$, i.e. $\theta_z(x,y) = t_z(x)t_z(y)/t_z(xy)$. In addition, we chose $t_1=1$.

    Following Section 9 of \cite{MN01}, the irreducible characters $\chi_{\a, u}$ of $D^\w(G)$ are indexed by the group $\hat G \times G$ where $\hat G$ is the character group of $G$. As a vector space $D^\w(G)=\BC[G]^* \o \BC[G]$. Let $\{e(u)\mid u \in G\}$ be the basis of $\BC[G]^*$ dual to $G$. Then
  \begin{equation}\label{eq:chi}
   \chi_{\a,u}(e(h) \o y) = \a(y) t_u(y) \delta_{h,u}
  \end{equation}
  for $\a \in \hat G$ and $h,u,y \in G$. The universal $R$-matrix of $D^\w(G)$ and the canonical ribbon structure (cf. \cite{AC92} and \cite[p869]{GMN}) are given respectively by
  $$
  R = \sum_{g,h \in G} (e(g) \o 1) \o (e(h) \o g),\quad  v =\sum_{g \in G} e(g) \o g\,.
  $$
  Since the pivotal trace of the canonical pivotal structure of $D^\w(G)$ is equal to the ordinary trace, it follows from \eqref{eq:chi} that
   the $((\a_1, u_1), (\a_2, u_2))$-entry of the $S$-matrix for $\C{D^\w(G)}$ is
 \begin{equation} \label{eq:S}
    b_\w((\a_1, u_1), (\a_2, u_2))=(\chi_{\a_1, u_1} \o \chi_{\a_2, u_2}) (R^{21}R) = \a_1(u_2)\a_2(u_1) t_{u_1}(u_2) t_{u_2}(u_1)
 \end{equation}
 and the $((\a, u), (\a, u))$-entry of the $T$-matrix  is
 \begin{equation} \label{eq:T}
   q(\a,u)=\chi_{\a, u} (v)=\a(u) t_{u}(u)\,.
  \end{equation}
 It is worth to note that $q$ is a quadratic form canonically defined on the group $\G^\w$ of group-like elements of $D^\w(G)$ and $b_\w$ is the associated non-degenerate bicharacter on $\G^\w$ defined in \cite[p3491]{MN01}. By \cite[Theorem 9.2]{NS07}, the Frobenius-Schur exponent of $\C{D^\w(G)}$ is given by the formula
 \begin{equation} \label{eq:exp}
\FSexp(D^\w(G))=\lcm |\w_C||C|
 \end{equation}
 where $C$ runs through all the maximal cyclic subgroups of $G$ and $|\w_C|$ denotes the order of the restriction of the cohomology class of $\w$ to $C$. Moreover, $\C{D^\w(G)} = Z(\C{H})$ for a certain semisimple quasi-Hopf algebra $H$ of dimension $|G|$ \cite{Maj98}.

 Now we consider  the order 2 multiplicative group  $G=\{1,x\}$, and let $\a$ be the non-trivial character of $G$.  Then $H^3(G, \BC^\times) =\BZ_2$, $D^\w(G)$ is commutative for all normalized 3-cocycles $\w$ of $G$, and the irreducible characters of $D^\w(G)$ are indexed by $\{(1,1), (\a,1), (1,x), (\a,x)\}$.

  (i) For $\w\equiv 1$, we can chose $t_z=1$ for all $z \in G$. It follows from \eqref{eq:S} and \eqref{eq:T} that  the normalized $S$-matrix and $T$-matrix are
 $$
s=\frac{1}{2}\left[
\begin{array}{cccc}
    1   &   1    &   1   &   1  \\
    1   &   1    &  - 1   &  - 1  \\
    1   &   -1    &   1   &  - 1  \\
    1   &   -1    &  - 1   &   1
\end{array}\right], \quad
T=\left[
\begin{array}{cccc}
    1   &   0   &   0   &   0  \\
    0   &   1    &   0   &  0  \\
    0   &   0    &   1   &   0  \\
    0   &   0    &   0   &   -1
\end{array}\right]\,.
$$
Since $s^2=T^2=(sT)^3=\id$ and $s(sT)s = Ts=(sT)\inv$, the image of the canonical representation $\rho$ of $\C{D(G)}$ is isomorphic $S_3$ and so $\ker \rho =\G(2)$.

(ii) For the non-trivial class of $H^3(G, \BC^\times)$, we consider $\w: G^3 \to \BC^\times$ defined by $\w(a,b,c)=-1$ if $a=b=c=x$ and $\w(a,b,c)=1$ otherwise. Then $\w$ is a non-trivial normalized 3-cocycle on $G$. If one  defines $t_z: G \to \BC^\times$ as
    $t_z(a)=i$ whenever $a=z=x$ and $t_z(a)=1$ otherwise, then $\theta_z = \delta t_z$ (cf. \cite[p857]{GMN}). Using the same index set for the irreducible characters as in case (i), the normalized $S$ and $T$-matrices of $\C{D^\w(G)}$ are
     $$
s=\frac{1}{2}
\left[
\begin{array}{cccc}
    1   &   1    &   1   &   1  \\
    1   &   1    &  - 1   &  - 1  \\
     1   &   -1    &  - 1   &   1\\
    1   &   -1    &   1   &  - 1
\end{array}\right], \quad
T=\left[
\begin{array}{cccc}
    1   &   0   &   0   &   0  \\
    0   &   1    &   0   &  0  \\
    0   &   0    &   i   &   0  \\
    0   &   0    &   0   &   -i
\end{array}\right]\,.
$$
Then $s^2=T^4=(sT^2)^4=(sT)^3=\id$ and $\FSexp(D^\w(G))=4$. Since $ssT^2s=T^2s=(sT^2)^{-1}$, the subgroup generated by $s, sT^2$ is a dihedral group of order 8 and hence the image of the canonical representation $\rho$ of $\C{D^\w(G)}$ contains at least 24 elements. Since $s^2=\id$ and $\fs^2=-1$, we have $\G(4)\langle 1, -1\rangle \subseteq \ker \rho$, and thus $\im \rho$ is a homomorphic image of $\SL/ \G(4)\langle 1, -1\rangle =\mbox{PSL}(2, \BZ_4)\cong S_4$.  Thus we have
$$
\ker \rho = \G(4)\langle 1, -1\rangle \quad \text{and}\quad \im \rho \cong S_4\,.  \qed
$$
  }
\end{example}

More examples of small modular categories can be found in \cite{RSW}.
\section{Modular representations and a conjecture of Eholzer}\label{s:eholzer}
A  matrix representation $\rho: \SL \to \GL(n, \k)$ which has finite image is called $\ft$-\emph{rational} if
$\im \rho \subseteq \GL(n, \BQ_m)$ where $m=\ord(\rho(\ft))$.
It is conjectured in \cite{E95} that the representation $\rho :\SL \to \GL(n, \BC)$ associated with a RCFT satisfies the conditions:
\begin{enumerate}
  \item[(i)] The kernel of $\rho$ is a congruence subgroup of $\SL$, and
  \item[(ii)] $\rho$ is $\ft$-\emph{rational}.
\end{enumerate}
In this section, we prove that every modular representation of a modular category has finite image and is $\ft$-rational.

Let $\A$ be a modular category over $\k$ with a complete set   of non-isomorphic simple objects $\{U_i\mid i \in \Pi\}$. We denote by $M_\Pi(R)$ the ring of square matrices indexed by $\Pi$ over a commutative ring $R$, and $\GL(\Pi, R)$ the group of invertible matrices in $M_\Pi(R)$. \emph{A modular representation of $\A$} is an ordinary group representation $\xi: \SL \to \GL(\Pi, \k)$ such that $\ol\rho_\A(\fg) = \pi (\xi(\fg))$ for all $\fg \in \SL$, where $\pi: \GL(\Pi, \k) \to \PGL(\Pi, \k)$ is the natural surjection. In particular, $\rho_{\A}^{\lambda, \zeta}$, for $\lambda, \zeta \in \k$ satisfying \eqref{eq:ls}, is a modular representation of $\A$.

Suppose $\xi_i: \SL \to \GL(\Pi, \k)$, $i=1,2$, are modular representations of $\A$. Then there exist $x_\fs$, $x_\ft \in \k^\times$  such that
$$
\xi_2(\fs) = x_\fs  \xi_1(\fs)  \quad \text{and}\quad \xi_2(\ft) = x_\ft \xi_1(\ft) \,.
$$
Using the relations of the $\fs$ and $\ft$, we find
$$
x_\fs^4 =1, \quad x_\ft^3 x_\fs^3 = x_\fs^2,
$$
and hence $x_\ft^{12}=1$ and $x_\fs = x_\ft^{-3}$. This implies there are  12  modular representations of $\A$. Moreover, for $\fg \in \ker\xi_1$, $\xi_2(\fg) = \a(\fg) \id$ where $\a(\fg)$ is some power of $x_\ft$. Since $x_\ft^{12}=1$, $\xi_2(\ker \xi_1)$ is isomorphic to a subgroup of $\BZ_{12}$. In particular, we find
$$
\frac{(\ker \xi_1)(\ker \xi_2)}{\ker \xi_2} \cong \frac{\ker \xi_1}{\ker \xi_1 \cap \ker \xi_2} \le \BZ_{12}\,.
$$
By the same argument, $(\ker \xi_1)(\ker \xi_2)/\ker \xi_1$ is a cyclic group of order dividing $12$.
Consequently, $\ker \xi_1$ is a finite index subgroup of $\SL$ if, and only if, $\ker \xi_2$ is of finite index.

Now, we consider a modular representation $\xi=\rho_\A^{\lambda, \zeta}$ where $\lambda, \zeta \in \k$ satisfy \eqref{eq:ls}. Suppose  $N=\FSexp(\A)$, $[S_{ij}]_\Pi$ and $[\delta_{ij}\w_i]_\Pi$ are the $S$ and $T$ matrices of $\A$ respectively. We have pointed out in the paragraph preceding Proposition \ref{p:inQN}  that $d(U_i) \in \BQ_N$ for $i \in \Pi$. Therefore,
$$
S_{ij}, \,  p_\A^\pm \in \BQ_N
$$
and so $\zeta^6 = p_\A^+/p_\A^- \in \BQ_N$. Since $\zeta^6 \in \BQ_N$ is a root of unity, $\ord(\zeta^6)$ divides $2N$. Thus, $\zeta^{12N}=1$ and hence
  \begin{equation} \label{eq:im_t}
  \xi(\ft)^{12N}=\id.
  \end{equation}
   Obviously, $\xi(\fs)^4=\id$, and so we have $(\det \xi(\fg))^{12N}=1$ for all $\fg \in \SL$.

   By Theorem \ref{t:cong2}, there exists a group homomorphism $\a : \Gamma(N) \to \k^\times$ such that
   $\xi(\fg)=\a(\fg)\id$ for all $\fg \in \Gamma(N)$. Therefore, $\a(\fg)^{12N|\Pi|}=1$. In particular, the image of $\a$ is a finite cyclic group. Now we find
   $$
   \frac{(\ker \xi) \Gamma(N)}{\ker \xi} \cong \frac{\Gamma(N)}{ \Gamma(N)\cap \ker \xi} \cong \xi(\Gamma(N)) \cong \a(\Gamma(N))\,.
   $$
  Since $\Gamma(N)$ is a finite index subgroup of $\SL$,  so is $\ker \xi$. Hence, all the modular representations of $\A$ have finite images.

  In view of the preceding remark, $\{\xi_x\mid x\in \k, x^{12}=1\}$ is the set of all modular representations of $\A$, where $\xi_x: \SL \to \GL(\Pi, \k)$ given by
  \begin{equation} \label{eq:xi_x}
    \xi_x(\fs) := \frac{1}{x^3} \xi(\fs)= \frac{1}{x^3 \lambda}S , \quad  \xi_x(\ft) := x \xi(\ft)=\frac{1}{\zeta/x}T\,.
  \end{equation}
  It follows from \eqref{eq:im_t} that $m:=\ord(\xi_x(\ft))$ divides $12N$. Since $(\frac{\w_i} {\zeta/x})^m=1$ for all $i \in \Pi$ and $\w_0=1$, we find
  $(\zeta/x)^m=1$ and so $\w_i^m=1$ for all $i \in \Pi$. Therefore, $N \mid m \mid 12 N$ and
  $
  \zeta/x, \w_i \in \BQ_m \, \text{for all }i \in \Pi.
  $
  Since $\lambda \zeta^3 = p_\A^+ \in \BQ_m$, we also have $x^3 \lambda = \frac{p_\A^+}{(\zeta / x)^3} \in \BQ_m$. Since $S \in \GL(\Pi, \BQ_N)$, we have $\xi_x(\fs)$ and $\xi_x(\ft) \in \GL(\Pi,\BQ_m)$. This completes the proof of

\begin{thm}\label{t:finite}
  Let $\A$ be a modular tensor category over $\k$. Then every modular representation $\rho$ of $\A$ has finite image, and is $\ft$-rational. Moreover,
   $$
  \FSexp(\A) \mid \ord (\rho(\ft)) \mid 12\cdot \FSexp(\A)\,.
  $$
  If one sets $\xi =\rho^{\lambda, \zeta}_\A$ for some $\lambda, \zeta \in \k$ satisfying \eqref{eq:ls}, then $\xi_x$, $x \in \k$ a 12-th root of unity, are all the modular representations of $\A$. \qed
\end{thm}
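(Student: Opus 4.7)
The plan is to reduce everything to the case of a single modular representation $\xi=\rho_\A^{\lambda,\zeta}$, exploiting the fact that any two modular representations differ only by scalar factors at $\fs$ and $\ft$. More precisely, if $\xi_1,\xi_2\colon\SL\to\GL(\Pi,\k)$ are modular representations of $\A$, then $\xi_2(\fs)=x_\fs\xi_1(\fs)$ and $\xi_2(\ft)=x_\ft\xi_1(\ft)$ for some $x_\fs,x_\ft\in\k^\times$. Feeding these into the defining relations $(\fs\ft)^3=\fs^2$ and $\fs^4=1$ of $\SL$ would force $x_\fs^4=1$ and $x_\fs^3x_\ft^3=x_\fs^2$, hence $x_\fs=x_\ft^{-3}$ and $x_\ft^{12}=1$. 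This parametrizes the modular representations as $\{\xi_x\mid x^{12}=1\}$ with $\xi_x$ defined by \eqref{eq:xi_x}, giving the last assertion. Moreover, the quotients $(\ker\xi_1)(\ker\xi_2)/\ker\xi_i$ embed into a finite cyclic group of order dividing $12$, so finiteness of image for one $\xi_i$ propagates to all.

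Next I would pin down $\xi=\rho_\A^{\lambda,\zeta}$ and argue finiteness directly. Setting $N=\FSexp(\A)$, Proposition \ref{p:inQN} gives $d(U_i)\in\BQ_N$, whence $p_\A^\pm=\sum\w_i^{\pm 1}d(U_i)^2\in\BQ_N$. Since $\zeta^6=p_\A^+/p_\A^-$ is then a root of unity lying in $\BQ_N$, its order divides $2N$; consequently $\zeta^{12N}=1$ and $\xi(\ft)^{12N}=\id$. The congruence subgroup theorem \ref{t:cong2} supplies a character $\a\colon\Gamma(N)\to\k^\times$ with $\xi(\fg)=\a(\fg)\id$ for $\fg\in\Gamma(N)$. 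From $\det\xi(\fg)^{12N}=1$ one reads off that $\a$ has image of bounded exponent, hence finite, so that
\[
  \frac{\ker\xi\cdot\Gamma(N)}{\ker\xi}\;\cong\;\frac{\Gamma(N)}{\Gamma(N)\cap\ker\xi}\;\cong\;\a(\Gamma(N))
\]
is finite. Combined with $[\SL:\Gamma(N)]<\infty$, this forces $\ker\xi$ to be of finite index and $\im\xi$ finite. By the first paragraph, every $\xi_x$ then has finite image as well.

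Finally, for $\ft$-rationality and the divisibility bounds, let $m=\ord(\xi_x(\ft))$. Since $\xi(\ft)^{12N}=\id$ and $\xi_x(\ft)$ differs from $\xi(\ft)$ by a $12$-th root of unity, $m\mid12N$. On the other hand $(\w_i/(\zeta/x))^m=1$ for all $i\in\Pi$ and $\w_0=1$ force $(\zeta/x)^m=1$ and $\w_i^m=1$; the latter says that $\theta^m=\id$ on the center $Z(\A)$ is not quite the statement needed, but by \cite[Theorem 5.5]{NS07} the order of $\theta$ on $Z(\A)$ equals $N=\FSexp(\A)$, so we may instead invoke the analogous bound $N\mid m$ via the fact that $T^N=\id$ already. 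From $\zeta/x\in\BQ_m$, $\w_i\in\BQ_m$, and $x^3\lambda=p_\A^+/(\zeta/x)^3\in\BQ_m$, together with $S$-entries lying in $\BQ_N\subseteq\BQ_m$ by Proposition \ref{p:inQN}, every matrix entry of $\xi_x(\fs)=\frac{1}{x^3\lambda}S$ and $\xi_x(\ft)=\frac{x}{\zeta}T$ lies in $\BQ_m$, establishing $\ft$-rationality.

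The main technical point — and the only place the work is not purely formal — is the chain $\w_i\in\BQ_N$, $d(U_i)\in\BQ_N\Rightarrow p_\A^\pm\in\BQ_N\Rightarrow\zeta^6$ is a root of unity in $\BQ_N$. Once this rationality of the Gauss sums is in hand, the divisibility sandwich $N\mid\ord(\rho(\ft))\mid 12N$ and the containment of all matrix entries in $\BQ_m$ are formal consequences of the relation $\lambda\zeta^3=p_\A^+$ and the congruence subgroup Theorem \ref{t:cong2}; the finiteness of the image then falls out of the fact that a scalar character on $\Gamma(N)$ of bounded exponent has finite image.
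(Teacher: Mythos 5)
Your proposal follows the paper's own proof almost step for step: the $12$-parametrization of modular representations by the scalar degrees of freedom at $\fs$ and $\ft$, the deduction that finiteness of image propagates along this family, the rationality $\zeta^6=p_\A^+/p_\A^-\in\BQ_N$ forcing $\xi(\ft)^{12N}=\id$, the use of Theorem~\ref{t:cong2} to realize $\xi|_{\Gamma(N)}$ as a scalar character of bounded exponent, and the closing $\BQ_m$-rationality argument. So this is essentially the same approach and is correct.

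One small spot in your last paragraph deserves tightening. You write that ``$\w_i^m=1$ says $\theta^m=\id$ on the center $Z(\A)$''; in fact this gives $\theta_\A^m=\id$, the twist of $\A$ itself, not of $Z(\A)$. The bridge you need is that $\FSexp(\A)=\ord(T_\A)$: by \cite[Theorem 5.5]{NS07} one has $\FSexp(\A)=\ord(\theta_{Z(\A)})$, and since the eigenvalues of $\theta_{Z(\A)}$ are the ratios $\w_i/\w_j$ with $\w_0=1$, the group they generate coincides with the one generated by the $\w_i$; hence $\ord(\theta_{Z(\A)})=\ord(T_\A)$. The paper uses this fact without comment as well, so this is a presentation nit rather than a gap, but your parenthetical remark reads as if you were not fully sure the inference goes through.
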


 In the proof of the above theorem, we have seen that $\Gamma(N)/(\Gamma(N)\cap \ker \xi)$ is always a finite cyclic group.
 However, there exist many linear characters $\a :\Gamma(N) \to \BC^\times$ with finite images whose kernel are \emph{noncongruence} subgroups of $\SL$ (cf. \cite{KL}). So, it is still unclear whether there always exists a modular representation of a modular tensor category whose kernel is a congruence subgroup. In view of Theorem \ref{t:cong1}, this is true when the modular category is the center of a spherical fusion category.

\section{GFS indicators and integers in the open string}\label{s:conjecture}
A family of scalars $Y_{ab}^c$ indexed by the primary fields $a,b,c$ of a RCFT was introduced by Pradisi, Sagnotti and Stanev. It is conjectured in \cite{PSS} that
\begin{equation}\label{eq:conj1}
Y_{ab}^c \in \BZ\,.
\end{equation}
Borisov, Halpern and Schweigert also considered these scalars, and they conjecture in \cite{BHS} that
\begin{equation}\label{eq:conj2}
\sum_{d }\frac{s_{ad}^2 s_{bd}^{} s_{\ol cd}^{}}{s_{0d}^2} \pm Y_{ab}^c \ge 0
\end{equation}
for all primary fields $a,b,c$, where $[s_{ab}]$ denotes the $S$-matrix of the RCFT. By considering the Galois group actions, Gannon has shown these two conjectures under the assumption that the $T$-matrix of the RCFT has odd order \cite{Gan}. In this section, we use the GFS indicators to prove these two conjectures for all modular categories.

Let $\A$ be a modular category with the set of simple objects $\{U_i\mid i \in \Pi\}$, and  the $S$ and $T$ matrices $[S_{ij}]_\Pi$ and $[\delta_{ij}\w_i]_\Pi$. Recall from Section \ref{s:modular} that $\{\bU_{ij} =(U_i \o U_j, \sigma_{U_i \o U_j})\mid i,j \in \Pi\}$ forms a complete set of simple objects of $Z(\A)$.
In the remainder of this section, we consider the normalizations $s$, $t$ of $S$ and $T$ respectively:
 \begin{equation}\label{eq:norms}
s:=\frac{1}{\lambda}\, S, \quad t:=\frac{1}{\zeta} T
\end{equation}
where $\lambda, \zeta \in \k$ satisfy \eqref{eq:ls}. In particular, $\lambda^2 =\dim \A$. The assignment $\fs \mapsto s, \, \ft \mapsto t$ defines an ordinary representation of $\SL$.

The fusion coefficients $N_{ab}^c =\dim \A(U_c, U_a \o U_b)$ of $\A$ and  $s$ are related by Verlinde's formula (cf. \cite{BaKi}):
\begin{equation}\label{eq:verlinde}
N_{ab}^c = \sum_{d \in \Pi}\frac{s_{ad} s_{bd}  s_{\ol c d}}{s_{0d}} \,.
\end{equation}
Defining the matrix $N_a \in M_\Pi(\k)$ by $(N_a)_{bc} = N_{ab}^c$, the assignment $\KK_0(\A) \to M_\Pi(\BZ); [U_a] \mapsto N_a$ is the regular representation of $\KK_0(\A)$ in matrix form. The Verlinde formula \eqref{eq:verlinde} can also be rewritten in matrix form as
\begin{equation}\label{eq:matrix_verlinde}
N_a = s D_a s\inv
\end{equation}
where $D_a$ is the diagonal matrix $[\delta_{ij} \frac{s_{aj}}{s_{0j}}]_\Pi$.

\begin{defn}
Let $s$ be the normalized  $S$-matrix of the modular category $\A$ described in \eqref{eq:norms}. For $J, K \in \GL(\Pi,\k)$, we define $Y_a(J,K) \in M_\Pi(\k)$  with the $(b,c)$-entry given by
$$
Y_{ab}^c(J,K) := \sum_{d \in \Pi}\frac{s_{ad} Q_{bd} \ol Q_{cd}}{s_{0d}},
$$
where $Q=JsKsJ=[Q_{ij}]_\Pi$ and $Q\inv=[\ol Q_{ij}]_\Pi$.
\end{defn}
It is worth noting that $Y_a(J,K)$ is independent of the choice of $\lambda$ used in the normalization \eqref{eq:norms} of $s$. Moreover, for any non-zero scalars $x,y \in \k$,
$$
Y_{ab}^c(xJ,yK) = Y_{ab}^c(J,K)\,.
$$

For any $\w \in \k$ and positive integer $m$, we write $\w^{1/m}$ for an $m$-th root of $\w$. Similarly, for any diagonal matrix $D \in M_\Pi(\k)$, $D^{1/m}$ abbreviates a diagonal matrix in $M_\Pi(\k)$ which satisfies the equation $\left(D^{1/m}\right)^m=D$.
For any $m$-th root $t^{1/m}$ of $t$, there exists an $m$-th root $T^{1/m}$ of $T$ which is a scalar multiple of $t^{1/m}$.
For these,
$$
Y_a(J, t^m) = Y_a(J, T^m), \quad Y_a(t^{1/m}, K) = Y_a(T^{1/m}, K) \quad\text{and}\quad Y_a(t^{1/m}, t^m) = Y_a(T^{1/m}, T^m) .
$$
In particular,  $Y_{ab}^c(t^{1/2}, t^2)$ are the scalars $Y_{ab}^c$ considered in \cite{PSS} and \cite{BHS}. The following lemma suggests a relation between these scalars $Y_{ab}^c$ and the GFS indicators via Proposition \ref{p:Bantay}.

\begin{lem} \label{l:rep}
Let $J,K \in \GL(\Pi,\k)$ such that $K$ is symmetric and J is a diagonal matrix of the form $[\delta_{ij} \eta_i]_\Pi$. Then, for $a \in \Pi$,
$$
Y_a(J,K) = (JsK)N_a (JsK)\inv.
$$
In particular, the assignment $\KK_0(\A) \to M_\Pi(\k)$; $[U_a] \mapsto Y_a(J,K)$ defines a matrix representation of $\KK_0(\A)$.

Moreover, for any positive integer $m$ and $a,b,c \in \Pi$,
$$
Y_{ab}^c(J,T^m) = \frac{\eta_b}{\eta_c}\nu_{m,1}^{\bU_{\ol b c}}(U_a)\,.
$$
\end{lem}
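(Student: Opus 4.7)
The plan is to treat both claims by direct computation using the matrix form \eqref{eq:matrix_verlinde}, $N_a = sD_as^{-1}$, of Verlinde's formula, together with the standard symmetry properties of $s$ collected in \eqref{eq:Sprop}.

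For the first claim, I first observe that since $s$ is symmetric, $K$ is symmetric by hypothesis, and $J$ is diagonal, the matrix $Q = JsKsJ$ is symmetric, and so is $Q^{-1}$; in particular $\ol Q_{dc} = \ol Q_{cd}$. From $Q = JsKsJ$ one has $JsKs = QJ^{-1}$, and inverting gives $s^{-1}K^{-1}s^{-1} = JQ^{-1}J$, so
\[(JsK)N_a(JsK)^{-1} = (JsKs)D_a(s^{-1}K^{-1}s^{-1})J^{-1} = QJ^{-1}D_aJQ^{-1} = QD_aQ^{-1},\]
where the last equality uses that the diagonal matrices $J$ and $D_a$ commute. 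Reading off the $(b,c)$-entry of $QD_aQ^{-1}$ and invoking the symmetry of $Q^{-1}$ reproduces the definition of $Y_{ab}^c(J,K)$. The representation property is then automatic, since $[U_a]\mapsto N_a$ is the regular representation of $\KK_0(\A)$ and conjugation by $JsK$ is an algebra automorphism of $M_\Pi(\k)$.

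For the second claim, specialize $K = T^m$ and compute the entries of $Q$ and $Q^{-1}$ directly. Because $J$ and $T^m$ are diagonal and $(s^{-1})_{ij} = s_{\ol i j}$,
\[Q_{bd} = \eta_b\eta_d\sum_k s_{bk}\w_k^m s_{kd}, \qquad \ol Q_{cd} = \eta_c^{-1}\eta_d^{-1}\sum_l s_{\ol c l}\w_l^{-m}s_{\ol l d}.\]
Plugging these into the definition of $Y_{ab}^c(J,T^m)$, the $\eta_d$ factors cancel, and using $s_{\ol l d} = s_{l\ol d}$ the inner sum over $d$ collapses by \eqref{eq:verlinde} to $\sum_d s_{ad}s_{kd}s_{\ol l d}/s_{0d} = N_{ak}^l$, yielding
\[Y_{ab}^c(J,T^m) = \frac{\eta_b}{\eta_c}\sum_{k,l}\w_k^m\w_l^{-m} s_{bk}s_{\ol c l} N_{ak}^l.\]
Comparing with Proposition \ref{p:Bantay} applied to $\bU_{\ol b c}$, I would identify the two expressions after using $s_{\ol b k} = s_{b\ol k}$, reindexing $k\mapsto \ol k$, and invoking $\w_{\ol k} = \w_k$ (from $\theta_{V\du} = \theta_V\du$) together with the Frobenius-reciprocity identity $N_{\ol k l}^a = N_{ak}^l$.

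The argument is essentially computational; the only subtle point is assembling the right combination of duality and symmetry identities for $s$, $\w$, and $N$, all of which come from \eqref{eq:Sprop} or the ribbon structure.
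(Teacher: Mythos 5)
Your argument is correct and follows the same line as the paper's proof: both reduce $Y_a(J,K)$ to $QD_aQ^{-1}$ via symmetry of $Q$ and the Verlinde formula $N_a = sD_as^{-1}$, and both obtain the second identity by computing the $(b,c)$-entry and reindexing $k\mapsto\ol k$ to match Proposition \ref{p:Bantay}. The only cosmetic difference is that you expand $Q_{bd}$ and $\ol Q_{cd}$ directly and apply the definition of $Y_{ab}^c$, while the paper reads the entry off the conjugate $(JsT^m)N_a(JsT^m)^{-1}$ established in the first part — the underlying computation is identical.
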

\begin{proof}
 Since $J, K$ are symmetric, and so is $Q=J s K s J$. Let $D_a$ be the diagonal matrix $[\delta_{ij} \frac{s_{ai}}{s_{0i}}]_\Pi$. Then we have
 \begin{multline*}
   Y_a(J,K) = Q D_a Q\inv =  J s K s J D_a J\inv s\inv (J s K)\inv  = J s K s  D_a s\inv (J s K)\inv =  J s K N_a (J s K)\inv.
 \end{multline*}
Here, the third equality follows from the Verlinde formula \eqref{eq:matrix_verlinde}.
In particular,
 $$
  Y_{ab}^c(J, T^m) = \frac{\eta_b}{\eta_c} \sum_{k, l}N_{ak}^{l}\frac{\w_k^m} {\w_l^{m}}
   s_{b k} s_{\ol c l}
   =\frac{\eta_b}{\eta_c} \sum_{k, l}N_{k  l}^{a}
   s_{\ol b k} s_{\ol c l} \frac{\w_k^m} {\w_l^{m}} =\frac{\eta_b}{\eta_c} \nu_{m,1}^{\bU_{\ol b c}}(U_a)\,.
 $$
 Here the last equation is an immediate consequence of Proposition \ref{p:Bantay}.
\end{proof}

\begin{thm} \label{t:solu}
  For any positive integer $m$ and $a,b,c \in \Pi$, $Y_{ab}^c(T^{1/m}, T^m)$ is an algebraic integer in $\BQ_m$ and
  \begin{equation}\label{eq:c2}
   \sum_{d \in \Pi} \frac{s_{ad}^m s_{bd}^{} s_{\ol c d}^{}}{s_{0d}^m} =\dim \A(U_c, U_a^{\o m} \o U_b) \ge |Y_{ab}^c(T^{1/m}, T^m)|
  \end{equation}
  where the inequality is considered in an embedding in $\BQ_m$ into $\BC$. In particular,
  $$
  Y_{ab}^c(T^{1/2}, T^2) \in \BZ \quad \text{and}\quad \sum_{d \in \Pi} \frac{s_{ad}^2 s_{bd}^{} s_{\ol c d}^{}}{s_{0d}^2} \pm Y_{ab}^c(T^{1/2}, T^2) \ge 0\,.
  $$
   If, in addition, $m$ is relatively prime to the Frobenius-Schur exponent $N$ of $\A$, then there exists an $m$-th root $T^{1/m} \in M_\Pi(\BQ_N)$, and
  $Y_{ab}^c(T^{1/m}, T^m)$ is a rational integer for all $T^{1/m} \in M_\Pi(\BQ_N)$.
\end{thm}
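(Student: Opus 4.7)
The strategy is to express $Y_{ab}^c(T^{1/m},T^m)$ as an integer linear combination of $m$-th roots of unity by combining Lemma \ref{l:rep} with a spectral analysis of $E^{(m,1)}_{\bU_{\ol b c},U_a}$. Applying Lemma \ref{l:rep} with $J=T^{1/m}$ (diagonal entries $\w_i^{1/m}$) gives
\[Y_{ab}^c(T^{1/m},T^m)=\frac{\w_b^{1/m}}{\w_c^{1/m}}\,\nu_{m,1}^{\bU_{\ol b c}}(U_a).\]
Lemmas \ref{Ecomposition} and \ref{Emm} yield $\bigl(E^{(m,1)}_{\bU_{\ol b c},U_a}\bigr)^m = E^{(m,m)}_{\bU_{\ol b c},U_a}$, which acts on $\A(U_{\ol b}\o U_c,U_a^{\o m})$ as scalar multiplication by $\theta_{\bU_{\ol b c}}^{-1} = (\w_b/\w_c)^{-1}$, using the ribbon identity $\w_{\ol b}=\w_b$ to simplify the twist on the simple object $\bU_{\ol b c}$. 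Hence $E^{(m,1)}_{\bU_{\ol b c},U_a}$ is diagonalisable with eigenvalues among the $m$-th roots of $\w_c/\w_b$; enumerating these as $\zeta_m^k\cdot(\w_c^{1/m}/\w_b^{1/m})$ with multiplicities $n_k\ge 0$ summing to $\dim\A(U_{\ol b}\o U_c,U_a^{\o m})$, we obtain the key cancellation
\[\nu_{m,1}^{\bU_{\ol b c}}(U_a) = \frac{\w_c^{1/m}}{\w_b^{1/m}}\sum_{k=0}^{m-1} n_k\zeta_m^k, \qquad Y_{ab}^c(T^{1/m},T^m)=\sum_{k=0}^{m-1} n_k\zeta_m^k \in \BZ[\zeta_m],\]
which displays $Y_{ab}^c(T^{1/m},T^m)$ as an algebraic integer in $\BQ_m$.

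For the equality in \eqref{eq:c2}, duality identifies $\dim\A(U_{\ol b}\o U_c, U_a^{\o m})$ with $\dim\A(U_c, U_b\o U_a^{\o m})$, and the commutativity of $\KK_0(\A)$ afforded by the braiding further identifies this with $(N_a^m)_{bc}=\dim\A(U_c,U_a^{\o m}\o U_b)$; Verlinde's formula \eqref{eq:matrix_verlinde} together with \eqref{eq:Sprop} then computes $(N_a^m)_{bc} = \sum_d s_{ad}^m s_{bd}s_{\ol cd}/s_{0d}^m$. The inequality is the triangle inequality $\bigl|\sum_k n_k\zeta_m^k\bigr|\le \sum_k n_k$. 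Specialising to $m=2$, we have $\BQ_2=\BQ$, so $Y_{ab}^c(T^{1/2},T^2)$ is simultaneously rational and an algebraic integer, hence a rational integer, and the signed bound is just this special case. For $\gcd(m,N)=1$, choosing $m^*$ with $mm^*\equiv 1\pmod N$ makes the diagonal matrix $T^{1/m}$ with entries $\w_i^{m^*}\in\BQ_N$ an $m$-th root of $T$ in $M_\Pi(\BQ_N)$. For any such $T^{1/m}\in M_\Pi(\BQ_N)$, Lemma \ref{l:rep} exhibits $Y_a(T^{1/m},T^m)$ as a conjugate of the integer matrix $N_a$ by a matrix over $\BQ_N$, so its entries lie in $\BQ_N$; since they already lie in $\BQ_m$ by the first part, they belong to $\BQ_m\cap\BQ_N=\BQ$, and being algebraic integers they lie in $\BZ$.

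The main obstacle is showing $Y_{ab}^c(T^{1/m},T^m)\in\BQ_m$, since Lemma \ref{l:rep} a priori places this scalar only in the larger field $\BQ_{mN}$. The spectral identity $\bigl(E^{(m,1)}_{\bU_{\ol b c},U_a}\bigr)^m = (\w_c/\w_b)\,\id$ is the decisive ingredient: it forces every eigenvalue of $E^{(m,1)}_{\bU_{\ol b c},U_a}$ to factor as an $m$-th root of unity times the fixed scalar $\w_c^{1/m}/\w_b^{1/m}$, producing the cancellation that eliminates the $m$-th roots of the $\w_i$ from $Y_{ab}^c(T^{1/m},T^m)$ entirely.
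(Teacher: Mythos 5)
Your argument follows the paper's own proof almost step by step: Lemma \ref{l:rep} reduces to $\nu_{m,1}^{\bU_{\ol b c}}(U_a)$, the identity $(E^{(m,1)}_{\bU_{\ol b c},U_a})^m = (\w_c/\w_b)\id$ from Lemmas \ref{Ecomposition} and \ref{Emm} together with \eqref{eq:centerribbon} forces the trace into $\BQ_m$ up to the expected twist factor, the triangle inequality gives the bound, and the Verlinde formula gives the equality with the fusion dimension. The only cosmetic difference is that you decompose $E^{(m,1)}$ explicitly into eigenspaces, whereas the paper normalizes to $\tilde E = (\w_b/\w_c)^{1/m}E^{(m,1)}$ with $\tilde E^m=\id$ and cites the general fact that a finite-order operator has cyclotomic-integer trace; these are the same argument.

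There is one small imprecision in your last paragraph worth fixing. You claim that $JsK$ is ``a matrix over $\BQ_N$,'' but $s=S/\lambda$ with $\lambda^2=\dim\A$, and $\lambda$ need not lie in $\BQ_N$ (e.g.\ for a pointed $\BZ_3$-category one has $\lambda=\sqrt 3\notin\BQ_3$). The conclusion that $Y_a(T^{1/m},T^m)$ has entries in $\BQ_N$ is nonetheless correct, because the conjugation is scalar-invariant: $Y_a=(JsK)N_a(JsK)^{-1}=(JSK)N_a(JSK)^{-1}$, and $JSK$ genuinely lies in $M_\Pi(\BQ_N)$. Alternatively, you could avoid the matrix formula entirely and argue as the paper does, using $Y_{ab}^c(T^{1/m},T^m)=(\w_b^{1/m}/\w_c^{1/m})\,\nu_{m,1}^{\bU_{\ol b c}}(U_a)$ and noting that both factors lie in $\BQ_N$ --- the first because you chose $T^{1/m}\in M_\Pi(\BQ_N)$, the second by Remark~\ref{r:inQN} (or Proposition~\ref{p:inQN}). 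Either fix is immediate; apart from this the proof is sound.
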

\begin{proof}
 In view of Lemma \ref{Emm} and \eqref{eq:centerribbon}, $\left(E_{\bU_{\ol b c}, U_a}^{(m,1)}\right)^m = \frac{\w_b\inv}{\w_c\inv}\id$ as operators on $\A(U_{\ol b}\o U_c, U_a^{\o m})$. If we set
  $$
  \tilde E = \frac{\w_b^{1/m}}{\w_c^{1/m}}E_{\bU_{\ol b c}, U_a}^{(m,1)},
  $$
  then $\tilde{E}^m = \id$ and hence $\Tr(\tilde E)$ is an algebraic integer in $\BQ_m$. Note that
  $$
  \Tr(\tilde E) = \frac{\w_b^{1/m}}{\w_c^{1/m}}\Tr\left(E_{\bU_{\ol b c}, U_a}^{(m,1)}\right) = \frac{\w_b^{1/m}}{\w_c^{1/m}} \nu_{m,1}^{\bU_{\ol b c}}(U_a)= Y_{ab}^c(T^{1/m}, T^m),
  $$
  where the last equality follows from Lemma \ref{l:rep}. Therefore $Y_{ab}^c(T^{1/m}, T^m)$ is an algebraic integer in $\BQ_m$ for all positive integers $m$. Since $\tilde E$ is a finite order $\k$-linear operator on $\A(U_{\ol b}\o U_c, U_a^{\o m})$, if one identifies $\BQ_m$ with a subfield of $\BC$, then
  $$
  |Y_{ab}^c(T^{1/m}, T^m)| =|\Tr(\tilde E)| \le \dim\, \A(U_{\ol b}\o U_c, U_a^{\o m})=\dim\, \A(U_c, U_a^{\o m}\o U_b)\,.
  $$

   By \eqref{eq:matrix_verlinde}, $N_a^m = s D_a^m s\inv$. Note that the $(b,c)$-entries of $N_a^m$ and $s D_a^m s\inv$ are respectively given by
   $$\dim \left(\A(U_c, U_a^{\o m} \o U_b)\right) \quad \text{and}\quad \sum_{d \in \Pi} \frac{s_{ad}^m s_{bd}^{} s_{\ol c d}^{}}{s_{0d}^m}.$$
   Thus, the first equality of \eqref{eq:c2} follows.

  Let $\varsigma_N\in \k$ be a primitive $N$-th root of unity. Then $\w_a$ is a power of $\varsigma_N$ for any $a \in \Pi$. If $m,N$ are relatively prime, then  there is an $m$-th root $\w_a^{1/m} \in \BQ_N$ and hence $T$ has a diagonal $m$-th root $T^{1/m}\in M_\Pi(\BQ_N)$. By Remark  \ref{r:inQN}, $\nu_{m,1}^{\bU_{\ol b c}}(U_a)$ is an algebraic integer in $\BQ_N$, and so is $Y_{ab}^c(T^{1/m}, T^m)$. Therefore, $Y_{ab}^c(T^{1/m}, T^m)$ is an algebraic integer in $\BQ_N \cap \BQ_m$. Since $(m,N)=1$, $\BQ_N \cap \BQ_m =\BQ$ and so $Y_{ab}^c(T^{1/m}, T^m)$ is a rational integer for any $a,b,c \in \Pi$.
\end{proof}
\begin{remark}
The specialization $m=2$ of Theorem \ref{t:solu} implies the conjecture of Pradisi-Sagnotti-Stanev  \eqref{eq:conj1} and the conjecture of Borisov-Halpern-Schweigert \eqref{eq:conj2}. As a consequence of Lemma \ref{l:rep}, for $m=2$ or $(m,N)=1$ with $T^{1/m} \in M_\Pi(\BQ_N)$, the assignment $\KK_0(\A) \to M_\Pi(\BZ)$; $[U_a] \mapsto Y_a(T^{1/m}, T^m)$ defines an integral representation of $\KK_0(\A)$.
\end{remark}

\section{Generalized Frobenius-Schur Endomorphisms}\label{s:endom}
It has been shown in \cite{NS05} that the Frobenius-Schur
indicators of an object $V$ in a pivotal fusion category $\CC$
over $\k$ are the pivotal traces of certain endomorphisms, called
the Frobenius-Schur (FS) endomorphisms. In this section, we
introduce the definition of a generalized Frobenius-Schur (GFS)
endomorphism $\FS_{V, z}^{(m,l)}$ for a pair $(m,l)$ of positive
integers, an object $V \in \CC$, and  a natural endomorphism  $z$
of the identity functor of $Z(\CC)$. These GFS endomorphisms
reduce to the FS endomorphisms defined in \cite{NS05} when $z$ is
the projection onto the trivial component. For a simple object
$\bX \in Z(\CC)$, we show that the GFS indicator
$\nu_{m,l}^\bX(V)$ is the left pivotal trace of $\FS_{V,
z_\bX/d_\ell(\bX)}^{(m,l)}$, where $z_\bX$ is the natural
projection onto the isotypic component of $\bX$. Moreover, if $(m,l)$ is a
pair of relatively prime integers, the GFS endomorphism
$\FS_{V,z}^{(m,l)}$ is natural in $V$.

This implies once again the additivity property
$$
\nu_{m,l}^\bX(U \oplus V) =\nu_{m,l}^\bX(U) +\nu_{m,l}^\bX(V)
$$
for a simple object $\bX \in Z(\CC)$ and a pair $(m,l)$ of
relatively prime positive integers proved already in corollary \ref{c:additivity} above when $\CC$ is spherical.

Let $\CC$ be a  pivotal fusion category over $\k$,  $\DD=Z(\CC)$, and $F: \DD\to \CC$ the natural forgetful functor. Note that $F$ maps the morphisms of $\DD$ injectively to the morphisms of $\CC$ and the pivotal trace of a morphism $f$ in $\DD$ is identical to the pivotal trace of $F(f)$ in $\CC$. Therefore, we may simply use the same notations for a morphism (or an object) in $\DD$ and its images in $\CC$ under $F$.

Now, we consider the two-sided adjoint $K$ to the forgetful functor $F$ with adjunction isomorphisms arranged as in \eqref{arrangement}. For $W \in \CC$, we define
$$
\u_W :=\ol \Psi\inv_{W, K(W)}(\id_{K(W)}): W \to K(W), \quad\text{and}\quad
\c_W:=\Psi_{K(W), W}(\id_{K(W)}): K(W) \to W\,.
$$

Then
$$
\u_W \circ g = K(g) \circ  \u_V \quad \text{and} \quad \c_W \circ K(g) = g \circ \c_V
$$
for all $g \in \CC(V, W)$.

Let  $\bX \in \DD$, and let $\{p_\a\}_\a$ be a  basis for $\CC(W,X)$, and  $\{q_\a\}_\a$ its dual basis for $\CC(X,W)$
with respect to the pairing $(\cdot, \cdot)_\ell$.
Set $P_\a = \ol \Psi(p_\a)$ and $Q_\a = \Psi\inv(q_\a)$.
Then

$$
p_\a = P_\a\circ  \u\,,\quad q_\a = \c \circ Q_\a
\quad \text{and}\quad (P_\a, Q_{\a'})_\ell=\delta_{\a, \a'}\,.
$$
Therefore,
$\{P_\a\}_\a$ and $\{Q_\a\}_\a$  are dual bases for $\DD(K(W),\bX)$ and
$\DD(\bX, K(W))$ respectively. If $\bX$ is simple, then
$$
z_\bX = d_\ell(\bX)\sum_\a Q_\a \circ P_\a
$$
is the natural projection of $K(W)$ onto its isotypic component of $\bX$. Note that $z_\bX/d_\ell(\bX)$
is a natural endomorphism of the identity functor on $\DD$.

Assume $\CC$ strict and set $W=V^m$ for some $V \in \CC$ and positive integer $m$. Then
\begin{equation}\label{eq:FSE1}
\begin{aligned}
 \nu_{m,1}^\bX(V) & = \sum_\a \,
{\def\objectstyle{\scriptstyle}
\xy (0,2)="ctext", "ctext"*{q_\a},
{"ctext"+(-4,-2)*{}; "ctext"+(4,2)*{} **\frm{-}},
{"ctext"+(2,-2);"ctext"+(2,-8)**\dir{-}},
{\vcross~{"ctext"+(-7,8)="x0"}{"ctext"+(-1,8)="v0"}{"ctext"+(-7,2)="v2"}{"ctext"+(-1,2)}},
{"v0"; "ctext"+(6,8)="v1" **\crv{"v0"+(0,3)& "v1"+(0,3)}},
{"v1"; "v1"+(0, -16) **\dir{-}?(.6)+(2,0)*{V}},
{"v2"*{};"ctext"+(-7,-2)="v3" **\dir{-}},
{"v3"; "ctext"+(-2,-2)="y" **\crv{"v3"+(0,-4)& "y"+(0,-4)}},
"ctext"+(4,-10)="ctext1"*{p_a},
{"ctext1"+(-4,-2)*{}; "ctext1"+(4,2)*{} **\frm{-}},
{"ctext1"+(0,-2)="x1"; "ctext1"+(-15,-2)="x2"**\crv{"x1"+(0,-6)& "x2"+(0,-6)}?(.3)+(4,0)*{X}},
{"x0"; "x0"+(-4,0)="x3" **\crv{"x0"+(0,2)& "x3"+(0,2)}},
{"x2"; "x3" **\dir{-}},
\endxy}  \,=\,\sum_\a
{\def\objectstyle{\scriptstyle}
\xy (0,-3)="ctext", "ctext"*{q_\a},
{"ctext"+(-4,-2)*{}; "ctext"+(3,2)*{} **\frm{-}},
{\vcross~{"ctext"+(-6,6)="x1"}{"ctext"+(-2,6)="x2"}{"ctext"+(-6,2)="y1"}{"ctext"+(-2,2)="y2"}},
{"y1"; "y1"+(0,-4)="z1"**\dir{-}},
{"z1"; "z1"+(4,0)="z2"**\crv{"z1"+(0,-3)&"z2"+(0,-3)}},
{"x1";"x1"+(-4,0)="x0"**\crv{"x1"+(0,3)&"x0"+(0,3)}},
{"x0"; "x0"+(0,-8)="z0"**\dir{-}},
{"z2"+(3,0)="z3"; "z3"+(2,6)="w"**\crv{"z3"+(1,-5)& "w"+(6,-6)}},
{"w"; "w"+(2,6)="a1" **\crv{"w"+(-6,6) & "a1"+(-1,5)}},
{"a1"+(-7,0)="a0"; "a1"+(4,0)="a2" **\crv{"a0"+(0,5)& "a2"+(0,5)}?(.7)+(2,1)*{V}},
{"a0"; "x2" **\dir{-}},
{"a1"+(2,-2)="ctext1"*{p_\a}},
{"ctext1"+(-4,-2)*{}; "ctext1"+(4,2)*{} **\frm{-}},
{"z0"; "z3"+(7,0)="z4"**\crv{"z0"+(0,-8)&"z4"+(0,-8)}},
{"ctext1"+(1,-2); "z4" **\dir{-}?(.5)+(2,0)*{X}},
\endxy}
\,=\,\sum_\a\,
{\def\objectstyle{\scriptstyle}
\xy (0,-10)="ctext", "ctext"*{q_\a},
{"ctext"+(-4,-2)*{}; "ctext"+(3,2)*{} **\frm{-}},
{\vcross~{"ctext"+(-6,6)="x1"}{"ctext"+(-2,6)="x2"}{"ctext"+(-6,2)="y1"}{"ctext"+(-2,2)="y2"}},
{"y1"; "y1"+(0,-4)="z1"**\dir{-}},
{"z1"; "z1"+(4,0)="z2"**\crv{"z1"+(0,-3)&"z2"+(0,-3)}},
{\vcrossneg~{"x1"+(-2,3)="b0"}{"b0"+(2,0)="b1"}{"x1"+(-2,0)="x0"}{"x1"}},
{"b0"; "b0"+(-3,0)="b"**\crv{"b0"+(0,2)& "b"+(0,2)}},
{"x0"; "x0"+(-3,0)="x"**\crv{"x0"+(0,-2)& "x"+(0,-2)}},
{"b"; "x"**\dir{-}},
{"b1"+(4,0)="b2"; "x2" **\dir{-}},
\vcrossneg~{"b1"+(0,4)="a1"}{"b2"+(0,4)="a2"}{"b1"}{"b2"},
\vcrossneg~{"a2"+(0,5)="c2"}{"a2"+(7,5)="c3"}{"a2"}{"a2"+(7,0)="a3"},
{"a3"; "a3"+(0,-15)="z4"**\dir{-}},
{"z2"+(3,0)="z3"; "z4"**\crv{"z3"+(0,-3)& "z4"+(0,-3)}},
{"c3"+(0,2)="ctext1"*{p_\a}},
{"ctext1"+(-4,-2)*{}; "ctext1"+(3,2)*{} **\frm{-}},
{"a1";"a1"+(0,9)="d1"**\dir{-}},
{"c2";"c2"+(0,4)="d2"**\dir{-}},
{"d2";"d2"+(5,0)="d3"**\crv{"d2"+(0,3)&"d3"+(0,3)}},
{"d1";"d1"+(12,0)="d4"**\crv{"d1"+(0,6)&"d4"+(0,6)}},
\endxy}\\\\
&=\, \sum_\a\,
{\def\objectstyle{\scriptstyle}
\xy (0,-8)="ctext", "ctext"*{q_\a},
{"ctext"+(-4,-2)*{}; "ctext"+(3,2)*{} **\frm{-}},
{"ctext"+(-7,-2)="z1"; "z1"+(5,0)="z2"**\crv{"z1"+(0,-3)&"z2"+(0,-3)}},
{"z1";"z1"+(0,19)="d1"**\dir{-}},
{"z2"+(0,10)="a2"},
{"a2"+(0,-2)="ctext1"*{\theta\inv}},
{"ctext1"+(-3,-2)*{}; "ctext1"+(2.5,2)*{} **\frm{-}},
{"ctext1"+(0,-2); "ctext1"+(0,-4) **\dir{-}},
\vcrossneg~{"a2"+(0,5)="c2"}{"a2"+(7,5)="c3"}{"a2"}{"a2"+(7,0)="a3"},
{"z2"+(3,0)="z3"; "z3"+(4,0)="z4"**\crv{"z3"+(0,-3)& "z4"+(0,-3)}},
{"a3"; "z4"**\dir{-}},
{"c3"+(0,2)="ctext1"*{p_\a}},
{"ctext1"+(-4,-2)*{}; "ctext1"+(3,2)*{} **\frm{-}},
{"c2";"c2"+(0,4)="d2"**\dir{-}},
{"d2";"d2"+(5,0)="d3"**\crv{"d2"+(0,3)&"d3"+(0,3)}},
{"d1";"d1"+(13,0)="d4"**\crv{"d1"+(0,6)&"d4"+(0,6)}},
\endxy}
\,=\, \sum_\a \ptrl\left(
{\def\objectstyle{\scriptstyle}
\xy (0,-8)="ctext",
{"ctext"+(-2,7)="a2"},
{"a2"+(0,-1.5)="ctext1"*{\theta\inv}},
{"ctext1"+(-3,-1.5)*{}; "ctext1"+(2.5,1.5)*{} **\frm{-}},
{"ctext1"+(0,-3)="ctext2"*{Q_\a}},
{"ctext2"+(-3,-1.5)*{}; "ctext2"+(2.5,1.5)*{} **\frm{-}},
{"ctext2"+(0,-3)="ctext3"*{\c}},
{"ctext3"+(-3,-1.5)*{}; "ctext3"+(2.5,1.5)*{} **\frm{-}},
{"ctext3"+(-1.5,-1.5)="z2"; "z2"+(0,-5)*{V} **\dir{-}},
\vcrossneg~{"a2"+(0,5)="c2"}{"a2"+(7,5)="c3"}{"a2"}{"a2"+(7,0)="a3"},
{"z2"+(2.5,0)="z3"; "z3"+(6,0)="z4"**\crv{"z3"+(0,-3)& "z4"+(0,-3)}},
{"a3"; "z4"**\dir{-}},
{"c3"+(0,1.5)="ctext4"*{P_\a}},
{"ctext4"+(-3,-1.5)*{}; "ctext4"+(2.5,1.5)*{} **\frm{-}},
{"ctext4"+(0,3)="ctext5"*{\u}},
{"ctext5"+(-3,-1.5)*{}; "ctext5"+(2.5,1.5)*{} **\frm{-}},
{"c2";"c2"+(0,6)="d2"**\dir{-}},
{"d2";"d2"+(5.5,0)="d3"**\crv{"d2"+(0,3)&"d3"+(0,3)}},
{"ctext5"+(1,1.5)="d4"; "d4"+(0,5)*{V}**\dir{-}},
\endxy}\right)
\,=\,\sum_\a \ptrl\left(
{\def\objectstyle{\scriptstyle}
\xy (0,-8)="ctext",
{"ctext"+(-2,7)="a2"},
{"a2"+(0,-1.5)="ctext1"*{P_\a}},
{"ctext1"+(-3,-1.5)*{}; "ctext1"+(2.5,1.5)*{} **\frm{-}},
{"ctext1"+(0,-3)="ctext2"*{Q_\a}},
{"ctext2"+(-3,-1.5)*{}; "ctext2"+(2.5,1.5)*{} **\frm{-}},
{"ctext2"+(0,-3)="ctext3"*{\c}},
{"ctext3"+(-3,-1.5)*{}; "ctext3"+(2.5,1.5)*{} **\frm{-}},
{"ctext3"+(-1.5,-1.5)="z2"; "z2"+(0,-5)*{V} **\dir{-}},
\vcrossneg~{"a2"+(0,5)="c2"}{"a2"+(7,5)="c3"}{"a2"}{"a2"+(7,0)="a3"},
{"z2"+(2.5,0)="z3"; "z3"+(6,0)="z4"**\crv{"z3"+(0,-3)& "z4"+(0,-3)}},
{"a3"; "z4"**\dir{-}},
{"c3"+(0,1.5)="ctext4"*{\theta\inv}},
{"ctext4"+(-3,-1.5)*{}; "ctext4"+(2.5,1.5)*{} **\frm{-}},
{"ctext4"+(0,3)="ctext5"*{\u}},
{"ctext5"+(-3,-1.5)*{}; "ctext5"+(2.5,1.5)*{} **\frm{-}},
{"c2";"c2"+(0,6)="d2"**\dir{-}},
{"d2";"d2"+(5.5,0)="d3"**\crv{"d2"+(0,3)&"d3"+(0,3)}},
{"ctext5"+(1,1.5)="d4"; "d4"+(0,5)*{V}**\dir{-}},
\endxy} \right) \\ \\
&=\,   \frac{1}{d_\ell(\bX)} \ptrl\left(
{\def\objectstyle{\scriptstyle}
\xy (0,-9)="ctext",
{"ctext"+(-2,7)="a2"},
{"a2"+(0,-1.5)="ctext1"*{z_\bX}},
{"ctext1"+(-2,-1.5)*{}; "ctext1"+(2,1.5)*{} **\frm{-}},
{"ctext1"+(0,-1.5); "ctext1"+(0,-2.5)**\dir{-}},
{"ctext1"+(0,-4)="ctext3"*{\c}},
{"ctext3"+(-3,-1.5)*{}; "ctext3"+(2.5,1.5)*{} **\frm{-}},
{"ctext3"+(-1.5,-1.5)="z2"; "z2"+(0,-5)*{V} **\dir{-}},
\vcrossneg~{"a2"+(0,5)="c2"}{"a2"+(7,5)="c3"}{"a2"}{"a2"+(7,0)="a3"},
{"z2"+(2.5,0)="z3"; "z3"+(6,0)="z4"**\crv{"z3"+(0,-3)& "z4"+(0,-3)}},
{"a3"; "z4"**\dir{-}},
{"c3"+(0,1.5)="ctext4"*{\theta\inv}},
{"ctext4"+(-3,-1.5)*{}; "ctext4"+(2.5,1.5)*{} **\frm{-}},
{"ctext4"+(0,1.5); "ctext4"+(0,2.5)**\dir{-}},
{"ctext4"+(0,4)="ctext5"*{\u}},
{"ctext5"+(-3,-1.5)*{}; "ctext5"+(2.5,1.5)*{} **\frm{-}},
{"c2";"c2"+(0,7)="d2"**\dir{-}},
{"d2";"d2"+(5.5,0)="d3"**\crv{"d2"+(0,3)&"d3"+(0,3)}},
{"ctext5"+(1,1.5)="d4"; "d4"+(0,5)*{V}**\dir{-}},
\endxy} \right) =  \ptrl(\FS_{V, z_\bX/d_\ell(\bX)}^{(m)}),
\end{aligned}
\end{equation}
where we use the following
 \begin{defn}\nmlabel{Definition}{FSEdef}
 {\rm The $(m,1)$-st GFS endomorphism of $V$ associated
    to an endomorphism $z$ of the identity functor on $Z(\CC)$ is
    \[\FS^{(m)}_{V,z}\,=\,
      \def\objectstyle{\scriptstyle}
\xy (0,-9)="ctext",
{"ctext"+(-2,7)="a2"},
{"a2"+(0,-1.5)="ctext1"*{z}},
{"ctext1"+(-2,-1.5)*{}; "ctext1"+(2,1.5)*{} **\frm{-}},
{"ctext1"+(0,-1.5); "ctext1"+(0,-2.5)**\dir{-}},
{"ctext1"+(0,-4)="ctext3"*{\c}},
{"ctext3"+(-3,-1.5)*{}; "ctext3"+(2.5,1.5)*{} **\frm{-}},
{"ctext3"+(-1.5,-1.5)="z2"; "z2"+(0,-5)*{V} **\dir{-}},
\vcrossneg~{"a2"+(0,5)="c2"}{"a2"+(7,5)="c3"}{"a2"}{"a2"+(7,0)="a3"},
{"z2"+(2.5,0)="z3"; "z3"+(6,0)="z4"**\crv{"z3"+(0,-3)& "z4"+(0,-3)}},
{"a3"; "z4"**\dir{-}},
{"c3"+(0,1.5)="ctext4"*{\theta\inv}},
{"ctext4"+(-3,-1.5)*{}; "ctext4"+(2.5,1.5)*{} **\frm{-}},
{"ctext4"+(0,1.5); "ctext4"+(0,2.5)**\dir{-}},
{"ctext4"+(0,4)="ctext5"*{\u}},
{"ctext5"+(-3,-1.5)*{}; "ctext5"+(2.5,1.5)*{} **\frm{-}},
{"c2";"c2"+(0,7)="d2"**\dir{-}},
{"d2";"d2"+(5.5,0)="d3"**\crv{"d2"+(0,3)&"d3"+(0,3)}},
{"ctext5"+(1,1.5)="d4"; "d4"+(0,5)*{V}**\dir{-}},
\endxy\,.
\]
}
\end{defn}

\begin{prop}\label{p:FSE1}
  Each of the $(m,1)$-st GFS endomorphisms defines a natural
  endomorphism of the identity functor on $\CC$. In particular,
  $\nu_{m,1}^{\bX}(V)$ is additive in $V$ for any simple $\bX \in
  Z(\CC)$.
\end{prop}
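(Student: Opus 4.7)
The plan is to first establish naturality of the family $\FS^{(m)}_{V,z}$ in $V\in\CC$, and then to deduce additivity of $\nu_{m,1}^\bX(-)$ by a pivotal-trace argument.

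I would start by observing that the inner composite
\[\Psi_V \;:=\; \c_{V^m}\circ z_{K(V^m)}\circ\theta^{-1}_{K(V^m)}\circ \u_{V^m}\;:\;V^m\to V^m\]
is natural in $V^m$ in the strong sense that, for every morphism $f\colon V\to W$ of $\CC$, we have $f^{\otimes m}\circ\Psi_V = \Psi_W\circ f^{\otimes m}$. This is immediate from four successive applications of naturality: that of $\c$ and $\u$ as counit and unit of the adjunction $K\dashv F$ (which gives $f^{\otimes m}\circ\c_{V^m} = \c_{W^m}\circ K(f^{\otimes m})$ and $K(f^{\otimes m})\circ\u_{V^m} = \u_{W^m}\circ f^{\otimes m}$), and that of $z$ and $\theta^{-1}$ as natural endomorphisms of $\id_{Z(\CC)}$ (which gives $K(f^{\otimes m})\circ z_{K(V^m)} = z_{K(W^m)}\circ K(f^{\otimes m})$ and the analogue for $\theta^{-1}$).

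Next, to prove naturality $f\circ\FS^{(m)}_{V,z} = \FS^{(m)}_{W,z}\circ f$ for $f\colon V\to W$, I would interpret the graphical definition of $\FS^{(m)}_{V,z}$ as a partial pivotal trace of $\Psi_V$ over the internal $V^{m-1}$ loop formed by the top cap and bottom cap, and transport $f$ from the bottom output $V$-strand once around this loop to the top input $V$-strand. As $f$ traverses each of the $m-1$ internal strands, the snake identities for the pivotal duality on $V$ versus $W$ convert the strand type from $V$ to $W$, so that after $m-1$ such moves the loop has been upgraded from $V^{m-1}$ to $W^{m-1}$ and the accumulated factor has become $f^{\otimes m}$ on one side of $\Psi$. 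Applying the naturality identity $f^{\otimes m}\circ\Psi_V = \Psi_W\circ f^{\otimes m}$ then transports this $f^{\otimes m}$ to the other side, where $m-1$ of its copies are reabsorbed into the (now $W^{m-1}$) loop and exactly one copy of $f$ remains precomposed with the top $V$-strand, producing $\FS^{(m)}_{W,z}\circ f$.

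Additivity of $\nu_{m,1}^\bX(-)$ will then be immediate. For $V=V_1\oplus V_2$ in the fusion category $\CC$, choosing inclusions $\iota_i\colon V_i\to V$ and projections $p_i\colon V\to V_i$ with $p_i\iota_j = \delta_{ij}\id_{V_i}$ and $\sum_i \iota_i p_i = \id_V$, naturality yields $\FS^{(m)}_{V,z} = \sum_i \iota_i\circ\FS^{(m)}_{V_i,z}\circ p_i$. Cyclicity of $\ptrl$ then gives $\ptrl(\FS^{(m)}_{V,z}) = \sum_i\ptrl(\FS^{(m)}_{V_i,z})$; specialising $z = z_\bX/d_\ell(\bX)$ for simple $\bX\in Z(\CC)$ and invoking the identity $\nu_{m,1}^\bX(V) = \ptrl(\FS^{(m)}_{V,z_\bX/d_\ell(\bX)})$ derived just above in \eqref{eq:FSE1} yields $\nu_{m,1}^\bX(V_1\oplus V_2) = \nu_{m,1}^\bX(V_1) + \nu_{m,1}^\bX(V_2)$.

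The delicate point will be the graphical sliding in the naturality argument: although each local move (naturality of $\u,\c,z,\theta^{-1}$ and the snake identities) is routine, careful bookkeeping is required to track how $f$ acquires $m-1$ additional copies as it is transported once around the internal loop, and to verify that the $\theta^{-1}$ inserted in the definition of $\Psi_V$ is precisely what cancels the monodromy incurred during this transport. This parallels Lemma \ref{Emm}: going once cyclically around a loop of length $m$ contributes exactly a $\theta$-factor, which the $\theta^{-1}$ absorbs.
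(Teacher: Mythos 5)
Your proposal identifies the right two ingredients — naturality of the composite through $\u,\theta^{-1},z,\c$, and a graphical transport of $f$ around the closed part of the diagram — and your additivity deduction from naturality of $\FS^{(m)}$ is correct and essentially the paper's. However, there is a genuine gap in the naturality argument itself, concentrated in exactly the step you flag as ``delicate.''

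First, $\FS^{(m)}_{V,z}$ is \emph{not} the partial pivotal trace of $\Psi_V=\c\circ z\circ\theta^{-1}\circ\u$ over $V^{m-1}$: the diagram contains, in addition, a half-braiding crossing of $K(V^m)$ with the loop strand, and — more importantly — a \emph{cyclic shift}: positions $2,\dots,m$ on the output side of $\c$ are joined by the loop to positions $1,\dots,m-1$ on the input side of $\u$, while the open strand sits at position $1$ below and at position $m$ above. This offset is what makes the argument work, and your ``plain partial trace'' reading loses it. Second, and more seriously, even granting a genuine partial trace, naturality of $\Psi_V$ in $V^m$ (your $f^{\otimes m}\circ\Psi_V=\Psi_W\circ f^{\otimes m}$, which is correct) does \emph{not} imply naturality of its partial trace: for instance, $\Psi_V=\id_V\otimes\id_V$ is natural, but $\ptr_{V}(\Psi_V)=d(V)\,\id_V$ is not. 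Third, the narrative in which a single $f$ ``acquires $m-1$ additional copies'' so as to become $f^{\otimes m}$, after which $m-1$ copies are ``reabsorbed,'' does not describe a legitimate sequence of moves — a single box cannot become $m$ boxes by sliding along strands, and there is nothing to reabsorb.

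What the paper actually does is move \emph{one} copy of $f$, written $f_k=\id_{W^{k-1}}\o f\o\id_{V^{m-k}}$, from position $1$ at the bottom to position $m$ at the top in $m$ alternating stages: each stage first slides $f_k$ through $\c,z,\theta^{-1},\u$ by naturality (this is the content of \eqref{eq:FSE2}, a special case of your $\Psi$-naturality applied to $f_k$ rather than $f^{\otimes m}$), and then slides it around the loop cap and cup by one position to become $f_{k+1}$, using the snake identities. Because positions shift cyclically by one, the single $f$ returns to the open strand after $m-1$ circuits, having converted the loop from $V^{m-1}$ to $W^{m-1}$ strand by strand in its wake. There is never more than one $f$ in the diagram, and the $\theta^{-1}$ is not needed to cancel a monodromy incurred by $f$ — it is already present in $\FS^{(m)}$ from the derivation \eqref{eq:FSE1} and simply comes along for the ride as $f$ slides through it by naturality. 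You should replace the $f^{\otimes m}$ picture by this single-$f$, position-by-position transport; the rest of your write-up (the additivity via $\sum_i\iota_ip_i=\id_V$ and cyclicity of $\ptrl$) then stands.
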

\begin{proof}
  Consider $f\colon V\to W$ in $\CC$, and write
  \[f_k=\id_{W^{k-1}}\o f\o \id_{V^{m-k}}\colon W^{k-1}\o V^{m-k+1}\to W^k\o V^{m-k}\]
  for all $1\leq k\leq m$.  We have
\begin{equation}\label{eq:FSE2}
\def\objectstyle{\scriptstyle}
\xy (0,-8)="ctext",
{"ctext"+(-2,7)="a2"},
{"a2"+(0,-1.5)="ctext1"*{z}},
{"ctext1"+(-2,-1.5)*{}; "ctext1"+(2,1.5)*{} **\frm{-}},
{"ctext1"+(0,-1.5); "ctext1"+(0,-3)**\dir{-}},
{"ctext1"+(0,-4.5)="ctext2"*{\c}},
{"ctext2"+(-3,-1.5)*{}; "ctext2"+(2.5,1.5)*{} **\frm{-}},
{"ctext2"+(-1,-1.5); "ctext2"+(-1,-3)**\dir{-}},
{"ctext2"+(1,-1.5); "ctext2"+(1,-3)**\dir{-}},
{"ctext2"+(0,-4.5)="ctext3"*{f_k}},
{"ctext3"+(-3,-1.5)*{}; "ctext3"+(3,1.5)*{} **\frm{-}},
{"ctext3"+(-1.5,-1.5)="z2"; "z2"+(0,-5)*{W} **\dir{-}},
\vcrossneg~{"a2"+(0,5)="c2"}{"a2"+(7,5)="c3"}{"a2"}{"a2"+(7,0)="a3"},
{"z2"+(2.5,0)="z3"; "z3"+(6,0)="z4"**\crv{"z3"+(0,-3)& "z4"+(0,-3)}},
{"a3"; "z4"**\dir{-}},
{"c3"+(0,1.5)="ctext4"*{\theta\inv}},
{"ctext4"+(-3,-1.5)*{}; "ctext4"+(2.5,1.5)*{} **\frm{-}},
{"ctext4"+(0,1.5); "ctext4"+(0,2.5) **\dir{-}},
{"ctext4"+(0,4)="ctext5"*{\u}},
{"ctext5"+(-3,-1.5)*{}; "ctext5"+(2.5,1.5)*{} **\frm{-}},
{"c2";"c2"+(0,7)="d2"**\dir{-}},
{"d2";"d2"+(5.5,0)="d3"**\crv{"d2"+(0,3)&"d3"+(0,3)}},
{"ctext5"+(1,1.5)="d4"; "d4"+(0,5)*{V}**\dir{-}},
\endxy  \,=\,
\xy (0,-8)="ctext",
{"ctext"+(-2,7)="a2"},
{"a2"+(0,-1.5)="ctext1"*{z}},
{"ctext1"+(-2,-1.5)*{}; "ctext1"+(2,1.5)*{} **\frm{-}},
{"ctext1"+(0,-1.5); "ctext1"+(0,-3)**\dir{-}},
{"ctext1"+(0,-4.5)="ctext2"*{K(f_k)}},
{"ctext2"+(-4,-1.5)*{}; "ctext2"+(4,1.5)*{} **\frm{-}},
{"ctext2"+(0,-1.5); "ctext2"+(0,-3)**\dir{-}},
{"ctext2"+(0,-4.5)="ctext3"*{\c}},
{"ctext3"+(-3,-1.5)*{}; "ctext3"+(3,1.5)*{} **\frm{-}},
{"ctext3"+(-1.5,-1.5)="z2"; "z2"+(0,-5)*{W} **\dir{-}},
\vcrossneg~{"a2"+(0,5)="c2"}{"a2"+(7,5)="c3"}{"a2"}{"a2"+(7,0)="a3"},
{"z2"+(2.5,0)="z3"; "z3"+(6,0)="z4"**\crv{"z3"+(0,-3)& "z4"+(0,-3)}},
{"a3"; "z4"**\dir{-}},
{"c3"+(0,1.5)="ctext4"*{\theta\inv}},
{"ctext4"+(-3,-1.5)*{}; "ctext4"+(2.5,1.5)*{} **\frm{-}},
{"ctext4"+(0,1.5); "ctext4"+(0,2.5) **\dir{-}},
{"ctext4"+(0,4)="ctext5"*{\u}},
{"ctext5"+(-3,-1.5)*{}; "ctext5"+(2.5,1.5)*{} **\frm{-}},
{"c2";"c2"+(0,7)="d2"**\dir{-}},
{"d2";"d2"+(5.5,0)="d3"**\crv{"d2"+(0,3)&"d3"+(0,3)}},
{"ctext5"+(1,1.5)="d4"; "d4"+(0,5)*{V}**\dir{-}},
\endxy  \,=\,
\xy (0,-13)="ctext",
{"ctext"+(-2,7)="a2"},
{"a2"+(0,-1.5)="ctext1"*{z}},
{"ctext1"+(-2,-1.5); "ctext1"+(2,1.5) **\frm{-}},
{"ctext1"+(0,-1.5); "ctext1"+(0,-2.5)**\dir{-}},
{"ctext1"+(0,-4)="ctext2"*{\c}},
{"ctext2"+(-3,-1.5); "ctext2"+(2.5,1.5) **\frm{-}},
{"ctext2"+(-1.5,-1.5)="z2"; "z2"+(0,-5)*{W} **\dir{-}},
\vcrossneg~{"a2"+(0,5)="c2"}{"a2"+(7,5)="c3"}{"a2"}{"a2"+(7,0)="a3"},
{"z2"+(2.5,0)="z3"; "z3"+(6,0)="z4"**\crv{"z3"+(0,-3)& "z4"+(0,-3)}},
{"a3"; "z4"**\dir{-}},
{"c3"+(0,1.5)="ctext3"*{\theta\inv}},
{"ctext3"+(-3,-1.5); "ctext3"+(2.5,1.5) **\frm{-}},
{"ctext3"+(0,1.5);"ctext3"+(0,3)**\dir{-}},
{"ctext3"+(0,4.5)="ctext4"*{K(f_k)}},
{"ctext4"+(-4,-1.5); "ctext4"+(4,1.5) **\frm{-}},
{"ctext4"+(0,1.5);"ctext4"+(0,3)**\dir{-}},
{"ctext4"+(0,4.5)="ctext5"*{\u}},
{"ctext5"+(-3,-1.5)*{}; "ctext5"+(3,1.5)*{} **\frm{-}},
{"ctext5"+(1,1.5)="d4"; "d4"+(0,5)*{V}**\dir{-}},
{"ctext5"+(-1.5,1.5)="d3"; "d3"+(-5.5,0)="d2"**\crv{"d3"+(0,3)&"d2"+(0,3)}},
{"c2"; "d2"**\dir{-}},
\endxy\,=\,
\xy (0,-13)="ctext",
{"ctext"+(-2,7)="a2"},
{"a2"+(0,-1.5)="ctext1"*{z}},
{"ctext1"+(-2,-1.5)*{}; "ctext1"+(2,1.5)*{} **\frm{-}},
{"ctext1"+(0,-1.5); "ctext1"+(0,-2.5)**\dir{-}},
{"ctext1"+(0,-4)="ctext2"*{\c}},
{"ctext2"+(-3,-1.5)*{}; "ctext2"+(2.5,1.5)*{} **\frm{-}},
{"ctext2"+(-1.5,-1.5)="z2"; "z2"+(0,-5)*{W} **\dir{-}},
\vcrossneg~{"a2"+(0,5)="c2"}{"a2"+(7,5)="c3"}{"a2"}{"a2"+(7,0)="a3"},
{"z2"+(2.5,0)="z3"; "z3"+(6,0)="z4"**\crv{"z3"+(0,-3)& "z4"+(0,-3)}},
{"a3"; "z4"**\dir{-}},
{"c3"+(0,1.5)="ctext3"*{\theta\inv}},
{"ctext3"+(-3,-1.5)*{}; "ctext3"+(2.5,1.5)*{} **\frm{-}},
{"ctext3"+(0,1.5);"ctext3"+(0,3)**\dir{-}},
{"ctext3"+(0,4.5)="ctext4"*{\u}},
{"ctext4"+(-3,-1.5)*{}; "ctext4"+(3,1.5)*{} **\frm{-}},
{"ctext4"+(-1,1.5);"ctext4"+(-1,3)**\dir{-}},
{"ctext4"+(1,1.5);"ctext4"+(1,3)**\dir{-}},
{"ctext4"+(0,4.5)="ctext5"*{f_k}},
{"ctext5"+(-3,-1.5)*{}; "ctext5"+(3,1.5)*{} **\frm{-}},
{"ctext5"+(1,1.5)="d4"; "d4"+(0,5)*{V}**\dir{-}},
{"ctext5"+(-1.5,1.5)="d3"; "d3"+(-5.5,0)="d2"**\crv{"d3"+(0,3)&"d2"+(0,3)}},
{"c2"; "d2"**\dir{-}},
\endxy
\end{equation}
and when $k< m$ we can continue
$$
\def\objectstyle{\scriptstyle}
\xy (0,-13)="ctext",
{"ctext"+(-2,7)="a2"},
{"a2"+(0,-1.5)="ctext1"*{z}},
{"ctext1"+(-2,-1.5)*{}; "ctext1"+(2,1.5)*{} **\frm{-}},
{"ctext1"+(0,-1.5); "ctext1"+(0,-2.5)**\dir{-}},
{"ctext1"+(0,-4)="ctext2"*{\c}},
{"ctext2"+(-3,-1.5)*{}; "ctext2"+(2.5,1.5)*{} **\frm{-}},
{"ctext2"+(-1.5,-1.5)="z2"; "z2"+(0,-5)*{W} **\dir{-}},
\vcrossneg~{"a2"+(0,5)="c2"}{"a2"+(7,5)="c3"}{"a2"}{"a2"+(7,0)="a3"},
{"z2"+(2.5,0)="z3"; "z3"+(6,0)="z4"**\crv{"z3"+(0,-3)& "z4"+(0,-3)}},
{"a3"; "z4"**\dir{-}},
{"c3"+(0,1.5)="ctext3"*{\theta\inv}},
{"ctext3"+(-3,-1.5)*{}; "ctext3"+(2.5,1.5)*{} **\frm{-}},
{"ctext3"+(0,1.5);"ctext3"+(0,3)**\dir{-}},
{"ctext3"+(0,4.5)="ctext4"*{\u}},
{"ctext4"+(-3,-1.5)*{}; "ctext4"+(3,1.5)*{} **\frm{-}},
{"ctext4"+(-1,1.5);"ctext4"+(-1,3)**\dir{-}},
{"ctext4"+(1,1.5);"ctext4"+(1,3)**\dir{-}},
{"ctext4"+(0,4.5)="ctext5"*{f_k}},
{"ctext5"+(-3,-1.5)*{}; "ctext5"+(3,1.5)*{} **\frm{-}},
{"ctext5"+(1,1.5)="d4"; "d4"+(0,5)*{V}**\dir{-}},
{"ctext5"+(-1.5,1.5)="d3"; "d3"+(-5.5,0)="d2"**\crv{"d3"+(0,3)&"d2"+(0,3)}},
{"c2"; "d2"**\dir{-}},
\endxy\,=\,
\xy (0,-13)="ctext",
{"ctext"+(-2,7)="a2"},
{"a2"+(0,-1.5)="ctext1"*{z}},
{"ctext1"+(-2,-1.5)*{}; "ctext1"+(2,1.5)*{} **\frm{-}},
{"ctext1"+(0,-1.5); "ctext1"+(0,-2.5)**\dir{-}},
{"ctext1"+(0,-4)="ctext2"*{\c}},
{"ctext2"+(-3,-1.5)*{}; "ctext2"+(2.5,1.5)*{} **\frm{-}},
{"ctext2"+(-1.5,-1.5)="z2"; "z2"+(0,-5)*{W} **\dir{-}},
\vcrossneg~{"a2"+(0,5)="c2"}{"a2"+(7,5)="c3"}{"a2"}{"a2"+(7,0)="a3"},
{"z2"+(2.5,0)="z3"; "z3"+(6,0)="z4"**\crv{"z3"+(0,-3)& "z4"+(0,-3)}},
{"a3"; "z4"**\dir{-}},
{"c3"+(0,1.5)="ctext3"*{\theta\inv}},
{"ctext3"+(-3,-1.5)*{}; "ctext3"+(2.5,1.5)*{} **\frm{-}},
{"ctext3"+(0,1.5);"ctext3"+(0,3)**\dir{-}},
{"ctext3"+(0,4.5)="ctext4"*{\u}},
{"ctext4"+(-3,-1.5)*{}; "ctext4"+(3,1.5)*{} **\frm{-}},
{"ctext4"+(-1.5,1.5);"ctext4"+(-1.5,3)**\dir{-}},
{"ctext4"+(1.5,1.5)="d4";"d4"+(0,9.5)*{V}**\dir{-}},
{"ctext4"+(-2 ,4.5)="ctext5"*{f_k}},
{"ctext5"+(-2,-1.5)*{}; "ctext5"+(2,1.5)*{} **\frm{-}},
{"ctext5"+(0,1.5)="d3"; "d3"+(-5,0)="d2"**\crv{"d3"+(0,3)&"d2"+(0,3)}},
{"c2"; "d2"**\dir{-}},
\endxy\,=\,
\xy (0,-8)="ctext",
{"ctext"+(-2,7)="a2"},
{"a2"+(0,-1.5)="ctext1"*{z}},
{"ctext1"+(-2,-1.5)*{}; "ctext1"+(2,1.5)*{} **\frm{-}},
{"ctext1"+(0,-1.5); "ctext1"+(0,-3)**\dir{-}},
{"ctext1"+(0,-4.5)="ctext2"*{\c}},
{"ctext2"+(-3,-1.5)*{}; "ctext2"+(2.5,1.5)*{} **\frm{-}},
{"ctext2"+(1.5,-1.5); "ctext2"+(1.5,-3)**\dir{-}},
{"ctext2"+(-1.5,-1.5); "ctext2"+(-1.5,-11)*{W}**\dir{-}},
{"ctext2"+(2,-4.5)="ctext3"*{f_k}},
{"ctext3"+(-2,-1.5)*{}; "ctext3"+(2,1.5)*{} **\frm{-}},
\vcrossneg~{"a2"+(0,5)="c2"}{"a2"+(7,5)="c3"}{"a2"}{"a2"+(7,0)="a3"},
{"ctext3"+(0,-1.5)="z3"; "z3"+(5,0)="z4"**\crv{"z3"+(0,-3)& "z4"+(0,-3)}},
{"a3"; "z4"**\dir{-}},
{"c3"+(0,1.5)="ctext4"*{\theta\inv}},
{"ctext4"+(-3,-1.5)*{}; "ctext4"+(2.5,1.5)*{} **\frm{-}},
{"ctext4"+(0,1.5); "ctext4"+(0,2.5) **\dir{-}},
{"ctext4"+(0,4)="ctext5"*{\u}},
{"ctext5"+(-3,-1.5)*{}; "ctext5"+(2.5,1.5)*{} **\frm{-}},
{"c2";"c2"+(0,7)="d2"**\dir{-}},
{"d2";"d2"+(5.5,0)="d3"**\crv{"d2"+(0,3)&"d3"+(0,3)}},
{"ctext5"+(1,1.5)="d4"; "d4"+(0,5)*{V}**\dir{-}},
\endxy \,=\,
\xy (0,-8)="ctext",
{"ctext"+(-2,7)="a2"},
{"a2"+(0,-1.5)="ctext1"*{z}},
{"ctext1"+(-2,-1.5)*{}; "ctext1"+(2,1.5)*{} **\frm{-}},
{"ctext1"+(0,-1.5); "ctext1"+(0,-3)**\dir{-}},
{"ctext1"+(0,-4.5)="ctext2"*{\c}},
{"ctext2"+(-3,-1.5)*{}; "ctext2"+(2.5,1.5)*{} **\frm{-}},
{"ctext2"+(-1,-1.5); "ctext2"+(-1,-3)**\dir{-}},
{"ctext2"+(1,-1.5); "ctext2"+(1,-3)**\dir{-}},
{"ctext2"+(0,-4.5)="ctext3"*{f_{k+1}}},
{"ctext3"+(-3.2,-1.5)*{}; "ctext3"+(3,1.5)*{} **\frm{-}},
{"ctext3"+(-1.5,-1.5)="z2"; "z2"+(0,-5)*{W} **\dir{-}},
\vcrossneg~{"a2"+(0,5)="c2"}{"a2"+(7,5)="c3"}{"a2"}{"a2"+(7,0)="a3"},
{"z2"+(2.5,0)="z3"; "z3"+(6,0)="z4"**\crv{"z3"+(0,-3)& "z4"+(0,-3)}},
{"a3"; "z4"**\dir{-}},
{"c3"+(0,1.5)="ctext4"*{\theta\inv}},
{"ctext4"+(-3,-1.5)*{}; "ctext4"+(2.5,1.5)*{} **\frm{-}},
{"ctext4"+(0,1.5); "ctext4"+(0,2.5) **\dir{-}},
{"ctext4"+(0,4)="ctext5"*{\u}},
{"ctext5"+(-3,-1.5)*{}; "ctext5"+(2.5,1.5)*{} **\frm{-}},
{"c2";"c2"+(0,7)="d2"**\dir{-}},
{"d2";"d2"+(5.5,0)="d3"**\crv{"d2"+(0,3)&"d3"+(0,3)}},
{"ctext5"+(1,1.5)="d4"; "d4"+(0,5)*{V}**\dir{-}},
\endxy\,.
$$
We conclude by induction that
$$
f \FS^{(m)}_{V, z} =
\def\objectstyle{\scriptstyle}
\xy (0,-8)="ctext",
{"ctext"+(-2,7)="a2"},
{"a2"+(0,-1.5)="ctext1"*{z}},
{"ctext1"+(-2,-1.5)*{}; "ctext1"+(2,1.5)*{} **\frm{-}},
{"ctext1"+(0,-1.5); "ctext1"+(0,-3)**\dir{-}},
{"ctext1"+(0,-4.5)="ctext2"*{\c}},
{"ctext2"+(-3,-1.5)*{}; "ctext2"+(2.5,1.5)*{} **\frm{-}},
{"ctext2"+(-2,-1.5); "ctext2"+(-2,-3)**\dir{-}},
{"ctext2"+(-1.5,-4.5)="ctext3"*{f}},
{"ctext3"+(-2,-1.5)*{}; "ctext3"+(2,1.5)*{} **\frm{-}},
{"ctext3"+(0,-1.5); "ctext3"+(0,-6.5)*{W}**\dir{-}},
\vcrossneg~{"a2"+(0,5)="c2"}{"a2"+(7,5)="c3"}{"a2"}{"a2"+(7,0)="a3"},
{"ctext2"+(1.5,-1.5)="z3"; "z3"+(5.5,0)="z4"**\crv{"z3"+(0,-3)& "z4"+(0,-3)}},
{"a3"; "z4"**\dir{-}},
{"c3"+(0,1.5)="ctext4"*{\theta\inv}},
{"ctext4"+(-3,-1.5)*{}; "ctext4"+(2.5,1.5)*{} **\frm{-}},
{"ctext4"+(0,1.5); "ctext4"+(0,2.5) **\dir{-}},
{"ctext4"+(0,4)="ctext5"*{\u}},
{"ctext5"+(-3,-1.5)*{}; "ctext5"+(2.5,1.5)*{} **\frm{-}},
{"c2";"c2"+(0,7)="d2"**\dir{-}},
{"d2";"d2"+(5.5,0)="d3"**\crv{"d2"+(0,3)&"d3"+(0,3)}},
{"ctext5"+(1,1.5)="d4"; "d4"+(0,5)*{V}**\dir{-}},
\endxy\,=\,
\xy (0,-8)="ctext",
{"ctext"+(-2,7)="a2"},
{"a2"+(0,-1.5)="ctext1"*{z}},
{"ctext1"+(-2,-1.5)*{}; "ctext1"+(2,1.5)*{} **\frm{-}},
{"ctext1"+(0,-1.5); "ctext1"+(0,-3)**\dir{-}},
{"ctext1"+(0,-4.5)="ctext2"*{\c}},
{"ctext2"+(-3,-1.5)*{}; "ctext2"+(2.5,1.5)*{} **\frm{-}},
{"ctext2"+(-1,-1.5); "ctext2"+(-1,-3)**\dir{-}},
{"ctext2"+(1,-1.5); "ctext2"+(1,-3)**\dir{-}},
{"ctext2"+(0,-4.5)="ctext3"*{f_1}},
{"ctext3"+(-3,-1.5)*{}; "ctext3"+(3,1.5)*{} **\frm{-}},
{"ctext3"+(-1.5,-1.5)="z2"; "z2"+(0,-5)*{W} **\dir{-}},
\vcrossneg~{"a2"+(0,5)="c2"}{"a2"+(7,5)="c3"}{"a2"}{"a2"+(7,0)="a3"},
{"z2"+(2.5,0)="z3"; "z3"+(6,0)="z4"**\crv{"z3"+(0,-3)& "z4"+(0,-3)}},
{"a3"; "z4"**\dir{-}},
{"c3"+(0,1.5)="ctext4"*{\theta\inv}},
{"ctext4"+(-3,-1.5)*{}; "ctext4"+(2.5,1.5)*{} **\frm{-}},
{"ctext4"+(0,1.5); "ctext4"+(0,2.5) **\dir{-}},
{"ctext4"+(0,4)="ctext5"*{\u}},
{"ctext5"+(-3,-1.5)*{}; "ctext5"+(2.5,1.5)*{} **\frm{-}},
{"c2";"c2"+(0,7)="d2"**\dir{-}},
{"d2";"d2"+(5.5,0)="d3"**\crv{"d2"+(0,3)&"d3"+(0,3)}},
{"ctext5"+(1,1.5)="d4"; "d4"+(0,5)*{V}**\dir{-}},
\endxy\,=\,\cdots\,=\,
\xy (0,-8)="ctext",
{"ctext"+(-2,7)="a2"},
{"a2"+(0,-1.5)="ctext1"*{z}},
{"ctext1"+(-2,-1.5)*{}; "ctext1"+(2,1.5)*{} **\frm{-}},
{"ctext1"+(0,-1.5); "ctext1"+(0,-3)**\dir{-}},
{"ctext1"+(0,-4.5)="ctext2"*{\c}},
{"ctext2"+(-3,-1.5)*{}; "ctext2"+(2.5,1.5)*{} **\frm{-}},
{"ctext2"+(-1,-1.5); "ctext2"+(-1,-3)**\dir{-}},
{"ctext2"+(1,-1.5); "ctext2"+(1,-3)**\dir{-}},
{"ctext2"+(0,-4.5)="ctext3"*{f_m}},
{"ctext3"+(-3,-1.5)*{}; "ctext3"+(3,1.5)*{} **\frm{-}},
{"ctext3"+(-1.5,-1.5)="z2"; "z2"+(0,-5)*{W} **\dir{-}},
\vcrossneg~{"a2"+(0,5)="c2"}{"a2"+(7,5)="c3"}{"a2"}{"a2"+(7,0)="a3"},
{"z2"+(2.5,0)="z3"; "z3"+(6,0)="z4"**\crv{"z3"+(0,-3)& "z4"+(0,-3)}},
{"a3"; "z4"**\dir{-}},
{"c3"+(0,1.5)="ctext4"*{\theta\inv}},
{"ctext4"+(-3,-1.5)*{}; "ctext4"+(2.5,1.5)*{} **\frm{-}},
{"ctext4"+(0,1.5); "ctext4"+(0,2.5) **\dir{-}},
{"ctext4"+(0,4)="ctext5"*{\u}},
{"ctext5"+(-3,-1.5)*{}; "ctext5"+(2.5,1.5)*{} **\frm{-}},
{"c2";"c2"+(0,7)="d2"**\dir{-}},
{"d2";"d2"+(5.5,0)="d3"**\crv{"d2"+(0,3)&"d3"+(0,3)}},
{"ctext5"+(1,1.5)="d4"; "d4"+(0,5)*{V}**\dir{-}},
\endxy\,=\,
\xy (0,-13)="ctext",
{"ctext"+(-2,7)="a2"},
{"a2"+(0,-1.5)="ctext1"*{z}},
{"ctext1"+(-2,-1.5)*{}; "ctext1"+(2,1.5)*{} **\frm{-}},
{"ctext1"+(0,-1.5); "ctext1"+(0,-2.5)**\dir{-}},
{"ctext1"+(0,-4)="ctext2"*{\c}},
{"ctext2"+(-3,-1.5)*{}; "ctext2"+(2.5,1.5)*{} **\frm{-}},
{"ctext2"+(-1.5,-1.5)="z2"; "z2"+(0,-5)*{W} **\dir{-}},
\vcrossneg~{"a2"+(0,5)="c2"}{"a2"+(7,5)="c3"}{"a2"}{"a2"+(7,0)="a3"},
{"z2"+(2.5,0)="z3"; "z3"+(6,0)="z4"**\crv{"z3"+(0,-3)& "z4"+(0,-3)}},
{"a3"; "z4"**\dir{-}},
{"c3"+(0,1.5)="ctext3"*{\theta\inv}},
{"ctext3"+(-3,-1.5)*{}; "ctext3"+(2.5,1.5)*{} **\frm{-}},
{"ctext3"+(0,1.5);"ctext3"+(0,3)**\dir{-}},
{"ctext3"+(0,4.5)="ctext4"*{\u}},
{"ctext4"+(-3,-1.5)*{}; "ctext4"+(3,1.5)*{} **\frm{-}},
{"ctext4"+(-1,1.5);"ctext4"+(-1,3)**\dir{-}},
{"ctext4"+(1,1.5);"ctext4"+(1,3)**\dir{-}},
{"ctext4"+(0,4.5)="ctext5"*{f_m}},
{"ctext5"+(-3,-1.5)*{}; "ctext5"+(3,1.5)*{} **\frm{-}},
{"ctext5"+(1,1.5)="d4"; "d4"+(0,5)*{V}**\dir{-}},
{"ctext5"+(-1.5,1.5)="d3"; "d3"+(-5.5,0)="d2"**\crv{"d3"+(0,3)&"d2"+(0,3)}},
{"c2"; "d2"**\dir{-}},
\endxy\,=\,
\xy (0,-13)="ctext",
{"ctext"+(-2,7)="a2"},
{"a2"+(0,-1.5)="ctext1"*{z}},
{"ctext1"+(-2,-1.5)*{}; "ctext1"+(2,1.5)*{} **\frm{-}},
{"ctext1"+(0,-1.5); "ctext1"+(0,-2.5)**\dir{-}},
{"ctext1"+(0,-4)="ctext2"*{\c}},
{"ctext2"+(-3,-1.5)*{}; "ctext2"+(2.5,1.5)*{} **\frm{-}},
{"ctext2"+(-1.5,-1.5)="z2"; "z2"+(0,-5)*{W} **\dir{-}},
\vcrossneg~{"a2"+(0,5)="c2"}{"a2"+(7,5)="c3"}{"a2"}{"a2"+(7,0)="a3"},
{"z2"+(2.5,0)="z3"; "z3"+(6,0)="z4"**\crv{"z3"+(0,-3)& "z4"+(0,-3)}},
{"a3"; "z4"**\dir{-}},
{"c3"+(0,1.5)="ctext3"*{\theta\inv}},
{"ctext3"+(-3,-1.5)*{}; "ctext3"+(2.5,1.5)*{} **\frm{-}},
{"ctext3"+(0,1.5);"ctext3"+(0,3)**\dir{-}},
{"ctext3"+(0,4.5)="ctext4"*{\u}},
{"ctext4"+(-3,-1.5)*{}; "ctext4"+(3,1.5)*{} **\frm{-}},
{"ctext4"+(1.5,1.5);"ctext4"+(1.5,3)**\dir{-}},
{"ctext4"+(-1.5,1.5)="d3"; "d3"+(-5.5,0)="d2"**\crv{"d3"+(0,3)&"d2"+(0,3)}},
{"ctext4"+(2 ,4.5)="ctext5"*{f}},
{"ctext5"+(-2,-1.5)*{}; "ctext5"+(2,1.5)*{} **\frm{-}},
{"ctext5"+(0,1.5)="d4";"d4"+(0,5)*{V}**\dir{-}},
{"c2"; "d2"**\dir{-}},
\endxy\,=\, \FS_{W,z}^{(m)}f.
$$
By the naturality of the $(m,1)$-st GFS endomorphisms, we find $\FS_{U \oplus V,z}^{(m)} =
\FS_{U, z}^{(m)}\oplus \FS_{V, z}^{(m)}$ for $U, V \in \CC$. If $\bX \in Z(\CC)$ is simple,
then it follows from \eqref{eq:FSE1} that
$$
\nu_{m,1}^\bX(U\oplus V) = \nu_{m,1}^\bX(U) +\nu_{m,1}^\bX(V). \qedhere
$$
\end{proof}

For $\CC=\C{H}$ for some semisimple quasi-Hopf algebra over $\BC$,  the natural endomorphism
$\FS_{-,z}^{(m)}$ is associated to a central element $\mu_{m,z}(H)$ of $H$. Moreover,
$$
\ptr(\FS_{V,z}^{(m)}) = \chi_V(\mu_{m,z}(H))
$$
where $\chi_V$ is the character afforded by $V \in \CC$. This central element was
determined in \cite{NS052} for $z=z_I$ but, for a
general $z$, is yet to be determined.

We now turn to our more \emph{general} version of GFS endomorphisms.
\begin{defn}
{\rm
  For non-negative integers $k,r,m$ with $l:=k+r+1\leq m$ and a natural
  endomorphism $z$ of the identity functor on $\DD$, we
  define
   \[\FS^{(m, k,r)}_{V,z}\,=\,
      \def\objectstyle{\scriptstyle}
\xy (0,-9)="ctext",
{"ctext"+(-2,7)="a2"},
{"a2"+(0,-1.5)="ctext1"*{z}},
{"ctext1"+(-2,-1.5)*{}; "ctext1"+(2,1.5)*{} **\frm{-}},
{"ctext1"+(0,-1.5); "ctext1"+(0,-2.5)**\dir{-}},
{"ctext1"+(0,-4)="ctext3"},{"ctext3"+(-3,0)*{\c}},
{"ctext3"+(-7.5,-1.5)*{}; "ctext3"+(2.5,1.5)*{} **\frm{-}},
\vcrossneg~{"a2"+(0,5)="c2"}{"a2"+(7,5)="c3"}{"a2"}{"a2"+(7,0)="a3"},
{"ctext3"+(-1.5,-1.5)="z2"; "z2"+(18,0)="z5" **\crv{"z2"+(0,-6)&"z5"+(0,-6)}},
{"z2"+(2.5,0)="z3"; "z3"+(6,0)="z4"**\crv{"z3"+(0,-3)& "z4"+(0,-3)}},
{"a3"; "z4"**\dir{-}?(.2)+(4,0)*{V^{l-m}}},
{"c3"+(0,1.5)="ctext4"*{\theta\inv}},
{"ctext4"+(-3,-1.5)*{}; "ctext4"+(2.5,1.5)*{} **\frm{-}},
{"ctext4"+(0,1.5); "ctext4"+(0,2.5)**\dir{-}},
{"ctext4"+(0,4)="ctext5"}, {"ctext5"+(2,0)*{\u}},
{"ctext5"+(-3,-1.5)*{}; "ctext5"+(7,1.5)*{} **\frm{-}},
{"c2";"c2"+(0,7)="d2"**\dir{-}},
{"d2";"d2"+(5.5,0)="d3"**\crv{"d2"+(0,3)&"d3"+(0,3)}},
{"ctext5"+(1,1.5)="d4"; "d4"+(-18,0)="d1"**\crv{"d4"+(0,6)&"d1"+(0,6)}},
{"d1"; "d1"+(0,-19)="z"**\dir{-}?(.2)+(-3,0)*{V^{-k}}},
{"z"; "z"+(4,0)="z0"**\crv{"z"+(0,-3)&"z0"+(0,-3)}},
{"z5"; "z5"+(0,19)="d7"**\dir{-}?(.2)+(4,0)*{V^{-r}}},
{"d7"; "d7"+(-4,0)="d6"**\crv{"d7"+(0,3)&"d6"+(0,3)}},
{"d4"+(2.5,0)="d5"; "d5"+(0,7)*{V}**\dir{-}},
{"z2"+(-2,0)="z1"; "z1"+(0,-7)*{V}**\dir{-}},
\endxy
    \]
    where the distribution of tensor factors over the legs of the
  graphical symbols for the unit and counit of adjunction is as
  follows:
  $$
  \def\objectstyle{\scriptstyle}
\xy {(0,0)="ctext"*{\u}},
{"ctext"+(-11,2)*{}; "ctext"+(11,-2)*{} **\frm{-}},
{"ctext"+(-9,2)="a1"; "a1"+(0,7)*{V^{m-l}}**\dir{-}},
{"ctext"+(-3,2)="a2"; "a2"+(0,7)*{\,\,V^k}**\dir{-}},
{"ctext"+(3,2)="a3"; "a3"+(0,7)*{V}**\dir{-}},
{"ctext"+(9,2)="a4"; "a4"+(0,7)*{V^r}**\dir{-}},
{"ctext"+(0,-2)="a5"; "a5"+(0,-7)*{K(V^m)}**\dir{-}},
\endxy
  \qquad
  \text{ and }
  \qquad
  \xy {(0,0)="ctext"*{\c}},
{"ctext"+(-11,2)*{}; "ctext"+(11,-2)*{} **\frm{-}},
{"ctext"+(-9,-2)="a1"; "a1"+(0,-7)*{V^k}**\dir{-}},
{"ctext"+(-3,-2)="a2"; "a2"+(0,-7)*{V}**\dir{-}},
{"ctext"+(3,-2)="a3"; "a3"+(0,-7)*{V^r}**\dir{-}},
{"ctext"+(9,-2)="a4"; "a4"+(0,-7)*{V^{m-l}}**\dir{-}},
{"ctext"+(0,2)="a5"; "a5"+(0,7)*{K(V^m)}**\dir{-}},
\endxy\,.
  $$
  We abbreviate $\FS_{V,z}^{(m,l)}=\FS_{V,z}^{(m,l-1,0)}$ for $1\le l \le m$.
  }
\end{defn}
\begin{lem}
  Let $m \in \BN$, $k,r\geq 0$ with $1\leq l:=k+r+1\le m$.
\begin{enumerate}
  \item[(i)]  For simple $\bX \in Z(\CC)$, $\nu^{\bX}_{m,l}(V)=
  \frac{1}{d_\ell(\bX)}\ptrl\left(\FS_{V,z_{\bX}}^{(m,l)}\right)$.
  \item[(ii)] If $r>0$, then $\ptrl\left(\FS_{V,z}^{(m,k,r)}\right)=\ptrr\left(\FS_{V,z}^{(m,k+1,r-1)}\right)$.
  \item[(iii)] In addition, if $\CC$ is spherical, then
  $$
  \ptr\left(\FS_{V,z}^{(m,k,r)}\right)=\ptr\left(\FS_{V,z}^{(m,l)}\right) \quad\text{and}\quad
  \nu^{\bX}_{m,l}(V)=\frac{1}{d(\bX)}\ptr\left(\FS_{V,z_{\bX}}^{(m,k,r)}\right)$$
  for simple $\bX \in \DD$. Moreover,
  $\FS_{V,z}^{(m,k,r)}=\FS_{V,z}^{(m,l)}$ for simple $V \in \CC$.
\end{enumerate}
\end{lem}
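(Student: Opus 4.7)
The plan is to build on the key calculation \eqref{eq:FSE1}, extending it first to general $l$ (with $r=0$) to obtain part (i), then exhibiting a graphical sliding identity for part (ii), and finally combining these with sphericality and simplicity arguments for part (iii).

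For part (i), I would replay the diagrammatic computation of \eqref{eq:FSE1} with $l$ parallel $V$-strands on the right in place of a single strand. The starting point is Proposition \ref{p:surgery} applied to $\nu^\bX_{m,l}(V)$ with dual bases $\{p_\a\}_\a$ of $\CC(V^m,X)$ and $\{\ol p^\ell_\a\}_\a$ of $\CC(X,V^m)$ relative to $(\cdot,\cdot)_\ell$. Passing to dual bases $\{P_\a\}_\a,\{Q_\a\}_\a$ of $\DD(K(V^m),\bX)$ and $\DD(\bX,K(V^m))$ via the adjunction isomorphisms $\ol\Psi,\Psi\inv$, the sum $\sum_\a Q_\a\circ P_\a$ collapses to $z_\bX/d_\ell(\bX)$ by simplicity of $\bX$. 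The remaining steps---introducing $\theta\inv$ to rewrite the half-braiding of $\bX$ past $V^l$ as a $\u$--$\c$ composition wrapped around the Drinfeld twist---are identical to those of \eqref{eq:FSE1}, except that the loop on the right now threads $l-1$ strands of $V$ rather than none, matching the $V^{-(l-1)}$ label in the graphical definition of $\FS^{(m,l)}_{V,z_\bX/d_\ell(\bX)}=\FS^{(m,l-1,0)}_{V,z_\bX/d_\ell(\bX)}$.

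For part (ii), this is a purely graphical identity. Taking $\ptrl$ of $\FS^{(m,k,r)}_{V,z}$ closes the top and bottom $V$-strands on the left via $\ev$ and $\ol\db$. With $r>0$, the rightmost strand of the $V^{-r}$ loop can be isotoped across the closing cups on the left: by naturality of evaluation and coevaluation together with the strict-pivotal coherence, that strand becomes absorbed into the left loop, reducing $r$ to $r-1$ and enlarging $k$ to $k+1$, while simultaneously the outer $V^\du$ closure migrates from the left side to the right side of the diagram. This turns $\ptrl$ into $\ptrr$ and changes the indexing from $(m,k,r)$ to $(m,k+1,r-1)$.

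For part (iii), in the spherical case $\ptrl=\ptrr=\ptr$, so iterating part (ii) starting from $(k,r)$ and reducing $r$ down to $0$ yields $\ptr(\FS^{(m,k,r)}_{V,z})=\ptr(\FS^{(m,l-1,0)}_{V,z})=\ptr(\FS^{(m,l)}_{V,z})$. Combined with part (i), this gives the displayed formula for $\nu^\bX_{m,l}(V)$. For the final assertion, both $\FS^{(m,k,r)}_{V,z}$ and $\FS^{(m,l)}_{V,z}$ are endomorphisms of $V$ by the graphical definition. When $V$ is simple, $\End_\CC(V)=\k\cdot\id_V$, so each is a scalar multiple of $\id_V$; these scalars are recovered by dividing the pivotal trace by $d(V)$, which is nonzero in a fusion category, so the equality of traces forces equality of the endomorphisms themselves.

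The main obstacle I expect is the diagrammatic bookkeeping in part (ii): tracking how the sliding strand interacts with the $\u$--$\c$ adjunction cells and with the $\theta\inv$ box, and verifying that the apparent transposition of closing cups from left to right is accounted for by the coherent identification $\ol\ev_V=\ev_{V\du}$, $\ol\db_V=\db_{V\du}$ in the strict pivotal setting. Once this is drawn carefully the identity is mechanical, but the set-up requires care with orientations so that the $V^{l-m}$-strand at the base of $\c$ and the corresponding strand at the top of $\u$ remain consistent after the isotopy.
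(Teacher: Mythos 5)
Your proof is correct and follows essentially the same approach as the paper. Part (i) mirrors the paper's cue that the argument is a repeat of the \eqref{eq:FSE1} computation, replacing the single $V$-strand that threads the half-braiding/twist apparatus by $l$ parallel strands and collapsing $\sum_\a Q_\a P_\a$ to $z_\bX/d_\ell(\bX)$ by simplicity of $\bX$; part (ii) is exactly the graphical-calculus re-indexing identity the paper asserts without elaboration (the key observation being that $\ptrl\left(\FS_{V,z}^{(m,k,r)}\right)$ and $\ptrr\left(\FS_{V,z}^{(m,k+1,r-1)}\right)$ draw out to the same closed diagram with $k+1$ strands circling on the left and $r$ on the right); and part (iii) does the same downward induction on $r$ using (ii) in the spherical case, applies (i), and finishes with the standard fact that two endomorphisms of a simple object with equal nonzero-normalized trace must coincide since $\End_\CC(V)=\k\,\id_V$ and $d(V)\neq 0$.
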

\begin{proof}
  The proof of (i) is similar to \eqref{eq:FSE1}, and (ii) can be obtained directly from graphical
  calculus. If $\CC$ is spherical, then by induction we have
  $$
  \ptr\left(\FS_{V,z}^{(m,k,r)}\right)=\ptr\left(\FS_{V,z}^{(m,l)}\right).
  $$
  Hence, by (i), $\nu^{\bX}_{m,l}(V)=\frac{1}{d(\bX)}\ptr\left(\FS_{V,z_{\bX}}^{(m,k,r)}\right)$ for
  simple $\bX \in Z(\CC)$.
  If $V$ is simple,
  then $\FS_{V,z}^{(m,k,r)}$ and $\FS_{V,z}^{(m,l)}$ are scalar multiples of $\id_V$. Since they have
  the same pivotal trace and $d(V) \ne 0$, $\FS_{V,z}^{(m,k,r)}=\FS_{V,z}^{(m,l)}$.
\end{proof}
\begin{prop}\label{p:FSE2}
  $\FS_{V,z}^{(m,k,r)}$ is natural in $V$ provided $m$ and
  $l:=k+r+1$ are relatively prime. In particular,  $\nu^{\bX}_{m,l}(V)$ is additive in $V$.
  In addition, if $\CC$ is spherical, then
  $\FS_{V,z}^{(m,k,r)}=\FS_{V,z}^{(m,l)}$ for all $V \in \CC$.
\end{prop}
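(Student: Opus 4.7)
The strategy extends the sliding argument used in Proposition \ref{p:FSE1}, which handled the case $l=1$ (automatically coprime to $m$). The key claim is naturality; additivity and the spherical identification follow formally.

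For naturality, let $f\colon V\to W$ be a morphism in $\CC$. The plan is to slide $f$ through the defining diagram of $\FS_{V,z}^{(m,k,r)}$ by exactly the moves employed in \eqref{eq:FSE2}: push $f$ into $K(V^m)$ via the naturality $\u\circ g = K(g)\circ\u$, commute past $z$ using that $z$ is a natural endomorphism of $\id_{Z(\CC)}$, and pull out again via $\c\circ K(g) = g\circ\c$. Direct graphical inspection shows that one such elementary slide translates the $f$-marker from a position $j$ in the tensor factor $V^m$ to position $j+l\pmod m$; the shift equals $l$ (rather than $1$ as in Proposition \ref{p:FSE1}) because of the $V^{m-l}$ cable that wraps around the diagram. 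Under the hypothesis $\gcd(m,l)=1$, the cyclic action $j\mapsto j+l$ on $\BZ/m\BZ$ is transitive, so iterating the slide $m$ times visits every tensor factor exactly once, replacing every $V$ by $W$, and produces the desired identity $f\circ\FS_{V,z}^{(m,k,r)} = \FS_{W,z}^{(m,k,r)}\circ f$.

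From naturality, additivity of $\nu_{m,l}^\bX(-)$ is immediate. For $V=U_1\oplus U_2$ with canonical inclusions $\iota_i$ and projections $\pi_i$, naturality applied to each $\iota_i$ yields the block decomposition $\FS_{V,z}^{(m,k,r)} = \iota_1\FS_{U_1,z}^{(m,k,r)}\pi_1 + \iota_2\FS_{U_2,z}^{(m,k,r)}\pi_2$, and part (i) of the preceding lemma together with additivity of $\bX\mapsto\nu_{m,l}^\bX(V)$ (Remark \ref{firstadditivity}) and additivity of the pivotal trace yields $\nu_{m,l}^\bX(U_1\oplus U_2) = \nu_{m,l}^\bX(U_1)+\nu_{m,l}^\bX(U_2)$. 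For the spherical claim, both $\FS_{V,z}^{(m,k,r)}$ and $\FS_{V,z}^{(m,l)}$ are natural endomorphisms of $\id_\CC$ that agree on every simple object by the preceding lemma; naturality applied to the inclusion of each simple summand of $V = \bigoplus_\alpha U_{i_\alpha}$ shows they also agree on $V$.

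The main obstacle is the graphical verification that one elementary slide shifts the position of $f$ by $l$ modulo $m$; this is a combinatorial accounting of how the wrap-around $V^{m-l}$ strand interacts with the unit and counit of the adjunction $K\dashv F$, generalizing the single-position shift visible in \eqref{eq:FSE2}. The coprimality assumption then enters precisely to guarantee transitivity of the shift action on $\BZ/m\BZ$; without it, the sliding terminates in a proper cyclic subgroup of positions and fails to convert every $V$-strand to a $W$-strand.
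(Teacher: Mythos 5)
Your proposal is correct and follows essentially the same route as the paper's proof: the paper defines the $m$-cycle $s(i)\equiv i+l\pmod m$, slides $f$ through the diagram one step at a time (with a three-case graphical check that one slide moves the position $p$ to $s(p)$), and invokes transitivity of the shift (guaranteed by $\gcd(m,l)=1$) to convert all $V$-strands to $W$-strands; additivity and the spherical identification are then extracted exactly as you describe. The only substantive content you have not supplied is the explicit graphical verification of the shift-by-$l$ claim (the paper handles the three cases $p\le m-l$, $m-l<p\le m-l+k$, and $p>m-r+1$ separately), but you correctly identify this as the crux of the argument.
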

\begin{proof}
Let $s\in S_m$ be the permutation determined by requiring
$s(i)\in\{1,\dots,m\}$ to be congruent to $i+l$ modulo $m$. Note
that $s$ is an $m$-cycle since $m$ and $l$ are relatively prime.

Consider $f\colon V\to W$. For any $X_i,Y_j$ in $\CC$ we will
write
 \[f_p
 \colon X_1\o\dots\o X_{p-1}\o V\o Y_1\o\dots\o Y_u
 \to X_1\o\dots\o X_{p-1}\o W\o Y_1\o\dots\o Y_u\]
for the morphism that acts as $f$ in the $p$-th position and the
identity elsewhere. Define a series of objects and morphisms
\[V[0]\xrightarrow{f[1]}V[1]\xrightarrow{f[2]}V[2]\to\dots\xrightarrow{f[n]}V[n]\]
by $V[0]=V^{\o n}$ and $f[i]=f_{s^{i-1}(k+1)}$; this fixes
$V[i]$ which has to be the appropriate target. Note that the
sequence is well-defined since $s$ is transitive, and we have
$V[n]=W^{\o n}$.

Let
$$
G_i:=\,
\def\objectstyle{\scriptstyle}
\xy (0,-9)="ctext",
{"ctext"+(-2,7)="a2"},
{"a2"+(0,-1.5)="ctext1"*{z}},
{"ctext1"+(-2,-1.5)*{}; "ctext1"+(2,1.5)*{} **\frm{-}},
{"ctext1"+(0,-1.5); "ctext1"+(0,-2.5)**\dir{-}},
{"ctext1"+(0,-4)="ctext3"},{"ctext3"+(-3,0)*{\c}},
{"ctext3"+(-7.5,-1.5)*{}; "ctext3"+(2.5,1.5)*{} **\frm{-}},
\vcrossneg~{"a2"+(0,5)="c2"}{"a2"+(7,5)="c3"}{"a2"}{"a2"+(7,0)="a3"},
{"ctext3"+(-1.5,-1.5)="z2"; "z2"+(18,0)="z5" **\crv{"z2"+(0,-6)&"z5"+(0,-6)}},
{"z2"+(2.5,0)="z3"; "z3"+(6,0)="z4"**\crv{"z3"+(0,-3)& "z4"+(0,-3)}},
{"a3"; "z4"**\dir{-}?(.2)+(4,0)},
{"z2"+(-2,0)="z1"; "z1"+(0,-7)*{W}**\dir{-}},
{"c3"+(0,1.5)="ctext4"*{\theta\inv}},
{"ctext4"+(-3,-1.5)*{}; "ctext4"+(2.5,1.5)*{} **\frm{-}},
{"ctext4"+(0,1.5); "ctext4"+(0,2.5)**\dir{-}},
{"ctext4"+(0,4)="ctext5"}, {"ctext5"+(2,0)*{\u}},
{"ctext5"+(-3,-1.5)*{}; "ctext5"+(7,1.5)*{} **\frm{-}},
{"ctext5"+(0,4)="ctext6"}, {"ctext6"+(2,-.1)*{f[i]}},
{"ctext6"+(-3,-1.5)*{}; "ctext6"+(7,1.5)*{} **\frm{-}},
{"c2";"c2"+(0,11)="d2"**\dir{-}},
{"d2";"d2"+(5.5,0)="d3"**\crv{"d2"+(0,3)&"d3"+(0,3)}},
{"ctext6"+(1,1.5)="d4"; "d4"+(-18,0)="d1"**\crv{"d4"+(0,6)&"d1"+(0,6)}},
{"d1"; "d1"+(0,-23)="z"**\dir{-}?(.2)+(-3,0)},
{"z"; "z"+(4,0)="z0"**\crv{"z"+(0,-3)&"z0"+(0,-3)}},
{"z5"; "z5"+(0,23)="d7"**\dir{-}?(.2)+(4,0)},
{"d7"; "d7"+(-4,0)="d6"**\crv{"d7"+(0,3)&"d6"+(0,3)}},
{"d4"+(2.5,0)="d5"; "d5"+(0,7)*{V}**\dir{-}},
{"d6"+(0,-3);"d6"+(0,-4) **\dir{-}},
{"d5"+(0,-3);"d5"+(0,-4) **\dir{-}},
{"d4"+(0,-3);"d4"+(0,-4) **\dir{-}},
{"d3"+(0,-3);"d3"+(0,-4) **\dir{-}},
\endxy\,=\,
\xy (0,-6)="ctext",
{"ctext"+(-2,7)="a2"},
{"a2"+(0,-1.5)="ctext1"*{z}},
{"ctext1"+(-2,-1.5)*{}; "ctext1"+(2,1.5)*{} **\frm{-}},
{"ctext1"+(0,-1.5); "ctext1"+(0,-2.5)**\dir{-}},
{"ctext1"+(0,-4)="ctext3"},{"ctext3"+(-3,0)*{\c}},
{"ctext3"+(-7.5,-1.5)*{}; "ctext3"+(2.5,1.5)*{} **\frm{-}},
{"ctext3"+(0,-4)="ctext0"},{"ctext0"+(-3,-.1)*{f[i]}},
{"ctext0"+(-7.5,-1.5)*{}; "ctext0"+(2.5,1.5)*{} **\frm{-}},
\vcrossneg~{"a2"+(0,5)="c2"}{"a2"+(7,5)="c3"}{"a2"}{"a2"+(7,0)="a3"},
{"ctext0"+(-1.5,-1.5)="z2"; "z2"+(18,0)="z5" **\crv{"z2"+(0,-6)&"z5"+(0,-6)}},
{"z2"+(2.5,0)="z3"; "z3"+(6,0)="z4"**\crv{"z3"+(0,-3)& "z4"+(0,-3)}},
{"a3"; "z4"**\dir{-}?(.2)+(4,0)},
{"c3"+(0,1.5)="ctext4"*{\theta\inv}},
{"ctext4"+(-3,-1.5)*{}; "ctext4"+(2.5,1.5)*{} **\frm{-}},
{"ctext4"+(0,1.5); "ctext4"+(0,2.5)**\dir{-}},
{"ctext4"+(0,4)="ctext5"}, {"ctext5"+(2,0)*{\u}},
{"ctext5"+(-3,-1.5)*{}; "ctext5"+(7,1.5)*{} **\frm{-}},
{"c2";"c2"+(0,7)="d2"**\dir{-}},
{"d2";"d2"+(5.5,0)="d3"**\crv{"d2"+(0,3)&"d3"+(0,3)}},
{"ctext5"+(1,1.5)="d4"; "d4"+(-18,0)="d1"**\crv{"d4"+(0,6)&"d1"+(0,6)}},
{"d1"; "d1"+(0,-23)="z"**\dir{-}?(.2)+(-3,0)},
{"z"; "z"+(4,0)="z0"**\crv{"z"+(0,-3)&"z0"+(0,-3)}},
{"z5"; "z5"+(0,23)="d7"**\dir{-}?(.2)+(4,0)},
{"d7"; "d7"+(-4,0)="d6"**\crv{"d7"+(0,3)&"d6"+(0,3)}},
{"d4"+(2.5,0)="d5"; "d5"+(0,7)*{V}**\dir{-}},
{"z2"+(-2,0)="z1"; "z1"+(0,-7)*{W}**\dir{-}},
{"z0"+(0,3);"z0"+(0,4)**\dir{-}},
{"z1"+(0,3);"z1"+(0,4)**\dir{-}},
{"z2"+(0,3);"z2"+(0,4)**\dir{-}},
{"z3"+(0,3);"z3"+(0,4)**\dir{-}},
\endxy\, \text{ for any } i\in\{1,\dots,m\}.
$$
The last equality follows from a similar argument as  \eqref{eq:FSE2}.
If $p:=s^{i-1}(k+1)\leq m-l$, then
$$
G_i=\,
\def\objectstyle{\scriptstyle}
\xy (0,-11)="ctext",
{"ctext"+(-2,7)="a2"},
{"a2"+(0,-1.5)="ctext1"*{z}},
{"ctext1"+(-2,-1.5)*{}; "ctext1"+(2,1.5)*{} **\frm{-}},
{"ctext1"+(0,-1.5); "ctext1"+(0,-2.5)**\dir{-}},
{"ctext1"+(0,-4)="ctext3"},{"ctext3"+(-3,0)*{\c}},
{"ctext3"+(-7.5,-1.5)*{}; "ctext3"+(2.5,1.5)*{} **\frm{-}},
\vcrossneg~{"a2"+(0,5)="c2"}{"a2"+(7,5)="c3"}{"a2"}{"a2"+(7,0)="a3"},
{"ctext3"+(-1.5,-1.5)="z2"; "z2"+(18,0)="z5" **\crv{"z2"+(0,-6)&"z5"+(0,-6)}},
{"z2"+(2.5,0)="z3"; "z3"+(6,0)="z4"**\crv{"z3"+(0,-3)& "z4"+(0,-3)}},
{"a3"; "z4"**\dir{-}?(.2)+(4,0)},
{"z2"+(-2,0)="z1"; "z1"+(0,-7)*{W}**\dir{-}},
{"c3"+(0,1.5)="ctext4"*{\theta\inv}},
{"ctext4"+(-3,-1.5)*{}; "ctext4"+(2.5,1.5)*{} **\frm{-}},
{"ctext4"+(0,1.5); "ctext4"+(0,2.5)**\dir{-}},
{"ctext4"+(0,4)="ctext5"}, {"ctext5"+(2,0)*{\u}},
{"ctext5"+(-3,-1.5)*{}; "ctext5"+(7,1.5)*{} **\frm{-}},
{"ctext5"+(-2.2,4)="ctext6"},{"ctext6"+(0,-.2)*{f_p}},
{"ctext6"+(-1.7,-1.5)*{}; "ctext6"+(1.4,1.5)*{} **\frm{-}},
{"c2";"c2"+(0,11)="d2"**\dir{-}},
{"d2";"d2"+(5,0)="d3"**\crv{"d2"+(0,3)&"d3"+(0,3)}},
{"ctext5"+(1,5.5)="d4"; "d4"+(-18,0)="d1"**\crv{"d4"+(0,7)&"d1"+(0,7)}},
{"d1"; "d1"+(0,-23)="z"**\dir{-}?(.2)+(-3,0)},
{"z"; "z"+(4,0)="z0"**\crv{"z"+(0,-3)&"z0"+(0,-3)}},
{"z5"; "z5"+(0,19)="d7"**\dir{-}?(.2)+(4,0)},
{"d7"; "d7"+(-4,0)="d6"**\crv{"d7"+(0,3)&"d6"+(0,3)}},
{"d4"+(2.5,0)="d5"},
{"d5"+(0,-4);"d5"+(0,7)*{V} **\dir{-}},
{"d4" ;"d4"+(0,-4) **\dir{-}},
{"d3"+(0,-3);"d3"+(0,-4) **\dir{-}},
\endxy\,=\,
\xy (0,-6)="ctext",
{"ctext"+(-2,7)="a2"},
{"a2"+(0,-1.5)="ctext1"*{z}},
{"ctext1"+(-2,-1.5)*{}; "ctext1"+(2,1.5)*{} **\frm{-}},
{"ctext1"+(0,-1.5); "ctext1"+(0,-2.5)**\dir{-}},
{"ctext1"+(0,-4)="ctext3"},{"ctext3"+(-3,0)*{\c}},
{"ctext3"+(-7.5,-1.5)*{}; "ctext3"+(2.5,1.5)*{} **\frm{-}},
{"ctext3"+(2,-4)="ctext0"},{"ctext0"+(0,-.2)*{f_p}},
{"ctext0"+(-1.7,-1.5)*{}; "ctext0"+(1.4,1.5)*{} **\frm{-}},
\vcrossneg~{"a2"+(0,5)="c2"}{"a2"+(7,5)="c3"}{"a2"}{"a2"+(7,0)="a3"},
{"ctext0"+(-2.8,-1.5)="z2"; "z2"+(17,0)="z5" **\crv{"z2"+(0,-6)&"z5"+(0,-6)}},
{"z2"+(2.5,0)="z3"; "z3"+(5.2,0)="z4"**\crv{"z3"+(0,-3)& "z4"+(0,-3)}},
{"a3"; "z4"**\dir{-}?(.2)+(4,0)},
{"c3"+(0,1.5)="ctext4"*{\theta\inv}},
{"ctext4"+(-3,-1.5)*{}; "ctext4"+(2.5,1.5)*{} **\frm{-}},
{"ctext4"+(0,1.5); "ctext4"+(0,2.5)**\dir{-}},
{"ctext4"+(0,4)="ctext5"}, {"ctext5"+(2,0)*{\u}},
{"ctext5"+(-3,-1.5)*{}; "ctext5"+(7,1.5)*{} **\frm{-}},
{"c2";"c2"+(0,7)="d2"**\dir{-}},
{"d2";"d2"+(5.5,0)="d3"**\crv{"d2"+(0,3)&"d3"+(0,3)}},
{"ctext5"+(1,1.5)="d4"; "d4"+(-18,0)="d1"**\crv{"d4"+(0,6)&"d1"+(0,6)}},
{"d1"; "d1"+(0,-19)="z"**\dir{-}?(.2)+(-3,0)},
{"z"; "z"+(4,0)="z0"**\crv{"z"+(0,-3)&"z0"+(0,-3)}},
{"z5"; "z5"+(0,23)="d7"**\dir{-}?(.2)+(4,0)},
{"d7"; "d7"+(-4,0)="d6"**\crv{"d7"+(0,3)&"d6"+(0,3)}},
{"d4"+(2.5,0)="d5"; "d5"+(0,7)*{V}**\dir{-}},
{"z2"+(-2.7,0)="z1"; "z1"+(0,-7)*{W}**\dir{-}},
{"z1"+(0,4);"z1"+(0,-4)**\dir{-}},
{"z2";"z2"+(0,4)**\dir{-}},
{"z3"+(0,3);"z3"+(0,4)**\dir{-}},
\endxy\, =\,
\xy (0,-6)="ctext",
{"ctext"+(-2,7)="a2"},
{"a2"+(0,-1.5)="ctext1"*{z}},
{"ctext1"+(-2,-1.5)*{}; "ctext1"+(2,1.5)*{} **\frm{-}},
{"ctext1"+(0,-1.5); "ctext1"+(0,-2.5)**\dir{-}},
{"ctext1"+(0,-4)="ctext3"},{"ctext3"+(-3,0)*{\c}},
{"ctext3"+(-7.5,-1.5)*{}; "ctext3"+(2.5,1.5)*{} **\frm{-}},
{"ctext3"+(0,-4)="ctext0"},{"ctext0"+(-3,-.2)*{f[i+1]}},
{"ctext0"+(-7.5,-1.5)*{}; "ctext0"+(2.5,1.5)*{} **\frm{-}},
\vcrossneg~{"a2"+(0,5)="c2"}{"a2"+(7,5)="c3"}{"a2"}{"a2"+(7,0)="a3"},
{"ctext0"+(-1.5,-1.5)="z2"; "z2"+(18,0)="z5" **\crv{"z2"+(0,-6)&"z5"+(0,-6)}},
{"z2"+(2.5,0)="z3"; "z3"+(6,0)="z4"**\crv{"z3"+(0,-3)& "z4"+(0,-3)}},
{"a3"; "z4"**\dir{-}?(.2)+(4,0)},
{"c3"+(0,1.5)="ctext4"*{\theta\inv}},
{"ctext4"+(-3,-1.5)*{}; "ctext4"+(2.5,1.5)*{} **\frm{-}},
{"ctext4"+(0,1.5); "ctext4"+(0,2.5)**\dir{-}},
{"ctext4"+(0,4)="ctext5"}, {"ctext5"+(2,0)*{\u}},
{"ctext5"+(-3,-1.5)*{}; "ctext5"+(7,1.5)*{} **\frm{-}},
{"c2";"c2"+(0,7)="d2"**\dir{-}},
{"d2";"d2"+(5.5,0)="d3"**\crv{"d2"+(0,3)&"d3"+(0,3)}},
{"ctext5"+(1,1.5)="d4"; "d4"+(-18,0)="d1"**\crv{"d4"+(0,6)&"d1"+(0,6)}},
{"d1"; "d1"+(0,-23)="z"**\dir{-}?(.2)+(-3,0)},
{"z"; "z"+(4,0)="z0"**\crv{"z"+(0,-3)&"z0"+(0,-3)}},
{"z5"; "z5"+(0,23)="d7"**\dir{-}?(.2)+(4,0)},
{"d7"; "d7"+(-4,0)="d6"**\crv{"d7"+(0,3)&"d6"+(0,3)}},
{"d4"+(2.5,0)="d5"; "d5"+(0,7)*{V}**\dir{-}},
{"z2"+(-2,0)="z1"; "z1"+(0,-7)*{W}**\dir{-}},
{"z0"+(0,3);"z0"+(0,4)**\dir{-}},
{"z1"+(0,3);"z1"+(0,4)**\dir{-}},
{"z2"+(0,3);"z2"+(0,4)**\dir{-}},
{"z3"+(0,3);"z3"+(0,4)**\dir{-}},
\endxy \,=G_{i+1}
$$
since $p+l=s(p)=s^{i}(k+1)$.
  If $m-l<p\leq m-l+k$, then for $q:=p-m+l$
$$
G_i=\,
\def\objectstyle{\scriptstyle}
\xy (0,-10)="ctext",
{"ctext"+(-2,7)="a2"},
{"a2"+(0,-1.5)="ctext1"*{z}},
{"ctext1"+(-2,-1.5)*{}; "ctext1"+(2,1.5)*{} **\frm{-}},
{"ctext1"+(0,-1.5); "ctext1"+(0,-2.5)**\dir{-}},
{"ctext1"+(0,-4)="ctext3"},{"ctext3"+(-3,0)*{\c}},
{"ctext3"+(-7.5,-1.5)*{}; "ctext3"+(2.5,1.5)*{} **\frm{-}},
\vcrossneg~{"a2"+(0,5)="c2"}{"a2"+(7,5)="c3"}{"a2"}{"a2"+(7,0)="a3"},
{"ctext3"+(-1.5,-1.5)="z2"; "z2"+(18,0)="z5" **\crv{"z2"+(0,-6)&"z5"+(0,-6)}},
{"z2"+(2.5,0)="z3"; "z3"+(6,0)="z4"**\crv{"z3"+(0,-3)& "z4"+(0,-3)}},
{"a3"; "z4"**\dir{-}?(.2)+(4,0)},
{"z2"+(-2,0)="z1"; "z1"+(0,-7)*{W}**\dir{-}},
{"c3"+(0,1.5)="ctext4"*{\theta\inv}},
{"ctext4"+(-3,-1.5)*{}; "ctext4"+(2.5,1.5)*{} **\frm{-}},
{"ctext4"+(0,1.5); "ctext4"+(0,2.5)**\dir{-}},
{"ctext4"+(0,4)="ctext5"}, {"ctext5"+(2,0)*{\u}},
{"ctext5"+(-3,-1.5)*{}; "ctext5"+(7,1.5)*{} **\frm{-}},
{"ctext5"+(.9,4)="ctext6"},{"ctext6"+(0,-.2)*{f_p}},
{"ctext6"+(-1.7,-1.5)*{}; "ctext6"+(1.4,1.5)*{} **\frm{-}},
{"c2";"c2"+(0,7)="d2"**\dir{-}},
{"d2";"d2"+(5,0)="d3"**\crv{"d2"+(0,3)&"d3"+(0,3)}},
{"ctext6"+(0, 1.5)="d4"; "d4"+(-18,0)="d1"**\crv{"d4"+(0,7)&"d1"+(0,7)}},
{"d1"; "d1"+(0,-23)="z"**\dir{-}?(.2)+(-3,0)},
{"z"; "z"+(4,0)="z0"**\crv{"z"+(0,-3)&"z0"+(0,-3)}},
{"z5"; "z5"+(0,19)="d7"**\dir{-}?(.2)+(4,0)},
{"d7"; "d7"+(-4,0)="d6"**\crv{"d7"+(0,3)&"d6"+(0,3)}},
{"d4"+(2.5,0)="d5"},
{"d5"+(0,-4);"d5"+(0,7)*{V} **\dir{-}},
{"d4"+(0,-3) ;"d4"+(0,-4) **\dir{-}},
\endxy\,=\,
\def\objectstyle{\scriptstyle}
\xy (0,-8)="ctext",
{"ctext"+(-2,7)="a2"},
{"a2"+(0,-1.5)="ctext1"*{z}},
{"ctext1"+(-2,-1.5)*{}; "ctext1"+(2,1.5)*{} **\frm{-}},
{"ctext1"+(0,-1.5); "ctext1"+(0,-2.5)**\dir{-}},
{"ctext1"+(0,-4)="ctext3"},{"ctext3"+(-3,0)*{\c}},
{"ctext3"+(-7.5,-1.5)*{}; "ctext3"+(2.5,1.5)*{} **\frm{-}},
\vcrossneg~{"a2"+(0,5)="c2"}{"a2"+(7,5)="c3"}{"a2"}{"a2"+(7,0)="a3"},
{"ctext3"+(-1.5,-1.5)="z2"; "z2"+(18,0)="z5" **\crv{"z2"+(0,-6)&"z5"+(0,-6)}},
{"z2"+(2.5,0)="z3"; "z3"+(6,0)="z4"**\crv{"z3"+(0,-3)& "z4"+(0,-3)}},
{"a3"; "z4"**\dir{-}?(.2)+(4,0)},
{"z2"+(-2,0)="z1"; "z1"+(0,-9)*{W}**\dir{-}},
{"c3"+(0,1.5)="ctext4"*{\theta\inv}},
{"ctext4"+(-3,-1.5)*{}; "ctext4"+(2.5,1.5)*{} **\frm{-}},
{"ctext4"+(0,1.5); "ctext4"+(0,2.5)**\dir{-}},
{"ctext4"+(0,4)="ctext5"}, {"ctext5"+(2,0)*{\u}},
{"ctext5"+(-3,-1.5)*{}; "ctext5"+(7,1.5)*{} **\frm{-}},
{"ctext5"+(.9,4)="ctext6"},
{"c2";"c2"+(0,7)="d2"**\dir{-}},
{"d2";"d2"+(5,0)="d3"**\crv{"d2"+(0,3)&"d3"+(0,3)}},
{"ctext6"+(0, -2)="d4"; "d4"+(-18,0)="d1"**\crv{"d4"+(0,7)&"d1"+(0,7)}},
{"d1"; "d1"+(0,-24)="z"**\dir{-}?(.2)+(-3,0)},
{"z"; "z"+(4,0)="z0"**\crv{"z"+(0,-2)&"z0"+(0,-2)}},
{"z0"+(1.7,3.5)*{}; "z0"+(-1.4,0.5)*{} **\frm{-}},
{"z0"+(.34,1.9)*{f_p}},
{"z0"+(.3,3.5);"z0"+(.3,4.5)**\dir{-}},
{"z5"; "z5"+(0,19)="d7"**\dir{-}?(.2)+(4,0)},
{"d7"; "d7"+(-4,0)="d6"**\crv{"d7"+(0,3)&"d6"+(0,3)}},
{"d4"+(2.5,0)="d5"},
{"d5"+(0,-0.5);"d5"+(0,9)*{V} **\dir{-}},
{"d4" ;"d4"+(0,-0.5) **\dir{-}},
\endxy\,=\,
\xy (0,-6)="ctext",
{"ctext"+(-2,7)="a2"},
{"a2"+(0,-1.5)="ctext1"*{z}},
{"ctext1"+(-2,-1.5)*{}; "ctext1"+(2,1.5)*{} **\frm{-}},
{"ctext1"+(0,-1.5); "ctext1"+(0,-2.5)**\dir{-}},
{"ctext1"+(0,-4)="ctext3"},{"ctext3"+(-3,0)*{\c}},
{"ctext3"+(-7.5,-1.5)*{}; "ctext3"+(2.5,1.5)*{} **\frm{-}},
{"ctext3"+(0,-4)="ctext0"},{"ctext0"+(-3,-.2)*{f[i+1]}},
{"ctext0"+(-7.5,-1.5)*{}; "ctext0"+(2.5,1.5)*{} **\frm{-}},
\vcrossneg~{"a2"+(0,5)="c2"}{"a2"+(7,5)="c3"}{"a2"}{"a2"+(7,0)="a3"},
{"ctext0"+(-1.5,-1.5)="z2"; "z2"+(18,0)="z5" **\crv{"z2"+(0,-6)&"z5"+(0,-6)}},
{"z2"+(2.5,0)="z3"; "z3"+(6,0)="z4"**\crv{"z3"+(0,-3)& "z4"+(0,-3)}},
{"a3"; "z4"**\dir{-}?(.2)+(4,0)},
{"c3"+(0,1.5)="ctext4"*{\theta\inv}},
{"ctext4"+(-3,-1.5)*{}; "ctext4"+(2.5,1.5)*{} **\frm{-}},
{"ctext4"+(0,1.5); "ctext4"+(0,2.5)**\dir{-}},
{"ctext4"+(0,4)="ctext5"}, {"ctext5"+(2,0)*{\u}},
{"ctext5"+(-3,-1.5)*{}; "ctext5"+(7,1.5)*{} **\frm{-}},
{"c2";"c2"+(0,7)="d2"**\dir{-}},
{"d2";"d2"+(5.5,0)="d3"**\crv{"d2"+(0,3)&"d3"+(0,3)}},
{"ctext5"+(1,1.5)="d4"; "d4"+(-18,0)="d1"**\crv{"d4"+(0,6)&"d1"+(0,6)}},
{"d1"; "d1"+(0,-23)="z"**\dir{-}?(.2)+(-3,0)},
{"z"; "z"+(4,0)="z0"**\crv{"z"+(0,-3)&"z0"+(0,-3)}},
{"z5"; "z5"+(0,23)="d7"**\dir{-}?(.2)+(4,0)},
{"d7"; "d7"+(-4,0)="d6"**\crv{"d7"+(0,3)&"d6"+(0,3)}},
{"d4"+(2.5,0)="d5"; "d5"+(0,7)*{V}**\dir{-}},
{"z2"+(-2,0)="z1"; "z1"+(0,-7)*{W}**\dir{-}},
{"z0"+(0,3);"z0"+(0,4)**\dir{-}},
{"z1"+(0,3);"z1"+(0,4)**\dir{-}},
{"z2"+(0,3);"z2"+(0,4)**\dir{-}},
{"z3"+(0,3);"z3"+(0,4)**\dir{-}},
\endxy \,=G_{i+1}
$$
since $q=s(p)=s^i(k+1)$. If $p> m-l+k+1=m-r+1$, then we set $q=p-m+l-k-1$ and find
$$
G_i=\,
\def\objectstyle{\scriptstyle}
\xy (0,-9)="ctext",
{"ctext"+(-2,7)="a2"},
{"a2"+(0,-1.5)="ctext1"*{z}},
{"ctext1"+(-2,-1.5)*{}; "ctext1"+(2,1.5)*{} **\frm{-}},
{"ctext1"+(0,-1.5); "ctext1"+(0,-2.5)**\dir{-}},
{"ctext1"+(0,-4)="ctext3"},{"ctext3"+(-3,0)*{\c}},
{"ctext3"+(-7.5,-1.5)*{}; "ctext3"+(2.5,1.5)*{} **\frm{-}},
\vcrossneg~{"a2"+(0,5)="c2"}{"a2"+(7,5)="c3"}{"a2"}{"a2"+(7,0)="a3"},
{"ctext3"+(-1.5,-1.5)="z2"; "z2"+(18,0)="z5" **\crv{"z2"+(0,-6)&"z5"+(0,-6)}},
{"z2"+(2.5,0)="z3"; "z3"+(6,0)="z4"**\crv{"z3"+(0,-3)& "z4"+(0,-3)}},
{"a3"; "z4"**\dir{-}?(.2)+(4,0)},
{"z2"+(-2,0)="z1"; "z1"+(0,-7)*{W}**\dir{-}},
{"c3"+(0,1.5)="ctext4"*{\theta\inv}},
{"ctext4"+(-3,-1.5)*{}; "ctext4"+(2.5,1.5)*{} **\frm{-}},
{"ctext4"+(0,1.5); "ctext4"+(0,2.5)**\dir{-}},
{"ctext4"+(0,4)="ctext5"}, {"ctext5"+(2,0)*{\u}},
{"ctext5"+(-3,-1.5)*{}; "ctext5"+(7,1.5)*{} **\frm{-}},
{"ctext5"+(6,4)="ctext6"},{"ctext6"+(0,-.2)*{f_p}},
{"ctext6"+(-1.7,-1.5)*{}; "ctext6"+(1.4,1.5)*{} **\frm{-}},
{"c2";"c2"+(0,7)="d2"**\dir{-}},
{"d2";"d2"+(5,0)="d3"**\crv{"d2"+(0,3)&"d3"+(0,3)}},
{"d3"+(2.5,0)="d4"; "d4"+(-18,0)="d1"**\crv{"d4"+(0,7)&"d1"+(0,7)}},
{"d1"; "d1"+(0,-19)="z"**\dir{-}?(.2)+(-3,0)},
{"z"; "z"+(4,0)="z0"**\crv{"z"+(0,-3)&"z0"+(0,-3)}},
{"z5"; "z5"+(0,23)="d7"**\dir{-}?(.2)+(4,0)},
{"d7"; "d7"+(-3.5,0)="d6"**\crv{"d7"+(0,3)&"d6"+(0,3)}},
{"d4"+(2.5,0)="d5"},
{"d5";"d5"+(0,9)*{V} **\dir{-}},
{"d6"+(0,-3);"d6"+(0,-4) **\dir{-}},
\endxy\,=\,
\xy (0,-6)="ctext",
{"ctext"+(-2,7)="a2"},
{"a2"+(0,-1.5)="ctext1"*{z}},
{"ctext1"+(-2,-1.5)*{}; "ctext1"+(2,1.5)*{} **\frm{-}},
{"ctext1"+(0,-1.5); "ctext1"+(0,-2.5)**\dir{-}},
{"ctext1"+(0,-4)="ctext3"},{"ctext3"+(-3,0)*{\c}},
{"ctext3"+(-7.5,-1.5)*{}; "ctext3"+(2.5,1.5)*{} **\frm{-}},
{"ctext3"+(-.7,-4)="ctext0"},{"ctext0"+(0,-.2)*{f_p}},
{"ctext0"+(-1.8,-1.5)*{}; "ctext0"+(1.4,1.5)*{} **\frm{-}},
\vcrossneg~{"a2"+(0,5)="c2"}{"a2"+(7,5)="c3"}{"a2"}{"a2"+(7,0)="a3"},
{"ctext0"+(0,-1.5)="z2"; "z2"+(17,0)="z5" **\crv{"z2"+(0,-6)&"z5"+(0,-6)}},
{"z2"+(2.5,4)="z3"; "z3"+(5.2,0)="z4"**\crv{"z3"+(0,-3)& "z4"+(0,-3)}},
{"a3"; "z4"**\dir{-}?(.2)+(4,0)},
{"c3"+(0,1.5)="ctext4"*{\theta\inv}},
{"ctext4"+(-3,-1.5)*{}; "ctext4"+(2.5,1.5)*{} **\frm{-}},
{"ctext4"+(0,1.5); "ctext4"+(0,2.5)**\dir{-}},
{"ctext4"+(0,4)="ctext5"}, {"ctext5"+(2,0)*{\u}},
{"ctext5"+(-3,-1.5)*{}; "ctext5"+(7,1.5)*{} **\frm{-}},
{"c2";"c2"+(0,7)="d2"**\dir{-}},
{"d2";"d2"+(5.5,0)="d3"**\crv{"d2"+(0,3)&"d3"+(0,3)}},
{"ctext5"+(1,1.5)="d4"; "d4"+(-18,0)="d1"**\crv{"d4"+(0,6)&"d1"+(0,6)}},
{"d1"; "d1"+(0,-19)="z"**\dir{-}?(.2)+(-3,0)},
{"z"; "z"+(4,0)="z0"**\crv{"z"+(0,-3)&"z0"+(0,-3)}},
{"z5"; "z5"+(0,23)="d7"**\dir{-}?(.2)+(4,0)},
{"d7"; "d7"+(-4,0)="d6"**\crv{"d7"+(0,3)&"d6"+(0,3)}},
{"d4"+(2.5,0)="d5"; "d5"+(0,7)*{V}**\dir{-}},
{"z2"+(-2.8,0)="z1"; "z1"+(0,-7)*{W}**\dir{-}},
{"z1"+(0,4);"z1"+(0,-4)**\dir{-}},
{"z2"+(0,3);"z2"+(0,4)**\dir{-}},
\endxy
\, =\,
\xy (0,-6)="ctext",
{"ctext"+(-2,7)="a2"},
{"a2"+(0,-1.5)="ctext1"*{z}},
{"ctext1"+(-2,-1.5)*{}; "ctext1"+(2,1.5)*{} **\frm{-}},
{"ctext1"+(0,-1.5); "ctext1"+(0,-2.5)**\dir{-}},
{"ctext1"+(0,-4)="ctext3"},{"ctext3"+(-3,0)*{\c}},
{"ctext3"+(-7.5,-1.5)*{}; "ctext3"+(2.5,1.5)*{} **\frm{-}},
{"ctext3"+(0,-4)="ctext0"},{"ctext0"+(-3,-.2)*{f[i+1]}},
{"ctext0"+(-7.5,-1.5)*{}; "ctext0"+(2.5,1.5)*{} **\frm{-}},
\vcrossneg~{"a2"+(0,5)="c2"}{"a2"+(7,5)="c3"}{"a2"}{"a2"+(7,0)="a3"},
{"ctext0"+(-1.5,-1.5)="z2"; "z2"+(18,0)="z5" **\crv{"z2"+(0,-6)&"z5"+(0,-6)}},
{"z2"+(2.5,0)="z3"; "z3"+(6,0)="z4"**\crv{"z3"+(0,-3)& "z4"+(0,-3)}},
{"a3"; "z4"**\dir{-}?(.2)+(4,0)},
{"c3"+(0,1.5)="ctext4"*{\theta\inv}},
{"ctext4"+(-3,-1.5)*{}; "ctext4"+(2.5,1.5)*{} **\frm{-}},
{"ctext4"+(0,1.5); "ctext4"+(0,2.5)**\dir{-}},
{"ctext4"+(0,4)="ctext5"}, {"ctext5"+(2,0)*{\u}},
{"ctext5"+(-3,-1.5)*{}; "ctext5"+(7,1.5)*{} **\frm{-}},
{"c2";"c2"+(0,7)="d2"**\dir{-}},
{"d2";"d2"+(5.5,0)="d3"**\crv{"d2"+(0,3)&"d3"+(0,3)}},
{"ctext5"+(1,1.5)="d4"; "d4"+(-18,0)="d1"**\crv{"d4"+(0,6)&"d1"+(0,6)}},
{"d1"; "d1"+(0,-23)="z"**\dir{-}?(.2)+(-3,0)},
{"z"; "z"+(4,0)="z0"**\crv{"z"+(0,-3)&"z0"+(0,-3)}},
{"z5"; "z5"+(0,23)="d7"**\dir{-}?(.2)+(4,0)},
{"d7"; "d7"+(-4,0)="d6"**\crv{"d7"+(0,3)&"d6"+(0,3)}},
{"d4"+(2.5,0)="d5"; "d5"+(0,7)*{V}**\dir{-}},
{"z2"+(-2,0)="z1"; "z1"+(0,-7)*{W}**\dir{-}},
{"z0"+(0,3);"z0"+(0,4)**\dir{-}},
{"z1"+(0,3);"z1"+(0,4)**\dir{-}},
{"z2"+(0,3);"z2"+(0,4)**\dir{-}},
{"z3"+(0,3);"z3"+(0,4)**\dir{-}},
\endxy \,=G_{i+1}
$$
since now $q+k+1=p+l-m=s(p)=s^i(k+1)$. Because $s$ has order $m$ and $s(m-l+k-1)=k+1$, the case
$p=m-l+k+1=s^{m-1}(k+1)$ occurs when $i=m$. Thus, using induction, we have
$$
f\circ \FS_{V,z}^{(m,k,r)} =G_1 = \cdots = G_m = \FS_{V,z}^{(m,k,r)}\circ f\,.
$$
The remaining statements are direct consequences of the naturality of $\FS_{V,z}^{m,k,r}$ and
the previous lemma.
\end{proof}

\noindent
{\bf Acknowledgement:} The authors would like to thank Ling Long for her comments and suggestions on congruence subgroups.

\providecommand{\bysame}{\leavevmode\hbox to3em{\hrulefill}\thinspace}
\providecommand{\MR}{\relax\ifhmode\unskip\space\fi MR }
\providecommand{\MRhref}[2]{%
  \href{http://www.ams.org/mathscinet-getitem?mr=#1}{#2}
}
\providecommand{\href}[2]{#2}

\end{document}